\theoremstyle{definition}
\newtheorem{theorem}{Theorem}[section]
\newtheorem{prop}[theorem]{Proposition}
\newtheorem{lemma}[theorem]{Lemma}
\newtheorem{cor}[theorem]{Corollary}
\newtheorem{conj}[theorem]{Conjecture}
\theoremstyle{remark}
\newtheorem{dfn}[theorem]{Definition}
\newtheorem{remark}[theorem]{Remark}
\numberwithin{equation}{section}
\def\co{\colon\thinspace}
\def\ep{\epsilon}
\def\Q{\mathbb{Q}}
\def\R{\mathbb{R}}
\def\Z{\mathbb{Z}}
\def\N{\mathbb{N}}
\def\C{\mathbb{C}}
\title[Symplectic embeddings of ellipsoids into polydisks]{Infinite staircases in the symplectic embedding problem for four-dimensional ellipsoids into polydisks}
\author{Michael Usher}
\address{Department of Mathematics\\University of Georgia\\Athens, GA 30602}
\email{usher@uga.edu}
\begin{document}
\begin{abstract}
We study the symplectic embedding capacity function $C_{\beta}$  for ellipsoids $E(1,\alpha)\subset \R^4$ into dilates of polydisks $P(1,\beta)$ as both $\alpha$ and $\beta$ vary through $[1,\infty)$.  For $\beta=1$ results of \cite{FM} show that $C_{\beta}$ has an infinite staircase accumulating at $\alpha=3+2\sqrt{2}$, while for \emph{integer} $\beta\geq 2$ \cite{CFS} found that no infinite staircase arises.  We show that, for arbitrary $\beta\in (1,\infty)$, the restriction of $C_{\beta}$ to $[1,3+2\sqrt{2}]$ is determined entirely by the obstructions from \cite{FM}, leading $C_{\beta}$ on this interval to have a finite staircase with the number of steps tending to $\infty$ as $\beta\to 1$.  On the other hand, in contrast to \cite{CFS}, for a certain doubly-indexed sequence of irrational numbers $L_{n,k}$ we find that $C_{L_{n,k}}$ has an infinite staircase; these $L_{n,k}$ include both numbers that are arbitrarily large and numbers that are arbitrarily close to $1$, with the corresponding accumulation points respectively arbitrarily large and arbitrarily close to $3+2\sqrt{2}$.  
\end{abstract} 

\maketitle
\section{Introduction}

It is now understood that questions about when one domain in $\mathbb{R}^{2n}$ symplectically embeds into another often have quite intricate answers.  The best known example of this is the characterization from \cite{MS} of when one four-dimensional ellipsoid embeds into a ball. Writing \[ E(a,b)=\left\{(w,z)\in \C^2\left|\frac{\pi |w|^2}{a}+\frac{\pi |z|^2}{b}\leq 1\right.\right\}, \] McDuff and Schlenk completely describe the embedding capacity function \[ C^{\mathrm{ball}}(\alpha)=\inf\{\lambda|(\exists\mbox{ a symplectic embedding }E(1,\alpha)\hookrightarrow E(\lambda,\lambda))\}, \] showing that, on the interval $[1,\tau^4)$ where $\tau$ is the golden ratio, $C^{\mathrm{ball}}$ is given by an ``infinite staircase'' made up of piecewise linear steps; for $\alpha>\tau^4$, $C^{\mathrm{ball}}(\alpha)$ is given by either by the volume bound $\sqrt{\alpha}$ or one of a finite list of piecewise linear functions.  

In this paper we consider instead embeddings of four-dimensional ellipsoids into four-dimensional \emph{polydisks} \[ P(a,b)=\left\{(w,z)\in\C^2|\pi|w|^2\leq a,\,\pi|z|^2\leq b\right\}.\]  For any given $\beta\geq 1$, we consider the embedding capacity function $C_{\beta}\co [1,\infty)\to \R$ defined by \begin{equation}
\label{cbdef} 
C_{\beta}(\alpha)=\inf\{\lambda|(\exists\mbox{ a symplectic embedding }E(1,\alpha)\hookrightarrow P(\lambda,\lambda\beta))\}. 
\end{equation}  So the fact that symplectic embeddings are volume-preserving implies the ``volume bound'' $C_{\beta}(\alpha)\geq \sqrt{\frac{\alpha}{2\beta}}$.    The function $C_1$ was completely described in \cite{FM}, and was found to be qualitatively similar to the McDuff-Schlenk function $C^{\mathrm{ball}}$, with an infinite staircase followed by a finite alternating sequence of piecewise linear steps and intervals on which it coincides with the volume bound; in this case the infinite staircase occupies the interval $[1,3+2\sqrt{2})$.  More recently, for all integer $\beta\geq 2$ the function $C_{\beta}$ was found to have a rather simpler description in \cite{CFS}:  in this case there is no infinite staircase and the function coincides with the volume bound on all but finitely many intervals where it is piecewise linear, with the piecewise linear steps fitting into a fairly simple pattern as $\beta$ varies.  

The contrast between the complexity of the function $C_1$ and the simplicity of $C_{\beta}$ for integer $\beta\geq 2$ raises a number of questions, some of which we answer here.  First, we determine how the infinite staircase that describes $C_1|_{[1,3+2\sqrt{2})}$ disappears as the parameter $\beta$ is adjusted away from $1$.  In fact, we show that for \emph{all} real $\beta$, the restriction of $C_{\beta}$ to $[1,3+2\sqrt{2}]$ is in a sense ``as simple as possible'' given the results of \cite{FM} concerning $C_1$: the obstructions to symplectic embeddings (arising from a specific sequence of exceptional spheres in blowups of $S^2\times S^2$) that give rise to the Frenkel-M\"uller staircase are the only obstructions needed to understand $C_{\beta}(\alpha)$ for any real $\beta$ and any $\alpha\in [1,3+2\sqrt{2}]$ (see Theorem \ref{fmsup}).  By directly inspecting these obstructions one can see that, for any given $\beta>1$, only finitely many of them will actually be relevant, and indeed we find a sequence $b_m\searrow 1$ such that the graph of $C_{\beta}|_{[1,3+2\sqrt{2}]}$ consists of exactly $m$ steps whenever $\beta\in [b_m,b_{m-1})$.  

Complementing this, we show that once $\alpha$ becomes larger than $3+2\sqrt{2}$ the obstructions from \cite{FM} and \cite{CFS} are quite far from being sufficient to describe $C_{\beta}|_{[1,\alpha]}$ for all $\beta$.  The main ingredient in this is a triply-indexed family of exceptional spheres $A_{i,n}^{(k)}$ in blowups of $S^2\times S^2$; for very small values of $i$ these have some overlap with the classes from \cite{FM} and \cite{CFS}, but otherwise they are new.  If one fixes integers $n\geq 2,k\geq 0$ and varies $i$, the resulting classes can be used to show that, for certain irrational numbers $L_{n,k}$ (see (\ref{Lnkdef})), the function $C_{L_{n,k}}$ has an infinite staircase, accumulating at the value $\alpha=S_{n,k}>1$ characterized by the identity $\frac{(1+S_{n,k})^2}{S_{n,k}}=\frac{2(1+L_{n,k})^2}{L_{n,k}}$.  Fixing $n$, it holds that $L_{n,k}\searrow 1$ as $k\to\infty$, and hence that $S_{n,k}\searrow 3+2\sqrt{2}$ as $k\to\infty$.  On the other hand, setting $k=0$ we have $L_{n,0}=\sqrt{n^2-1}$, so there are arbitrarily large $\beta$ (which even become arbitrarily close to integers) for which $C_{\beta}$ has an infinite staircase, a counterpoint to the result of \cite{CFS} that $C_{\beta}$ never has an infinite staircase for integer $\beta\geq 2$.

For $i\geq 2$, the obstructions from our classes $A_{i,n}^{(0)}$ give larger lower bounds for $C_{L_{n,0}}(\alpha)$ for $\alpha=\frac{c_{i,n}}{d_{i,n}}$ with notation as in (\ref{abcd}) than do any of the classes denoted $E_m,F_m$ in \cite{CFS}.  Since $L_{n,0}=\sqrt{n^2-1}>2$ for $n\geq 3$, this gives many counterexamples to \cite[Conjecture 1.5]{CFS}.

\subsection{Initial background and notation}\label{init}
Before stating our results more explicitly let us recall some of the facts that are the basis of our analysis; these will largely be familiar to readers of \cite{MS},\cite{FM},\cite{CFS}.  The first main point is that, if $\frac{b}{a}\in\Q$, the existence of a symplectic embedding $E(a,b)^{\circ}\hookrightarrow P(c,d)^{\circ}$ from the interior of an ellipsoid into the interior of a polydisk is equivalent to the existence of a certain ball packing, dictated in part by the so-called \textbf{weight sequence} $\mathcal{W}(a,b)$ of $E(a,b)$. Here $\mathcal{W}(a,b)$ is determined recursively by setting $\mathcal{W}(x,0)=\mathcal{W}(0,x)$ equal to the empty sequence and, if $x\leq y$, setting $\mathcal{W}(x,y)$ and $\mathcal{W}(y,x)$ both equal to the result of prepending $x$ to the sequence $\mathcal{W}(x,y-x)$. (The recursion terminates because we assume $\frac{b}{a}\in\Q.)$ For any $a\in \Q_{\geq 0}$ we let $w(a)=\mathcal{W}(1,a)$.  So for instance $\mathcal{W}(8,3)=(3,3,2,1,1)$ and $w(\frac{8}{3})=(1,1,\frac{2}{3},\frac{1}{3},\frac{1}{3})$.    Then \cite[Proposition 1.4]{FM}, which is based on the analysis in \cite{Mell}, asserts that $E(a,b)^{\circ}$ symplectically embeds into $P(c,d)^{\circ}$ if and only if there is a symplectic embedding of a disjoint union of balls:
 \[ B(c)^{\circ}\sqcup B(d)^{\circ}\sqcup\left(\coprod_{w\in \mathcal{W}(a,b)}B(w)^{\circ}\right) \hookrightarrow B(c+d)^{\circ}.\]  Here $B(x)$ denotes the four-dimensional ball of capacity $x$ (\emph{i.e.}  $B(x)=E(x,x)$).  

In turn, as is explained at the end of the introduction to \cite{Mell} based on \cite{MP},\cite{Liu}, \cite{Bi}, a disjoint union $B(a_0)\sqcup\cdots\sqcup B(a_N)$ of closed balls symplectically embeds into $B(\lambda)^{\circ}$ if and only if there is a symplectic form $\omega$ on the complex $(N+1)$-fold blowup $X_{N+1}$ of $\mathbb{C}P^2$ whose associated first Chern class agrees with the one induced by the standard complex structure on $X_{N+1}$ (namely $3L-\sum_iE_i$ where $L$ is Poincar\'e dual to the hyperplane class and the $E_i$ are the Poincar\'e duals of the exceptional divisors), and which endows the the standard hyperplane class with area $\lambda$ and the respective exceptional divisors with areas $a_0,\ldots,a_N$.  

Let us write $\mathcal{C}_K(X_{N+1})$ for the set of cohomology classes of symplectic forms on $X_{N+1}$ having  associated first Chern class $3L-\sum_iE_i$, and denote the closure of this set by $\bar{\mathcal{C}}_K(X_{N+1})$.  Also write a general element of $H^2(X_{N+1};\R)$ as \[ dL-\sum_{i=0}^{N}t_iE_i=\langle d;t_0,\ldots,t_N\rangle.\]  In this notation it follows easily from the above facts that:

\begin{prop}[\cite{FM},\cite{Mell}] \label{background} Let $\alpha\in \Q,\beta,\lambda\in \R$ with $\alpha,\beta\geq 1$ and $\lambda>0$, and write the weight sequence $w(\alpha)=\mathcal{W}(1,\alpha)$ as $w(\alpha)=(x_2,\ldots,x_{N})$.  Then the following are equivalent:
\begin{itemize}
\item[(i)] $\lambda\geq C_{\beta}(\alpha)$.
\item[(ii)] $\langle \lambda(\beta+1);\lambda \beta,\lambda,x_2,\ldots,x_N\rangle\in \bar{\mathcal{C}}_K(X_{N+1})$.  
\end{itemize}
\end{prop}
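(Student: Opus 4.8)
The plan is to compose the three facts recalled above --- the relation between the value of $C_\beta$ and symplectic embeddings into slightly dilated targets, the translation (from \cite{FM},\cite{Mell}) of ellipsoid-into-polydisk embeddings into ball packings, and the translation (from \cite{MP},\cite{Liu},\cite{Bi}) of ball packings into the symplectic cone --- while taking care of where the passage between open and closed domains, and hence the closure in $\bar{\mathcal{C}}_K$, enters. First I would dispose of the infimum. Post-composing with the inclusion $P(\mu,\mu\beta)\hookrightarrow P(\mu',\mu'\beta)$ shows that $\{\mu>0\mid E(1,\alpha)\hookrightarrow P(\mu,\mu\beta)\}$ is upward closed, so (i) holds if and only if there is a symplectic embedding $E(1,\alpha)\hookrightarrow P(\lambda',\lambda'\beta)$ for every $\lambda'>\lambda$. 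A short rescaling argument then shows this to be equivalent to the existence of an embedding $E(1,\alpha)^\circ\hookrightarrow P(\lambda',\lambda'\beta)^\circ$ for every $\lambda'>\lambda$: given the closed-domain embeddings for all parameters greater than $\lambda$, the image of $E(1,\alpha)$ under the one for some $\lambda''\in(\lambda,\lambda')$ lies in $P(\lambda'',\lambda''\beta)\subset P(\lambda',\lambda'\beta)^\circ$ and may be restricted; conversely one embeds the closed sub-ellipsoid $E(1-\delta,(1-\delta)\alpha)\subset E(1,\alpha)^\circ$ and dilates back up.

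Next I would apply the ball-packing criterion of \cite{FM},\cite{Mell} with $a=1$, $b=\alpha$ (using $\alpha\in\Q$, so that the weight sequence $\mathcal{W}(1,\alpha)=(x_2,\dots,x_N)$ is finite), $c=\lambda'$ and $d=\lambda'\beta$. This rewrites the condition just obtained as: for every $\lambda'>\lambda$, the open balls $B(\lambda')^\circ\sqcup B(\lambda'\beta)^\circ\sqcup B(x_2)^\circ\sqcup\cdots\sqcup B(x_N)^\circ$ symplectically embed into $B(\lambda'(1+\beta))^\circ$. I would then invoke the standard fact (following from \cite{MP},\cite{Liu},\cite{Bi}, used in this packaged form in \cite{Mell}) that a disjoint union of open balls of capacities $a_0,\dots,a_N$ embeds into $B(\mu)^\circ$ if and only if $\langle\mu;a_0,\dots,a_N\rangle\in\bar{\mathcal{C}}_K(X_{N+1})$ --- this is precisely the statement in which the closure is needed, and combining it with the open-domain form of the ball-packing criterion avoids any further open/closed bookkeeping. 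Since reordering the exceptional-divisor entries is induced by a diffeomorphism of $X_{N+1}$ and so does not affect membership in $\bar{\mathcal{C}}_K$, condition (i) has now become: $\langle\lambda'(1+\beta);\lambda'\beta,\lambda',x_2,\dots,x_N\rangle\in\bar{\mathcal{C}}_K(X_{N+1})$ for every $\lambda'>\lambda$.

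Finally I would remove the quantifier over $\lambda'$. Let $T$ be the set of $\mu>0$ for which $\langle\mu(1+\beta);\mu\beta,\mu,x_2,\dots,x_N\rangle\in\bar{\mathcal{C}}_K(X_{N+1})$. On one hand $T$ is closed, being the preimage of the closed set $\bar{\mathcal{C}}_K$ under the affine (hence continuous) map $\mu\mapsto\langle\mu(1+\beta);\mu\beta,\mu,x_2,\dots,x_N\rangle$. On the other hand, by the equivalence established in the previous step (which is valid for any single value of the parameter), $T$ equals the set of $\mu$ admitting a symplectic embedding $E(1,\alpha)^\circ\hookrightarrow P(\mu,\mu\beta)^\circ$, and this set is upward closed because $P(\mu,\mu\beta)^\circ\subset P(\mu',\mu'\beta)^\circ$ whenever $\mu'>\mu$. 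A subset of $(0,\infty)$ that is both closed and upward closed contains every $\lambda'>\lambda$ if and only if it contains $\lambda$; applying this to $T$ yields exactly the equivalence of (i) and (ii).

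I anticipate that the only genuinely non-elementary ingredient is the translation used in the second step between open ball packings and $\bar{\mathcal{C}}_K$: the direction that builds an embedding of a union of open balls out of a class in $\bar{\mathcal{C}}_K$ rests on the pseudoholomorphic-curve and exhaustion arguments of \cite{MP},\cite{Bi} (together with the description of $\bar{\mathcal{C}}_K$ as a closed convex cone with interior $\mathcal{C}_K$), and lies outside the present paper. Granting that input, the proof is the bookkeeping described above, carried out for an arbitrary real $\beta\ge1$.
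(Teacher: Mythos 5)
Your proof is correct and carries out exactly the derivation the paper indicates with ``it follows easily from the above facts'': you compose the ball-packing reformulation of \cite{FM} with the cone criterion from \cite{MP},\cite{Liu},\cite{Bi}, with the open/closed bookkeeping and the final closedness-plus-monotonicity step supplying the rigor behind the appearance of $\bar{\mathcal{C}}_K$ rather than $\mathcal{C}_K$. The one point worth flagging is that the paper's recalled fact is stated for \emph{closed} balls (giving $\mathcal{C}_K$), so your open-ball-to-$\bar{\mathcal{C}}_K$ equivalence, while a standard repackaging, does carry the extra exhaustion/connectivity input you correctly identify as the non-elementary ingredient.
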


Moreover, by \cite[Theorem 3]{Liu}, we have \begin{equation}\label{poscrit} \bar{\mathcal{C}}_K(X_{N+1})=\left\{c\in H^2(X_{N+1};\R)|c^2\geq 0,\,c\cdot E\geq 0\mbox{ for all }E\in \mathcal{E}_{N+1}\right\},\end{equation}  where $\mathcal{E}_{N+1}$ denotes the set of \emph{exceptional classes} in $X_{N+1}$, \emph{i.e.} the classes Poincar\'e dual to symplectically embedded spheres of self-intersection $-1$.    (Applying \cite[Theorem 3]{Liu} directly we would also need to check that $c\cdot L\geq 0$, but since  $L=(L-E_0-E_1)+E_0+E_1$ is a sum of elements of $\mathcal{E}_{N+1}$ this follows from the other conditions.)

To study embeddings into polydisks it is often helpful to use different coordinates on $H^2(X_{N+1};\R)$, as described in \cite[Remark 3.7]{FM}.  Recall that, for $N\geq 1$, our $(N+1)$-fold blowup $X_{N+1}$ of $\mathbb{C}P^2$ can also be viewed as an $N$-fold blowup of $S^2\times S^2$ (say with exceptional divisors $E'_{1},\ldots,E'_N$), with the Poincar\'e duals $S_1$ and $S_2$ of the $S^2$ factors corresponding respectively to $L-E_0$ and $L-E_1$ and with the $E'_i$ corresponding to $L-E_0-E_1$ for $i=1$ and to $E_i$ for $i\geq 2$. Let us accordingly write \[ \left(d,e;m_1,\ldots,m_N\right)=dS_1+eS_2-\sum_{i=1}^{N}m_iE'_i\in H^2(X_{N+1};\R) \] (note that we are using angle brackets when we use ``$\C P^2$ coordinates'' and parentheses when we use ``$S^2\times S^2$ coordinates'').   Hence the representations in our two bases are related by: \begin{equation}\label{convert1}
(d,e;m_1,m_2\ldots,m_N)=\langle d+e-m_1; d-m_1, e-m_1,m_2,\ldots,m_N\rangle;
\end{equation}
\begin{equation}\label{convert2}
\langle r; s_0,s_1,s_2,\ldots,s_N\rangle = (r-s_1,r-s_0;r-s_0-s_1,s_2,\ldots,s_N).
\end{equation}

Condition (ii) in Proposition \ref{background} can then be rephrased as \begin{equation}\label{s2s2crit} \left(\lambda \beta,\lambda;0,w(\alpha)\right)\in \bar{\mathcal{C}}_K(X_{N+1};\R) \end{equation} where here and throughout the rest of the paper we abuse notation slightly by writing the weight sequence $w(\alpha)=(x_2,\ldots,x_N)$ as though it were a single entry in the coordinate expression of our cohomology class, so that $(\lambda \beta,\lambda;0,w(\alpha))$ is shorthand for $(\lambda \beta,\lambda;0,x_2,\ldots,x_N)$.  By considering small-weight blowups and taking a limit it is easy to see that (\ref{s2s2crit}) is equivalent to \[ (\lambda \beta,\lambda;w(\alpha))\in \bar{\mathcal{C}}_K(X_N).\] 

Now if $w(\alpha)=(x_2,\ldots,x_N)$ then $\sum_{i} x_{i}^{2}=a$; conceptually this is because one can obtain the weight sequence by subdividing a $1$-by-$a$ rectangle into squares of sidelength $x_i$.  Thus the self-intersection of the class $(\lambda \beta,\lambda;w(\alpha))$ is equal to $2\lambda^2 \beta-\alpha$ and so is nonnegative if and only if $\lambda$ obeys the volume bound $\lambda\geq \sqrt{\frac{\alpha}{2\beta}}$ alluded to earlier.  Now suppose that $E=(d,e;\vec{m})\in \mathcal{E}_N$ where $\vec{m}\in \mathbb{Z}^n$.  One example of such an element $E$ is $E'_i=(0,0;0,\ldots,-1,\ldots,0)$, which has nonnegative intersection number with $(\lambda \beta,\lambda;w(\alpha))$ since all entries of $w(\alpha)$ are nonnegative.  All other elements of $\mathcal{E}_N$ have $d,e\geq 0$ (by positivity of intersections with embedded holomorphic spheres Poincar\'e dual to $S_1$ and $S_2$), all $m_i\geq 0$ (by positivity of intersections with $E'_i$) and $d+e> 0$ (given that $m_i\geq 0$, this follows from the Chern number of $E$ being $1$).  The intersection number of such a class with $(\lambda \beta,\lambda;w(\alpha))$ is equal to $\lambda(d+\beta e)-w(\alpha)\cdot \vec{m}$ and so is nonnegative if and only if $\lambda\geq \frac{w(\alpha)\cdot \vec{m}}{d+\beta e}$.  Recalling that elements of $\mathcal{E}_N$ have Chern number $1$ and self-intersection $-1$, we accordingly make the following definition:

\begin{dfn}\label{obsdef}
Let $E=(d,e;\vec{m})\in H^2(X_{N};\Z)$ be either equal to some $E'_i$ or have the properties that such that $c_1(TX_N)\cdot E=1$, $E\cdot E=-1$,  and all $m_i\geq 0$ (and hence\footnote{indeed the condition on the Chern number shows that $2(d+e)\geq 2(d+e)-\sum m_i>0$, and then if either of $d$ and $e$ were negative we would have $E\cdot E\leq 2de\leq -2$ which is not the case} $d,e\geq 0$ with $d+e>0$).  Let $\alpha\in \Q$ have weight sequence $w(\alpha)$ of length $N-1$ and let $\beta\in [1,\infty)$.  The \textbf{obstruction from $E$ at $(\alpha,\beta)$} is \[ \mu_{\alpha,\beta}(E)=\left\{\begin{array}{ll} 0 & \mbox{ if }E=E'_i \\ \frac{w(\alpha)\cdot \vec{m}}{d+\beta e} & \mbox{ otherwise}\end{array}\right..\]
\end{dfn}

Proposition \ref{background} and (\ref{poscrit}) therefore imply that:

\begin{cor}\label{cbe}
For any $\beta\geq 1$ and any $\alpha$ whose weight sequence has length $N-1$ we have: \[ C_{\beta}(\alpha)=\max\left\{\sqrt{\frac{\alpha}{2\beta}},\sup_{E\in \mathcal{E}_N}\mu_{\alpha,\beta}(E)\right\}.\]
\end{cor}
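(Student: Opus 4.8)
The plan is to deduce the corollary directly from Proposition \ref{background}, the reformulation (\ref{s2s2crit}) together with the limiting remark that reduces it to a statement in $X_N$, and the description (\ref{poscrit}) of $\bar{\mathcal{C}}_K$; the entire content is to translate the two defining conditions of that cone into the two inequalities on $\lambda$ appearing in the displayed formula.

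First I would record the basic equivalence. By Proposition \ref{background}, (\ref{s2s2crit}), and the observation immediately following (\ref{s2s2crit}) (which uses small-weight blowups and a limit to pass from $X_{N+1}$ to $X_N$), for every $\lambda>0$ one has $\lambda\geq C_\beta(\alpha)$ if and only if $c(\lambda):=(\lambda\beta,\lambda;w(\alpha))\in\bar{\mathcal{C}}_K(X_N)$. Since $\bar{\mathcal{C}}_K(X_N)$ is closed and $\lambda\mapsto c(\lambda)$ is continuous, the set of admissible $\lambda$ is a closed half-line; it is nonempty since every ellipsoid embeds into a sufficiently large polydisk, so $C_\beta(\alpha)<\infty$ and the half-line is exactly $[C_\beta(\alpha),\infty)$. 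Thus it suffices to show that $c(\lambda)\in\bar{\mathcal{C}}_K(X_N)$ precisely when $\lambda\geq\max\left\{\sqrt{\frac{\alpha}{2\beta}},\,\sup_{E\in\mathcal{E}_N}\mu_{\alpha,\beta}(E)\right\}$, and then pass to the infimum.

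Next I would unpack (\ref{poscrit}) for $c=c(\lambda)$. The condition $c^2\geq 0$: since $\sum_i x_i^2=\alpha$ for $w(\alpha)=(x_2,\dots,x_N)$, one gets $c^2=2\lambda^2\beta-\alpha$, so $c^2\geq 0$ is equivalent to $\lambda\geq\sqrt{\frac{\alpha}{2\beta}}$. For the conditions $c\cdot E\geq 0$ with $E\in\mathcal{E}_N$ I would split into cases exactly as in the paragraph preceding Definition \ref{obsdef}. If $E=E'_i$ then $c\cdot E'_i$ is the corresponding (nonnegative) entry of $w(\alpha)$, so the condition is vacuous; this is consistent with $\mu_{\alpha,\beta}(E'_i)=0\leq\sqrt{\frac{\alpha}{2\beta}}$, so including these classes in the supremum does not change the stated maximum. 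For every other $E=(d,e;\vec m)\in\mathcal{E}_N$ we have all $m_i\geq 0$ and $d,e\geq 0$ with $d+e>0$ (by positivity of intersections with the embedded holomorphic representatives of $S_1$, $S_2$ and the $E'_i$, combined with the Chern-number constraint, as in the footnote to Definition \ref{obsdef}), hence $d+\beta e>0$ because $\beta\geq 1$; and $c\cdot E=\lambda(d+\beta e)-w(\alpha)\cdot\vec m$, so $c\cdot E\geq 0$ is equivalent to $\lambda\geq\frac{w(\alpha)\cdot\vec m}{d+\beta e}=\mu_{\alpha,\beta}(E)$.

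Finally I would assemble: $c(\lambda)\in\bar{\mathcal{C}}_K(X_N)$ holds if and only if $\lambda\geq\sqrt{\frac{\alpha}{2\beta}}$ and $\lambda\geq\mu_{\alpha,\beta}(E)$ for every $E\in\mathcal{E}_N$, i.e.\ if and only if $\lambda$ is at least the maximum in the statement; taking the infimum over admissible $\lambda$ (which, by the second paragraph, equals $C_\beta(\alpha)$) yields the claimed identity, and in particular the supremum over $\mathcal{E}_N$ on the right-hand side is finite since $C_\beta(\alpha)<\infty$. I do not expect a serious obstacle here: the argument is bookkeeping once Proposition \ref{background} and (\ref{poscrit}) are granted. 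The only points meriting a moment's care are that the relevant equivalence holds for all real $\lambda$ (which is already part of Proposition \ref{background}), that the reduction from $X_{N+1}$ to $X_N$ is legitimate (the limiting remark after (\ref{s2s2crit})), and that the $E'_i$ classes, though included in $\mathcal{E}_N$, contribute nothing beyond the volume term.
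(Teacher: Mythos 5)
Your proposal is correct and follows exactly the route the paper takes: the paper's "proof" of this corollary is precisely the unpacking of (\ref{poscrit}) for the class $(\lambda\beta,\lambda;w(\alpha))$ carried out in the paragraph preceding Definition~\ref{obsdef}, combined with Proposition~\ref{background} and the passage from $X_{N+1}$ to $X_N$. Nothing is missing and nothing is done differently.
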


In fact, it follows from \cite[Section 6.1]{Bi} that if we let $\tilde{\mathcal{E}}_N$ be the set of classes  $E\in H^2(X_N;\Z)$ obeying the assumptions in the first sentence of Definition \ref{obsdef}, then we continue to have \begin{equation}\label{biggerE}  C_{\beta}(\alpha)=\max\left\{\sqrt{\frac{\alpha}{2\beta}},\sup_{E\in \tilde{\mathcal{E}}_N}\mu_{\alpha,\beta}(E)\right\}.\end{equation}  Thus enlargng the set $\mathcal{E}_N$ to $\tilde{\mathcal{E}}_N$ does not affect the supremum on the right-hand side above.  This sometimes will save us the trouble of checking that certain families of classes that are easily seen to lie in $\tilde{\mathcal{E}}_N$ in fact lie in $\mathcal{E}_N$.  That said, it is sometimes important to know that a class lies in $\mathcal{E}_N$, because the fact that distinct elements of $\mathcal{E}_N$ have nonnegative intersection number often provides useful constraints.

As $\alpha$ varies through $\mathbb{Q}$, the length of its weight sequence also varies, so the value $N$ appearing in Corollary \ref{cbe} and in (\ref{biggerE}) depends on $\alpha$.  To avoid keeping track of this dependence, it is better to work in the union of all of the $H^2(X_N;\R)$, with two elements in this union regarded as equivalent if one can be obtained from the other by pullback under the map $X_{N'}\to X_N$ given by blowing down the last $N'-N$ exceptional divisors when $N'>N$.  Let $\mathcal{H}^2$ denote this union (more formally, \[ \mathcal{H}^2:=\varinjlim_{N\to\infty} H^2(X_N;\R) \] for the directed system whose structure maps $H^2(X_N;\R)\to H^2(X_{N'};\R)$ are the pullbacks associated to the blowdowns $X_{N'}\to X_N$).  So any element of $\mathcal{H}^2$ can be expressed as $(d,e;m_1,\ldots,m_N)$ (or, if one prefers, as $\langle r;s_0,\ldots,s_N\rangle$) for some finite collection of real numbers $d,e,m_1,\ldots,m_N$, and $(d,e;m_1,\ldots,m_N)$ and $(d,e;m_1,\ldots,m_N,0)$ are expressions of the same element of $\mathcal{H}^2$.  The Chern number of such an element is $2(d+e)-\sum m_i$ and its self-intersection is $2de-\sum m_{i}^{2}$; in particular these are both independent of the choice of representative of the equivalence class.  

It is easy to see that if $E\in\mathcal{E}_N$ (resp. $E\in\tilde{\mathcal{E}}_N$) then the image of $E$ under the blowdown-induced map $H^2(X_N;\R)\to H^2(X_{N'};\R)$ for $N'>N$ belongs to $\mathcal{E}_{N'}$ (resp. to $\tilde{\mathcal{E}}_{N'}$).   Let $\mathcal{E}$ and $\tilde{\mathcal{E}}$, respectively, be the unions of the images under the canonical map $H^2(X_N;\R)\to\mathcal{H}^2$ of the various $\mathcal{E}_N$ and $\tilde{\mathcal{E}}_N$, so an element $E\in\mathcal{E}$ can be regarded as Poincar\'e dual to an embedded symplectic sphere of self-intersection $-1$ for all sufficiently large $N$.    

Definition \ref{obsdef} extends to arbitrary $\alpha\in\Q\cap [1,\infty)$ and arbitrary $E\in \tilde{\mathcal{E}}$: we simply need to interpret the dot product $w(a)\cdot \vec{m}$ when $E=(d,e;\vec{m})$, and if $w(a)=(x_2,\ldots,x_N)$ and $\cdot{\vec{m}}=(m_2,\ldots,m_{N'})$ (where we can assume $N'\geq N$ by appending zeros to $\vec{m}$ which does not change the corresponding element of $\tilde{\mathcal{E}}$) we use the obvious convention that $w(a)\cdot\vec{m}=\sum_{i=2}^{N}x_im_i$.  With this definition of $\mu_{\alpha,\beta}(E)$ for arbitrary $E\in\tilde{\mathcal{E}}$ and $\alpha\in \Q\cap [1,\infty)$ it follows easily from Corollary \ref{cbe} and from (\ref{biggerE}) that \begin{equation}\label{dirlim}C_{\beta}(\alpha)=\max\left\{\sqrt{\frac{\alpha}{2\beta}},\sup_{E\in \mathcal{E}}\mu_{\alpha,\beta}(E)\right\}=\max\left\{\sqrt{\frac{\alpha}{2\beta}},\sup_{E\in \tilde{\mathcal{E}}}\mu_{\alpha,\beta}(E)\right\}.
\end{equation}
Since $C_{\beta}$ is easily seen to be continuous this is enough to characterize $C_{\beta}(\alpha)$ for all real $\alpha\geq 1$.

The great majority of elements of $\mathcal{E}$ or $\tilde{\mathcal{E}}$ that we will consider in this paper have a rather special form:

\begin{dfn}\label{qpdef}
An element $E\in \mathcal{H}^2$ is said to be \textbf{quasi-perfect} if both $E\in\tilde{\mathcal{E}}$ and there are nonnegative integers $a,b,c,d$ such that \[ E=(a,b;\mathcal{W}(c,d)).\]
 Such an element is said to be \textbf{perfect}  if additionally $E\in\mathcal{E}$.  \end{dfn}

There are quasi-perfect classes that are not perfect, such as $(31,14;\mathcal{W}(79,11))=(31,14;11^{\times 7},2^{\times 5},1^{\times 2})$,\footnote{Throughout the paper we use the usual convention that $z^{\times \ell}$ means that $z$ is repeated $\ell$ times, so $(31,14;11^{\times 7},2^{\times 5},1^{\times 2})=(31,14;11,11,11,11,11,11,11,2,2,2,2,2,1,1)$.} which cannot lie in $\mathcal{E}$ because it has negative intersection number with the element $(3,1;1^{\times 7})$ of  $\mathcal{E}$.

The discussion in \cite[Section 2.1]{MS} shows that for each $E\in\tilde{\mathcal{E}}$ and $\beta\geq 1$ the function $\alpha\mapsto \mu_{\alpha,\beta}(E)$ is piecewise linear, though in some cases it can be somewhat complicated.  For quasi-perfect classes $E=(a,b;\mathcal{W}(c,d))$, it will often be sufficient for us to consider a simpler piecewise linear function $\Gamma_{\cdot,\beta}(E)$ which, like $\mu_{\cdot,\beta}(E)$, provides a lower bound for $C_{\beta}$:

\begin{prop} \label{Gammadef}
If $E=(a,b;\mathcal{W}(c,d))\in\tilde{\mathcal{E}}$ is quasi-perfect and $\alpha,\beta\geq 1$ define \[ \Gamma_{\alpha,\beta}(E)=\left\{\begin{array}{ll} \frac{d\alpha}{a+\beta b} & \mbox{ if }\alpha\leq\frac{c}{d} \\ \frac{c}{a+\beta b} & \mbox{ if }\alpha\geq \frac{c}{d}\end{array}\right..\] Then $C_{\beta}(\alpha)\geq \Gamma_{\alpha,\beta}(E)$.
\end{prop}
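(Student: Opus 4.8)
The plan is to deduce the bound $C_\beta(\alpha)\geq\Gamma_{\alpha,\beta}(E)$ from the identity (\ref{dirlim}), which tells us $C_\beta(\alpha)\geq\mu_{\alpha,\beta}(E)$ for every $E\in\tilde{\mathcal E}$, so it suffices to show $\mu_{\alpha,\beta}(E)\geq\Gamma_{\alpha,\beta}(E)$ whenever $E=(a,b;\mathcal W(c,d))$ is quasi-perfect. Writing $\vec m=\mathcal W(c,d)$, we have by definition $\mu_{\alpha,\beta}(E)=\frac{w(\alpha)\cdot\vec m}{a+\beta b}$, so since the denominator $a+\beta b$ agrees with the denominator in $\Gamma_{\alpha,\beta}(E)$, everything reduces to the elementary inequality
\[
w(\alpha)\cdot\mathcal W(c,d)\ \geq\ \begin{cases} d\alpha & \alpha\leq c/d\\ c & \alpha\geq c/d\end{cases}.
\]
First I would record the basic interpretation of the weight sequence: $\mathcal W(x,y)$ is precisely the list of side lengths of squares appearing when one greedily subdivides an $x$-by-$y$ rectangle, and $w(\alpha)=\mathcal W(1,\alpha)$. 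The key combinatorial fact — already used in the excerpt to compute self-intersections — is that for any rational $x,y$ one has $\sum_i \mathcal W(x,y)_i^2 = xy$, and more generally that the weight expansion behaves well under dot products. The cleanest route is the known inner-product formula: for rationals $\alpha$ and $c/d$ (cleared of denominators appropriately), $w(\alpha)\cdot w(c/d)\geq\min(\alpha,c/d)\cdot(\text{scaling})$, with equality governed by whether the continued-fraction expansions are "compatible."

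The main step, then, is the following lemma, which I expect to be the only real content: if $w(\alpha)=(x_2,\dots,x_N)$ and $\mathcal W(c,d)=(y_2,\dots,y_M)$, then $\sum_i x_i y_i\geq\min(d\alpha, c)$. I would prove this by the geometric packing picture: $\mathcal W(c,d)$ decomposes a $c$-by-$d$ rectangle into squares $Q_2,\dots,Q_M$ of sides $y_i$, while $w(\alpha)$ decomposes a $1$-by-$\alpha$ rectangle into squares of sides $x_i$; the recursive structure of the weight algorithm means that the first several $x_i$ and $y_i$ are proportional (both being the "Fibonacci-like" sequence produced by the Euclidean algorithm applied to the pair in question), and the partial sums $\sum_{i\le j} x_i y_i$ telescope against $\min(d\alpha,c)$. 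Concretely, one runs the two weight recursions in parallel: as long as neither has terminated, at each step both sequences subtract their current common leading term (up to the fixed ratio), and one checks that the running dot product never falls below the relevant linear quantity; when $\alpha\leq c/d$ the $w(\alpha)$ side terminates first and the bound is $d\alpha=\sum_i x_i\cdot(dx_i/\text{?})$, while for $\alpha\geq c/d$ it is $c$. This is essentially the "$w(\alpha)\cdot w(\beta)\geq\min\{\alpha,\beta\}$ when rescaled" estimate that appears in \cite{MS} and \cite{FM}; I would either cite it directly from \cite[Section 2.1]{MS} or reprove it in a short paragraph by induction on $N+M$, the inductive step being exactly one round of the Euclidean subtraction.

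The subtle point to watch — and where I'd spend the most care — is matching up the two weight sequences correctly when the ratios $\alpha$ and $c/d$ are close but not equal: the sequences $x_i$ and $y_i$ stay proportional only through the common part of their continued-fraction expansions, and one must verify that after they diverge the contribution of the remaining (positive) terms is enough to push the dot product past $\min(d\alpha,c)$ rather than leaving a deficit. The inequality is in the "safe" direction, so this should go through, but stating the induction hypothesis in a form strong enough to close the recursion (tracking not just the dot product but the residual rectangles $\alpha - (\text{consumed})$ and $c/d - (\text{consumed})$) is the part that requires the right bookkeeping. Once the lemma is in hand, dividing by $a+\beta b$ and invoking (\ref{dirlim}) finishes the proof in one line.
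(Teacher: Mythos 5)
Your route is correct but genuinely different from the paper's, and it does quite a bit more work. The paper evaluates the obstruction only at the single point $\alpha = c/d$: since $\mathcal{W}(c,d) = d\,w(c/d)$ and $w(c/d)\cdot w(c/d) = c/d$, one gets $\mu_{c/d,\beta}(E) = \frac{c}{a+\beta b}$ directly, whence $C_\beta(c/d)\geq \frac{c}{a+\beta b}$. The extension to all $\alpha$ then uses two soft properties of $C_\beta$: monotonicity (inclusion of ellipsoids) handles $\alpha\geq c/d$, and the sublinearity $C_\beta(t\alpha)\leq tC_\beta(\alpha)$ for $t\geq 1$ (a scaling argument, applied with $t=(c/d)/\alpha$) supplies the linear piece for $\alpha\leq c/d$. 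This bypasses the combinatorics of $w(\alpha)\cdot w(c/d)$ for $\alpha\neq c/d$ entirely.

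You instead aim for the pointwise inequality $\mu_{\alpha,\beta}(E)\geq\Gamma_{\alpha,\beta}(E)$, correctly reducing it to the global claim $w(\alpha)\cdot w(c/d)\geq\min(\alpha,c/d)$. One caution on the citation: the result you want to quote from \cite[Section 2.1]{MS} --- essentially \cite[Lemma 2.2.1 and Corollary 2.2.7]{MS}, which the present paper invokes in Proposition~\ref{obsshape} --- is a \emph{local} statement, giving $w(\alpha)\cdot w(c/d)=\min(\alpha,c/d)$ only for $\alpha$ in a neighborhood of $c/d$; the global inequality is not stated there. It is nonetheless true, and your fallback of an induction on the combined length of the two weight expansions does close: after peeling off the common block of $1$'s and one Euclidean step, the inductive hypothesis (formulated for all positive rationals via the scaling identity $\mathcal{W}(p,q)\cdot\mathcal{W}(u,v)=qv\,w(p/q)\cdot w(u/v)$ and the symmetry $\mathcal{W}(x,y)=\mathcal{W}(y,x)$) yields exactly the needed lower bound. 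Your argument therefore succeeds, and even proves the somewhat stronger assertion that $\mu_{\cdot,\beta}(E)$ dominates $\Gamma_{\cdot,\beta}(E)$ pointwise; but the paper's monotonicity/sublinearity device gets the proposition with far less effort, and it is the standard move for propagating a bound at a single rational ratio to its linear continuation.
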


\begin{proof} Observe first that $w(\frac{c}{d})=\frac{1}{d}\mathcal{W}(c,d)$ and $w(\frac{c}{d})\cdot w(\frac{c}{d})=\frac{c}{d}$, so that \[ \mu_{\frac{c}{d},\beta}\left((a,b;\mathcal{W}(c,d))\right)= \frac{c}{a+\beta b}.\]  Hence \begin{equation}\label{vertex} C_{\beta}\left(\frac{c}{d}\right)\geq \frac{c}{a+\beta b}.\end{equation}  But $C_{\beta}$ is trivially a monotone increasing function (increasing $\alpha$ enlarges the codomain of the desired embedding) while $C_{\beta}$ also satisfies the sublinearity property $C_{\beta}(t\alpha)\leq tC_{\beta}(\alpha)$ for $t\geq 1$, because if there is a symplectic embedding $E(1,\alpha)\hookrightarrow P(\lambda,\lambda \beta)$ then by scaling we obtain a composition of symplectic embeddings $E(1,t\alpha)\hookrightarrow E(t,t\alpha)\hookrightarrow P(t\lambda,t\lambda \beta)$ (cf. \cite[Lemma 1.1.1]{MS}).  The proposition then follows from (\ref{vertex}) and these monotonicity and sublinearity properties. 
\end{proof}

\subsection{The disappearing Frenkel-M\"uller staircase}\label{fmintro}

In \cite{FM} the authors introduce a sequence of perfect classes that we denote $\{FM_{n}\}_{n=-1}^{\infty}$ (these are the classes called $E(\beta_n)$ in \cite[Section 5.1]{FM}; we recall the formula in (\ref{fmclass}) below), and \cite[Theorem 1.3 (i)]{FM} can be expressed in our notation as stating that \[ C_1(\alpha)=\sup_n\{\Gamma_{\alpha,1}(FM_n)|n\geq -1\} \quad\mbox{for }\alpha\in [1,3+2\sqrt{2}].\] See Figure \ref{fmfig}.

\begin{center}
\begin{figure}
\includegraphics[width=6 in]{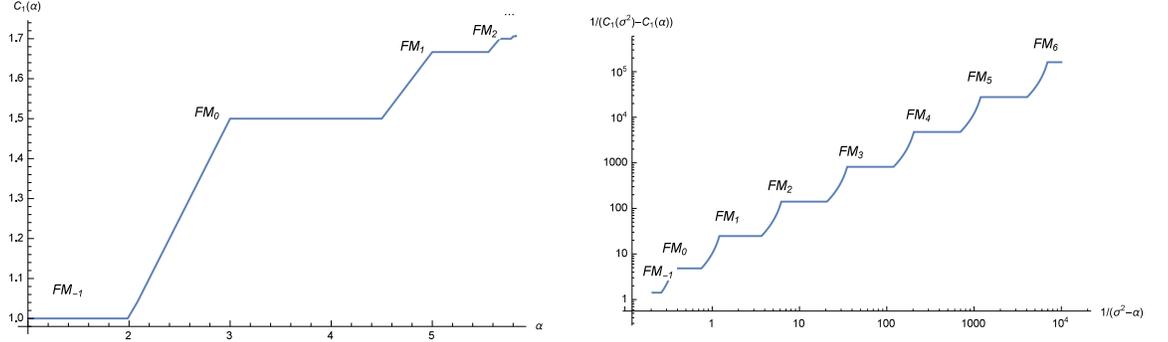}
\caption{A plot of the Frenkel-M\"uller infinite staircase $C_1|_{[1,\sigma^2]}$ where $\sigma^2=3+2\sqrt{2}$, together with a log-log plot which makes more visible some of the steps that accumulate at $\sigma^2$. Each  step in each of the plots is labeled by the Frenkel-M\"uller class $FM_n$ having the property that $C_1$ coincides with $\Gamma_{\cdot,1}(FM_n)$ on that step.}
\label{fmfig}
\end{figure}
\end{center}

 Our first main result is that the analogous statement 
continues to hold for $C_{\beta}$ with $\beta>1$:

\begin{theorem}\label{fmsup} For any $\beta\geq 1$ and any $\alpha\in [1,3+2\sqrt{2}]$ we have \begin{equation}\label{fmeq} C_{\beta}(\alpha)=\sup_n\{\Gamma_{\alpha,\beta}(FM_n)|n\geq -1\}.\end{equation}
\end{theorem}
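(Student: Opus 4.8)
The inequality ``$\geq$'' in (\ref{fmeq}) is immediate from Proposition \ref{Gammadef}. For the reverse inequality, by (\ref{dirlim}) and continuity (in $\alpha$) of both sides it is enough to show, for every rational $\alpha\in[1,3+2\sqrt2]$ and every $\beta\geq1$, that $\sqrt{\alpha/(2\beta)}$ and $\mu_{\alpha,\beta}(E)$ (the latter for each $E\in\tilde{\mathcal E}$) are bounded above by $\sup_n\Gamma_{\alpha,\beta}(FM_n)$. The plan is to bootstrap from the case $\beta=1$, i.e.\ from \cite[Theorem 1.3(i)]{FM} as recalled above.

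The key elementary point is that both sides depend on $\beta$ in one simple way: writing a competing class as $E=(p,q;\vec m)$ and $FM_n=(a_n,b_n;\mathcal W(c_n,d_n))$, one has
\[ \mu_{\alpha,\beta}(E)=\mu_{\alpha,1}(E)\cdot\frac{p+q}{p+\beta q},\qquad \Gamma_{\alpha,\beta}(FM_n)=\Gamma_{\alpha,1}(FM_n)\cdot\frac{a_n+b_n}{a_n+\beta b_n}, \]
and for $\beta\geq1$ both ``$\beta$-factors'' lie in $(0,1]$; this already gives the crude bound $C_\beta(\alpha)\leq C_1(\alpha)=\sup_n\Gamma_{\alpha,1}(FM_n)$, which must be sharpened. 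The mechanism for sharpening it is to produce, for each $E$, an index $n$ with $\Gamma_{\alpha,1}(FM_n)\geq\mu_{\alpha,1}(E)$ and with $a_nq\geq b_np$; the latter inequality is precisely what makes $\tfrac{p+q}{p+\beta q}\leq\tfrac{a_n+b_n}{a_n+\beta b_n}$, so the two conditions together with the displayed identities yield $\mu_{\alpha,\beta}(E)\leq\Gamma_{\alpha,\beta}(FM_n)\leq\sup_m\Gamma_{\alpha,\beta}(FM_m)$. (Classes with $p=0$ or $q=0$ are forced to be of the form $(1,0;E'_i)$ or $(0,1;E'_i)$, with $\mu_{\alpha,\beta}(E)\leq1=\Gamma_{\alpha,\beta}(FM_{-1})$, and can be ignored.)

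To produce such an $n$, use the explicit Frenkel--M\"uller data (\ref{fmclass}): one checks that $a_n\geq b_n$ for all $n$, that $a_n/b_n\to1$, and that the slopes $d_n/(a_n+b_n)$ of the rising parts of the $\Gamma_{\cdot,1}(FM_n)$ decrease (in $n\geq0$). So only finitely many $n$ have $a_n/b_n\geq p/q$; if $n^{*}$ is the largest such index then $FM_{-1},\dots,FM_{n^{*}}$ already realize $C_1$ on $[1,c_{n^{*}}/d_{n^{*}}]$, so for $\alpha\leq c_{n^{*}}/d_{n^{*}}$ we get $\max_{-1\leq n\leq n^{*}}\Gamma_{\alpha,1}(FM_n)=C_1(\alpha)\geq\mu_{\alpha,1}(E)$ and are done. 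This covers in particular every $E$ with $p\leq q$. The remaining, and crux, case is $p>q$ together with $\alpha>c_{n^{*}}/d_{n^{*}}$.

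In that case I would bound $\mu_{\alpha,\beta}(E)$ using $p+\beta q\geq2\sqrt{\beta pq}$ and Cauchy--Schwarz $w(\alpha)\cdot\vec m\leq\sqrt{\alpha}\,|\vec m|$ with $|\vec m|^2=2pq+1$ (from $E\cdot E=-1$ and $|w(\alpha)|^2=\alpha$), obtaining
\[ \mu_{\alpha,\beta}(E)\leq\sqrt{\tfrac{\alpha}{2\beta}}\,\sqrt{1+\tfrac{1}{2pq}}, \]
and play this against a lower bound for $\sup_n\Gamma_{\alpha,\beta}(FM_n)$ (the same estimate should yield the volume inequality as well). The main obstacle is exactly this comparison as $\alpha\to3+2\sqrt2$: since the Frenkel--M\"uller staircase meets the volume curve there, both $\mu_{\alpha,\beta}(E)$ and $\sup_n\Gamma_{\alpha,\beta}(FM_n)$ collapse onto $\sqrt{\alpha/(2\beta)}$, and one must quantify how large $pq$ --- equivalently $n^{*}(E)$ --- is forced to be in terms of the distance from $\alpha$ to $3+2\sqrt2$, and how fast the $FM_n$-data tend to their limiting ratios, in order to see that the $\Gamma$-side stays on top. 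I expect the inequalities $E\cdot FM_n\geq0$ against the high-index Frenkel--M\"uller classes (which are very constraining near the accumulation point) to be the right tool here --- possibly forcing a troublesome $E$ to coincide with some $FM_n$ --- while the finitely many $E$ with $pq$ below an explicit threshold are handled by direct inspection.
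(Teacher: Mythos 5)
Your approach is genuinely different from the paper's and, as you partly acknowledge, it is not complete; but there is also a gap in a case you believe you have handled.

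The paper proves $C_\beta(\alpha)\le\sup_n\Gamma_{\alpha,\beta}(FM_n)$ constructively: by monotonicity and sublinearity of $C_\beta$, sharpness need only be checked at the finitely many ``corner'' values $s_n(\beta)$, $s'_{2k}(\beta)$ and one point past $\sigma^2$ (after Proposition~\ref{supformula}), and at each such value the class $(\gamma\beta,\gamma;\mathcal{W}(1,\alpha))$ is shown to lie in $\bar{\mathcal C}_K(X_N)$ by reducing it, via iterated applications of the Cremona composition $\Xi$ and Proposition~\ref{bigreduce}, to a class that satisfies the tiling criterion of Corollary~\ref{tilecrit}. Your plan instead tries to dominate every competing obstruction $\mu_{\alpha,\beta}(E)$ (and the volume bound) by some $\Gamma_{\alpha,\beta}(FM_n)$. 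That is a legitimate alternative route in principle, but it requires uniform control over the entire set $\tilde{\mathcal E}$, which the paper's approach avoids.

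The concrete gap in what you claim to have handled: you want, for each $E=(p,q;\vec m)$, an index $n$ satisfying \emph{both} $\Gamma_{\alpha,1}(FM_n)\ge\mu_{\alpha,1}(E)$ and $a_n q\ge b_n p$, and you argue that when $\alpha\le c_{n^*}/d_{n^*}$ the maximizer of $\Gamma_{\alpha,1}(FM_n)$ over $n\in\{-1,\dots,n^*\}$ works. But when $p>q$, every even-index Frenkel--M\"uller class has $a_n=b_n=P_{n+1}$ (see (\ref{fmclass})), so $a_n q\ge b_n p$ fails for all even $n$. The set $\{n: a_n/b_n\ge p/q\}$ therefore consists of $-1$ together with a few \emph{odd} indices and is not an interval $\{-1,\dots,n^*\}$; on the even ``steps'' of the $\beta=1$ staircase the maximizing index is even, and for such $n$ the monotonicity-in-$\beta$ step $\frac{p+q}{p+\beta q}\le\frac{a_n+b_n}{a_n+\beta b_n}$ reverses. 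You would need to prove, for $p>q$, that $\mu_{\alpha,1}(E)$ is already dominated by some odd-index $\Gamma_{\alpha,1}(FM_n)$ (or by $\Gamma_{\alpha,1}(FM_{-1})=1$) wherever you invoke it, and that is a nontrivial claim you have not established. Combined with the ``crux'' case near $\sigma^2$, which you correctly flag as open and only sketch via Cauchy--Schwarz and positivity of intersections, the proposal as it stands does not yield the theorem. If you want to pursue an obstruction-comparison proof, you would essentially have to reprove the relevant structural facts about $\tilde{\mathcal E}$ near the accumulation point; the paper's Cremona-plus-tiling route (Propositions~\ref{bigreduce}, \ref{squaretile}, Corollary~\ref{tilecrit}, and Section~\ref{fmsupproof}) is designed precisely to sidestep that difficulty.
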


The right-hand side of (\ref{fmeq}) can be computed explicitly, and its behavior when $\beta=1$ is different from its behavior when $\beta>1$.  The perfect classes $FM_n$ have the form $(x_n,y_n;\mathcal{W}(c_n,d_n))$ where the $\frac{c_n}{d_n}$ form an increasing sequence.  It is also true that the $\Gamma_{\frac{c_n}{d_n},1}(FM_n)=\frac{c_n}{x_n+y_n}$ form an increasing sequence, in view of which the graph of $\alpha\mapsto \sup_n\{\Gamma_{\alpha,1}(FM_n)|n\geq -1\} $ forms an infinite staircase as described in \cite{FM}.  However for any $\beta>1$ there is a value of $n$ (depending on $\beta$, and always odd) for which $\Gamma_{\frac{c_n}{d_n},\beta}(FM_n)$ is maximal, as a result of which the right-hand side of (\ref{fmeq}) reduces to a maximum over a finite set.  

A bit more specifically, let $\{P_n\}_{n=0}^{\infty}$ and $\{H_n\}_{n=0}^{\infty}$ be the Pell numbers and the half-companion Pell numbers respectively (see Section \ref{pellprelim}) and for $n\geq -1$ let \[ b_n=\left\{\begin{array}{ll} \frac{P_{n+2}+1}{P_{n+2}-1} & \mbox{if $n$ is even}\\ \frac{H_{n+1}+1}{H_{n+1}-1} &\mbox{if $n$ is odd}\end{array}\right.. \]  The $b_n$ form a decreasing sequence that converges to $1$, with the first few values being given by $b_{-1}=\infty$, $b_0=3$, $b_1=2$, $b_2=\frac{13}{11}$, $b_3=\frac{9}{8}$, $b_4=\frac{71}{69}$.  We show in Proposition \ref{supreduce} that, for all $\alpha$, 
\[ \sup_n\{\Gamma_{\alpha,\beta}(FM_n)|n\geq -1\}=\max\{\Gamma_{\alpha,\beta}(FM_{-1}),\Gamma_{\alpha,\beta}(FM_0),\ldots,\Gamma_{\alpha,\beta}(FM_{2k-1})\}\mbox{ for }b\in [b_{2k},b_{2k-1}],\] and  \begin{align*}& \sup_n\{\Gamma_{\alpha,\beta}(FM_n)|n\geq -1\}=\max\{\Gamma_{\alpha,\beta}(FM_{-1}),\Gamma_{\alpha,\beta}(FM_0),\ldots,\Gamma_{\alpha,\beta}(FM_{2k-1}),\Gamma_{\alpha,\beta}(FM_{2k+1})\}\\ & \qquad\qquad\qquad \qquad\mbox{ for }b\in [b_{2k+1},b_{2k}].\end{align*}
As $\beta$ increases within the interval $[b_{2k+1},b_{2k}]$, the $\Gamma_{\alpha,\beta}(FM_n)$ all become smaller since our  codomain $P(1,\beta)$ is expanding, but the maximal value of $\Gamma_{\alpha,\beta}(FM_{2k-1})$ decreases more slowly than does the maximal value of $\Gamma_{\alpha,\beta}(FM_{2k+1})$, matching it precisely when $\beta=b_{2k}$. In particular the step in the graph of $C_{\beta}$ corresponding to $FM_{2k+1}$ disappears as $\beta\nearrow b_{2k}$, being overtaken by the step corresponding to $FM_{2k-1}$.  Similarly, as $\beta\nearrow b_{2k-1}$, the step corresponding to $FM_{2k-2}$ is overtaken by the step corresponding to $FM_{2k-3}$ (and the step corresponding to $FM_{2k-1}$ remains as the final step, surviving until $\beta$ reaches $b_{2k-2}$).  See Figure \ref{fmgraphs}. Once $\beta$ rises above $b_0=3$, only the ``step'' (more accurately described as a floor) corresponding to $FM_{-1}$ remains. In fact one has $FM_{-1}=(1,0;1)$ so that $\Gamma_{\alpha,\beta}(FM_{-1})=1$ for all $\alpha,\beta$; thus for $\beta\geq 3$ Theorem \ref{fmsup} just says that $C_{\beta}(\alpha)=1$ for $\alpha\leq 3+2\sqrt{2}$, \emph{i.e.} that the bound given by the non-squeezing theorem is sharp for all such $\alpha$.  (This latter fact is easily deduced from well-known results, since $3+2\sqrt{2}<6$ and for $\beta\geq 3$ there is a symplectic embedding of $E(1,6)^{\circ}$ into $P(1,\beta)$,  see \emph{e.g.} \cite[Remark 1.2.1]{CFS}.)

\begin{center}
\begin{figure}
\includegraphics[width=6 in]{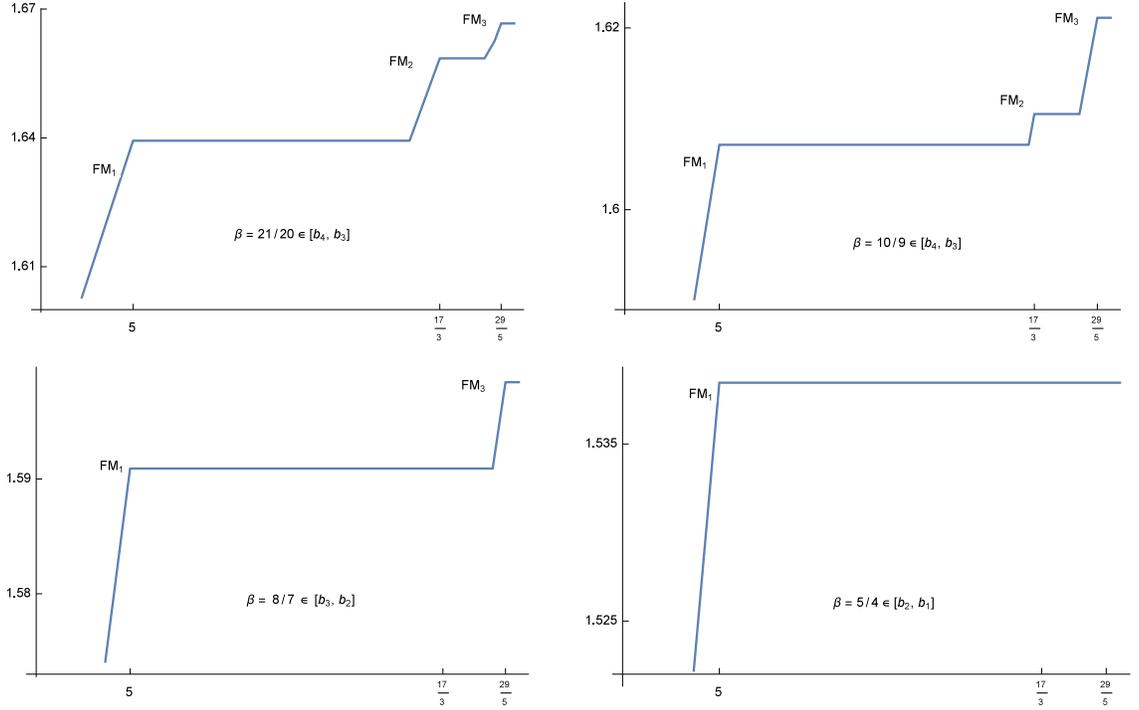}
\caption{Plots of the functions $C_{\beta}|_{[4.8,3+2\sqrt{2}]}$ for selected values of $\beta$. For $b_4<\beta<b_3$, $C_{\beta}|_{[1,\sigma^2]}$ is given as the maximum of obstructions arising from the Frenkel-M\"uller classes $FM_{-1},FM_{0},FM_{1},FM_{2},FM_{3}$ (the first two of which are relevant only for values of $\alpha$ outside the domain of these plots).  The obstruction from $FM_1$ approaches the obstruction from $FM_2$ as $\beta$ approaches $b_3=\frac{9}{8}$, and these obstructions cross once $\beta>b_3$ so that $FM_2$ is no longer relevant. Once $\beta>b_2=\frac{13}{11}$ the obstruction from $FM_1$ likewise overtakes the obstruction from $FM_{3}$.  Increasing $\beta$ still further would lead to the obstruction from $FM_{-1}$ overtaking that from $FM_0$ when $\beta$ crosses $b_1=2$ and overtaking that from $FM_1$ when $\beta$ crosses $b_0=3$.}
\label{fmgraphs}
\end{figure}
\end{center}

\begin{remark}
In fact, our proof shows that, when $\beta>1$, the equality $C_{\beta}(\alpha)=\sup_n\{\Gamma_{\alpha,\beta}(FM_n)|n\geq -1\}$ continues to hold
for $\alpha<\alpha_0(\beta)$ for an upper bound $\alpha_0(\beta)$ that is somewhat larger than $3+2\sqrt{2}$, and converges to $3+2\sqrt{2}$ as $\beta\to 1$.  (Explicit, though typically not optimal, values for $\alpha_0(\beta)$ can be read off from Propositions \ref{p2k} and \ref{lastplat}.)  It follows from our other main results that such a bound $\alpha_0(\beta)$ must depend on $\beta$, as there are pairs  $(\alpha,\beta)$ arbitrarily close to $(3+2\sqrt{2},1)$ for which (\ref{fmeq}) is false. (For instance, in the notation used elsewhere in the paper, one can take $(\alpha,\beta)=(S_{n,k},L_{n,k})$ for large $k$ and any $n\geq 2$.)\end{remark}

\subsection{New infinite staircases}\label{intronew}

The analysis in \cite{MS},\cite{FM} shows that, for any $\alpha,\beta\geq 1$ such that $C_{\beta}(\alpha)$ exceeds the volume bound $\sqrt{\frac{\alpha}{2\beta}}$, there is a neighborhood of $\alpha$ on which $C_{\beta}$ is piecewise linear.  Let $\mathcal{S}_{\beta}$ denote the collection of affine functions $f\co \R\to\R$ having the property that there is an nonempty open set on which $C_{\beta}$ coincides with $f$.  Thus the graph of $C_{\beta}$ consists of segments which coincide with the graph of one of the functions from $\mathcal{S}_{\beta}$, collectively forming a sort of staircase, and other segments which coincide with the volume bound.  We say that $C_{\beta}$ \textbf{has an infinite staircase} if $\mathcal{S}_{\beta}$ is an infinite set.  In this case, we say that $\alpha\in \R$ is an \textbf{accumulation point} of the infinite staircase if for every neighborhood $U$ of $\alpha$ there are infinitely many $f\in \mathcal{S}_{\beta}$ such that $C_{\beta}$ coincides with $f$ on some nonempty open subset of $U$.

\begin{remark}
Arguing as in \cite[Corollary 1.2.4]{MS}, for any $E=(x,y;\vec{m})\in\mathcal{E}\setminus\cup_i\{E'_i\}$ we have $1=c_1(E)=2(x+y)-\sum_i m_i$, so that for any $\alpha\in\Q$ \[ \vec{m}\cdot w(\alpha)\leq \sum_i m_i<2(x+y)\] and so (for any $\beta\geq 1$) \[ \mu_{\alpha,\beta}(E)=\frac{\vec{m}\cdot w(\alpha)}{x+\beta y}\leq \frac{2(x+y)}{x+\beta y}\leq 2.\]   Thus if the volume bound $\sqrt{\frac{\alpha}{2\beta}}$ is at least $2$, \emph{i.e.} if $\alpha\geq 8\beta$, then $C_{\beta}(\alpha)$ is equal to the volume bound. It follows easily from this that if $C_{\beta}$ has an infinite staircase then this infinite staircase must have an accumulation point.   An unpublished argument communicated to the author by Cristofaro-Gardiner appears to imply that the only possible accumulation point for any such infinite staircase is the value $\alpha>1$ determined by the equation $\frac{(1+\alpha)^2}{\alpha}=\frac{2(1+\beta)^2}{\beta}$.
\end{remark}

Again denoting by $P_m$ and $H_m$ the Pell and half-companion Pell numbers respectively, for any integers $n\geq 2$ and $k\geq 0$ let 
  \[ L_{n,k}=\frac{H_{2k}(\sqrt{n^2-1}+1)+2nP_{2k}+(\sqrt{n^2-1}-1)}{H_{2k}(\sqrt{n^2-1}+1)+2nP_{2k}-(\sqrt{n^2-1}-1)} \] and \[S_{n,k}=\frac{(\sqrt{n^2-1}+1)P_{2k+1}+nH_{2k+1}}{(\sqrt{n^2-1}+1)P_{2k-1}+nH_{2k-1}} .\] In particular \[ L_{n,0}=\sqrt{n^2-1},\qquad S_{n,0}=\frac{\sqrt{n^2-1}+1+n}{\sqrt{n^2-1}+1-n}.\] Our second main result is the following, proven as part of Corollary \ref{nkstair}:
\begin{theorem}\label{stairmain} For any $n\geq 2$ and $k\geq 0$ the function $C_{L_{n,k}}$ has an infinite staircase, with accumulation point at $S_{n,k}$.
\end{theorem}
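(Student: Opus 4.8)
The plan is to exhibit, for each fixed $n\geq 2$ and $k\geq 0$, an explicit infinite family of perfect classes whose obstructions determine $C_{L_{n,k}}$ on an initial interval $[1,S_{n,k}]$ and force that restriction to be a genuine infinite staircase. The family is the one-parameter subfamily $\{A_{i,n}^{(k)}\}_{i\geq 1}$ of the triply-indexed classes, each written in $S^2\times S^2$ coordinates as $A_{i,n}^{(k)}=(a_i,b_i;\mathcal{W}(c_i,d_i))$. The first step, of the same routine flavour as the treatment of the Frenkel-M\"uller classes, is to check that each $A_{i,n}^{(k)}$ is quasi-perfect and in fact lies in $\mathcal{E}$: one verifies the Chern identity, deduces from $\mathcal{W}(c_i,d_i)\cdot\mathcal{W}(c_i,d_i)=c_id_i$ together with $A_{i,n}^{(k)}\cdot A_{i,n}^{(k)}=-1$ the identity $c_id_i=2a_ib_i+1$, and exhibits a sequence of Cremona transformations reducing $A_{i,n}^{(k)}$ to a standard exceptional class. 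Genuine membership in $\mathcal{E}$, not merely $\tilde{\mathcal{E}}$, is what makes the positivity constraints $A_{i,n}^{(k)}\cdot E\geq0$ available for all $E\in\mathcal{E}$, which is essential below.

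Next I would pin down the combinatorial skeleton of the family. From the Pell-number recursions built into the definition of the $A_{i,n}^{(k)}$ one shows that the vertices $\alpha_i:=c_i/d_i$ form a strictly monotone sequence converging to $S_{n,k}$ (the closed form of $S_{n,k}$, and the identity $\frac{(1+S_{n,k})^2}{S_{n,k}}=\frac{2(1+L_{n,k})^2}{L_{n,k}}$, come out of the dominant eigenvalue of the underlying linear recursion), that the vertex values $\Gamma_{\alpha_i,L_{n,k}}(A_{i,n}^{(k)})=\frac{c_i}{a_i+L_{n,k}b_i}$ are strictly monotone in $i$, and that $\Gamma_{\alpha_i,L_{n,k}}(A_{j,n}^{(k)})\leq\Gamma_{\alpha_i,L_{n,k}}(A_{i,n}^{(k)})$ for all $i,j$. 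Thus the piecewise-linear functions $\Gamma_{\cdot,L_{n,k}}(A_{i,n}^{(k)})$ of Proposition \ref{Gammadef} are genuinely nested in a staircase pattern whose corner points accumulate at $S_{n,k}$, and (\ref{dirlim}) already gives the lower bound $C_{L_{n,k}}(\alpha)\geq\max\{\sqrt{\alpha/(2L_{n,k})},\sup_i\Gamma_{\alpha,L_{n,k}}(A_{i,n}^{(k)})\}$, whose right-hand side has infinitely many distinct slopes on $[1,S_{n,k})$.

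The substantive step is the matching upper bound: that on $[1,S_{n,k}]$ this inequality is an equality, i.e. that the obstructions from the $A_{i,n}^{(k)}$, together with the volume bound and at most finitely many further classes (which for small $i$ coincide with classes from \cite{FM} and \cite{CFS}), dominate every other obstruction. By Corollary \ref{cbe} and (\ref{dirlim}) it suffices to prove, for each $E=(p,q;\vec r)\in\mathcal{E}$ and each $\alpha\in[1,S_{n,k}]$, that $\mu_{\alpha,L_{n,k}}(E)\leq\max\{\sqrt{\alpha/(2L_{n,k})},\sup_i\Gamma_{\alpha,L_{n,k}}(A_{i,n}^{(k)})\}$. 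Placing $\alpha$ in the step governed by $A_{i,n}^{(k)}$ and reducing to the worst case $\alpha=\alpha_i$, where $w(\alpha_i)=\frac1{d_i}\mathcal{W}(c_i,d_i)$, the inequality $\mu_{\alpha_i,L_{n,k}}(E)\leq\frac{c_i}{a_i+L_{n,k}b_i}$ unwinds to
\[ c_id_i\,(p+L_{n,k}q)\;\geq\;(a_i+L_{n,k}b_i)\,\bigl(\mathcal{W}(c_i,d_i)\cdot\vec r\bigr). \]
Using $c_id_i=2a_ib_i+1$ and the crude bound $\mathcal{W}(c_i,d_i)\cdot\vec r\leq b_ip+a_iq$ coming from $A_{i,n}^{(k)}\cdot E\geq0$, this reduces to $p(a_ib_i+1-L_{n,k}b_i^2)+q(L_{n,k}(a_ib_i+1)-a_i^2)\geq0$, which holds when $L_{n,k}$ lies in a certain window around $a_i/b_i$; but that window shrinks essentially to a point as $i\to\infty$, so the crude bound is in general too weak and a sharper estimate on $\mathcal{W}(c_i,d_i)\cdot\vec r$ is needed. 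I expect this to be the main obstacle, and would address it as in the Frenkel-M\"uller analysis: feed in the further positivity relations $A_{i\pm1,n}^{(k)}\cdot E\geq0$ (and, on the finitely many small-$i$ steps, the relations from the relevant \cite{FM} and \cite{CFS} classes) and exploit the precise arithmetic of $L_{n,k}$ — the one place where the particular value of $\beta$ genuinely matters, since the analogous estimate fails for nearby $\beta$ — to bound $\mathcal{W}(c_i,d_i)\cdot\vec r$ sharply enough, meanwhile checking that the extra small-$i$ classes never exceed the claimed value of $C_{L_{n,k}}$.

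Granting the resulting formula $C_{L_{n,k}}(\alpha)=\max\{\sqrt{\alpha/(2L_{n,k})},\sup_i\Gamma_{\alpha,L_{n,k}}(A_{i,n}^{(k)})\}$ on $[1,S_{n,k}]$, the theorem follows. The right-hand side realises infinitely many distinct affine pieces on nonempty open subintervals (one per $A_{i,n}^{(k)}$), so $\mathcal{S}_{L_{n,k}}$ is infinite and $C_{L_{n,k}}$ has an infinite staircase; and since $\alpha_i\nearrow S_{n,k}$ with $C_{L_{n,k}}(\alpha_i)$ strictly increasing to $C_{L_{n,k}}(S_{n,k})$, which by the same Pell arithmetic strictly exceeds the volume bound $\sqrt{S_{n,k}/(2L_{n,k})}$ there (so that the staircase does not terminate before $S_{n,k}$), every neighbourhood of $S_{n,k}$ meets infinitely many steps. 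Hence $S_{n,k}$ is an accumulation point of the staircase — established here directly via $\alpha_i\to S_{n,k}$, and in any case the only candidate by the characterisation recalled in the Remark. Together with the explicit description of $C_{L_{n,k}}$ just obtained, this is the content of Corollary \ref{nkstair}, of which Theorem \ref{stairmain} is a part.
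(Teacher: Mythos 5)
The target of your ``substantive step'' --- the global formula
\[ C_{L_{n,k}}(\alpha)=\max\left\{\sqrt{\tfrac{\alpha}{2L_{n,k}}},\sup_i\Gamma_{\alpha,L_{n,k}}(A_{i,n}^{(k)})\right\}\quad\text{on }[1,S_{n,k}] \]
--- is false, so that step cannot be carried out.  Proposition \ref{undervol} and Corollary \ref{undervolcor} show (for $k=0$, with numerical verification for many larger $k$) that there are points $\alpha$ strictly between consecutive vertices $c_{i,n,k}/d_{i,n,k}$ and $c_{i+1,n,k}/d_{i+1,n,k}$ where $\sup_i\Gamma_{\alpha,L_{n,k}}(A_{i,n}^{(k)})$ falls \emph{strictly below} the volume bound, while Proposition \ref{hatbeatsvol} shows that $C_{L_{n,k}}$ is \emph{strictly above} the volume bound there, owing to the additional obstruction classes $\hat{A}_{i,n}^{(k)}$.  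Thus your right-hand side underestimates $C_{L_{n,k}}$ on infinitely many intervals, not just on ``finitely many small-$i$ steps'' as you hope.  Even with the $\hat{A}$ classes added, the resulting global description of $C_{L_{n,k}}$ is only a conjecture in the paper (Conjecture \ref{fillconj}); proving Theorem \ref{stairmain} via a full description of $C_{L_{n,k}}$ on $[1,S_{n,k}]$ is not a route the paper ever attempts, because it is considerably harder than the theorem itself.  Your closing claim that $C_{L_{n,k}}(S_{n,k})$ ``strictly exceeds the volume bound'' is also false: Corollary \ref{accvol} shows that at the accumulation point of any infinite staircase the embedding capacity must \emph{equal} the volume bound, which is the only way the staircase's steps can accumulate there.

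The paper's actual argument is much lighter: it establishes only \emph{local} sharpness, at (a neighborhood of) each vertex $c_i/d_i$, and this alone produces infinitely many distinct affine pieces accumulating at $S_{n,k}=\lim c_i/d_i$.  Concretely, Lemma \ref{aLbineq} shows that if a perfect class $C=(a,b;\mathcal{W}(c,d))$ satisfies $(a-\beta b)^2<2\beta$ and $(a-b)(a-\beta b)<1+\beta$ then $\mu_{c/d,\beta}(C)$ is \emph{strictly} larger than the volume bound and than $\mu_{c/d,\beta}(E)$ for every other $E\in\mathcal{E}$; Proposition \ref{interval} then promotes this to a neighborhood of $c/d$ via an equicontinuity argument for the family $\alpha\mapsto\mu_{\alpha,\beta}(E)$.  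The two inequalities are verified for $A_{i,n}^{(k)}$ with $\beta=L_{n,k}$ in Propositions \ref{beatvol} and \ref{beatsup}.  No control of $C_{L_{n,k}}$ away from these small neighborhoods is needed.  Incidentally, your ``crude bound is too weak'' diagnosis of the intersection estimate is also a misreading of the local situation: at $\alpha=c_i/d_i$ the bound $\mathcal{W}(c_i,d_i)\cdot\vec r\leq b_ip+a_iq$ \emph{is} sufficient, provided you first reduce to $p\geq q$ (by the paper's observation that $\mu_{c/d,\beta}((q,p;\vec r))>\mu_{c/d,\beta}((p,q;\vec r))$ when $\beta>1$) and then invoke precisely the inequality $(a_i-b_i)(a_i-L_{n,k}b_i)<1+L_{n,k}$; what goes wrong is only the attempt to carry the same estimate through the whole step, which is where you would need the $\hat{A}$ classes and where the argument runs aground.
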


We also show in Corollary \ref{nkstair} that $\frac{(1+S_{n,k})^2}{S_{n,k}}=\frac{2(1+L_{n,k})^2}{L_{n,k}}$, consistently with Cristofaro-Gardiner's work.  

The proof of Theorem \ref{stairmain} makes use of a collection of perfect classes $A_{i,n}^{(k)}=(a_{i,n,k},b_{i,n,k};\mathcal{W}(c_{i,n,k},d_{i,n,k}))$ for $i,k\geq 0$ and $n\geq 2$.  (See (\ref{abcd}) and (\ref{ink}) for explicit formulas.)  For fixed $n$ and $k$ and varying $i$, the numbers $\frac{c_{i,n,k}}{d_{i,n,k}}$ form a strictly increasing sequence, and we see in (\ref{stairnhd}) that $C_{L_{n,k}}$ coincides on a neighborhood of each $\frac{c_{i,n,k}}{d_{i,n,k}}$ with the function $\Gamma_{\cdot,L_{n,k}}(A_{i,n}^{(k)})$.  This is enough to show that $C_{L_{n,k}}$ has an infinite staircase, though it does not determine the structure of the staircase in detail since does not address the behavior of $C_{L_{n,k}}$ outside of these neighborhoods.

At least for $k=0$, the infinite staircase of Theorem \ref{stairmain} does not consist only of steps given by the $\Gamma_{\cdot,L_{n,k}}(A_{i,n}^{(k)})$; indeed Proposition \ref{undervol} shows that $\sup_i\Gamma_{\alpha,L_{n,0}}(A_{i,n}^{(0)})$ is below the volume bound at certain points $\alpha\in \left(\frac{c_{i,n,0}}{d_{i,n,0}},\frac{c_{i+1,n,0}}{d_{i+1,n,0}}\right)$.  (We expect the analogous statement to be true for arbitrary $k$, and have confirmed this with computer calculations for all $n,k\leq 100$.)  In Section \ref{ahatsect} we introduce a different set of quasi-perfect classes\footnote{We expect these classes to all be perfect, and have checked this for many examples, but we do not have a general argument.} $\hat{A}_{i,n}^{(k)}$.  The fact that $A_{i,n}^{(k)}$ and $\hat{A}_{i,n}^{(k)}$ are all quasi-perfect implies that we have a lower bound \begin{equation}\label{lowerbound} C_{L_{n,k}}(\alpha)\geq \sup\left\{\Gamma_{\alpha,L_{n,k}}(A)|A\in \cup_{i=0}^{\infty}\{A_{i,n}^{(k)},\hat{A}_{i,n}^{(k)}\}\right\},\end{equation} and Conjecture \ref{fillconj} asserts that this inequality is in fact an equality for all $\alpha$ in the region $\left[\frac{c_{0,n,k}}{d_{0,n,k}},S_{n,k}\right]$ occupied by the staircase.  

Setting $k=0$, Proposition \ref{hatbeatsvol} shows that the right-hand side of (\ref{lowerbound}) is strictly greater than the volume bound for all $\alpha\in [\frac{c_{0,n,0}}{d_{0,n,0}},S_{n,0}]$.  Computer calculations show that the analogous statement continues to hold for all $n,k\leq 100$.  In particular, while it is in principle possible for an infinite staircase for some $C_{\beta}$ to include intervals on which $C_{\beta}$ coincides with the volume bound, our infinite staircases do not have this property at least when $k=0$ and $n$ is arbitrary, or when $n,k\leq 100$.   Indeed in contrast to the Frenkel-M\"uller staircase these infinite staircases do not even touch the volume bound at isolated points prior to the accumulation point $S_{n,k}$.

One can show that the interval of $\alpha$ on which $\Gamma_{\alpha,L_{n,k}}(A_{i,n}^{(k)})$ realizes the supremum on the right-hand side of (\ref{lowerbound}) has length with the same order of magnitude as $\frac{1}{P_{2k}^{2}\omega_{n}^{2i}}$ where $\omega_n=n+\sqrt{n^2-1}$, while the corresponding interval for $\Gamma_{\alpha,L_{n,k}}(\hat{A}_{i,n}^{(k)})$ is contained in $\left[\frac{c_{i,n,k}}{d_{i,n,k}},\frac{c_{i+1,n,k}}{d_{i+1,n,k}}\right]$ and has length bounded by a constant times $\frac{1}{P_{2k}^{2}\omega_{n}^{4i}}$.  Thus the steps corresponding to the $\hat{A}_{i,n}^{(k)}$ are between those corresponding to $A_{i,n}^{(k)}$ and $A_{i+1,n}^{(k)}$, and decay in size at a faster rate.  See Figure \ref{stairfig}.

\begin{center}
\begin{figure}
\includegraphics[width=5 in]{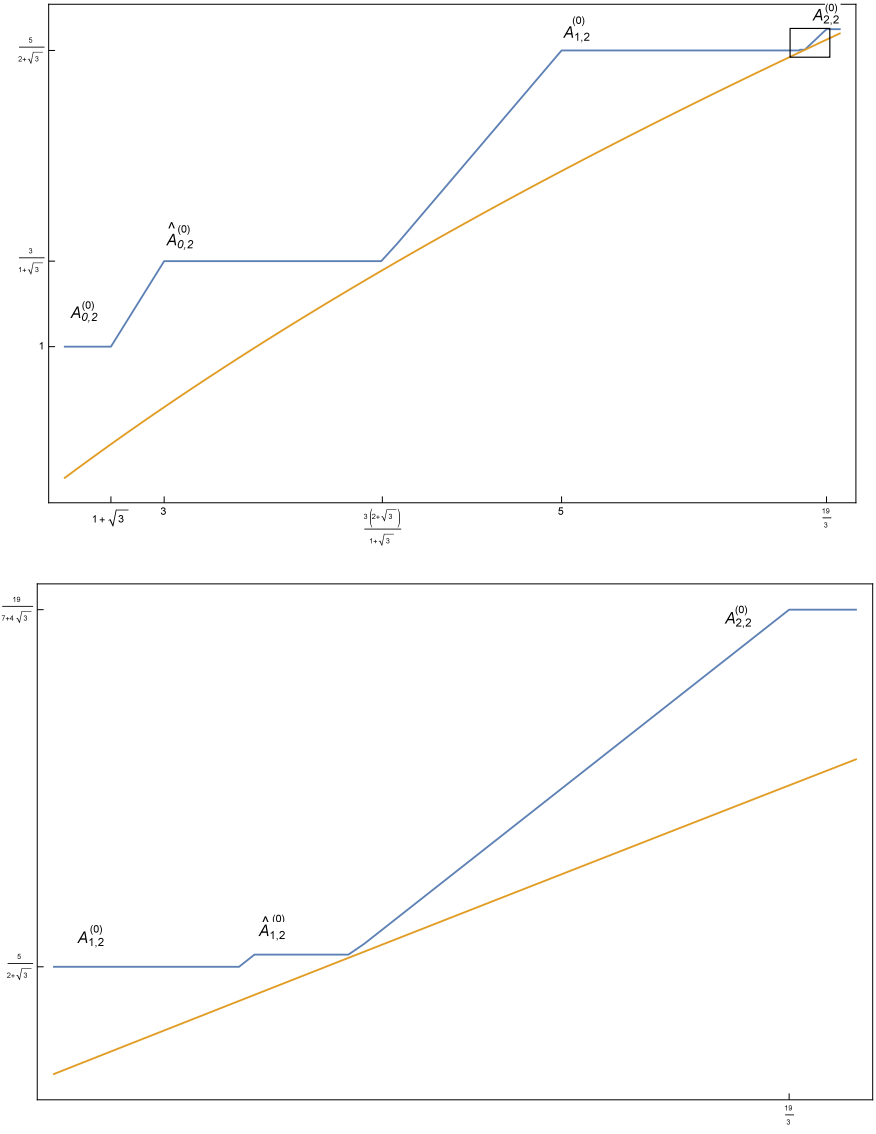}
\caption{Partial plots of our lower bound (\ref{lowerbound}) for $C_{L_{2,0}}$ (which we conjecture to be equal to $C_{L_{2,0}}$), with steps labelled by their corresponding elements of $\mathcal{E}$, together with the volume bound in orange.  The bottom plot is a magnification of the box in the upper right of the top plot; evidently such a magnification is needed in order to make the step corresponding to $\hat{A}_{1,2}^{(0)}=(11,7;\mathcal{W}(31,5))$ visible.}
\label{stairfig}
\end{figure}
\end{center}

Our infinite staircases for $C_{L_{n,k}}$ join together nicely with the picture in Section \ref{fmintro}.  As we see in Section \ref{connect}, for all $n$ it holds that $A_{0,n}^{(k)}=FM_{2k-1}$, so that the initial obstruction in our staircase coincides with one of the Frenkel-M\"uller obstructions.  Moreover when $n\geq 4$ Proposition \ref{Lsize} shows that $L_{n,k}$ lies in the interval $(b_{2k},b_{2k-1})$, and so $FM_{2k-1}$ is the last surviving obstruction in the Frenkel-M\"uller staircase for $C_{L_{n,k}}$.  For the remaining values of $n$, as we explain in Section \ref{connect}, Proposition \ref{Lsize} shows that $A_{0,3}^{(k)}=FM_{2k-1}$ is the penultimate surviving obstruction in the Frenkel-M\"uller staircase for $C_{L_{3,k}}$ (and the last one is $\hat{A}_{0,3}^{(k)}=FM_{2k+1}$), and that $A_{0,2}^{(k)}$ is the antepenultimate surviving obstruction in the Frenkel-M\"uller staircase for $C_{L_{2,k}}$, with the last two being $\hat{A}_{0,2}^{(k)}=FM_{2k}$ and $A_{1,2}^{(k)}=FM_{2k+1}$.  So in all cases our staircases overlap with the remnants of the Frenkel-M\"uller staircase.  

As for the accumulation points $S_{n,k}$, we see in Proposition \ref{snkint} that each $S_{n,k}$ lies in the interval $\left(\frac{P_{2k+4}}{P_{2k+2}},\frac{P_{2k+2}}{P_{2k}}\right)$ (which for $k=0$ is to be interpreted as $(6,\infty)$). Since $\frac{P_{2k+2}}{P_{2k}}\to 3+2\sqrt{2}$ as $k\to\infty$, this shows that $S_{n,k}\to 3+2\sqrt{2}$ as $k\to\infty$, uniformly in $n$.  In the other limit as $n\to\infty$ with $k$ fixed, one has  $S_{n,k}\nearrow \frac{P_{2k+2}}{P_{2k}}$ and $L_{n,k}\nearrow \frac{H_{2k+1}+1}{H_{2k+1}-1}$  as $n\to\infty$.   In this limit all of the steps in our staircases have length tending to zero except for the step corresponding to $A_{0,n}^{(k)}$ (which as mentioned in the previous paragraph is equal to $FM_{2k-1}$ independently of $n$), and indeed a special case of Proposition \ref{p2k} shows that when $\beta=\frac{H_{2k+1}+1}{H_{2k+1}-1}$ the final step that remains in the Frenkel--M\"uller staircase for $C_{\beta}$ extends all the way to $\alpha=\frac{P_{2k+2}}{P_{2k}}$, at which point it can be seen to coincide with the volume bound.

The existence of an infinite staircase for $C_{L_{n,k}}$ appears to depend quite delicately on the specific values $L_{n,k}$. In particular it follows from Corollary \ref{moveL} that all but finitely many of the $\Gamma_{\cdot,\beta}(A_{i,n}^{(k)})$ cease to be relevant to $C_{\beta}$ when $\beta$ is arbitrarily close to but not equal to $L_{n,k}$.    For typical values of $\beta$ that are close to some of the $L_{n,k}$ we expect $C_{\beta}(\alpha)$ for $\alpha$ slightly larger than $3+2\sqrt{2}\approx 5.828$ to be given by the maximum of a small collection of the $\Gamma_{\alpha,\beta}(A_{i,n}^{(k)})$ for various values of $n$.  For example one can show (for instance using the program at \cite{U}) that for $\beta=5/4$ (which lies between $L_{6,1}$ and $L_{7,1}$), $C_{\beta}$ is given on $[3+2\sqrt{2},\frac{1000}{169}]$ by the obstruction coming from the exceptional class $FM_1=A_{0,n}^{(1)}=(2,1;\mathcal{W}(5,1))$, on $[\frac{1000}{169},\frac{5929}{1000}]$ by the obstruction coming from $A_{1,6}^{(1)}=(25,20;\mathcal{W}(77,13))$, and on $[\frac{5929}{1000},\frac{457}{77}]$ by the obstruction coming from $A_{1,7}^{(1)}=(29,23;\mathcal{W}(89,15))$, after which it is given on a somewhat longer interval by the obstruction coming from the non-quasi-perfect class $(2,2;2,1^{\times 5})$, which readers of \cite{FM} will recognize as the first class to appear after the infinite staircase for $C_1$.  

The $A_{i,n}^{(k)}$ and $\hat{A}_{i,n}^{(k)}$ are not the only perfect classes to contribute to some of the functions $C_{\beta}$ in the region following the Frenkel-M\"uller staircase; for instance $(15,10;\mathcal{W}(43,7))$ is the first class after $FM_1$ to contribute to $C_{3/2}$, and cannot be found among the $A_{i,n}^{(k)}$ or $\hat{A}_{i,n}^{(k)}$.  Preliminary computer experiments suggest that classes such as $(15,10;\mathcal{W}(43,7))$ may fit into different families that are structurally similar to the $A_{i,n}^{(k)}$, perhaps leading to infinite staircases for other irrational values of $\beta$ besides the $L_{n,k}$.  (To give a concrete family of examples, the author suspects that $C_{\beta}$ has an infinite staircase for $\beta=\frac{2n-1+2\sqrt{n^2-1}}{2n-2+\sqrt{n^2-1}}$ for all integers $n\geq 2$.  For $n=2$ this is equal to $\sqrt{3}=L_{2,0}$, but for $n\geq 3$ it is distinct from all of the $L_{n,k}$ since it lies strictly between $\frac{4}{3}=\sup_{k\geq 1,n\geq 2}L_{n,k}$ and $\sqrt{3}=\min_{n\geq 2}L_{n,0}$.) However analogous methods would not seem to be capable of producing infinite staircases for $C_{\beta}$ when $\beta$ is rational, consistently with the conjecture of Cristofaro-Gardiner, Holm, Mandini, and Pires (see \cite[p. 13]{P}) that would imply that the only rational $\beta$ for which any such staircase exists is the value $\beta=1$ considered in \cite{FM}.

\subsection{Organization of the paper}

The following section collects a variety of tools that are used at various places in our analysis.  It seems unavoidable that many of our proofs will involve extensive manipulations of Pell numbers $P_n$ and $H_n$, and some relevant facts about these appear in Section \ref{pellprelim}.  As will be familiar to experts, the subsets of $H^2(X_{N+1};\R)$ appearing in (\ref{poscrit}), namely $\overline{\mathcal{C}}_K(X_{N+1})$ and $\mathcal{E}_{N+1}$, are acted upon \emph{Cremona moves}. In Section \ref{cremintro} we recall this and set up relevant notation, after which we identify a very useful composition of Cremona moves, labeled $\Xi$ in Proposition \ref{xiaction}, and compute its action on various kinds of classes that appear in the rest of the paper.  Restricting attention to classes of the form $(a,b;\mathcal{W}(c,d))$ such as those that appear in Definition \ref{qpdef}, we then consider the question of when such a class is (quasi-)perfect.  Some simple algebra shows that the quasi-perfect classes of this form having $\gcd(c,d)=1$ correspond after a change of variables to solutions to a certain (generalized) Pell equation.  We can then exploit the construction from \cite{B} of infinite families of such solutions to define (Definition \ref{bmovedef}) the ``$k$th-order Brahmagupta move'' $C\mapsto C^{(k)}$ acting on classes $(a,b;\mathcal{W}(c,d))$.  By construction this move preserves the property of being quasi-perfect provided that $\gcd(c,d)=1$, and in Proposition \ref{pellcrem} we use the aforementioned composition of Cremona moves $\Xi$ to show that it also preserves the property of being perfect.  Section \ref{toolsect} concludes with a brief discussion of what we call the ``tiling criterion,'' which gives a sufficient criterion for a class to belong to $\bar{\mathcal{C}}(X_{N+1})$, expressed in terms of partial tilings of a large square by several rectangles.  The roots of this go back to \cite[Section 5]{T} and something similar is used in \cite[Section 3]{GU}, but we give a more systematic and straightforwardly-applicable formulation here.    

Section \ref{fmsect} contains the  proof of Theorem \ref{fmsup}.  First we rewrite more explicitly, for any given $\beta>1$, the supremum $\sup_n\Gamma_{\alpha,\beta}(FM_n)$, identifying it as a supremum over a finite set depending on $\beta$ and not on $\alpha$.  Using the monotonicity and sublinearity of $C_{\beta}$, the statement that the lower bound $C_{\beta}(\alpha)\geq \sup_n\Gamma_{\alpha,\beta}(FM_n)$ is sharp for all $\alpha\in [1,3+2\sqrt{2}]$ is easily seen to be equivalent to sharpness just for a finite subset of $\alpha$ (depending on $\beta>1$), namely the points where the ``steps'' in the finite staircase determining $\sup_n\Gamma_{\alpha,\beta}(FM_n)$ come together (as well as one point at the end of the staircase).  In each case this is equivalent to a certain class belonging to $\bar{\mathcal{C}}_K(X_{N+1})$ which we show (in Section \ref{fmsupproof}) to hold using the techniques of Section \ref{toolsect}.  A bit more specifically, 
our preferred approach to showing that a general class belongs to $\bar{\mathcal{C}}_K(X_{N+1})$ is to apply repeated Cremona moves to the class, often iteratively using $\Xi$, until it satisfies the tiling criterion.  Roughly speaking the move $\Xi$ transforms the problem for the $k$th class to the problem for the $(k-2)$nd class.
 
Section \ref{findsect} contains the proof of Theorem \ref{stairmain} and discusses some of the properties of our infinite staircases.  In Section \ref{critsect} we provide a general criterion for a sequence of perfect classes $(a_i,b_i;\mathcal{W}(c_i,d_i))$ to give rise to an infinite staircase.  We then construct our key collection of perfect classes $A_{i,n}^{(k)}$ in Section \ref{ainksect}, and show in Section \ref{versect} that, for fixed $n,k$, the sequence $\{A_{i,n}^{(k)}\}_{i=0}^{\infty}$ satisfies our general criterion.  This suffices to prove Theorem \ref{stairmain}, though it does not provide a complete description of the staircases.  We explain in Section \ref{undervolsect} that, at least for $k=0$, the lower bound $\sup_i\Gamma_{\alpha,L_{n,k}}(A_{i,n}^{(k)})$ for $C_{\beta}$ provided by the $A_{i,n}^{(k)}$ falls under the volume constraint at some values of $\alpha$ lying within the region occupied by the staircase, so the staircase is not completely described by the obstructions from $A_{i,n}^{(k)}$.  
Section \ref{lnksnk} carries out a few elementary calculations that help make sense of the values $L_{n,k}$ and $S_{n,k}$ from Theorem \ref{stairmain}, and then makes progress toward understanding how the function of two variables $(\alpha,\beta)\mapsto C_{\beta}(\alpha)$ behaves near $S_{n,k}$ and $L_{n,k}$ by finding two classes which are not among those contributing to the infinite staircase and whose obstructions at $(S_{n,k},L_{n,k})$ exactly match the volume.  We use this to give some indication of how  our infinite staircases disappear as $\beta$ is varied away from $L_{n,k}$ in Corollary \ref{moveL}.  Section \ref{ahatsect} introduces the classes $\hat{A}_{i,n}^{(k)}$ which appear to be necessary to completely describe our staircases, leading to the conjectural formula for $C_{L_{n,k}}(\alpha)$ in Conjecture \ref{fillconj}.  Finally, in Section \ref{connect} we work out the interface between our infinite staircases and the remnants of the Frenkel-M\"uller staircase that are determined in Section \ref{fmsect}.  With the exception of Section \ref{connect}, Sections \ref{fmsect} and \ref{findsect} are completely independent of each other.

\subsection*{Acknowledgements}  I am grateful to D. Cristofaro-Gardiner for useful comments and encouragement and for explaining his results about possible accumulation points of infinite staircases.  A crucial hint for the discovery of the classes $A_{i,n}^{(k)}$ was provided by OEIS entry A130282.  This work was partially supported by the NSF through the grant DMS-1509213.
\newpage
\section{Some tools}\label{toolsect}
\subsection{Preliminaries on Pell numbers}\label{pellprelim}
The Pell numbers $P_n$ and the half-companion Pell numbers $H_n$ appear frequently throughout the paper, and here we collect some facts concerning them. The sequences $\{P_n\}$ and $\{H_n\}$, by definition, obey the same recurrence relation: \[ P_{n+2}=2P_{n+1}+P_n\qquad\qquad H_{n+2}=2H_{n+1}+H_n \] with different initial conditions \[ P_0=0,\,\,P_1=1 \qquad\qquad H_0=1,\,\,H_1=1.\]  Denote by $\sigma=1+\sqrt{2}$ the ``silver ratio,'' so that $\sigma$ is the larger solution to the equation $x^2=2x+1$, the smaller solution being $-1/\sigma=1-\sqrt{2}$.  Note that $\sigma^2=3+2\sqrt{2}$, which is the quantity appearing in Theorem \ref{fmsup}. It is easy to check the following closed-form expressions for $P_n$ and $H_n$. \begin{equation}\label{closedform}
P_n=\frac{\sigma^n-(-\sigma)^{-n}}{2\sqrt{2}}\qquad\qquad H_n=\frac{\sigma^n+(-\sigma)^{-n}}{2}.\end{equation}

From these expressions it is not hard to check the identities, for $n,j\in \N$ with $j\leq n$, \begin{equation}\label{pp} P_{n+j}P_{n-j}=P_{n}^{2}+(-1)^{n+j+1}P_{j}^{2}, \end{equation}
\begin{equation} \label{ph} P_{n\pm j}H_{n\mp j}=P_nH_n\pm (-1)^{n+j}P_jH_j,\end{equation}
\begin{equation} \label{hh} H_nH_{n+j}=2P_nP_{n+j}+(-1)^nH_j. \end{equation}

Given the initial conditions $P_1=H_1=H_0=1$, we immediately see some useful special cases of these identities:
\begin{equation}\label{phloc} P_{n\pm 1}H_{n\mp 1}=P_nH_n\pm (-1)^{n+1},\end{equation} \begin{equation}\label{psquared} P_{n+1}P_{n-1}=P_{n}^{2}+(-1)^n,\end{equation}
\begin{equation} \label{htop}  H_{n}^{2}=2P_{n}^{2}+(-1)^n=2P_{n+1}P_{n-1}-(-1)^n,\end{equation}
\begin{equation} \label{consec} H_{n}H_{n+1}=2P_nP_{n+1}+(-1)^n.\end{equation}

Also, the fact that the $P_n$ and $H_n$ obey the same linear recurrence relation makes the following an easy consequence of their initial conditions: \begin{equation}\label{hpadd} H_{n}=P_{n}+P_{n-1}=P_{n+1}-P_{n},\qquad \qquad P_n=\frac{H_n+H_{n-1}}{2}=\frac{H_{n+1}-H_n}{2}.\end{equation}  Furthermore we have the identities
\begin{equation}\label{4hp} H_{n+2}+H_{n}=2H_{n+1}+2H_n=4P_{n+1}\end{equation} and similarly \begin{equation}\label{2hp} P_{n+2}+P_n=2P_{n+1}+2P_n=2H_{n+1}.\end{equation}
Moreover, 
\begin{equation}\label{4gapP} P_{n+2}+P_{n-2}=2P_{n+1}+P_n+P_{n-2}=5P_n+2P_{n-1}+P_{n-2}=6P_n \end{equation} and similarly \begin{equation}\label{4gapH} H_{n+2}+H_{n-2}=6H_n.\end{equation}

Although the conventional definition is that $P_n,H_n$ are defined only for $n\in \N$, we will occasionally (and without comment) use the convention that $P_{-1}=1,\,H_{-1}=-1,\,P_{-2}=-2$; evidently this is consistent with both the recurrence relations and the closed forms given above.

\begin{prop}\label{orderratios} For $k\geq 0$ the following inequalities hold: \[ \frac{P_{2k+1}}{P_{2k-1}}<\frac{H_{2k+2}}{H_{2k}}<\frac{P_{2k+3}}{P_{2k+1}}<\sigma^2<\frac{P_{2k+4}}{P_{2k+2}}<\frac{H_{2k+3}}{H_{2k+1}}<\frac{P_{2k+2}}{P_{2k}}.\]
\end{prop}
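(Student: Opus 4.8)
The plan is to prove the chain of six inequalities by reducing each one to the closed-form expressions \eqref{closedform} and the companion identities \eqref{pp}, \eqref{ph}, \eqref{hh} (or their special cases), rather than by manipulating ratios directly. The cleanest systematic approach is to clear denominators: each inequality of the form $\frac{X}{Y}<\frac{Z}{W}$ with $Y,W>0$ is equivalent to $XW<ZY$, and each such cross-product can typically be evaluated in closed form using a product identity, reducing the claim to comparing small constants like $P_j^2$, $H_jH_{j'}$, or powers of $\sigma$. I would handle the three inequalities not involving $\sigma^2$ first, then the two comparisons with $\sigma^2=3+2\sqrt2$, then assemble.

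First I would dispatch $\frac{P_{2k+1}}{P_{2k-1}}<\frac{P_{2k+3}}{P_{2k+1}}$ and $\frac{P_{2k+4}}{P_{2k+2}}<\frac{P_{2k+2}}{P_{2k}}$ in one stroke: using \eqref{psquared}, $P_{m+1}P_{m-1}=P_m^2+(-1)^m$, one gets $P_{2k+1}^2 = P_{2k+2}P_{2k}-1 < P_{2k+2}P_{2k}$, which is exactly $\frac{P_{2k+1}}{P_{2k-1}}<\frac{P_{2k+2}}{P_{2k}}$ after relabeling, and iterating/relabeling gives the nested versions (the sign $(-1)^m$ is $+1$ at odd $m$ and $-1$ at even $m$, so one must be slightly careful which way the strict inequality points — for the even-index ratios the correction is $-1$, giving $P_{2k+2}^2 = P_{2k+3}P_{2k+1}+1 > P_{2k+3}P_{2k+1}$, i.e. $\frac{P_{2k+4}}{P_{2k+2}}<\frac{P_{2k+2}}{P_{2k}}$ indeed, wait — I would double-check this orientation carefully, since getting a sign backwards is the easiest way to ruin the argument). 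For $\frac{H_{2k+2}}{H_{2k}}<\frac{P_{2k+3}}{P_{2k+1}}$ I would cross-multiply to $H_{2k+2}P_{2k+1}<P_{2k+3}H_{2k}$ and expand both sides using \eqref{ph} (with suitable $n,j$, $n=2k+2$ and the two sides having $j=1$ and $j=2$ respectively, or similar), so that the inequality becomes a comparison of the form $P_nH_n - P_1H_1 < P_nH_n + \text{(positive)}$, manifestly true. The inequality $\frac{P_{2k+1}}{P_{2k-1}}<\frac{H_{2k+2}}{H_{2k}}$ and $\frac{P_{2k+4}}{P_{2k+2}}<\frac{H_{2k+3}}{H_{2k+1}}<\frac{P_{2k+2}}{P_{2k}}$ I would treat identically: cross-multiply, expand each product by the appropriate special case of \eqref{ph}/\eqref{hh}/\eqref{htop}, and observe the residual is a small signed constant.

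For the two inequalities bracketing $\sigma^2$, I would use \eqref{closedform} directly. Since $P_{2k+3}=\frac{\sigma^{2k+3}-(-\sigma)^{-(2k+3)}}{2\sqrt2}$ and $P_{2k+1}=\frac{\sigma^{2k+1}-(-\sigma)^{-(2k+1)}}{2\sqrt2}$, and $(-\sigma)^{-(2k+1)}=-\sigma^{-(2k+1)}$ for odd exponent, we get $\frac{P_{2k+3}}{P_{2k+1}}=\frac{\sigma^{2k+3}+\sigma^{-(2k+3)}}{\sigma^{2k+1}+\sigma^{-(2k+1)}}$; writing $t=\sigma^{-(4k+2)}>0$ this is $\sigma^2\cdot\frac{1+\sigma^{-(4k+6)}/\sigma^{-2}\cdots}{}$ — more cleanly, $\frac{P_{2k+3}}{P_{2k+1}}-\sigma^2 = \frac{\sigma^{-(2k+3)}-\sigma^2\sigma^{-(2k+1)}}{P_{2k+1}\cdot 2\sqrt2/(2\sqrt 2)}$, and the numerator $\sigma^{-(2k+3)}-\sigma^{-(2k-1)}$ is negative, so $\frac{P_{2k+3}}{P_{2k+1}}<\sigma^2$; the even-index ratio $\frac{P_{2k+4}}{P_{2k+2}}$ has $(-\sigma)^{-(2k+2)}=+\sigma^{-(2k+2)}$ so the sign flips and $\frac{P_{2k+4}}{P_{2k+2}}>\sigma^2$. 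I would present this as: for $m$ odd, $\frac{P_{m+2}}{P_m}<\sigma^2$, and for $m$ even, $\frac{P_{m+2}}{P_m}>\sigma^2$, both following from $\sigma^2 P_m - P_{m+2} = \sigma^2\frac{-(-\sigma)^{-m}}{2\sqrt2}+\frac{(-\sigma)^{-(m+2)}}{2\sqrt2} = \frac{(-\sigma)^{-m}}{2\sqrt 2}(-\sigma^2+\sigma^{-2}(-1)^{?})$ — the bookkeeping of the $(-1)^m$ factor inside $(-\sigma)^{-m}$ being the one genuinely delicate spot. Finally I would chain all six inequalities together, noting each was proved with the correct strict orientation, to conclude the displayed string. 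The main obstacle throughout is sign discipline — every identity carries a $(-1)^{\text{index}}$ and an inequality that should be strict in one direction becomes false if one of these is mishandled, so I would organize the proof so that the parity of each index ($2k$, $2k\pm1$, $2k\pm2$) is tracked explicitly in each step rather than absorbed silently.
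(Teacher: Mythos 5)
Your overall strategy --- cross-multiply each inequality, evaluate the cross-products via the Pell identities from Section~\ref{pellprelim}, and handle the two $\sigma^2$ comparisons via the closed form (\ref{closedform}) --- is the same as the paper's, and the $\sigma^2$ part of your argument is correct and matches the paper's. But there is a concrete gap in the core step, and the first paragraph of your plan is both extraneous and mis-cited.

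The gap: you propose to expand the cross-products $H_{2k+2}P_{2k+1}$ and $P_{2k+3}H_{2k}$ directly using (\ref{ph}) ``with suitable $n,j$.'' No such $n,j$ exist. Identity (\ref{ph}) expresses products $P_{n\pm j}H_{n\mp j}$, i.e.\ products where the $P$-index and the $H$-index differ by the \emph{even} number $2j$; but in $H_{2k+2}P_{2k+1}$ the indices differ by $1$, and in $P_{2k+3}H_{2k}$ they differ by $3$. (Equivalently, the two indices must have the same parity, which fails here.) So the invocation is vacuous as written, and the ``residual small signed constant'' you expect to read off never materializes. The fix --- and this is exactly what the paper does --- is to first apply the recurrence to one factor, e.g.\
\[ P_{2k+3}H_{2k}-P_{2k+1}H_{2k+2}=(2P_{2k+2}+P_{2k+1})H_{2k}-P_{2k+1}(2H_{2k+1}+H_{2k})=2\bigl(P_{2k+2}H_{2k}-P_{2k+1}H_{2k+1}\bigr), \]
and only \emph{then} invoke (\ref{phloc}), since $P_{2k+2}H_{2k}$ has indices differing by $2$ and so lies in the scope of that identity. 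The same shift is needed for the other two $H$-vs-$P$ inequalities. Separately, your opening paragraph is off: the inequalities $\frac{P_{2k+1}}{P_{2k-1}}<\frac{P_{2k+3}}{P_{2k+1}}$ and $\frac{P_{2k+4}}{P_{2k+2}}<\frac{P_{2k+2}}{P_{2k}}$ do not appear in the chain (they are consequences of it, not links in it), so proving them does not advance the argument; the identity you would need for them is the $j=2$ case of (\ref{pp}), not (\ref{psquared}) which is $j=1$; and the line ``$P_{2k+1}^2 = P_{2k+2}P_{2k}-1 < P_{2k+2}P_{2k}$, which is exactly $\frac{P_{2k+1}}{P_{2k-1}}<\frac{P_{2k+2}}{P_{2k}}$ after relabeling'' is incorrect, since $P_{2k+1}^2<P_{2k+2}P_{2k}$ rearranges to $\frac{P_{2k+1}}{P_{2k}}<\frac{P_{2k+2}}{P_{2k+1}}$, a different inequality. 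Once you delete that paragraph and insert the recurrence-shifting step, your proof becomes essentially the paper's.
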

(Strictly speaking $\frac{P_{2k+2}}{P_{2k}}$ is not defined if $k=0$ since $P_0=0$, but in this case we interpret $\frac{P_{2k+2}}{P_{2k}}$ as $\infty$. A similar remark applies to Proposition \ref{28649})
\begin{proof}
We have \begin{align*} P_{n+1}H_n-P_{n-1}H_{n+2}&=(2P_n+P_{n-1})H_n-P_{n-1}(2H_{n+1}+H_n)
\\ &= 2(P_nH_n-P_{n-1}H_{n+1})=-2(-1)^n \end{align*} where the last equality uses (\ref{phloc}), and this immediately implies  that \[ \frac{P_{2k+1}}{P_{2k-1}}<\frac{H_{2k+2}}{H_{2k}}\qquad\mbox{and}\qquad \frac{P_{2k+2}}{P_{2k}}>\frac{H_{2k+3}}{H_{2k+1}}.\]  Similarly, using the other case of (\ref{phloc}) we have \[ P_nH_{n+1}-P_{n+2}H_{n-1}=P_n(2H_n+H_{n-1})-(2P_{n+1}+P_n)H_{n-1}=2(P_nH_n-P_{n+1}H_{n-1})=2(-1)^n,\]
so that \[ \frac{H_{2k+3}}{H_{2k+1}}>\frac{P_{2k+4}}{P_{2k+2}}\qquad \mbox{and}\qquad \frac{H_{2k+2}}{H_{2k}}<\frac{P_{2k+3}}{P_{2k+1}}.\]
So it remains only to show that $\frac{P_{2k+1}}{P_{2k-1}}<\sigma^2<\frac{P_{2k+2}}{P_{2k}}$ for all $k$.  By (\ref{closedform}), we see that \[ \frac{P_{2k+1}}{P_{2k-1}}=\frac{\sigma^{2k+1}+\sigma^{-2k-1}}{\sigma^{2k-1}+\sigma^{-2k+1}}=\sigma^2\left(\frac{1+\sigma^{-4k-2}}{1+\sigma^{-4k+2}   } \right)<\sigma^2\] since $\sigma>1$.  Similarly \[ \frac{P_{2k+2}}{P_{2k}}=\sigma^2\left(\frac{1-\sigma^{-4k-4}}{1-\sigma^{-4k}}\right)>\sigma^2.\]
\end{proof}

It so happens that the sequence $\left\{\frac{2(P_{2k+2}^{2}-1)}{H_{2k+1}^{2}}\right\}_{k=0}^{\infty}=\{6,\frac{286}{49},\frac{9798}{1681},\ldots\}$ will play a role in the proof of Theorem \ref{fmsup} (specifically in Proposition \ref{lastplat}), and the following estimate will be relevant:
\begin{prop}\label{28649}
For $k\geq 0$ we have \[ \sigma^2<\frac{2(P_{2k+2}^{2}-1)}{H_{2k+1}^{2}}<\frac{P_{2k+2}}{P_{2k}}.\]  
\end{prop}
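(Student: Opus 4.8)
The plan is to treat the two inequalities separately, in each case reducing to an elementary positivity statement after substituting one of the Pell identities from Section \ref{pellprelim}. The key identity is the second form of (\ref{htop}) with $n=2k+1$, which (using $(-1)^{2k+1}=-1$) reads $H_{2k+1}^2=2P_{2k+2}P_{2k}+1$.

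For the upper bound $\frac{2(P_{2k+2}^2-1)}{H_{2k+1}^2}<\frac{P_{2k+2}}{P_{2k}}$, the case $k=0$ is vacuous since the right-hand side is then interpreted as $\infty$. For $k\geq 1$ every term is positive, so after clearing denominators and substituting $H_{2k+1}^2=2P_{2k+2}P_{2k}+1$ the inequality becomes $2P_{2k}(P_{2k+2}^2-1)<P_{2k+2}(2P_{2k+2}P_{2k}+1)$; the terms $2P_{2k}P_{2k+2}^2$ cancel on the two sides and what remains is $0<P_{2k+2}+2P_{2k}$, which is immediate.

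For the lower bound $\sigma^2<\frac{2(P_{2k+2}^2-1)}{H_{2k+1}^2}$ I would compute directly with the closed forms (\ref{closedform}). Since $2k+2$ is even and $2k+1$ is odd, $P_{2k+2}=\frac{\sigma^{2k+2}-\sigma^{-2k-2}}{2\sqrt{2}}$ and $H_{2k+1}=\frac{\sigma^{2k+1}-\sigma^{-2k-1}}{2}$, so that $2P_{2k+2}^2=\frac{\sigma^{4k+4}-2+\sigma^{-4k-4}}{4}$ and $\sigma^2H_{2k+1}^2=\frac{\sigma^{4k+4}-2\sigma^2+\sigma^{-4k}}{4}$. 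Subtracting, $2(P_{2k+2}^2-1)-\sigma^2H_{2k+1}^2=\frac{2\sigma^2-10+\sigma^{-4k-4}-\sigma^{-4k}}{4}$, and since $2\sigma^2-10=4\sqrt{2}-4$ while $0<\sigma^{-4k}-\sigma^{-4k-4}=\sigma^{-4k}(1-\sigma^{-4})<1$ for all $k\geq 0$, the numerator exceeds $4\sqrt{2}-5>0$. Hence $2(P_{2k+2}^2-1)>\sigma^2H_{2k+1}^2$, which is the claim.

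I do not anticipate any serious obstacle: both bounds collapse to one-line positivity assertions once the correct substitution is made, the only minor points being the $k=0$ boundary case in the upper bound and keeping the parities straight when invoking the closed forms. An alternative for the lower bound would be to clear denominators as in the upper bound and invoke $P_{2k+2}>\sigma^2P_{2k}$ from Proposition \ref{orderratios}, but that estimate by itself is too weak and one would have to quantify it, so the closed-form computation is the cleaner route.
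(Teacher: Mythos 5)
Your proof is correct and follows essentially the same route as the paper's: the upper bound in both cases reduces, via the identity $H_{2k+1}^2=2P_{2k}P_{2k+2}+1$ from (\ref{htop}), to the trivially positive quantity $P_{2k+2}+2P_{2k}$, and the lower bound in both cases is reduced via the closed forms (\ref{closedform}) to the numerical inequality $2\sigma^2-10>\sigma^{-4k}(1-\sigma^{-4})$. The only cosmetic difference is that you work with the difference $2(P_{2k+2}^2-1)-\sigma^2H_{2k+1}^2$ where the paper manipulates the ratio, and you invoke $4\sqrt{2}>5$ where the paper invokes $\sqrt{2}>5/4$; these are the same estimate.
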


\begin{proof}
First notice that (\ref{htop}) gives $H_{2k+1}^{2}=2P_{2k}P_{2k+2}+1$, and so \[ P_{2k+2}H_{2k+1}^{2}=2P_{2k+2}^{2}P_{2k}+P_{2k+2}>2P_{2k}(P_{2k+2}^{2}-1),\] from which the second inequality is immediate.  As for the first inequality, based on (\ref{closedform}) we have \begin{align*}
\frac{2(P_{2k+2}^{2}-1)}{H_{2k+1}^{2}}&=\frac{\sigma^{4k+4}-10+\sigma^{-4k-4}}{\sigma^{4k+2}-2+\sigma^{-4k-2}}
\\ &=\sigma^2\left(\frac{1-10\sigma^{-4k-4}+\sigma^{-8k-8}}{1-2\sigma^{-4k-2}+\sigma^{-8k-4}}\right).
\end{align*} So the desired inequality is equivalent to the statement that $-10\sigma^{-4k-4}+\sigma^{-8k-8}>-2\sigma^{-4k-2}+\sigma^{-8k-4}$, \emph{i.e.} (multiplying both sides by $\sigma^{4k+4}$ and rearranging) \begin{equation}\label{286need} 2\sigma^2-10>\sigma^{-4k}(1-\sigma^{-4}).\end{equation}  Of course since $\sigma>1$, (\ref{286need}) holds for all $k\geq 0$ if and only if it holds for $k=0$, \emph{i.e.} if and only if $2\sigma^2+\sigma^{-4}>11$. But this is indeed true: we have $2\sigma^2=2(3+2\sqrt{2})>11$ since $\sqrt{2}>\frac{5}{4}$.  This proves (\ref{286need}) and hence the proposition.
\end{proof}

We now discuss some connections between weight sequences and Pell numbers.  As in the introduction, for any pair of nonnegative, rationally dependent real numbers $x,y$, the \textbf{weight sequence} $\mathcal{W}(x,y)$ associated to the ellipsoid $E(x,y)$ is determined recursively by setting $\mathcal{W}(x,0)=\mathcal{W}(0,x)$ equal to the empty sequence and, if $x\leq y$, setting $\mathcal{W}(x,y)$ and $\mathcal{W}(y,x)$ both equal to (abusing notation slightly) $\left(x,\mathcal{W}(x,y-x)\right)$, \emph{i.e.} to the sequence that results from prepending $x$ to the sequence $\mathcal{W}(x,y-x)$.   

More geometrically, the weight sequence $\mathcal{W}(x,y)=(w_1,\ldots,w_k)$ is obtained by beginning with a $x$-by-$y$ rectangle and inductively removing as large a square as possible (of side length $w_i$ at the $i$th stage), leaving a smaller rectangle, until the final stage when the rectangle that remains is a square of side length $w_k$.  Thus the statement that $\mathcal{W}(x,y)=(w_1,\ldots,w_k)$ in particular implies that an $a$-by-$b$ rectangle can be tiled by a set of squares of side lengths $w_1,\ldots,w_k$; 

First we compute a certain specific family of weight sequences.

\begin{prop}\label{pellweight}
For any positive real number $x$ and any $m\in \N$ we have \[ \mathcal{W}(2P_{2m+1}x,P_{2m}x)=\left((P_{2m}x)^{\times 4},2P_{2m-1}x,\ldots,(P_{2j}x)^{\times 4},2P_{2j-1}x,\ldots,(P_2x)^{\times 4},2P_1x\right).\]
\end{prop}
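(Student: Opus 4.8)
The plan is to prove the identity by induction on $m$, using the geometric ``cut off the largest square'' description of the weight sequence recalled just before the statement, together with the Pell recurrence $P_{n+2}=2P_{n+1}+P_n$. A preliminary reduction: the defining recursion gives $\mathcal{W}(\lambda a,\lambda b)=\lambda\,\mathcal{W}(a,b)$ for any $\lambda>0$ (an immediate induction on the depth of the recursion, which depends only on the ratio $a/b$), so it suffices to treat $x=1$ and establish
\[ \mathcal{W}(2P_{2m+1},P_{2m})=\bigl((P_{2m})^{\times 4},2P_{2m-1},(P_{2m-2})^{\times 4},2P_{2m-3},\ldots,(P_2)^{\times 4},2P_1\bigr). \]
The case $m=0$ is vacuous (both sides are the empty sequence, since $P_0=0$), and the case $m=1$ I would check by hand: $\mathcal{W}(10,2)=(2^{\times 5})$, which matches because $2P_1=2=P_2$. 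This degenerate coincidence $2P_1=P_2$ is exactly why $m=1$ has to be a base case rather than part of the inductive step.

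For the inductive step with $m\geq 2$ I would use only two elementary consequences of the Pell recurrence: $2P_{2m+1}=4P_{2m}+2P_{2m-1}$, and $0<2P_{2m-1}<P_{2m}$ (the latter since $P_{2m}=2P_{2m-1}+P_{2m-2}$ with $P_{2m-2}>0$). The second inequality shows that $P_{2m}$ is the shorter side of the $2P_{2m+1}$-by-$P_{2m}$ rectangle, and combined with the first it shows that applying the weight-sequence recursion four times peels off four copies of $P_{2m}$ (at the $k$th application the long side is $2P_{2m+1}-(k-1)P_{2m}\geq P_{2m}$ for $k\leq 4$ since $2P_{2m-1}\geq 0$), after which $2P_{2m+1}-4P_{2m}=2P_{2m-1}$ becomes the shorter side; thus $\mathcal{W}(2P_{2m+1},P_{2m})=\bigl((P_{2m})^{\times 4},\mathcal{W}(2P_{2m-1},P_{2m})\bigr)$. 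Next, $P_{2m}-2P_{2m-1}=P_{2m-2}<2P_{2m-1}$, so exactly one copy of $2P_{2m-1}$ comes off, leaving $\mathcal{W}(2P_{2m-1},P_{2m-2})=\mathcal{W}(2P_{2(m-1)+1},P_{2(m-1)})$. Applying the inductive hypothesis to this last weight sequence and concatenating yields the desired formula.

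I do not expect any deep difficulty here; the work is essentially running the (continued-fraction-like) square-removal algorithm and matching each stage against the Pell identities. The one point that requires care, and that I would state explicitly, is the exact multiplicity at each stage --- four squares of side $P_{2j}$, then a single square of side $2P_{2j-1}$, repeating with $j$ decreasing --- together with the fact that this ``four-then-one'' pattern degenerates at the final stage (where $2P_1=P_2$ makes the last block effectively ``five''), which is the reason for treating $m=1$ separately.
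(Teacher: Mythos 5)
Your proof is correct and takes essentially the same route as the paper: induct on $m$, peel off four squares of side $P_{2m}$ and then one square of side $2P_{2m-1}$, and identify the remaining $2P_{2m-1}$-by-$P_{2m-2}$ rectangle with the $(m-1)$ case. One small remark: separating $m=1$ as a second base case is not actually necessary. The weight-sequence recursion is stated with a non-strict inequality ($x\leq y$), so the equality $2P_1=P_2$ causes no trouble---the inductive step still produces $\mathcal{W}(2P_1x,P_0x)=\mathcal{W}(2x,0)$, the empty sequence, exactly as needed. The paper's proof exploits this by writing the inequality chain as $0<2P_{2m-1}\leq P_{2m}$ (non-strict on the right) and running the induction uniformly for all $m>0$ from the single base case $m=0$. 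Your version is equally valid; it just checks one more case by hand than is strictly required.
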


(Of course, since $P_2=2P_1=2$ we could equally well write the last five terms in the sequence as $(2x)^{\times 5}$. See Figure \ref{pellpic} for the corresponding tiling in the case that $m=3$.)

\begin{proof}
This is a straightforward induction on $m$: if $m=0$ then since $P_{2m}=0$ both sides of the equation are equal to the empty sequence, while for $m>0$ the fact that \[ 0<2P_{2m+1}-4P_{2m}=2P_{2m-1}=P_{2m}-P_{2m-2}\leq P_{2m} \] implies that \begin{align*} \mathcal{W}(2P_{2m+1}x,P_{2m}x)&=\left(\left((P_{2m}x)^{\times 4}\right), \mathcal{W}(2P_{2m-1}x,P_{2m}x)\right)\\&=\left(\left((P_{2m}x)^{\times 4},2P_{2m-1}x\right), \mathcal{W}(2P_{2m-1}x,P_{2m-2}x)\right).\end{align*}  Thus the validity of the result for $m-1$ implies it for $m$. \end{proof}

\begin{center}
\begin{figure}
\includegraphics[height=1.2 in]{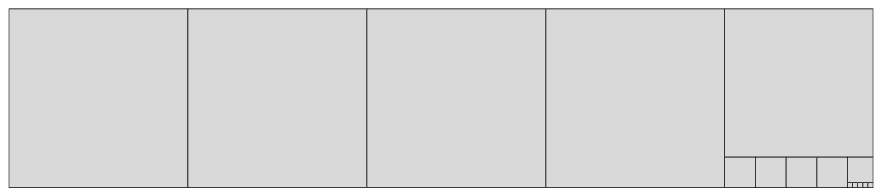}
\caption{The square tiling of a $338$-by-$70$ rectangle corresponding to the fact that \[\mathcal{W}(2P_{7},P_6)=(P_{6}^{\times 4},2P_5,P_{4}^{\times 4},2P_3,P_{2}^{\times 4},2P_1).\]}
\label{pellpic}
\end{figure}
\end{center}

The following computation gives the first part of the weight expansion $w(\alpha)=\mathcal{W}(1,\alpha)$ of an arbitrary rational number $\alpha\geq 1$, with more information when $\alpha$ is close to $\sigma^2$.

\begin{prop}\label{genexp}
Assume that $\alpha\in \left[\frac{P_{2k+1}}{P_{2k-1}},\frac{P_{2k+2}}{P_{2k}}\right]\cap\Q$ where $k\geq 0$.  Then \begin{align*} w(\alpha)&=\left(1,\left(\frac{P_2}{2}-\frac{P_0}{2}\alpha\right)^{\times 4},P_1\alpha-P_3,\ldots,\left(\frac{P_{2k}}{2}-\frac{P_{2k-2}}{2}\alpha\right)^{\times 4},P_{2k-1}\alpha-P_{2k+1},\right. \\ & \qquad \qquad \qquad \left.\mathcal{W}\left(\frac{P_{2k+2}}{2}-\frac{P_{2k}}{2}\alpha,P_{2k-1}\alpha-P_{2k+1}\right)\right).\end{align*}
\end{prop}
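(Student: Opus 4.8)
The plan is to prove Proposition~\ref{genexp} by induction on $k$. The base case $k=0$ should be immediate: the interval is all of $\Q\cap[1,\infty)$ (interpreting $P_2/P_0$ as $\infty$), and since $P_0=0$, $P_{-1}=P_1=1$, $P_2=2$, and the blocks indexed $j=1,\dots,k$ in the asserted formula are vacuous, that formula reduces to $w(\alpha)=\left(1,\mathcal{W}(1,\alpha-1)\right)$, which is just the defining recursion for $w(\alpha)=\mathcal{W}(1,\alpha)$ when $\alpha\geq 1$.

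For the inductive step I would assume the formula for $k$ and take $\alpha\in\left[\frac{P_{2k+3}}{P_{2k+1}},\frac{P_{2k+4}}{P_{2k+2}}\right]\cap\Q$. By Proposition~\ref{orderratios} one has $\frac{P_{2k+1}}{P_{2k-1}}<\frac{P_{2k+3}}{P_{2k+1}}$ and $\frac{P_{2k+4}}{P_{2k+2}}<\frac{P_{2k+2}}{P_{2k}}$, so such $\alpha$ also lies in the interval to which the case-$k$ formula applies. Setting $p=\frac{P_{2k+2}}{2}-\frac{P_{2k}}{2}\alpha$ and $q=P_{2k-1}\alpha-P_{2k+1}$, the case-$k$ formula then exhibits $w(\alpha)$ with every entry but the last already of the shape required for level $k+1$, the last entry being $\mathcal{W}(p,q)$. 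Comparing with the block indexed $j=k+1$ and the trailing weight sequence in the level-$(k+1)$ formula, what remains is to show that
\[ \mathcal{W}(p,q)=\bigl(p^{\times 4},\ q-4p,\ \mathcal{W}(5p-q,\,q-4p)\bigr) \]
and that $q-4p=P_{2k+1}\alpha-P_{2k+3}$ and $5p-q=\frac{P_{2k+4}}{2}-\frac{P_{2k+2}}{2}\alpha$.

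These two identities are the only genuine computation. Iterating $P_{m+1}=2P_m+P_{m-1}$ gives $P_{m+2}=5P_m+2P_{m-1}$, whence $q-4p=(P_{2k-1}+2P_{2k})\alpha-(P_{2k+1}+2P_{2k+2})=P_{2k+1}\alpha-P_{2k+3}$ and $5p-q=\frac{5P_{2k+2}+2P_{2k+1}}{2}-\frac{5P_{2k}+2P_{2k-1}}{2}\alpha=\frac{P_{2k+4}}{2}-\frac{P_{2k+2}}{2}\alpha$. Granting these, $q-4p\geq 0\iff\alpha\geq\frac{P_{2k+3}}{P_{2k+1}}$ and $5p-q\geq 0\iff\alpha\leq\frac{P_{2k+4}}{P_{2k+2}}$, so our $\alpha$ satisfies $4p\leq q\leq 5p$, and $p>0$ since $\alpha\leq\frac{P_{2k+4}}{P_{2k+2}}<\frac{P_{2k+2}}{P_{2k}}$ (Proposition~\ref{orderratios} again). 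Then $q\geq 4p$ lets the weight-sequence recursion peel off four squares of side $p$, $\mathcal{W}(p,q)=(p^{\times 4},\mathcal{W}(p,q-4p))$; next $q-4p\leq p$ peels off one square of side $q-4p$, $\mathcal{W}(p,q-4p)=(q-4p,\mathcal{W}(q-4p,\,p-(q-4p)))$; and $p-(q-4p)=5p-q$. Using the symmetry $\mathcal{W}(x,y)=\mathcal{W}(y,x)$ and substituting the two identities above yields the displayed expansion of $\mathcal{W}(p,q)$, and feeding this into the case-$k$ formula produces the case-$(k+1)$ formula, completing the induction.

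The one place I would be careful is the left endpoint $\alpha=\frac{P_{2k+3}}{P_{2k+1}}$, where $q-4p=0$: there $\mathcal{W}(p,q)=(p^{\times 4})$ while the expansion above literally reads $(p^{\times 4},0,\mathcal{W}(\,\cdot\,,0))$, so one adopts the (harmless) convention that a zero entry of a weight sequence, together with the ensuing $\mathcal{W}(\,\cdot\,,0)=\emptyset$, is deleted; alternatively, prove the identity for $\alpha$ in the interior of each interval and note that the square-removal pattern is locally constant there. Beyond this I anticipate no obstacle: the essential inputs are just Proposition~\ref{orderratios} (for the nesting of consecutive intervals) and the elementary Pell manipulations above, and it is precisely the demand that exactly four squares of side $p$ and then one of side $q-4p$ be removable that pins the interval endpoints to $\frac{P_{2k+3}}{P_{2k+1}}$ and $\frac{P_{2k+4}}{P_{2k+2}}$.
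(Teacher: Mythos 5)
Your proof is correct and follows essentially the same inductive strategy as the paper's: verify the base case $k=0$ directly, note (via Proposition~\ref{orderratios}) that the interval for level $k+1$ is contained in the one for level $k$, and expand the trailing $\mathcal{W}(p,q)$ by peeling off four squares of side $p$ and then one of side $q-4p$, with the Pell recurrence $P_{m+2}=5P_m+2P_{m-1}$ producing the required level-$(k+1)$ entries. Your explicit remark about the degenerate left endpoint $q-4p=0$ is a minor point of care that the paper leaves implicit, but otherwise the two arguments are the same.
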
 (If $k=0$, in which case the condition $\alpha\in \left[\frac{P_{2k+1}}{P_{2k-1}},\frac{P_{2k+2}}{P_{2k}}\right]$  just says that $\alpha\in [1,\infty)$, then  the  sequence $\left(\frac{P_2}{2}-\frac{P_0}{2}\alpha\right)^{\times 4},\ldots,P_{2k-1}\alpha-P_{2k+1}$ should be interpreted as empty, so this just simplifies to $w(\alpha)=\left(1,\mathcal{W}(1,\alpha-1)\right)$.)

\begin{proof} We proceed by induction; for $k=0$ the statement is trivial.  Let $\alpha\in \left[\frac{P_{2k+1}}{P_{2k-1}},\frac{P_{2k+2}}{P_{2k}}\right]$ where $k\geq 1$ and assume the statement proven for all $j<k$.  Note that Proposition \ref{orderratios} shows that \[ \left[\frac{P_{2k+1}}{P_{2k-1}},\frac{P_{2k+2}}{P_{2k}}\right]\subset \left[\frac{P_{2j+1}}{P_{2j-1}},\frac{P_{2j+2}}{P_{2j}}\right] \mbox{ for }j<k,\] so the inductive hypothesis applies to our particular $\alpha$.  The special case $j=k-1$ of the inductive hypothesis leads us to consider $\mathcal{W}\left(\frac{P_{2k}}{2}-\frac{P_{2k-2}}{2}\alpha,P_{2k-3}\alpha-P_{2k-1}\right)$.  We simply observe that \[  \left(P_{2k-3}\alpha-P_{2k-1}\right)-4\left(\frac{P_{2k}}{2}-\frac{P_{2k-2}}{2}\alpha\right)=P_{2k-1}\alpha-P_{2k+1}\geq 0 \] since we assume that $\alpha\geq \frac{P_{2k+1}}{P_{2k-1}}$, and then that \[ \left(\frac{P_{2k}}{2}-\frac{P_{2k-2}}{2}\alpha\right)-(P_{2k-1}\alpha-P_{2k+1})=\frac{P_{2k+2}}{2}-\frac{P_{2k}}{2}\alpha\geq 0\] since we assume that $\alpha\leq \frac{P_{2k+2}}{P_{2k}}$.  Thus 
\begin{align*} \mathcal{W}\left(\frac{P_{2k}}{2}-\frac{P_{2k-2}}{2}\alpha,P_{2k-3}\alpha-P_{2k-1}\right)=&\left(\left(\frac{P_{2k}}{2}-\frac{P_{2k-2}}{2}\alpha\right)^{\times 4},P_{2k-1}\alpha-P_{2k+1}\right)\\ &\sqcup\mathcal{W}\left(\frac{P_{2k+2}}{2}-\frac{P_{2k}}{2}\alpha,P_{2k-1}\alpha-P_{2k+1}\right).\end{align*}  The result then follows immediately by induction.
\end{proof}

\subsection{Cremona moves}\label{cremintro}

As in the introduction let $X_{N+1}$ denote the $(N+1)$-point blowup of $\mathbb{C}P^2$, with $L,E_0,\ldots,E_N\in H^2(X_{N+1},\Z)$ the Poincar\'e duals of a complex projective line  and of the $N+1$ exceptional divisors of the blowups, respectively.  If $x,y,z\in \{0,\ldots, N\}$ then $X_{N+1}$ contains a smoothly embedded sphere of self-intersection $-2$ that is Poincar\'e dual to the class $L-E_x-E_y-E_z$, and the \textbf{Cremona move} $\frak{c}_{xyz}\co H^2(X_{N+1};\R)\to H^2(X_{N+1};\R)$ is defined to be the cohomological action of the generalized Dehn twist along this sphere.  Likewise if $x,y\in\{0,\ldots,N\}$ then $X_{N+1}$ contains a smoothly embedded sphere of self-intersection $-2$ Poincar\'e dual to $E_x-E_y$, and we let $\mathfrak{c}_{xy}$ denote the action on $H^2$ of the generalized Dehn twist along this sphere.   
In terms of the basis $\{L,E_0,\ldots,E_N\}$ we have \[ \frak{c}_{xyz}\left(dL-\sum_i a_iE_i\right)=(d+\delta_{xyz})L-\sum_{i\in\{x,y,z\}}(a_i+\delta_{xyz})E_i-\sum_{i\notin\{x,y,z\}}a_iE_i \] where \begin{equation}\label{deltadef} \delta_{xyz}=d-a_x-a_y-a_z\end{equation} and \[ \frak{c}_{xy}\left(dL-\sum_ia_iE_i\right)=dL-a_yE_x-a_xE_y-\sum_{i\notin\{x,y\}}a_iE_i.\]  (So in terms of the coordinates $\langle d;a_0,\ldots,a_N\rangle$ from Section \ref{init}, $\frak{c}_{xyz}$ adds $\delta_{xyz}$ to the coordinates $d,a_x,a_y,a_z$ and $\frak{c}_{xy}$ simply swaps $a_x$ with $a_y$.)  Note that Cremona moves preserve the standard first Chern class $c_1(TX_N)=3L-\sum_i E_i$.  We say that two classes $A,B\in H^2(X_{N+1};\R)$ are \textbf{Cremona equivalent} if there is a sequence of Cremona moves mapping $A$ to $B$.  The operations $\frak{c}_{xyz},\frak{c}_{xy}$ obviously give rise to corresponding operations on the direct limit $\mathcal{H}^2=\varinjlim H^2(X_{N};\R)$, which we denote by the same symbols.

A crucial fact for our purposes will be that Cremona moves $\frak{c}_{xyz}$ and $\frak{c}_{xy}$ preserve both the closure of the symplectic cone $\bar{\mathcal{C}}_K(X_{N+1})$ and the set of exceptional classes $\mathcal{E}_{N+1}$.  Indeed, as shown in \cite[Proposition 1.2.12(iii)]{MS}, one has $E\in \mathcal{E}_{N+1}$ if and only if $E$ can be mapped to $E_0$ by a sequence of Cremona moves; since Cremona moves are induced by orientation-preserving diffeomorphisms this implies that they likewise preserve $\bar{\mathcal{C}}_K(X_{N+1})$ by (\ref{poscrit}).  Thus to verify condition (ii) in Proposition \ref{background} holds (and thus to show that $\lambda\geq C_{\beta}(\alpha)$) it suffices to find a sequence of Cremona moves which sends the class $\langle \lambda(\beta+1);\lambda \beta,\lambda,x_2,\ldots,x_N\rangle$ to a class that can be directly verified to lie in $\bar{\mathcal{C}}_K(X_{N+1})$.  Likewise to show that classes such as the $A_{i,n}^{(k)}$ that we use to prove Theorem \ref{stairmain} belong to $\mathcal{E}$ it suffices to show that they are Cremona equivalent to $E_0=\langle 0;-1,0\rangle=(1,0;1)$.

 There is a particular composition of Cremona moves whose repeated application underlies both the proof of Theorem \ref{fmsup} and the construction of many of the classes involved in our infinite staircases. Specifically, let \[ \Xi=\frak{c}_{36}\circ \frak{c}_{456}\circ \frak{c}_{236}\circ \frak{c}_{012}\circ \frak{c}_{345}\in Aut\left( H^2(X_7;\R) \right).\]  

\begin{prop}\label{xiaction} Given any $Z,A,B,C,\epsilon\in \R$, we have \[ \Xi\left(\langle Z;A+\epsilon,A-\epsilon,B^{\times 4},C\rangle\right)=\langle Z';A'+\epsilon,A'-\epsilon,B'^{\times 4},C'\rangle \] where $A',B',C',Z'$ are computed as follows.  Let \[\zeta=Z-2B.\]  Then \begin{align*}
A' &= 2\zeta-A, \\ C' &= 2\zeta-C, \\ B' &= C'+Z-2A-B, \\ Z' &= 2B'+\zeta.
\end{align*}
\end{prop}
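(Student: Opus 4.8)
The proof is a direct computation. Since $\Xi$ is a composition of five Cremona moves, the plan is simply to apply them one at a time --- starting (because of the order of composition) with $\mathfrak{c}_{345}$ --- using the explicit formulas for $\mathfrak{c}_{xyz}$ and $\mathfrak{c}_{xy}$ recalled immediately above Proposition \ref{xiaction}. I would work throughout in the $\langle d;a_0,\ldots,a_6\rangle$ coordinates, record the full tuple $(d;a_0,\ldots,a_6)$ after each move, and substitute $\zeta=Z-2B$ everywhere to keep the expressions compact. One begins from $(Z;A+\epsilon,A-\epsilon,B,B,B,B,C)$.

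The five steps run as follows. (i) For $\mathfrak{c}_{345}$ one has $\delta_{345}=Z-3B$, so each of $a_3,a_4,a_5$ becomes $\zeta$ and $d$ becomes $2Z-3B$. (ii) For $\mathfrak{c}_{012}$ the current entries $a_0,a_1$ differ by $2\epsilon$, so this move adds the common quantity $\delta_{012}=2\zeta-2A$ to both, producing $(2\zeta-A)\pm\epsilon$ in slots $0,1$; thus $A'=2\zeta-A$ already appears in the correct positions, and one also records the updated $a_2$ and $d$. (iii) For $\mathfrak{c}_{236}$ a short simplification gives $\delta_{236}=\zeta-C$, which turns $a_3$ into $2\zeta-C=C'$, turns $a_6$ into $\zeta$, and updates $a_2$ and $d$. (iv) For $\mathfrak{c}_{456}$ one now has $a_4=a_5=a_6=\zeta$, and $\delta_{456}$ simplifies so that $a_4,a_5,a_6$ all become $3\zeta+B-2A-C$, which one checks equals $C'+Z-2A-B$, i.e.\ the asserted $B'$. (v) Finally $\mathfrak{c}_{36}$ swaps $a_3$ and $a_6$, placing $B'$ in slot $3$ and $C'$ in slot $6$.

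At the end, slots $0,1$ read $A'\pm\epsilon$, slots $3,4,5$ read $B'$, and slot $6$ reads $C'$, so the only things left to verify --- and the points I would present most carefully, since everything else is forced by the shapes of the Cremona formulas (note $\mathfrak{c}_{012}$ automatically preserves the $\pm\epsilon$ splitting) --- are two coincidences: first, that the value of $a_2$ carried through steps (ii)--(iii) is exactly $B'$, so that slots $2,3,4,5$ genuinely form $B'^{\times 4}$; and second, that the accumulated value of $d$ equals $2B'+\zeta$, which is the claimed $Z'$. Both are one-line checks after substituting $\zeta=Z-2B$, together with the earlier simplification $\delta_{236}=\zeta-C$; there is no conceptual obstacle, only the bookkeeping, and it is probably cleanest to display the intermediate tuples $(d;a_0,\ldots,a_6)$ explicitly so the reader can follow the arithmetic.
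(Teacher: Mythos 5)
Your proposal is correct and follows exactly the intended route: the paper leaves this as ``a straightforward computation,'' and your step-by-step application of the five Cremona moves (with the simplifications $\delta_{345}=\zeta-B$, $\delta_{012}=2\zeta-2A$, $\delta_{236}=\zeta-C$) is precisely that computation. I verified the intermediate tuples and the final checks that $a_2=B'$ and $d=2B'+\zeta=7Z-12B-4A-2C$ all come out as you claim.
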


\begin{proof}
This is a straightforward computation which we leave to the reader.
\end{proof}

Repeated application of the following proposition will be helpful in the proof of Theorem \ref{fmsup}.

\begin{prop}\label{indxi}
For any $j\in\N$, $\gamma,\alpha,\beta\in\R$ we have \begin{align*} &\Xi\left(\left\langle P_{2j-1}(\gamma(\beta+1)-1)-P_{2j-2}\alpha;\frac{H_{2j-2}}{2}\gamma(\beta+1)-P_{2j-1}+\frac{\gamma(\beta-1)}{2},  \right.\right.\\ &\qquad\qquad \qquad  \frac{H_{2j-2}}{2}\gamma(\beta+1)-P_{2j-1}-\frac{\gamma(\beta-1)}{2},  \left.\left.\left(\frac{P_{2j}}{2}-\frac{P_{2j-2}}{2}\alpha\right)^{\times 4},P_{2j-1}\alpha-P_{2j+1} \right\rangle \right) 
\\ &
 =\left\langle P_{2j+1}(\gamma(\beta+1)-1)-P_{2j}\alpha;\frac{H_{2j}}{2}\gamma(\beta+1)-P_{2j+1}+\frac{\gamma(\beta-1)}{2},\right. 
\\  &\qquad\qquad \qquad \left. \frac{H_{2j}}{2}\gamma(\beta+1)-P_{2j+1}-\frac{\gamma(\beta-1)}{2}, \left(\frac{P_{2j}}{2}(2\gamma(\beta+1)-\alpha-1)\right)^{\times 4},\right.\\ & \qquad\qquad \qquad \qquad P_{2j-1}(2\gamma(\beta+1)-\alpha-1)   \Bigg\rangle. \end{align*}
\end{prop}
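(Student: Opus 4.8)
The plan is to derive this directly from Proposition \ref{xiaction}, using only the linear recurrences for $P_n,H_n$ and the identities collected in Section \ref{pellprelim}. First I would match the input class to the template $\langle Z;A+\epsilon,A-\epsilon,B^{\times 4},C\rangle$ of Proposition \ref{xiaction}, reading off
\[ Z=P_{2j-1}(\gamma(\beta+1)-1)-P_{2j-2}\alpha,\quad A=\tfrac{H_{2j-2}}{2}\gamma(\beta+1)-P_{2j-1},\quad \epsilon=\tfrac{\gamma(\beta-1)}{2}, \]
\[ B=\tfrac{P_{2j}}{2}-\tfrac{P_{2j-2}}{2}\alpha,\qquad C=P_{2j-1}\alpha-P_{2j+1}. \]
(For the edge cases with small $j$ one invokes the conventions $P_{-1}=1$, $H_{-1}=-1$, $P_{-2}=-2$ from Section \ref{pellprelim}, under which all identities used below remain valid since they respect both the recurrences and the closed forms.) Since $\Xi$ acts on this class without disturbing $\epsilon$, the output is automatically of the form $\langle Z';A'+\epsilon,A'-\epsilon,B'^{\times 4},C'\rangle$ with the \emph{same} $\epsilon$, and it remains only to compute $Z',A',B',C'$ from the formulas $\zeta=Z-2B$, $A'=2\zeta-A$, $C'=2\zeta-C$, $B'=C'+Z-2A-B$, $Z'=2B'+\zeta$ of Proposition \ref{xiaction}.

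I would simplify $\zeta$ first, as everything else is built from it. A direct substitution gives $\zeta=P_{2j-1}\gamma(\beta+1)-P_{2j-1}-P_{2j}$, and by the identity $H_n=P_n+P_{n-1}$ of (\ref{hpadd}) this is $\zeta=P_{2j-1}\gamma(\beta+1)-H_{2j}$. Plugging into $A'=2\zeta-A$ and collecting the coefficient of $\gamma(\beta+1)$ yields $2P_{2j-1}-\tfrac{H_{2j-2}}{2}$, which equals $\tfrac{H_{2j}}{2}$ precisely because $H_{2j}+H_{2j-2}=4P_{2j-1}$ by (\ref{4hp}); the constant term is $P_{2j-1}-2H_{2j}=-P_{2j+1}$ because $P_{2j+1}+P_{2j-1}=2H_{2j}$ by (\ref{2hp}). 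This gives $A'=\tfrac{H_{2j}}{2}\gamma(\beta+1)-P_{2j+1}$, as claimed. The same identity $P_{2j+1}+P_{2j-1}=2H_{2j}$ immediately converts $C'=2\zeta-C$ into $C'=P_{2j-1}(2\gamma(\beta+1)-\alpha-1)$.

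Next I would compute $B'=C'+Z-2A-B$, keeping the three ``parameters'' $\gamma(\beta+1)$, $\alpha$, and $1$ separate throughout. The coefficient of $\gamma(\beta+1)$ reduces to $3P_{2j-1}-H_{2j-2}=P_{2j}$ upon writing $H_{2j-2}=P_{2j-1}-P_{2j-2}$ and using $P_{2j}=2P_{2j-1}+P_{2j-2}$; the coefficient of $\alpha$ reduces to $-P_{2j-1}-\tfrac{P_{2j-2}}{2}=-\tfrac{P_{2j}}{2}$ by the same recurrence; and the constant term reduces to $-\tfrac{P_{2j}}{2}$. Hence $B'=\tfrac{P_{2j}}{2}(2\gamma(\beta+1)-\alpha-1)$. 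Finally $Z'=2B'+\zeta$ collapses to $P_{2j+1}(\gamma(\beta+1)-1)-P_{2j}\alpha$ using once more the recurrence $P_{2j+1}=2P_{2j}+P_{2j-1}$ together with $H_{2j}=P_{2j+1}-P_{2j}$ from (\ref{hpadd}). Comparing these four expressions with the asserted right-hand side finishes the proof.

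There is no conceptual obstacle: the argument is entirely a substitution into the five closed-form expressions of Proposition \ref{xiaction} followed by repeated application of the Pell recurrence and of (\ref{hpadd}), (\ref{4hp}), (\ref{2hp}). The only thing that requires care is the bookkeeping — tracking the coefficients of $\gamma(\beta+1)$, of $\alpha$, and of the constant $1$ independently at every stage, so that the cancellation of the $\epsilon$-dependence and the correct shift of Pell indices are both transparent — together with the consistent use of the negative-index conventions when $j$ is small.
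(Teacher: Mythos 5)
Your proposal is correct and follows essentially the same route as the paper: both read off $Z,A,B,C,\epsilon$ from the template of Proposition \ref{xiaction}, compute $\zeta=P_{2j-1}\gamma(\beta+1)-H_{2j}$, and then simplify $A',B',C',Z'$ via the recurrence and the identities (\ref{hpadd}), (\ref{4hp}), (\ref{2hp}). The only difference is cosmetic bookkeeping (the paper evaluates $Z-2A-B$ as an intermediate quantity and computes $A'$ last, while you compute $A'$ earlier and track coefficients of $\gamma(\beta+1)$, $\alpha$, and $1$ separately), which changes nothing of substance.
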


\begin{proof}
We follow the notation of Proposition \ref{xiaction}, so $Z=P_{2j-1}(\gamma(\beta+1)-1)-P_{2j-2}\alpha$, $A=\frac{H_{2j-2}}{2}\gamma(\beta+1)-P_{2j-1}$, $B=\frac{P_{2j}}{2}-\frac{P_{2j-2}}{2}\alpha$, and $C=P_{2j-1}\alpha-P_{2j+1}$.    We then find \begin{align*} \zeta&=\left(P_{2j-1}(\gamma(\beta+1)-1)-P_{2j-2}\alpha\right)-2\left(\frac{P_{2j}}{2}-\frac{P_{2j-2}}{2}\alpha\right)
\\ &=  P_{2j-1}\gamma(\beta+1)-P_{2j-1}-P_{2j}=P_{2j-1}\gamma(\beta+1)-H_{2j} \end{align*}
and \begin{align*} Z-2A-B&=\gamma(\beta+1)(P_{2j-1}-H_{2j-2})-\frac{P_{2j-2}}{2}\alpha+\left(-P_{2j-1}+2P_{2j-1}-\frac{P_{2j}}{2}\right)
\\ &= P_{2j-2}\left(\gamma(\beta+1)-\frac{1}{2}\alpha-\frac{1}{2}\right).
\end{align*} Thus \[ C'=2\zeta-C=P_{2j-1}(2\gamma(\beta+1)-\alpha)-(2H_{2j}-P_{2j+1})=P_{2j-1}(2\gamma(\beta+1)-\alpha-1) \] where we have used that $2H_{2j}-P_{2j+1}=P_{2j-1}$ by (\ref{2hp}). Also, \[ B'=C'+(Z-2A-B)=\left(P_{2j-1}+\frac{P_{2j-2}}{2}\right)(2\gamma(\beta+1)-\alpha-1)=\frac{P_{2j}}{2}(2\gamma(\beta+1)-\alpha-1),\] and then \begin{align*} Z'&=2B'+\zeta=(P_{2j-1}+2P_{2j})\gamma(\beta+1)-(H_{2j}+P_{2j})-P_{2j}\alpha
\\ &=P_{2j+1}(\gamma(\beta+1)-1)-P_{2j}\alpha.\end{align*}  Finally \begin{align*}A'&=2\zeta-A=\left(2P_{2j-1}-\frac{H_{2j-2}}{2}\right)\gamma(\beta+1)-(2H_{2j}-P_{2j-1})\\ &=\frac{H_{2j}}{2}\gamma(\beta+1)-P_{2j+1} \end{align*} since (\ref{4hp}) shows that $\frac{H_{2j}}{2}+\frac{H_{2j-2}}{2}=2P_{2j-1}$ and (\ref{2hp}) shows that $P_{2j+1}+P_{2j-1}=2H_{2j}$. In view of Proposition \ref{xiaction}, this completes the proof. 
\end{proof}

The definition of $\Xi$ given above presents it as an automorphism of $H_2(X_7;\R)$; we will use variants $\Xi^{(n)}$ of $\Xi$ which are automorphisms of $H_2(X_N;\R)$ where $N\geq n+5\geq 7$.  Specifically, we define \begin{equation}\label{xindef} \Xi^{(n)}=\frak{c}_{n+1,n+4}\circ \frak{c}_{n+2,n+3,n+4}\circ \frak{c}_{n,n+1,n+4}\circ \frak{c}_{01n}\circ \frak{c}_{n+1,n+2,n+3}.\end{equation}  Equivalently, \begin{align*} \Xi^{(n)}&\left(\left\langle r;s_0,s_1,s_2,\ldots,s_{n-1},s_n,s_{n+1}s_{n+2},s_{n+3},s_{n+4},s_{n+5},\ldots,s_{N-1}\right\rangle\right)
\\ &\quad = \left\langle r';s'_0,s'_1,s_2,\ldots,s_{n-1},s'_n,s'_{n+1}s'_{n+2},s'_{n+3},s'_{n+4},s_{n+5},\ldots,s_{N-1}\right\rangle \end{align*} where $r',s'_0,s'_1,s'_n,\ldots,s'_{n+4}$ are defined by the property that $\Xi(\langle r;s_0,s_1,s_{n},\ldots,s_{n+4}\rangle)=\Xi(\langle r';s'_0,s'_1,s'_{n},\ldots,s'_{n+4}\rangle)$.  (In practice we will have $s_n=s_{n+1}=s_{n+2}=s_{n+3}$ so that we can apply Proposition \ref{xiaction}.)

We will now use the $\Xi^{(n)}$ together with Propositions \ref{genexp} and \ref{indxi} to modify via Cremona moves the classes that are relevant to the embedding problems that arise in the proof of Theorem \ref{fmsup}. Recall that $E(1,\alpha)^{\circ}$ symplectically embeds into $\gamma P(1,\beta)$ if and only if the $\left(\gamma b,\gamma;\mathcal{W}(1,a)\right)\in \bar{\mathcal{C}}_K(X_{N+1})$ where $N$ is the length of the weight sequence $w(\alpha)$.

\begin{prop} \label{bigreduce} Assume that $\gamma,\beta\geq 1$ and that $\alpha\in \left[\frac{P_{2k+1}}{P_{2k-1}},\frac{P_{2k+2}}{P_{2k}}\right]\cap \Q$ where $k\geq 1$.  If $2\gamma(\beta+1)-a-1<0$ then $\left(\gamma b,\gamma;\mathcal{W}(1,a)\right)\notin \bar{\mathcal{C}}_K(X_{N+1})$ where $N$ is the length of $w(\alpha)$.  Otherwise, $\left(\gamma \beta,\gamma;\mathcal{W}(1,\alpha)\right)$ is Cremona equivalent to the class \begin{align*} \Sigma_{\alpha,\beta,\gamma}^{k}&=\left\langle P_{2k+1}\left(\gamma(\beta+1)-1\right)-P_{2k}\alpha;\frac{H_{2k}}{2}\gamma(\beta+1)-P_{2k+1}+\gamma\left(\frac{\beta-1}{2}\right),\right.\\ &\qquad  \frac{H_{2k}}{2}\gamma(\beta+1)-P_{2k+1}-\gamma\left(\frac{\beta-1}{2}\right),\mathcal{W}\left(\frac{P_{2k+2}}{2}-\frac{P_{2k}}{2}\alpha,P_{2k-1}\alpha-P_{2k+1}\right)    ,\\ & \qquad \left.\mathcal{W}\left(\frac{P_{2k}}{2}\left(2\gamma(\beta+1)-\alpha-1\right) , P_{2k+1}\left(2\gamma(\beta+1)-\alpha-1\right)\right)\right\rangle.\end{align*}
\end{prop}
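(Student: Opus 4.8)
The strategy is to first convert the $S^2\times S^2$-coordinate class $(\gamma\beta,\gamma;\mathcal{W}(1,\alpha))$ into $\C P^2$-coordinates using (\ref{convert1}), then apply the weight expansion of Proposition \ref{genexp} to see that the resulting class has exactly the shape to which Proposition \ref{indxi} applies, and finally iterate the Cremona move $\Xi^{(n)}$ a total of $k$ times to arrive at $\Sigma^k_{\alpha,\beta,\gamma}$. In more detail: I would first write out $(\gamma\beta,\gamma;\mathcal{W}(1,\alpha))$ in $\C P^2$-coordinates. Since $w(\alpha) = \mathcal{W}(1,\alpha)$ begins with a $1$ (as $\alpha\ge 1$), the conversion (\ref{convert1}) produces a class of the form $\langle \gamma(\beta+1)-1;\gamma\beta-1,\gamma-1,\mathcal{W}(1,\alpha-1)\rangle$, or after regrouping so the repeated entries from Proposition \ref{genexp} become visible, a class $\langle Z;A+\epsilon,A-\epsilon,\ldots\rangle$ where $\epsilon = \gamma(\beta-1)/2$, matching the $j=1$ instance of the left-hand side of Proposition \ref{indxi} once one substitutes the expansion of $w(\alpha)$ from Proposition \ref{genexp}.

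Next I would set up the induction on the number of applications of $\Xi^{(n)}$. The hypothesis $\alpha\in[\frac{P_{2k+1}}{P_{2k-1}},\frac{P_{2k+2}}{P_{2k}}]$ together with Proposition \ref{orderratios} guarantees that $\alpha$ also lies in every wider interval $[\frac{P_{2j+1}}{P_{2j-1}},\frac{P_{2j+2}}{P_{2j}}]$ for $1\le j\le k$, so at each stage the weight expansion of the ``remaining'' ellipsoid has the four-fold-repeated form required to invoke Proposition \ref{indxi}. At the $j$th step, applying $\Xi^{(n_j)}$ (for the appropriate offset $n_j$ tracking where in the coordinate vector the active block sits) transforms the block indexed by $2j-1$ into the block indexed by $2j+1$, exactly as computed in Proposition \ref{indxi}, while leaving the ``already-processed'' weights $\left(\frac{P_{2i}}{2}-\frac{P_{2i-2}}{2}\alpha\right)^{\times 4},P_{2i-1}\alpha - P_{2i+1}$ for $i<j$ untouched. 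After $k$ iterations the head of the class has been transformed into the $\langle P_{2k+1}(\gamma(\beta+1)-1)-P_{2k}\alpha;\ldots\rangle$ form appearing in $\Sigma^k_{\alpha,\beta,\gamma}$, and one is left with $\mathcal{W}(\frac{P_{2k}}{2}(2\gamma(\beta+1)-\alpha-1),P_{2k+1}(2\gamma(\beta+1)-\alpha-1))$ as the tail produced by the last application of Proposition \ref{indxi}, together with the ``inert'' tail $\mathcal{W}(\frac{P_{2k+2}}{2}-\frac{P_{2k}}{2}\alpha,P_{2k-1}\alpha-P_{2k+1})$ coming from the final undigested piece of the Proposition \ref{genexp} expansion. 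The hypothesis $2\gamma(\beta+1)-\alpha-1\ge 0$ is exactly what is needed for this last weight sequence to be defined (a weight sequence of a genuine rectangle); if instead $2\gamma(\beta+1)-\alpha-1<0$ then at the final stage the class $\Xi^{(n)}$ would produce has a negative entry $C'=P_{2k-1}(2\gamma(\beta+1)-\alpha-1)<0$, and since $\Xi^{(n)}$ is a composition of Cremona moves (hence preserves $\bar{\mathcal C}_K$), and a class in $\bar{\mathcal C}_K(X_{N+1})$ can be shown to have a nonnegative pairing with the exceptional class $E_j$ corresponding to that entry, the original class cannot lie in $\bar{\mathcal C}_K(X_{N+1})$.

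The main obstacle I expect is purely bookkeeping: I need to verify carefully that at each stage the index positions line up so that $\Xi^{(n_j)}$ acts on precisely the right six coordinates (the two near the front playing the role of $A\pm\epsilon$, the four repeated $B$'s, and the single $C$), that the entries it is supposed to leave fixed are genuinely outside its support, and that the inequalities from Proposition \ref{orderratios} propagate correctly so that Proposition \ref{indxi} is applicable at every step rather than just the first. There is also a subtlety in that the length of the weight sequence, and hence the ambient $X_N$, must be large enough to accommodate all the blocks; this is handled by working in $\mathcal{H}^2$ (padding with zeros as in Section \ref{init}) so the number of blowups is not an issue, but one should check that the stated class $\Sigma^k_{\alpha,\beta,\gamma}$ is the correct representative. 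None of this requires new ideas beyond Propositions \ref{genexp}, \ref{indxi}, and the basic fact (recalled in Section \ref{cremintro}) that Cremona moves preserve $\bar{\mathcal C}_K(X_{N+1})$; the content is in organizing the iteration cleanly, and the negativity case follows from positivity of intersection with the relevant $E'_i$ or $E_j$.
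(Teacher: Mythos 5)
Your overall strategy --- convert to $\mathbb{C}P^2$ coordinates, expand $w(\alpha)$ via Proposition \ref{genexp}, apply Proposition \ref{indxi} iteratively for $j=1,\ldots,k$, and read off the failure to lie in $\bar{\mathcal{C}}_K(X_{N+1})$ from negative entries when $2\gamma(\beta+1)-\alpha-1<0$ --- is exactly the paper's approach. However, your description of the iteration contains a genuine bookkeeping error, and omits a step that is essential to arrive at $\Sigma^k_{\alpha,\beta,\gamma}$.

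You claim that the $j$th application of $\Xi^{(n_j)}$ leaves the entries $\left(\frac{P_{2i}}{2}-\frac{P_{2i-2}}{2}\alpha\right)^{\times 4},\, P_{2i-1}\alpha-P_{2i+1}$ for $i<j$ untouched. That is not what happens: at the earlier step $i$, those positions were already overwritten by $\left(\frac{P_{2i}}{2}(2\gamma(\beta+1)-\alpha-1)\right)^{\times 4},\, P_{2i-1}(2\gamma(\beta+1)-\alpha-1)$, and it is \emph{those} new entries that all subsequent moves leave fixed. Because of this, the picture you describe --- in which the final application of Proposition \ref{indxi} directly produces a tail $\mathcal{W}\left(\frac{P_{2k}}{2}(2\gamma(\beta+1)-\alpha-1),P_{2k+1}(2\gamma(\beta+1)-\alpha-1)\right)$ --- cannot be right. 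Each application of $\Xi^{(n_j)}$ produces only five new entries; after $k$ iterations you have $5k$ accumulated entries $\left(\frac{P_{2}}{2}(2\gamma(\beta+1)-\alpha-1)\right)^{\times 4},\,P_1(\cdots),\ldots,\left(\frac{P_{2k}}{2}(\cdots)\right)^{\times 4},\,P_{2k-1}(\cdots)$ scattered through the coordinate vector, plus the inert tail $\mathcal{W}\left(\frac{P_{2k+2}}{2}-\frac{P_{2k}}{2}\alpha,P_{2k-1}\alpha-P_{2k+1}\right)$. To reach $\Sigma^k_{\alpha,\beta,\gamma}$ you must then invoke Proposition \ref{pellweight}, which identifies exactly this collection of $5k$ numbers (after reordering by Cremona moves $\frak{c}_{xy}$) with the single weight sequence $\mathcal{W}\left(\frac{P_{2k}}{2}(2\gamma(\beta+1)-\alpha-1),P_{2k+1}(2\gamma(\beta+1)-\alpha-1)\right)$. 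This regrouping step is where the Pell structure is used and cannot be skipped: without it there is no a priori reason that the accumulated block should be a weight sequence at all. The rest of the negativity argument is fine, modulo noting that negative entries already appear after the \emph{first} application of $\Xi$ when $2\gamma(\beta+1)-\alpha-1<0$, not only at the last stage.
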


\begin{proof}
Combining (\ref{convert1}) with Proposition \ref{genexp}, we see that our class $\left(\gamma\beta,\gamma;\mathcal{W}(1,\alpha)\right)$ is equal to \begin{align*}
&\left\langle \gamma(\beta+1)-1;\gamma \beta-1,\gamma-1,\left(\frac{P_2}{2}-P_0\alpha\right)^{\times 4},P_1\alpha-P_3,\ldots \right.
\\ & \qquad \left. \left(\frac{P_{2k}}{2}-\frac{P_{2k-2}}{2}\alpha\right)^{\times 4},P_{2k-1}\alpha-P_{2k+1},\mathcal{W}\left(\frac{P_{2k+2}}{2}-\frac{P_{2k}}{2}\alpha,P_{2k-1}\alpha-P_{2k+1}\right)\right\rangle.
\end{align*}
With  a view toward Proposition \ref{indxi}, note that the first three terms above can be rewritten as \[ \gamma(\beta+1)-1=P_1(\gamma(\beta+1)-1)-P_0\alpha,\] \[ \gamma \beta-1 = \frac{H_0}{2}\gamma(\beta+1)-P_1+\frac{\gamma(\beta-1)}{2},\] \[ \gamma-1 = \frac{H_0}{2}\gamma(\beta+1)-P_1-\frac{\gamma(\beta-1)}{2}.\]
So we can apply Proposition \ref{indxi} successively with $j=1,\ldots,k$ to find that the image of $\left(\gamma\beta,\gamma;\mathcal{W}(1,\alpha)\right)$ under the composition of Cremona moves $\Xi^{(5k-3)}\circ\cdots\circ\Xi^{(7)}\circ\Xi^{(2)}$ is equal to 
\begin{align*} &\left\langle P_{2k+1}\left(\gamma(\beta+1)-1\right)-P_{2k}\alpha;\frac{H_{2k}}{2}\gamma(\beta+1)-P_{2k+1}+\gamma\left(\frac{\beta+1}{2}\right),\right.\\ &\qquad  \frac{H_{2k}}{2}\gamma(\beta+1)-P_{2k+1}-\gamma\left(\frac{\beta-1}{2}\right),\left(\frac{P_2}{2}(2\gamma(\beta+1)-\alpha-1)\right)^{\times 4},\\ &\qquad  P_{1}\left(2\gamma(\beta+1)-\alpha-1\right),\ldots,\left(\frac{P_{2k}}{2}(2\gamma(\beta+1)-\alpha-1)\right)^{\times 4},P_{2k-1}(2\gamma(\beta+1)-\alpha-1),
\\ & \qquad \left. \mathcal{W}\left(\frac{P_{2k+2}}{2}-\frac{P_{2k}}{2}\alpha,P_{2k-1}\alpha-P_{2k+1}\right)\right\rangle. 
\end{align*}

If $2\gamma(\beta+1)-\alpha-1<0$ then above expression has some negative entries and so our class pairs negatively with some of the $E'_i$ and thus cannot belong to $\bar{\mathcal{C}}_K(X_{N+1})$.  Otherwise, we can use Proposition \ref{pellweight} to group the entries beginning with $\frac{P_2}{2}(2\gamma(\beta+1)-\alpha-1)$ and ending with $P_{2k-1}(2\gamma(\beta+1)-\alpha-1)$ together as $\mathcal{W}\left(\frac{P_{2k}}{2}(2\gamma(\beta+1)-\alpha-1),P_{2k+1}(2\gamma(\beta+1)-\alpha-1)\right)$ and so (modulo reordering, which can be carried out by Cremona moves $\frak{c}_{xy}$) the above class is precisely the class $\Sigma_{a,b,\gamma}^{k}$ given in the proposition.
\end{proof}

\begin{remark} The values of $\alpha$ such that there exist $k$ for which Proposition \ref{bigreduce} is applicable to $\alpha$  are precisely those $\alpha$ in the interval $\left[\frac{P_3}{P_1},\frac{P_4}{P_2}\right]=[5,6]$.  For any such $\alpha$, we have \[ w(\alpha)=\left(1^{\times 5},\alpha-5,\mathcal{W}(\alpha-5,6-\alpha)\right).\]  Consider the class $E=(2,2;2,1^{\times 5})$, which lies in $\mathcal{E}$.  We find that, for $\alpha\in [5,6]$, \[ \mu_{\alpha,\beta}(E)=\frac{(2,1^{\times 5})\cdot (1^{\times 5},\alpha-5)}{2+2\beta}=\frac{\alpha+1}{2(\beta+1)}.\]  Thus the condition that $2\gamma(\beta+1)-\alpha-1\geq  0$ in Proposition \ref{bigreduce} is equivalent to the condition that the class $E$ does not obstruct the embedding $E(1,\alpha)^{\circ}\hookrightarrow \gamma P(1,\beta)$.  

The class $E$ was identified in \cite{FM} as giving a sharp obstruction to this embedding when $\beta=1$ and $\alpha\in [\sigma^2,6]$.  (For $\beta=1$ and $1\leq \alpha<\sigma^2$, on the other hand, $\mu_{\alpha,\beta}(E)$ is less than the volume bound.)  Results such as Theorem \ref{stairmain} show that the situation is more complicated for $\alpha\in (\sigma^2,6]$ and $\beta$ arbitrarily close but not equal to $1$.
\end{remark}

\begin{remark}\label{k0} Since $P_0=0$ and $\frac{P_2}{2}=P_1=H_0=P_{-1}=1$, the $k=0$ version of the class $\Sigma_{\alpha,\beta,\gamma}^{k}$ would degenerate to 
$\langle \gamma(\beta+1)-1;\gamma \beta-1,\gamma-1,\mathcal{W}(1,\alpha-1)\rangle$, which by (\ref{convert1}) is equal to $(\gamma\beta,\gamma;\mathcal{W}(1,\alpha))$. So the appropriate---and trivially true---variant of Proposition \ref{bigreduce} for $k=0$ (which would allow $\alpha$ to be an arbitrary value in $[1,\infty)$) is simply that $(\gamma \beta,\gamma;\mathcal{W}(1,\alpha))\in \bar{\mathcal{C}}_K(X_{N+1})$ if and only if $\Sigma_{\alpha,\beta,\gamma}^{0}\in \bar{\mathcal{C}}_K(X_{N+1})$ (with no condition on $2\gamma(\beta+1)-\alpha-1$).
\end{remark}

\subsubsection{The Brahmagupta move on perfect classes} \label{bmove}  We now use the move $\Xi$ from Proposition \ref{xiaction} to construct an action on classes of the form $\left(a,b;\mathcal{W}(c,d)\right)$ that will be important in our proof of the existence of some of the infinite staircases from Theorem \ref{stairmain}. (Specifically, the obstructions producing the infinite staircase for $C_{L_{n,k}}$ will be obtained from those producing the infinite staircase for $C_{L_{n,0}}$ by the $k$th-order Brahmagupta move, defined below.)  

To motivate this, let us consider the question of whether a class $C=\left(a,b;\mathcal{W}(c,d)\right)$ where $a,b,c,d\in\N$ belongs to the sets $\tilde{\mathcal{E}}$ or $\mathcal{E}$ from the introduction.  Assume for simplicity that $\gcd(c,d)=1$ and that $a\geq b$ and $c\geq d$.  Since the entries of $\mathcal{W}(c,d)$ are all nonnegative, we will have $C\in\tilde{\mathcal{E}}$ if and only if $C$ has Chern number $1$ and self-intersection $-1$.  Writing $\mathcal{W}(c,d)=(m_1,\ldots,m_N)$, the Chern number of $C$ is $2(a+b)-\sum_{i}m_i=2(a+b)-(c+d-1)$ where we have used \cite[Lemma 1.2.6(iii)]{MS} and the assumption that $c$ and $d$ are relatively prime.  Thus $C$ has the correct Chern number for membership in $\tilde{\mathcal{E}}$ precisely if $2(a+b)=c+d$.  This holds if and only if we can express $C$ in the form \begin{equation}\label{xdeform} C=\left(\frac{x+\ep}{2},\frac{x-\ep}{2};\mathcal{W}(x+\delta,x-\delta)\right) \end{equation} where $x,\delta,\ep\in \N$ with $\delta\leq x$.  Since we will have $\sum m_{i}^{2}=(x+\delta)(x-\delta)$ (as is obvious from the interpretation of $\mathcal{W}(x+\delta,x-\delta)$ in terms of 
tiling a rectangle by squares, as in \cite[Lemma 1.2.6(ii)]{MS}), the self-intersection number of $C$ is $2\left(\frac{x+\ep}{2}\right)\left(\frac{x-\ep}{2}\right)-(x+\delta)(x-\delta)=-\frac{1}{2}\left(x^2-2\delta^2+\ep^2\right)$.  Thus a class of the form (\ref{xdeform}) belongs to $\tilde{\mathcal{E}}$ if and only if the triple $(x,\delta,\ep)$ obeys \begin{equation}\label{pelleqn} x^2-2\delta^2=2-\ep^2.\end{equation}

Now if we temporarily regard $\ep$ as fixed and $x$ and $\delta$ as variables (\ref{pelleqn}) is a (generalized) \emph{Pell equation} $x^2-D\delta^2=N$ with $D=2$ and $N=2-\ep^2$.  A basic feature of such equations, observed in \cite[XVIII 64-65, p. 246]{B}, is that their integer solutions come in infinite families.  Indeed the equation asks for $x+\delta\sqrt{D}$ to be an element of norm $N$ in $\Z[\sqrt{D}]$, and the norm is multiplicative, so if $u+v\sqrt{D}\in \Z[\sqrt{D}]$ has norm one then $(u+v\sqrt{D})(x+\delta\sqrt{D})$ will have norm $N$, \emph{i.e.} $(ux+Dv\delta,vx+u\delta)$ will again be a solution to the equation.  In our case where $D=2$, (\ref{htop}) shows that, for any $k\geq 0$, $H_{2k}+P_{2k}\sqrt{2}$ has norm one in $\Z[\sqrt{2}]$.  Thus, given $\ep\in \Z$, if $(x,\delta)$ is one solution to (\ref{pelleqn}) then, for all $k$, $(H_{2k}x+2P_{2k}\delta,P_{2k}x+H_{2k}\delta)$ is also a solution.

Accordingly we make the following definition:

\begin{dfn}\label{bmovedef} For $k\in\N$, the \textbf{$k$th-order Brahmagupta move} is the operation which sends a class $C\in\mathcal{H}^2$ having the form (\ref{xdeform}) where $x,\delta,\ep\in \N$ and $\delta\leq x$ to the class \[ C^{(k)}=\left(\frac{x_k+\ep}{2},\frac{x_k-\ep}{2};\mathcal{W}(x_k+\delta_k,x_k-\delta_k)\right) \] where \[ x_k=H_{2k}x+2P_{2k}\delta,\qquad \delta_k=P_{2k}x+H_{2k}\delta.\]
\end{dfn}

Recalling that a quasi-perfect class is by definition one having the form $(a,b;\mathcal{W}(c,d))$ that belongs to $\tilde{\mathcal{E}}$, the preceding discussion almost immediately implies that:

\begin{cor}\label{bqperfect}
If $C=(a,b;\mathcal{W}(c,d))$ with $\gcd(c,d)=1$ is a quasi-perfect class, then for all $k\in \N$ the class $C^{(k)}$ is also quasi-perfect.
\end{cor}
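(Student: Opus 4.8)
The strategy is to show that the normal form and the Pell equation singled out in the discussion preceding Definition \ref{bmovedef} are both preserved by the $k$th-order Brahmagupta move. First I would put the hypotheses in usable shape: since $\mathcal{W}(c,d)=\mathcal{W}(d,c)$, and since the conditions defining $\tilde{\mathcal{E}}$ (Chern number, self-intersection, nonnegativity of the $m_i$) are symmetric under interchanging the two $S^2\times S^2$ coordinates, we may assume $a\ge b$ and $c\ge d$. Then, exactly as in that discussion, quasi-perfectness of $C$ together with $\gcd(c,d)=1$ forces $2(a+b)=c+d$ (from the Chern-number computation, which uses \cite[Lemma 1.2.6(iii)]{MS}); this is precisely what lets us write $C$ in the form (\ref{xdeform}) with $x=a+b$, $\ep=a-b$, $\delta=\frac{c-d}{2}$, all nonnegative integers with $\delta\le x$ and $\ep\le x$ (note $c$ and $d$ are both odd, so $\delta\in\Z$). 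The self-intersection computation then forces $(x,\delta,\ep)$ to satisfy the Pell equation (\ref{pelleqn}).

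The core of the argument is the arithmetic of the ``Brahmagupta matrix'' $M_k=\left(\begin{smallmatrix}H_{2k}&2P_{2k}\\ P_{2k}&H_{2k}\end{smallmatrix}\right)$, which by Definition \ref{bmovedef} carries $(x,\delta)$ to $(x_k,\delta_k)$. I would record three facts, all immediate from Section \ref{pellprelim}: (a) $\det M_k=H_{2k}^2-2P_{2k}^2=1$ by (\ref{htop}) at the even index $2k$, so $M_k\in GL_2(\Z)$ and hence $\gcd(x_k,\delta_k)=\gcd(x,\delta)$; (b) since $M_k$ implements multiplication by the norm-one element $H_{2k}+P_{2k}\sqrt2\in\Z[\sqrt2]$ (as noted before the definition), we have $x_k^2-2\delta_k^2=x^2-2\delta^2=2-\ep^2$; and (c) $H_{2k}$ is odd (because $H_{2k}^2=2P_{2k}^2+1$) while $P_{2k}$ is even (from the recurrence), so $x_k\equiv x$ and $\delta_k\equiv\delta\pmod 2$. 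I would also observe that $x_k-\delta_k=(H_{2k}-P_{2k})x+(2P_{2k}-H_{2k})\delta=P_{2k-1}x+H_{2k-1}\delta\ge 0$ by (\ref{hpadd}), and that $x_k\ge x\ge\max\{\delta,\ep\}$, so that $C^{(k)}$ is a genuine class of the shape (\ref{xdeform}).

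It then remains only to rerun the quasi-perfectness check from the motivating discussion for $C^{(k)}=\left(\frac{x_k+\ep}{2},\frac{x_k-\ep}{2};\mathcal{W}(x_k+\delta_k,x_k-\delta_k)\right)$. From (c) and $x\equiv\ep\pmod2$ (which holds since $\frac{x+\ep}{2}\in\Z$) we get $x_k\equiv\ep\pmod 2$, so $\frac{x_k\pm\ep}{2}$ are nonnegative integers; and since $x+\delta=c$ is odd, (c) gives $x_k+\delta_k$ odd, so $\gcd(x_k+\delta_k,x_k-\delta_k)$ divides $2\gcd(x_k,\delta_k)=2$ and is odd, hence equals $1$. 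Consequently the Chern number of $C^{(k)}$ is $2x_k-\bigl((x_k+\delta_k)+(x_k-\delta_k)-1\bigr)=1$, and by (b) its self-intersection is $\frac{x_k^2-\ep^2}{2}-(x_k^2-\delta_k^2)=-\tfrac12(x_k^2-2\delta_k^2+\ep^2)=-1$; the entries of $\mathcal{W}(x_k+\delta_k,x_k-\delta_k)$ are all nonnegative by construction. Hence $C^{(k)}\in\tilde{\mathcal{E}}$ and has the form $(a',b';\mathcal{W}(c',d'))$ with $a',b',c',d'\in\N$, i.e.\ it is quasi-perfect. (For $k=0$, $M_0=I$ and $C^{(0)}=C$, so there is nothing to prove.)

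The whole argument is essentially bookkeeping resting on the material preceding Definition \ref{bmovedef}, so I do not expect a serious obstacle; the one place demanding care is the pair of parity statements in the previous paragraph --- that $x_k\equiv\ep\pmod 2$, so $\frac{x_k\pm\ep}{2}$ remain integral, and that $x_k+\delta_k$ stays odd, so $\gcd(x_k+\delta_k,x_k-\delta_k)=1$ and the Chern-number count still returns exactly $1$ rather than $2$. Both reduce to the single observation that $H_{2k}$ is odd and $P_{2k}$ is even, combined with $\det M_k=1$.
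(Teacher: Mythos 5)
Your proposal is correct, and the central step --- showing $\gcd(x_k+\delta_k,x_k-\delta_k)=1$, which is what makes the Chern-number count come out to exactly $1$ --- is done by a genuinely different argument than the paper's. The paper proceeds by induction on $k$: using (\ref{hpadd}), (\ref{4gapP}), and (\ref{4gapH}) it shows $x_k-\delta_k=x_{k-1}+\delta_{k-1}$ and $(x_k+\delta_k)+(x_{k-1}-\delta_{k-1})=6(x_{k-1}+\delta_{k-1})$, so the ideal $(x_k+\delta_k,\,x_k-\delta_k)\subset\Z$ coincides with $(x_{k-1}+\delta_{k-1},\,x_{k-1}-\delta_{k-1})$, reducing by induction to $\gcd(x_0+\delta_0,x_0-\delta_0)=\gcd(c,d)=1$. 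You instead exploit that the Brahmagupta matrix $M_k$ has determinant $H_{2k}^2-2P_{2k}^2=1$, so $\gcd(x_k,\delta_k)=\gcd(x,\delta)$ (and the latter is $1$ since $\gcd(x,\delta)$ divides both $x+\delta=c$ and $x-\delta=d$), and then use the parity observation that $H_{2k}$ is odd and $P_{2k}$ is even to see $x_k+\delta_k\equiv c\equiv 1\pmod 2$; since $\gcd(x_k+\delta_k,x_k-\delta_k)$ divides $2\gcd(x_k,\delta_k)=2$ and is odd, it is $1$. This is shorter and avoids the induction; the paper's version is more in keeping with the Pell-identity bookkeeping used throughout Section \ref{toolsect}. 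Your proof also explicitly verifies that $\frac{x_k\pm\ep}{2}$ remain integers and that $x_k\ge\delta_k$, which the paper leaves implicit. (One tiny point worth stating rather than leaving tacit: that $\gcd(x,\delta)=1$ follows from $\gcd(c,d)=1$, as used to get $2\gcd(x_k,\delta_k)=2$.)
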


\begin{proof}
By construction, the operation $C\mapsto C^{(k)}$ preserves the self-intersection number.  Writing $C^{(k)}=\left(\frac{x_k+\ep}{2},\frac{x_k-\ep}{2};\mathcal{W}(x_k+\delta_k,x_k-\delta_k)\right)$ with $x_k,\delta_k$ as in Definition \ref{bmovedef}, the argument preceding Definition \ref{bmovedef} shows that  $C^{(k)}$ will have Chern number $1$ provided that $\gcd(x_k+\delta_k,x_k-\delta_k)=1$.  We have $x_k+\delta_k=P_{2k+1}x+H_{2k+1}\delta$ and $x_k-\delta_k=P_{2k-1}x+H_{2k-1}\delta$ by repeated use of (\ref{hpadd}).  In particular \begin{equation}\label{ind-gcd} x_k-\delta_k=x_{k-1}+\delta_{k-1}.\end{equation}  Also, \begin{align*}(x_k+\delta_k)+(x_{k-1}-\delta_{k-1})&=(P_{2k+1}+P_{2k-3})x+(H_{2k+1}+H_{2k-3})\delta\\ &=6(P_{2k-1}x+H_{2k-1}\delta)=6(x_{k-1}+\delta_{k-1})\end{align*} where we have used (\ref{4gapP}) and (\ref{4gapH}).  Together with (\ref{ind-gcd}) this shows that the ideal in $\Z$ generated by $x_k+\delta_k$ and $x_{k}-\delta_k$ is the same as that generated by $x_{k-1}+\delta_{k-1}$ and $x_{k-1}-\delta_{k-1}$.  So by induction on $k$ we will have $\gcd(x_k+\delta_k,x_k-\delta_k)=1$ if and only if $\gcd(x_0+\delta_0,x_0-\delta_0)=1$.  But $(x_0+\delta_0,x_0-\delta_0)=(c,d)$ so this follows from the hypothesis of the corollary.
\end{proof}

It turns out that the $k$th-order Brahmagupta move also preserves the set of perfect classes, not just the set of quasi-perfect classes:

\begin{prop}\label{pellcrem}
Let $x,\ep,\delta,k\geq 0$ with $x\geq \delta$.  Let $x_k=H_{2k}x+2P_{2k}\delta$ and $\delta_k=P_{2k}x+H_{2k}\delta$. Then \[ \left(\frac{x+\ep}{2},\frac{x-\ep}{2};\mathcal{W}(x+\delta,x-\delta)\right)\mbox{ and } \left(\frac{x_k+\ep}{2},\frac{x_k-\ep}{2};\mathcal{W}(x_k+\delta_k,x_k-\delta_k)\right)\] are Cremona equivalent. In particular if $C=(a,b;\mathcal{W}(c,d))\in\mathcal{E}$ with $\gcd(c,d)=1$ then also $C^{(k)}\in\mathcal{E}$.
\end{prop}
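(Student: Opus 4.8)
The plan is to prove the Cremona equivalence by reducing to the single case $k=1$ and then iterating, since an arbitrary $k$th-order Brahmagupta move is the composition of $k$ first-order moves: one checks directly from $x_k = H_{2k}x + 2P_{2k}\delta$, $\delta_k = P_{2k}x + H_{2k}\delta$ together with the identities $H_{2k}=H_2H_{2k-2}+2P_2P_{2k-2}$ and similar addition formulas for $P$, $H$ at even indices (which follow from (\ref{closedform}) or from (\ref{hh}), (\ref{ph}), (\ref{pp})), that the pair $(x_k,\delta_k)$ is obtained from $(x_{k-1},\delta_{k-1})$ by the first-order move with multiplier $H_2 + P_2\sqrt 2 = 3 + 2\sqrt 2 = \sigma^2$. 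So it suffices to show that, for $x\geq\delta\geq 0$ and $\ep\geq 0$,
\[
\left(\tfrac{x+\ep}{2},\tfrac{x-\ep}{2};\mathcal{W}(x+\delta,x-\delta)\right) \ \text{ and }\ \left(\tfrac{x_1+\ep}{2},\tfrac{x_1-\ep}{2};\mathcal{W}(x_1+\delta_1,x_1-\delta_1)\right)
\]
are Cremona equivalent, where $x_1 = 3x+4\delta$, $\delta_1 = 2x+3\delta$ (using $H_2=3$, $P_2=2$).

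The mechanism for the single-step equivalence should be the composite Cremona move $\Xi$ of Proposition \ref{xiaction}, applied to the front of the weight expansion. The key is to use the recursive structure of $\mathcal{W}(x+\delta,x-\delta)$: since $x\geq\delta$ we have $x+\delta \geq x-\delta \geq 0$, and the first several entries of $\mathcal{W}(x+\delta,x-\delta)$ are determined by how many copies of $x-\delta$ fit into $x+\delta$ — and $x+\delta - 2(x-\delta) = 3\delta - x$, which can be negative, so one must be a little careful, but after peeling off the initial block one eventually exposes a piece of the form $\mathcal{W}(\text{something},\text{something})$ whose first entries are four equal terms followed by a fifth term, matching the shape $\langle Z; A+\ep', A-\ep', B^{\times 4}, C\rangle$ to which Proposition \ref{xiaction} applies. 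Concretely I would convert $(a,b;\mathcal{W}(c,d))$ to $\mathbb{C}P^2$-coordinates via (\ref{convert1}), use a weight-expansion computation in the spirit of Proposition \ref{pellweight} and Proposition \ref{genexp} to write out enough leading terms (the relevant tiling is of a rectangle of the shape $(x+\delta)$-by-$(x-\delta)$, which decomposes into squares of sidelength comparable to $x-\delta$ and a smaller leftover rectangle), apply $\Xi$ (or a shifted copy $\Xi^{(n)}$ as in (\ref{xindef})) to the exposed five-term block, and verify via Proposition \ref{xiaction} that the output is exactly the corresponding block in the weight expansion of $(a_1,b_1;\mathcal{W}(x_1+\delta_1,x_1-\delta_1))$; the $\ep$ parameter rides along passively in the $\pm\ep$ slots, as it does throughout Proposition \ref{indxi}. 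This is essentially the same kind of bookkeeping already carried out in the proof of Proposition \ref{bigreduce}, just with a different pair of Pell-indexed parameters.

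For the last sentence — that $C\in\mathcal{E}$ with $\gcd(c,d)=1$ implies $C^{(k)}\in\mathcal{E}$ — I would argue as follows. Corollary \ref{bqperfect} already gives that $C^{(k)}$ is quasi-perfect, hence in $\tilde{\mathcal{E}}$, so it has Chern number $1$ and self-intersection $-1$ and all its $m_i\geq 0$. Cremona moves are induced by orientation-preserving diffeomorphisms and hence preserve $\mathcal{E}_N$ (and so $\mathcal{E}$), as recalled in Section \ref{cremintro}; since the first part of the proposition exhibits $C$ and $C^{(k)}$ as Cremona equivalent, and $C\in\mathcal{E}$ by hypothesis, we conclude $C^{(k)}\in\mathcal{E}$ directly. (One can equivalently invoke \cite[Proposition 1.2.12(iii)]{MS}: $C\in\mathcal{E}_N$ iff $C$ is carried to $E_0$ by Cremona moves, and composing with the moves realizing the Brahmagupta equivalence shows the same for $C^{(k)}$.)

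The main obstacle I anticipate is the combinatorial bookkeeping of the weight expansions when $x$ and $\delta$ are not in the convenient range $\delta \leq x \leq 3\delta$: one must track how $\mathcal{W}(x+\delta,x-\delta)$ unfolds (how many initial squares of each size appear) well enough to locate a block of the exact shape $\langle Z;A+\ep',A-\ep',B^{\times 4},C\rangle$ required by Proposition \ref{xiaction}, possibly after applying $\Xi^{(n)}$ at several nested depths à la the iterated $\Xi^{(5k-3)}\circ\cdots\circ\Xi^{(2)}$ in Proposition \ref{bigreduce}. Matching the output coordinates of $\Xi$ against the claimed expansion of the target class will require the even-index Pell/half-companion identities from Section \ref{pellprelim} (chiefly (\ref{hpadd}), (\ref{4hp}), (\ref{2hp}), and (\ref{htop})), but those are all available; the genuinely delicate part is organizing the induction so that the ``peeling'' terminates correctly and the leftover rectangle after the $\Xi$-move is recognizably $\mathcal{W}(x_1+\delta_1,x_1-\delta_1)$ (up to reordering entries, which costs only transposition moves $\frak{c}_{xy}$).
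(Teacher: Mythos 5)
Your high-level skeleton is exactly the paper's: reduce to the $k=1$ case by observing that $(x,\delta)\mapsto(x_k,\delta_k)$ is the $k$-fold iterate of $(x,\delta)\mapsto(3x+4\delta,2x+3\delta)$ (using $3H_{2k}+4P_{2k}=H_{2k+2}$ and $2H_{2k}+3P_{2k}=P_{2k+2}$), then realize the $k=1$ step by a single application of the composite move $\Xi$; and your last paragraph (that $C\in\mathcal{E}$ plus Cremona equivalence gives $C^{(k)}\in\mathcal{E}$) is correct and matches the paper. But the middle step — the actual mechanism for the $k=1$ equivalence — has a genuine gap. You propose to start from the \emph{small} class $\bigl(\tfrac{x+\ep}{2},\tfrac{x-\ep}{2};\mathcal{W}(x+\delta,x-\delta)\bigr)$ and ``peel'' leading terms of $\mathcal{W}(x+\delta,x-\delta)$ until a block of the shape $B^{\times 4},C$ is exposed. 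That won't happen: after converting to $\mathbb{C}P^2$-coordinates the remaining tail is $\mathcal{W}(2\delta,x-\delta)$, and the number of equal leading entries there is $\lfloor (x-\delta)/(2\delta)\rfloor$ (or $\lfloor 2\delta/(x-\delta)\rfloor$), which depends on the ratio $x/\delta$ and is not generically $4$ or $5$. There is no peeling depth at which the required $4{+}1$ block materializes, and the ``nested $\Xi^{(n)}$ \`a la Proposition \ref{bigreduce}'' you anticipate is a red herring here --- iteration is handled entirely by composing $k$ first-order moves, never by nested applications of $\Xi$ within one step.

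The paper's fix is to run the equivalence in the \emph{other} direction, starting from the $k=1$ class. Since $x\geq\delta\geq 0$, one has $5(x+\delta)\leq 5x+7\delta<6(x+\delta)$ and then $2\delta\leq x+\delta$, so $\mathcal{W}(x_1+\delta_1,x_1-\delta_1)=\mathcal{W}(5x+7\delta,x+\delta)$ \emph{always} begins $\bigl((x+\delta)^{\times 5},2\delta\bigr)\sqcup\mathcal{W}(2\delta,x-\delta)$, uniformly in the ratio; after the conversion (\ref{convert1}) absorbs one $(x+\delta)$, the exposed block is exactly $\langle 2x+3\delta;\tfrac{x+2\delta+\ep}{2},\tfrac{x+2\delta-\ep}{2},(x+\delta)^{\times 4},2\delta,\ldots\rangle$, a single $\Xi^{(2)}$ sends it to $\langle\delta;\tfrac{-x+2\delta+\ep}{2},\tfrac{-x+2\delta-\ep}{2},0^{\times 5},\mathcal{W}(2\delta,x-\delta)\rangle$, and converting back and deleting zeros gives the small class. (If you insist on your direction, the trick you are missing is to pad the converted small class with five zeros in positions $2,\dots,6$, giving $B=C=0$, to which $\Xi^{(2)}$ applies and produces the $k=1$ class; padding by zeros is legal since it represents the same element of $\mathcal{H}^2$.) Either way the weight-expansion bookkeeping you flagged as ``genuinely delicate'' is in fact trivial once the direction or the zero-padding is chosen correctly, and no induction on peeling depth is needed.
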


\begin{proof}  
First observe that, using (\ref{hpadd}), \[ 3H_{2k}+4P_{2k}=3H_{2k}+2(H_{2k+1}-H_{2k})=2H_{2k+1}+H_{2k}=H_{2k+2} \] and \[ 2H_{2k}+3P_{2k}=2(P_{2k+1}-P_{2k})+3P_{2k}=2P_{2k+1}+P_{2k}=P_{2k+2},\] and so (since $H_2=3$ and $P_2=2$) \begin{align*} \left(H_2x_k+2P_{2}\delta_k,P_2x_k+H_2\delta_k\right)&= \left(3(H_{2k}x+2P_{2k}\delta)+4(P_{2k}x+H_{2k}\delta),2(H_{2k}x+2P_{2k}\delta) +3(P_{2k}x+H_{2k}\delta)\right)
\\ &= \left(H_{2k+2}x+2P_{2k+2}\delta,P_{2k+2}x+H_{2k+2}\delta\right).\end{align*}
Thus the map $(x,\delta)\mapsto (x_{k+1},\delta_{k+1})$ can be factored as the composition of the maps $(x,\delta)\mapsto (x_k,\delta_k)$ and $(x,\delta)\mapsto (x_1,\delta_1)$, and so by induction $(x,\delta)\mapsto (x_k,\delta_k)$ is just the $k$-fold composition of the map $(x,\delta)\mapsto (x_1,\delta_1)$.  This implies that it suffices to prove the proposition when $k=1$, in which case $(x_1,\delta_1)=(3x+4\delta,2x+3\delta)$.

We then have (since we assume $x\geq \delta\geq 0$) \begin{align*} \mathcal{W}(x_1+\delta_1,x_1-\delta_1)&=\mathcal{W}(5x+7\delta,x+\delta)
\\ \\&=\left((x+\delta)^{\times 5}\right)\sqcup\mathcal{W}(2\delta,x+\delta)=\left((x+\delta)^{\times 5},2\delta\right)\sqcup\mathcal{W}(2\delta,x-\delta).\end{align*}

So \begin{align*} & \left(\frac{x_1+\ep}{2},\frac{x_1-\ep}{2};\mathcal{W}(x_1+\delta_1,x_1-\delta_1)\right)=\left(\frac{3x+4\delta+\ep}{2},\frac{3x+4\delta-\ep}{2};\mathcal{W}(x_1+\delta_1,x_1-\delta_1)\right)
\\ \qquad & =\left\langle 2x+3\delta;\frac{x+2\delta+\ep}{2},\frac{x+2\delta-\ep}{2},(x+\delta)^{\times 4},2\delta,\mathcal{W}(2\delta,x-\delta)\right\rangle.\end{align*}
Applying Proposition \ref{xiaction} shows that this class is Cremona equivalent (via $\Xi^{(2)}=\frak{c}_{36}\circ \frak{c}_{456}\circ \frak{c}_{236}\circ \frak{c}_{012}\circ \frak{c}_{345}$) to \[ \left\langle \delta;\frac{-x+2\delta+\ep}{2},\frac{-x+2\delta-\ep}{2},0^{\times 5},\mathcal{W}(2\delta,x-\delta)\right\rangle,\] which after the usual change of basis (\ref{convert1}) is equal to \[ \left(\frac{x+\ep}{2},\frac{x-\ep}{2};x-\delta,0^{\times 5},\mathcal{W}(2\delta,x-\delta)\right).\]  But $(x-\delta)\sqcup \mathcal{W}(2\delta,x-\delta)=\mathcal{W}(x+\delta,x-\delta)$, so after deleting zeros the above class simply becomes 
$\left(\frac{x+\ep}{2},\frac{x-\ep}{2};\mathcal{W}(x+\delta,x-\delta)\right)$.
\end{proof}

\subsection{Tiling}

We note here a simple criterion for a class to belong to the set  $\bar{\mathcal{C}}_K(X_{N+1})$ that appears in Proposition \ref{background}.  Throughout this section a ``$p$-by-$q$ rectangle'' means a subset of $\mathbb{R}^2$ that is given as a product of intervals having lengths $p$ and $q$, not necessarily in that order. A ``square of sidelength $p$'' is a $p$-by-$p$ rectangle.

\begin{prop}\label{squaretile}
Suppose that $r,s_0,\ldots,s_N\in\R_{>0}$ have the property that there are squares $R,S_0,\ldots,S_N$ of respective sidelengths $r,s_1,\ldots,s_N$ such that the interiors $S_{i}^{\circ}$ of the $S_i$ are disjoint and $\cup_{i=0}^{N}S_{i}^{\circ}\subset R$.  Then $\langle r;s_0,\ldots,s_N\rangle\in\bar{\mathcal{C}}_{K}(X_{N+1})$. 
\end{prop}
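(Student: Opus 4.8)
The plan is to translate the conclusion into a statement about symplectic ball packings and then to build the required packing directly from the given packing of squares, following Traynor. First I would record the reduction. By the facts recalled at the start of the introduction (following \cite{Mell},\cite{MP},\cite{Liu},\cite{Bi}), for any $\rho>0$ and $a_0,\dots,a_N\geq 0$ the class $\langle \rho;a_0,\dots,a_N\rangle$ lies in $\mathcal{C}_K(X_{N+1})$ as soon as the disjoint union of \emph{closed} balls $B(a_0)\sqcup\cdots\sqcup B(a_N)$ symplectically embeds into the \emph{open} ball $B(\rho)^{\circ}$. Since $\bar{\mathcal{C}}_K(X_{N+1})$ is closed, it therefore suffices to produce, for every $\delta>0$, a symplectic embedding $\coprod_{i=0}^{N}B(s_i-\delta)^{\circ}\hookrightarrow B(r)^{\circ}$: restricting such an embedding to the closed balls $B(s_i-2\delta)$ shows $\langle r;s_0-2\delta,\dots,s_N-2\delta\rangle\in\mathcal{C}_K(X_{N+1})$, and letting $\delta\to 0$ gives $\langle r;s_0,\dots,s_N\rangle\in\bar{\mathcal{C}}_K(X_{N+1})$.

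Second, I would construct these embeddings from the square packing along the lines of \cite[Section 5]{T} (see also \cite[Section 3]{GU}). Let $\mu\co\C^2\to\R^2$ be the moment map $\mu(z_1,z_2)=(\pi|z_1|^2,\pi|z_2|^2)$ of the standard $T^2$-action, and call the two coordinate complex lines $\{z_1=0\}$ and $\{z_2=0\}$ the \emph{axes}. Translating $R,S_0,\dots,S_N$ in $\R^2$ by a common vector (which changes neither the hypothesis nor, up to a base-translation symplectomorphism, the conclusion) we may assume $R$ lies in the open first quadrant. The two ingredients supplied by \cite{T} are: (a) for any $c>0$ the closed ball $B(c-\delta)$ symplectically embeds into $\mu^{-1}(T)$ whenever $T$ is an open right isoceles triangle of leg length $c$ sitting in the open quadrant, up to the standard modifications used to handle the axes; and (b) an integral--affine cut-and-paste operation performed on a finite union of polygons lying in the open quadrant lifts to a symplectomorphism between the corresponding $\mu$-preimages. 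Inside each packed square $S_i$ one places such a triangle $T_{s_i}$ of leg length $s_i$; because the $S_i$ have pairwise disjoint interiors and lie in $R$, the triangles $T_{s_i}$ are pairwise disjoint polygons contained in $R$.

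The heart of the argument — and the step where \cite{T} does the real work — is then to cut the configuration $\coprod_i T_{s_i}\subset R$, together with $R$ itself, into finitely many polygonal pieces and to move those pieces by integral--affine maps so that the $T_{s_i}$ reassemble, still with pairwise disjoint interiors, inside a \emph{single} right isoceles triangle $T_r$ of leg length $r$, which we may take to be the lower-left half of $R$. It is here that the hypothesis that the $S_i$ are honest squares rather than triangles is convenient: each $S_i$ contains a copy of $T_{s_i}$ in each of two orientations related by a point reflection (which, being orientation-preserving and integral--affine, lifts via (b) to a symplectomorphism), and this provides enough freedom to carry out the reassembly regardless of how the $S_i$ happen to sit in $R$. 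Feeding the resulting base picture through (a) and (b) yields a symplectic embedding $\coprod_i B(s_i-\delta)^{\circ}\hookrightarrow\mu^{-1}(T_r)\cong B(r)^{\circ}$, completing the second step and hence the proof.

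The genuinely delicate point is the reassembly; the reduction to ball packings, the translation of the picture into the open quadrant, and the passage between balls and their axis-punctured versions are all routine, so I would present them only in outline, referring to \cite{T} and \cite{GU} for the explicit combinatorial construction underlying this tiling criterion.
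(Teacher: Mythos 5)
Your reduction to ball packings is correct and matches the paper's first step. But the construction you then sketch uses the toric moment-map model $B(c)^\circ\cong\mu^{-1}(T_c)$, where each ball corresponds to a right isoceles triangle in the quadrant, and this forces you to convert the given \emph{square} packing into a \emph{triangle} packing by cut-and-paste. You correctly flag that reassembly as the delicate point and defer it to Traynor, but it is the entire content of the proposition, and the square packing does not obviously yield the needed integral-affine cut-and-paste. Concretely: the two natural ways of folding the upper-right half of $R$ onto $T_r$ are the reflection across the diagonal (orientation-reversing, so not a candidate for lifting to a symplectomorphism) and the point reflection about the center (orientation-preserving, but no longer injective on the union of the packed triangles, which can collide after the fold). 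The ``two orientations of $T_{s_i}$ inside $S_i$'' observation is local to one square and does not control the global interference between triangles coming from different squares. As written there is a hole exactly where the work lies.

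The paper sidesteps this entirely with a different Traynor decomposition: not the toric one but the Lagrangian product $B(s)^\circ\cong\square(\pi)\times_L\triangle(s/\pi)$, renormalized by the anisotropic symplectomorphism $(x_1,y_1,x_2,y_2)\mapsto(\tfrac{s}{\pi}x_1,\tfrac{\pi}{s}y_1,\tfrac{s}{\pi}x_2,\tfrac{\pi}{s}y_2)$ to $\square(s)\times_L\triangle(1)$. The crucial gain is that every ball, source and target, then carries the \emph{same} triangle factor $\triangle(1)$ in the momentum coordinates, so the whole packing problem collapses to translating the position-space squares $\square(s_i)$ disjointly into $\square(r)$ --- which is exactly the data the hypothesis hands you, with no reassembly at all. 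If you swap your ingredient (a) for the Lagrangian product model, your outline closes immediately.
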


\begin{proof}
By \cite[Proposition 2.1.B]{MP} it suffices to show that there is a symplectic embedding $\coprod_{i=0}^{N}B^4(s_i)^{\circ}\hookrightarrow B(r)^{\circ}$.  

For any $v>0$ let us write $\square(v)=(0,v)\times (0,v)$ and $\triangle(v)=\{(x_1,x_2)\in (0,\infty)^2|x_1+x_2<v\}$. Also for any subsets $A,B\subset \R^2$ let us denote by $A\times_L B$ the ``Lagrangian product'' \[ A\times_L B=\{(x_1,y_1,x_2,y_2)|(x_1,x_2)\in A,(y_1,y_2)\in B\}. \]  (We of course use the symplectic form $dx_1\wedge dy_1+dx_2\wedge dy_2$ on $\R^4$.)

Now \cite[Proposition 5.2]{T} states\footnote{Note that what Traynor denotes $B^4(s)$ is what we would denote $B(\pi s)$.} that, for any $s>0$, there is a symplectomorphism $B(s)^{\circ}\cong \square(\pi)\times_L \triangle(s/\pi)$.  The map \[ (x_1,y_1,x_2,y_2)\mapsto \left(\frac{s}{\pi}x_1,\frac{\pi}{s}y_1,\frac{s}{\pi}x_2,\frac{\pi}{s}y_2\right) \] is a symplectomorphism of $\R^4$ which maps $\square(\pi)\times_L \triangle(s/\pi)$ to $\square(s)\times_L \triangle(1)$.  Thus the existence of a symplectic embedding $\coprod_{i=0}^{N}B(s_i)^{\circ}\hookrightarrow B(r)^{\circ}$ is equivalent to the existence of a symplectic embedding \begin{equation}\label{squareembed} \coprod_{i=0}^{N}\left(\square(s_i)\times_L \triangle(1)\right)\hookrightarrow \square(r)\times_L \triangle(1).\end{equation} But the hypothesis of the proposition implies that the squares $\square(s_1),\ldots,\square(s_k)$ can be arranged by translations to be disjoint and still contained in $\square(r)$;  applying these translations in the $x_1,x_2$ directions in $\R^4$ then gives the desired embedding (\ref{squareembed}).
\end{proof}

\begin{cor}\label{tilecrit} Let $a_1,b_1,\ldots,a_m,b_m,r>0$ and suppose that there are $a_i$-by-$b_i$ rectangles $T_i$ ($i=1,\ldots m$) such that the $T_{i}^{\circ}$ are disjoint and such that $\cup_{i=1}^{m}T_{i}^{\circ}$ is contained in a square of sidelength $r$.  Then the class \[ \langle
r;\mathcal{W}(a_1,b_1),\ldots,\mathcal{W}(a_m,b_m)\rangle \] belongs to $\bar{\mathcal{C}}(X_{N})$ where $N$ is the sum of the lengths of the weight sequences $\mathcal{W}(a_i,b_i)$.
\end{cor}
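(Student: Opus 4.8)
The plan is to reduce the corollary directly to Proposition \ref{squaretile} by refining each of the given rectangles $T_i$ into the square tiling associated to its weight sequence. Recall the geometric description of $\mathcal{W}(a,b)$ recalled in Section \ref{pellprelim} (and in the introduction): since $a_i$ and $b_i$ are rationally dependent, applying the ``remove the largest possible square'' procedure to an $a_i$-by-$b_i$ rectangle exhibits it as a union of finitely many squares whose sidelengths are exactly the entries $w^{(i)}_1,\ldots,w^{(i)}_{k_i}$ of $\mathcal{W}(a_i,b_i)$, two distinct such squares overlapping only along their boundaries; here $\sum_i k_i=N$. First I would carry out such a tiling inside each of the rectangles $T_i$ given in the hypothesis, obtaining a finite family of squares $\{S^{(i)}_\ell\}$ ($1\le i\le m$, $1\le\ell\le k_i$), where $S^{(i)}_\ell\subset T_i$ has sidelength $w^{(i)}_\ell>0$.

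Next I would verify that this family satisfies the hypotheses of Proposition \ref{squaretile}. For the disjointness of interiors: two squares from the tiling of the same $T_i$ have disjoint interiors by construction, while if $i\ne j$ then monotonicity of the interior operator gives $(S^{(i)}_\ell)^{\circ}\subset T_i^{\circ}$ and $(S^{(j)}_{\ell'})^{\circ}\subset T_j^{\circ}$, and $T_i^{\circ}\cap T_j^{\circ}=\varnothing$ by hypothesis. For the containment: the same monotonicity gives $\bigcup_{i,\ell}(S^{(i)}_\ell)^{\circ}\subset\bigcup_i T_i^{\circ}$, which by hypothesis lies inside a square of sidelength $r$, and this square serves as the ``$R$'' of Proposition \ref{squaretile}. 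Relabelling the $S^{(i)}_\ell$ as $S_0,\ldots,S_{N-1}$ with sidelengths $s_0,\ldots,s_{N-1}$ and applying Proposition \ref{squaretile} (with its ``$N$'' taken to be our $N-1$) then yields $\langle r;s_0,\ldots,s_{N-1}\rangle\in\bar{\mathcal{C}}_K(X_N)$.

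Finally, the multiset $\{s_0,\ldots,s_{N-1}\}$ is precisely the concatenation of $\mathcal{W}(a_1,b_1),\ldots,\mathcal{W}(a_m,b_m)$, and the cohomology class $\langle r;s_0,\ldots,s_{N-1}\rangle$ is unchanged under permuting the $s_j$ (such a permutation is realized by a composition of the Cremona moves $\frak{c}_{xy}$, which preserve $\bar{\mathcal{C}}_K$); hence $\langle r;s_0,\ldots,s_{N-1}\rangle=\langle r;\mathcal{W}(a_1,b_1),\ldots,\mathcal{W}(a_m,b_m)\rangle$, which is the asserted class. I do not anticipate any real difficulty here: the only step needing any care is the bookkeeping in the previous paragraph, namely checking that gluing the individual square tilings of the $T_i$ produces a configuration of squares with pairwise disjoint interiors sitting inside a single square of sidelength $r$, so that Proposition \ref{squaretile} genuinely applies.
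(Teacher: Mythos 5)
Your proof is correct and takes essentially the same route as the paper's: decompose each rectangle $T_i$ into the square tiling encoded by $\mathcal{W}(a_i,b_i)$, observe that the resulting collection of squares has pairwise disjoint interiors inside the ambient square of sidelength $r$, and apply Proposition \ref{squaretile}. The only difference is that you spell out the disjointness/containment bookkeeping and the relabelling step more explicitly than the paper does.
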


\begin{proof}
Indeed if we write $\mathcal{W}(a_i,b_i)=(s_{i1},\ldots,s_{ik})$ then, as noted earlier, an $a_i$-by-$b_i$ rectangle can be divided into squares of sidelength $s_{i1},\ldots,s_{ik_i}$ with disjoint interiors.  So we can simply apply Proposition \ref{squaretile} to a collection of squares of sidelengths $s_{ij}$ ($1\leq i\leq m,1\leq j\leq k_i$).
\end{proof}

We will say that a class $\langle
r;\mathcal{W}(a_1,b_1),\ldots,\mathcal{W}(a_m,b_m)\rangle$ ``\textbf{satisfies the tiling criterion}'' if it obeys the hypothesis of Corollary \ref{tilecrit}.  Our most common method of showing that a general class belongs to $\bar{\mathcal{C}}_K(X_{N})$ will be to find a sequence of Cremona moves which converts it to a class that satisfies the tiling criterion.

\section{Proof of Theorem \ref{fmsup}} \label{fmsect}

\subsection{The Frenkel-M\"uller classes}

Theorem \ref{fmsup} asserts that, for any $\beta\geq 1$ and $\alpha\leq \sigma^2=3+2\sqrt{2}$, $C_{\beta}(\alpha)$ is given as the supremum of the obstructions $\Gamma_{\alpha,\beta}(FM_n)$ induced by the Frenkel-M\"uller classes $FM_n$.  We now recall the definition of the classes $FM_n$ and describe more explicitly $\sup_n\Gamma_{\alpha,\beta}(FM_n)$; in particular we will see that for any given $\beta>1$ this  reduces to a supremum over a finite set that is independent of $\alpha$.

The classes $FM_n$, which are shown to belong to $\mathcal{E}$ in \cite[Theorem 5.1]{FM}, are given by \begin{equation}\label{fmclass} FM_n=\left\{\begin{array}{ll} \left(P_{n+1},P_{n+1};\mathcal{W}(H_{n+2},H_{n})\right) & n\mbox{ even}\\
\left(\frac{H_{n+1}+1}{2},\frac{H_{n+1}-1}{2};\mathcal{W}(P_{n+2},P_{n})\right) & n\mbox{ odd}\end{array}\right..
\end{equation}

(The slightly more complicated formula in \cite{FM} is equivalent to this by (\ref{4hp}) and (\ref{2hp}).) While the definition in \cite{FM} assumes that $n\geq 0$, we can also set $n=-1$; this yields the class $FM_{-1}=(1,0;1)$ which also belongs to $\mathcal{E}$.

\begin{remark}
In terms of Definition \ref{bmovedef} we have $FM_{2k-1}=FM_{-1}^{(k)}$ and $FM_{2k}=FM_{0}^{(k)}$.  So since it is easy to check that $FM_{-1}=(1,0;\mathcal{W}(1,1))\in\mathcal{E}$ and that $FM_{0}=(1,1;\mathcal{W}(3,1))\in\mathcal{E}$, Proposition \ref{pellcrem} leads to an easy proof that all of the $FM_n$ are perfect classes.
\end{remark}

Based on the definition in Proposition \ref{Gammadef}, we have \[ \Gamma_{\alpha,\beta}(FM_n)=\left\{\begin{array}{ll} \frac{H_n\alpha}{(\beta+1)P_{n+1}} & \alpha\leq \frac{H_{n+2}}{H_n}\\ \frac{H_{n+2}}{(\beta+1)P_{n+1}} & \alpha\geq \frac{H_{n+2}}{H_{n}}\end{array}\right.\mbox{ for $n$ even},\] and \[\quad \Gamma_{\alpha,\beta}(FM_n)=\left\{\begin{array}{ll} \frac{2P_n\alpha}{(\beta+1)H_{n+1}-(\beta-1)} & \alpha\leq \frac{P_{n+2}}{P_n}\\ \frac{2P_{n+2}}{(\beta+1)H_{n+1}-(\beta-1)} & \alpha\geq \frac{P_{n+2}}{P_{n}}\end{array}\right.\mbox{ for $n$ odd.}\]

For $n\geq -1$ let \begin{equation}\label{adef} \alpha_n=\left\{\begin{array}{ll}\frac{H_{n+2}}{H_n} & n\mbox{ even}\\ \frac{P_{n+2}}{P_n} & n\mbox{ odd}\end{array}\right.,\end{equation} and
let us abbreviate \begin{equation}\label{gammadef} \gamma_{n,\beta}=\Gamma_{\alpha_n,\beta}(FM_{n})=\left\{\begin{array}{ll} \frac{H_{n+2}}{(\beta+1)P_{n+1}} & n\mbox{ even} \\ \frac{2P_{n+2}}{(\beta+1)H_{n+1}-(\beta-1)} & n\mbox{ odd}.\end{array}\right.\end{equation} for the value taken by $\Gamma_{\alpha,\beta}(FM_n)$ for all $\alpha\geq \alpha_n$.  Note that $\gamma_{-1,\beta}=1$ for all $\beta$ (corresponding to the non-squeezing bound), but all other $\gamma_{n,\beta}$ depend nontrivially on $\beta$. 


For the remainder of this subsection we will examine more closely the quantity \begin{equation}\label{fmbound} \sup_n\Gamma_{\alpha,\beta}(FM_n),\end{equation} leading to the more explicit formula in Proposition \ref{supformula}.  First of all notice that the $\alpha_n$ from (\ref{adef}) form a strictly increasing sequence by the first two inequalities in Proposition \ref{orderratios}; this sequence converges to $\sigma^2$ by (\ref{closedform}).
When $\beta=1$ it is also true that the numbers $\gamma_{n,\beta}$ form a strictly increasing sequence as $n$ varies, but we will see presently that this is \textbf{not} the case when $\beta>1$, as a result of which Theorem \ref{fmsup} implies qualitatively different behavior for $\beta>1$ than for $\beta=1$. 

\begin{prop}\label{ordergamma} The various $\gamma_{n,\beta}$ satisfy the following relationships, for $k\in \N$ and $\beta\geq 1$: 
\begin{itemize}
\item[(i)] $\gamma_{2k+2,\beta}>\gamma_{2k,\beta}$ for all $\beta$.
\item[(ii)] $\gamma_{2k+1,\beta}>\gamma_{2k,\beta}$ for all $\beta$.
\item[(iii)] $\gamma_{2k,\beta}>\gamma_{2k-1,\beta}$ if and only if $\beta<1+\frac{2}{H_{2k+2}-1}$.
\item[(iv)] $\gamma_{2k+1,\beta}>\gamma_{2k-1,\beta}$ if and only if $\beta<1+\frac{2}{P_{2k+2}-1}$.
\end{itemize}
The statements (iii) and (iv) also hold with their strict inequalities replaced by non-strict inequalities.
\end{prop}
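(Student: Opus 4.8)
The plan is to prove all four statements by direct manipulation of the closed formulas for $\gamma_{n,\beta}$ in (\ref{gammadef}), reducing each inequality to a statement about Pell numbers $P_n, H_n$ that can be verified using the identities collected in Section \ref{pellprelim}. Since each $\gamma_{n,\beta}$ has the shape $\frac{(\text{numerator indep.\ of }\beta)}{(\text{linear in }\beta)}$, comparing two of them is equivalent to comparing two linear-in-$\beta$ quantities after cross-multiplication; the resulting inequalities will be affine in $\beta$, so for (iii) and (iv) the "if and only if" with an explicit threshold will fall out immediately (and the non-strict versions follow by the same computation), while for (i) and (ii) the cross-multiplied inequality will reduce to a fixed Pell identity with no $\beta$-dependence.

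First I would handle (i): the comparison $\gamma_{2k+2,\beta} > \gamma_{2k,\beta}$ is, after clearing the common factor $(\beta+1)$, exactly $\frac{H_{2k+4}}{P_{2k+3}} > \frac{H_{2k+2}}{P_{2k+1}}$, i.e.\ $H_{2k+4}P_{2k+1} > H_{2k+2}P_{2k+3}$. Using (\ref{ph}) (with $n = 2k+\frac{5}{2}$ — more precisely applying the identity in the form $P_{n-j}H_{n+j}$ vs $P_{n+j}H_{n-j}$) this difference equals $2(-1)^{?}P_jH_j$ for the appropriate small $j$, and I expect it to come out strictly positive; alternatively the cleanest route is to note $H_{m+2}P_{m-1} - H_m P_{m+1} = 2(H_{m+1}P_{m-1} - H_m P_m)$ by the recurrences (exactly the computation already carried out in the proof of Proposition \ref{orderratios}) and then invoke (\ref{phloc}). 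Statement (ii), comparing the even formula $\gamma_{2k,\beta} = \frac{H_{2k+2}}{(\beta+1)P_{2k+1}}$ with the odd formula $\gamma_{2k+1,\beta} = \frac{2P_{2k+3}}{(\beta+1)H_{2k+2}-(\beta-1)}$, requires one extra observation: the denominator $(\beta+1)H_{2k+2} - (\beta-1) = (\beta+1)(H_{2k+2}-1) + 2 < (\beta+1)H_{2k+2}$ since $H_{2k+2}\ge 1$ (indeed $>1$), so it suffices to show $\frac{2P_{2k+3}}{(\beta+1)H_{2k+2}} \ge \frac{H_{2k+2}}{(\beta+1)P_{2k+1}}$, i.e.\ $2P_{2k+3}P_{2k+1} \ge H_{2k+2}^2$; by (\ref{htop}), $H_{2k+2}^2 = 2P_{2k+3}P_{2k+1} - 1 < 2P_{2k+3}P_{2k+1}$, which gives strict inequality and hence (ii).

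For (iii) and (iv) I would cross-multiply and solve for $\beta$. For (iv): $\gamma_{2k+1,\beta} > \gamma_{2k-1,\beta}$ reads $\frac{2P_{2k+3}}{(\beta+1)H_{2k+2}-(\beta-1)} > \frac{2P_{2k+1}}{(\beta+1)H_{2k}-(\beta-1)}$; both denominators are positive for $\beta\ge 1$, so this is equivalent to $P_{2k+3}\big((\beta+1)H_{2k}-(\beta-1)\big) > P_{2k+1}\big((\beta+1)H_{2k+2}-(\beta-1)\big)$. Expanding and collecting the $\beta$-terms, the coefficient of $(\beta+1)$ is $P_{2k+3}H_{2k} - P_{2k+1}H_{2k+2}$ (which by (\ref{ph})/(\ref{phloc}) is a small constant — I expect $\pm 2$ or similar, and negative), and the coefficient of $-(\beta-1)$ is $P_{2k+3}-P_{2k+1}>0$; rearranging yields $\beta < $ (that explicit ratio), and one checks via (\ref{ph}) and the recurrences that the ratio simplifies to $1+\frac{2}{P_{2k+2}-1}$, using e.g.\ $P_{2k+3}-P_{2k+1} = 2P_{2k+2}$ (from $P_{2k+3}=2P_{2k+2}+P_{2k+1}$) and an identity for $P_{2k+1}H_{2k+2}-P_{2k+3}H_{2k}$. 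Statement (iii) is the analogous computation comparing the even formula at $2k$ with the odd formula at $2k-1$, i.e.\ $\frac{H_{2k+2}}{(\beta+1)P_{2k+1}}$ vs $\frac{2P_{2k+1}}{(\beta+1)H_{2k}-(\beta-1)}$; cross-multiplying gives $H_{2k+2}\big((\beta+1)H_{2k}-(\beta-1)\big) > 2P_{2k+1}^2(\beta+1)$, and since $2P_{2k+1}^2 = H_{2k+1}^2 - (-1)^{2k+1} = H_{2k+1}^2+1$ doesn't immediately match, I would instead use $2P_{2k+1}^2 = H_{2k+2}H_{2k} - (-1)^{?}$ via (\ref{htop})/(\ref{hh}): indeed $H_{2k}H_{2k+2} = 2P_{2k}P_{2k+2} + (-1)^{2k}H_2 = 2P_{2k}P_{2k+2}+3$, which together with $2P_{2k+1}^2 = 2P_{2k}P_{2k+2}-2(-1)^{2k}+\cdots$ needs care — so the main bookkeeping is choosing the right pair of Pell identities so that the $\beta$-coefficient collapses to exactly $H_{2k+2}-1$ in the denominator of the threshold. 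In each of (iii), (iv), tracking the direction of the inequality and confirming that the common denominators stay positive for all $\beta\ge 1$ (so that cross-multiplication is reversible) gives the "if and only if," and equality in the cross-multiplied relation corresponds exactly to $\beta$ equal to the stated threshold, yielding the final sentence about non-strict inequalities.

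The main obstacle I anticipate is purely computational: assembling, for (iii) and (iv), the precise combination of the identities (\ref{pp})--(\ref{consec}) that makes the threshold come out in the clean form $1+\frac{2}{H_{2k+2}-1}$ respectively $1+\frac{2}{P_{2k+2}-1}$ rather than as an unsimplified ratio of Pell numbers. Everything is elementary, but it is easy to drop a sign or misalign an index in the cross-multiplication, so I would carry out each of the four cross-multiplications explicitly and then simplify using the recurrences $P_{m+2}=2P_{m+1}+P_m$, $H_{m+2}=2H_{m+1}+H_m$ together with (\ref{htop}) and (\ref{consec}) to reach the stated bounds. Parts (i) and (ii) I expect to be short, each a one-line consequence of (\ref{phloc}) and (\ref{htop}) respectively.
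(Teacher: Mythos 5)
Your approach matches the paper's: compare the explicit formulas for $\gamma_{n,\beta}$, clear denominators, and reduce each inequality to a Pell-number identity using (\ref{phloc}), (\ref{htop}), (\ref{hh}), and (\ref{pp}). The paper organizes the computation by forming the ratios $\gamma_m/\gamma_n$ and showing each is of the form $1+(\text{something of definite sign})$, whereas you cross-multiply; these are equivalent bookkeeping choices. Two small slips in your forecasting are worth flagging, since you yourself worry about sign errors: in (ii), the reason $(\beta+1)H_{2k+2}-(\beta-1)<(\beta+1)H_{2k+2}$ is that $\beta>1$, not that $H_{2k+2}\ge 1$ (at $\beta=1$ this is an equality, but your chain still closes because the second comparison $2P_{2k+3}P_{2k+1}>H_{2k+2}^2$ is strict); and in (iv), the constant $P_{2k+3}H_{2k}-P_{2k+1}H_{2k+2}$ comes out to $+2$, not negative as you guessed — had it actually been negative the cross-multiplied inequality could never hold for $\beta\ge 1$, so it is fortunate the sign resolves the right way. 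With those corrected, the computations you outline land exactly where the paper's do, including the key simplification $H_{2k}H_{2k+2}=2P_{2k+1}^2+1$ for (iii).
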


\begin{proof} First we use (\ref{phloc}) to see that \begin{align*} \frac{\gamma_{2k+2,\beta}}{\gamma_{2k,\beta}}&=\frac{H_{2k+4}/((1+\beta)P_{2k+3})}{H_{2k+2}/((1+\beta)P_{2k+1})}=\frac{2H_{2k+3}P_{2k+1}+H_{2k+2}P_{2k+1}}{2H_{2k+2}P_{2k+2}+H_{2k+2}P_{2k+1}}
\\ &=  \frac{2H_{2k+2}P_{2k+2}+2+H_{2k+2}P_{2k+1}}{2H_{2k+2}P_{2k+2}+H_{2k+2}P_{2k+1}}>1,\end{align*} proving (i).

Also, using (\ref{htop}), we see that 
\begin{align*} \frac{\gamma_{2k,\beta}}{\gamma_{2k+1,\beta}}&=\frac{H_{2k+2}/((\beta+1)P_{2k+1})}{2P_{2k+3}/((\beta+1)H_{2k+2}-(\beta-1))} = \frac{H_{2k+2}^{2}}{2P_{2k+3}P_{2k+1}}-\frac{\beta-1}{\beta+1}\frac{H_{2k+2}}{2P_{2k+1}P_{2k+3}} \\ &= \frac{2P_{2k+3}P_{2k+1}-1}{2P_{2k+3}P_{2k+1}}-\frac{\beta-1}{\beta+1}\frac{H_{2k+2}}{2P_{2k+1}P_{2k+3}}<1 ,\end{align*} proving (ii).

 On the other hand, we calculate using (\ref{hh}) and (\ref{pp}) that \begin{align*} H_{2k}H_{2k+2}=2P_{2k}P_{2k+2}+3=2P_{2k+1}^{2}+1\end{align*} and hence that \begin{align*} 
\frac{\gamma_{2k,\beta}}{\gamma_{2k-1,\beta}}&= \frac{H_{2k+2}/((\beta+1)P_{2k+1})}{2P_{2k+1}/((\beta+1)H_{2k}-(\beta-1))} = \frac{H_{2k}H_{2k+2}}{2P_{2k+1}^{2}}-\frac{\beta-1}{\beta+1}\frac{H_{2k+2}}{2P_{2k+1}^{2}}
\\&= 1+\frac{1}{2P_{2k+1}^{2}}\left(1-\frac{\beta-1}{\beta+1}H_{2k+2}\right).
\end{align*}  Thus $\gamma_{2k,\beta}> \gamma_{2k-1,\beta}$ if and only if $1-\frac{\beta-1}{\beta+1}H_{2k+2}>0$, \emph{i.e.} if and only if $\beta<1+\frac{2}{H_{2k+2}-1}$, as stated in (iii).

Finally, we see from (\ref{phloc}) that \begin{align*} P_{2k+3}H_{2k}-P_{2k+1}H_{2k+2}&= (2P_{2k+2}+P_{2k+1})H_{2k}-P_{2k+1}(2H_{2k+1}+H_{2k})
\\ &= 2(P_{2k+2}H_{2k}-P_{2k+1}H_{2k+1})=2 \end{align*} and hence that \begin{align*} 
\frac{\gamma_{2k+1,\beta}}{\gamma_{2k-1,\beta}}&=\frac{2P_{2k+3}/((\beta+1)H_{2k+2}-(\beta-1))}{2P_{2k+1}/((\beta+1)H_{2k}-(\beta-1))}=\frac{(\beta+1)P_{2k+3}H_{2k}-(\beta-1)P_{2k+3}}{(\beta+1)P_{2k+1}H_{2k+2}-(\beta-1)P_{2k+1}} 
\\ &= \frac{(\beta+1)(P_{2k+1}H_{2k+2}+2)-(\beta-1)(P_{2k+1}+2P_{2k+2})}{(\beta+1)P_{2k+1}H_{2k+2}-(\beta-1)P_{2k+1}}=1+\frac{2(\beta+1)-2(\beta-1)P_{2k+2}}{(\beta+1)P_{2k+1}H_{2k+2}-(\beta-1)P_{2k+1}},
\end{align*} which is greater than one if and only if $(\beta-1)P_{2k+2}<\beta+1$, \emph{i.e.} if and only if $\beta<1+\frac{2}{P_{2k+2}-1}$, proving (iv).
\end{proof}

Let us write $b_{-1}=\infty$ and, for $n\in \N$, \begin{equation}\label{bdef} b_n=\left\{\begin{array}{ll} 1+\frac{2}{P_{n+2}-1} & n\mbox{ even}\\ 1+\frac{2}{H_{n+1}-1} & n\mbox{ odd} \end{array}\right.. \end{equation}  So $b_0=1+\frac{2}{2-1}=3$, $b_1=1+\frac{2}{3-1}=2$, $b_2=1+\frac{2}{12-1}=\frac{13}{11}, b_3=\frac{18}{16},\,b_4=\frac{71}{69},b_5=\frac{100}{98},\ldots$.  Since for $n\geq 2$ it holds that $P_n<H_n<P_{n+1}$ we have $b_0>b_1>\cdots>b_n>\cdots$.  Also evidently $\lim_{n\to\infty}b_n=1$.

The following can be derived directly from Proposition \ref{ordergamma}:

\begin{cor}\label{orderint}
If $b_{2k}<\beta<b_{2k-1}$ then $\gamma_{2k-1,\beta}\geq \gamma_{n,\beta}$ for all $n\in \N\cup\{-1\}$, and we have \[ \gamma_{-1,\beta}<\gamma_{0,\beta}<\cdots<\gamma_{2k-1,\beta}.\] 
On the other hand if $b_{2k+1}<\beta<b_{2k}$  then $\gamma_{2k+1,\beta}\geq \gamma_{n,\beta}$ for all $n\in \N\cup\{-1\}$, and we have $\gamma_{n,\beta}<\gamma_{n+1,\beta}$ for all $n\in \{-1,0,\ldots,2k-2\}$, while $\gamma_{2k-2,\beta}<\gamma_{2k,\beta}<\gamma_{2k-1,\beta}<\gamma_{2k+1,\beta}$.
\end{cor}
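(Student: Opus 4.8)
The plan is to read off Corollary \ref{orderint} from Proposition \ref{ordergamma} together with the fact, noted just after (\ref{bdef}), that $b_{-1}>b_0>b_1>\cdots$ with $b_n\to 1$. The key observation is that the two thresholds appearing in Proposition \ref{ordergamma} are themselves among the $b_n$: comparing (\ref{bdef}) with parts (iii) and (iv) of that proposition shows that, for $j\in\N$, the statement ``$\gamma_{2j,\beta}>\gamma_{2j-1,\beta}$'' is equivalent to ``$\beta<b_{2j+1}$'', while ``$\gamma_{2j+1,\beta}>\gamma_{2j-1,\beta}$'' is equivalent to ``$\beta<b_{2j}$'' (and the corresponding non-strict statements hold as well). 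Parts (i) and (ii) will be used freely since they hold for all $\beta\geq 1$.

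First I would establish the common ascending chain. In both cases of the corollary we have $\beta<b_{2k-1}$, hence $\beta<b_{2j+1}$ for every $j$ with $0\leq j\leq k-1$ (because $2j+1\leq 2k-1$ and $\{b_n\}$ is decreasing); so part (iii) gives $\gamma_{2j-1,\beta}<\gamma_{2j,\beta}$ for these $j$, and part (ii) gives $\gamma_{2j,\beta}<\gamma_{2j+1,\beta}$ for all $j$. Alternating these produces $\gamma_{-1,\beta}<\gamma_{0,\beta}<\cdots<\gamma_{2k-1,\beta}$, which is exactly the chain asserted in Case 1 and also covers the claim ``$\gamma_{n,\beta}<\gamma_{n+1,\beta}$ for $n\in\{-1,\ldots,2k-2\}$'' in Case 2; in the latter case the remaining four-term comparison $\gamma_{2k-2,\beta}<\gamma_{2k,\beta}<\gamma_{2k-1,\beta}<\gamma_{2k+1,\beta}$ follows respectively from part (i) (applied with index $k-1$), from part (iii) at index $k$ (using $\beta>b_{2k+1}$), and from part (iv) at index $k$ (using $\beta<b_{2k}$).

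It remains to see that the indicated $\gamma$ dominates all of the others, and here the main work is controlling the indices $n\geq 2k$. The useful point is that part (iv) compares odd indices two apart, so iterating it telescopes the odd tail downward: if $\beta>b_{2j}$ then $\gamma_{2j+1,\beta}<\gamma_{2j-1,\beta}$. In Case 1 ($b_{2k}<\beta<b_{2k-1}$) we have $\beta>b_{2k}\geq b_{2j}$ for all $j\geq k$, so $\gamma_{2k-1,\beta}>\gamma_{2k+1,\beta}>\gamma_{2k+3,\beta}>\cdots$; combining with part (ii) ($\gamma_{2j,\beta}<\gamma_{2j+1,\beta}$) shows every $\gamma_{n,\beta}$ with $n\geq 2k$ is at most $\gamma_{2k-1,\beta}$, which together with the chain above gives $\gamma_{2k-1,\beta}\geq\gamma_{n,\beta}$ for all $n\in\N\cup\{-1\}$. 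In Case 2 ($b_{2k+1}<\beta<b_{2k}$) the same telescoping, starting one index later (valid because $\beta>b_{2k+1}\geq b_{2j}$ for $j\geq k+1$), gives $\gamma_{2k+1,\beta}>\gamma_{2k+3,\beta}>\cdots$, so $\gamma_{2k+1,\beta}$ dominates every $\gamma_{n,\beta}$ with $n\geq 2k+2$; since the chain plus the four-term comparison already give $\gamma_{2k+1,\beta}\geq\gamma_{n,\beta}$ for $-1\leq n\leq 2k$, we conclude $\gamma_{2k+1,\beta}\geq\gamma_{n,\beta}$ for all $n$. I do not anticipate a genuine obstacle here: the argument is purely the combinatorial bookkeeping just described, the only minor care being the small-index edge cases (e.g.\ $k\leq 1$, where some comparisons involve $\gamma_{-1,\beta}=1$ or $\gamma_{-2,\beta}$), which are handled using the conventions $P_{-1}=1,\ H_{-1}=-1,\ P_{-2}=-2$ under which the closed forms, and hence Proposition \ref{ordergamma}, remain valid.
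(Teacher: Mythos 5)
Your proof is correct, and it is essentially the derivation the paper has in mind: the paper's own ``proof'' of this corollary consists of a single sentence stating it can be derived directly from Proposition \ref{ordergamma}, and your argument carries out exactly that derivation, using the monotone decrease of the $b_n$ to translate the two thresholds $1+\frac{2}{H_{2k+2}-1}$ and $1+\frac{2}{P_{2k+2}-1}$ into $b_{2k+1}$ and $b_{2k}$, and then chaining (ii) with (iii) upward and telescoping (iv) downward. Your remark about the degenerate $k=0$ comparison $\gamma_{-2,\beta}<\gamma_{0,\beta}$ (handled via the paper's stated conventions $P_{-1}=1$, $H_{-1}=-1$, $P_{-2}=-2$) is a sensible way to make the corollary's statement literally correct at the boundary of its index range.
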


Recalling the definitions of $\Gamma_{\alpha,\beta}(FM_n)$  from  Proposition \ref{Gammadef} and $\gamma_{n,\beta}$ from (\ref{gammadef}), observe that if $\beta,m,n$ have the property that $m>n$ (so that $\alpha_m>\alpha_n$) and $\gamma_{m,\beta}<\gamma_{n,\beta}$ then we in fact have $\Gamma_{\alpha,\beta}(FM_n)>\Gamma_{\alpha,\beta}(FM_m)$ for all $\alpha$.  Hence (using continuity considerations at the endpoints of the intervals) Corollary \ref{orderint} implies that, for any given $\beta> 1$, the supremum on the right hand side of Theorem \ref{fmsup} becomes a maximum over a finite set as follows:

\begin{prop} \label{supreduce}
If $\beta\in [b_{2k},b_{2k-1}]$ then \[ \sup_{n\in \N\cup\{-1\}}\Gamma_{\alpha,\beta}(FM_n)=\max\{\Gamma_{\alpha,\beta}(FM_{-1}),\Gamma_{\alpha,\beta}(FM_0),\ldots,\Gamma_{\alpha,\beta}(FM_{2k-1})\},\] and if $\beta\in [b_{2k+1},b_{2k}]$ then \[ \sup_{n\in \N\cup\{-1\}}\Gamma_{\alpha,\beta}(FM_n)=\max\{\Gamma_{\alpha,\beta}(FM_{-1}),\Gamma_{\alpha,\beta}(FM_0),\ldots,\Gamma_{\alpha,\beta}(FM_{2k-1}),\Gamma_{\alpha,\beta}(FM_{2k+1})\}.\]
\end{prop}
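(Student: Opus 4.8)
The plan is to derive the proposition from Corollary~\ref{orderint} together with one elementary observation about the shape of the functions $\alpha\mapsto\Gamma_{\alpha,\beta}(FM_n)$, namely the observation recorded just before the statement. First I would note that, reading off the formulas for $\Gamma_{\alpha,\beta}(FM_n)$ displayed before (\ref{adef}), each such function is linear through the origin on $[0,\alpha_n]$ and constant, equal to $\gamma_{n,\beta}$, on $[\alpha_n,\infty)$; equivalently $\Gamma_{\alpha,\beta}(FM_n)=\gamma_{n,\beta}\min\{\alpha/\alpha_n,\,1\}$ for all $\alpha\ge 0$. Since the $\alpha_n$ are strictly increasing (the first two inequalities in Proposition~\ref{orderratios}), this yields the following domination principle: if $m>n$ and $\gamma_{m,\beta}\le\gamma_{n,\beta}$, then $\Gamma_{\alpha,\beta}(FM_m)\le\Gamma_{\alpha,\beta}(FM_n)$ for \emph{every} $\alpha\ge 0$ (for $\alpha\le\alpha_n$ because then both functions are linear through the origin and $\gamma_{m,\beta}/\alpha_m\le\gamma_{n,\beta}/\alpha_n$; for $\alpha\ge\alpha_n$ because $\Gamma_{\alpha,\beta}(FM_m)\le\gamma_{m,\beta}\le\gamma_{n,\beta}=\Gamma_{\alpha,\beta}(FM_n)$). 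I use this in the non-strict form $\le$, since at the endpoints of the intervals below some of the $\gamma_{\cdot,\beta}$ coincide.

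Next I would feed in Corollary~\ref{orderint}. For $\beta\in(b_{2k},b_{2k-1})$ it gives $\gamma_{2k-1,\beta}\ge\gamma_{n,\beta}$ for all $n\in\N\cup\{-1\}$; hence any index $n\ge 2k$ has $n>2k-1$ and $\gamma_{n,\beta}\le\gamma_{2k-1,\beta}$, so the domination principle gives $\Gamma_{\alpha,\beta}(FM_n)\le\Gamma_{\alpha,\beta}(FM_{2k-1})$ for all $\alpha$. Discarding these indices leaves the supremum in (\ref{fmbound}) unchanged and equal to the finite maximum $\max\{\Gamma_{\alpha,\beta}(FM_{-1}),\ldots,\Gamma_{\alpha,\beta}(FM_{2k-1})\}$, which is the first assertion. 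For $\beta\in(b_{2k+1},b_{2k})$, Corollary~\ref{orderint} makes $\gamma_{2k+1,\beta}$ the largest of all the $\gamma_{n,\beta}$, so every $n\ge 2k+2$ (which has $n>2k+1$) is dominated by $FM_{2k+1}$; moreover the same corollary gives $\gamma_{2k,\beta}<\gamma_{2k-1,\beta}$, and since $2k>2k-1$ the domination principle shows $FM_{2k}$ is dominated by $FM_{2k-1}$. Hence only the indices $-1,0,\ldots,2k-1$ and $2k+1$ can contribute, giving the second assertion.

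The last step is to pass from the open intervals of Corollary~\ref{orderint} to the closed intervals $[b_{2k},b_{2k-1}]$ and $[b_{2k+1},b_{2k}]$ in the statement. Here I would simply observe that all the inequalities among the $\gamma_{n,\beta}$ that were used are non-strict ($\gamma_{2k-1,\beta}\ge\gamma_{n,\beta}$, resp.\ $\gamma_{2k+1,\beta}\ge\gamma_{n,\beta}$ and $\gamma_{2k,\beta}\le\gamma_{2k-1,\beta}$), and these continue to hold on the closed intervals by the non-strict versions of Proposition~\ref{ordergamma}(iii),(iv) (explicitly included there); alternatively one can note that $\beta\mapsto\Gamma_{\alpha,\beta}(FM_n)$ is continuous and take limits. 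I would also record for consistency that at the common endpoint $\beta=b_{2k}$ one has $\gamma_{2k+1,\beta}=\gamma_{2k-1,\beta}$ by Proposition~\ref{ordergamma}(iv) while $\alpha_{2k+1}>\alpha_{2k-1}$, so by the domination principle $\Gamma_{\alpha,\beta}(FM_{2k+1})\le\Gamma_{\alpha,\beta}(FM_{2k-1})$ and the two formulas agree there.

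I do not expect a genuine obstacle: all the real content sits in Corollary~\ref{orderint} (hence in Proposition~\ref{ordergamma}), and what remains is just bookkeeping — verifying that $FM_{2k-1}$ dominates every $FM_n$ with $n\ge 2k$, that $FM_{2k+1}$ dominates every $FM_n$ with $n\ge 2k+2$, and that $FM_{2k}$ is dominated by $FM_{2k-1}$ — together with the minor issue of upgrading the open intervals to closed ones. If anything needs care it is making sure the index set on the right-hand side is exactly right (neither too long nor too short), which the domination principle settles.
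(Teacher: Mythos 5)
Your proposal is correct and follows essentially the same route as the paper: the paper also derives Proposition \ref{supreduce} from Corollary \ref{orderint} via exactly the observation you call the ``domination principle'' (that $m>n$ and $\gamma_{m,\beta}\le\gamma_{n,\beta}$ forces $\Gamma_{\cdot,\beta}(FM_m)\le\Gamma_{\cdot,\beta}(FM_n)$ everywhere), and then appeals to continuity at the endpoints of the $\beta$-intervals to pass from open to closed intervals. Your treatment is if anything slightly more careful than the paper's, since you explicitly invoke the non-strict forms of Proposition \ref{ordergamma}(iii),(iv) to justify the endpoint cases rather than simply citing continuity.
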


For $n\in \N$, let \begin{equation}\label{sndef} s_n(\beta)=\frac{\gamma_{n-1,\beta} \alpha_n}{\gamma_{n,\beta}};\end{equation} thus $s_n(\beta)$ is the unique value of $\alpha$ at which the constant piece of $\Gamma_{n-1,\beta}$ coincides with the nonconstant piece of $\Gamma_{n,\beta}$. Obviously if $\gamma_{n-1,\beta}<\gamma_{n,\beta}$ then $s_{n}(\beta)<\alpha_n$.

\begin{prop} \label{asorder}
If $n$ is even then $\alpha_{n-1}<s_n(\beta)$ for all $\beta$, while if $n$ is odd then $\alpha_{n-1}<s_n(\beta)$ provided that $\beta<1+\frac{2}{H_{n-1}-1}=b_{n-2}$.
\end{prop}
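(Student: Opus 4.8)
The plan is to reduce the asserted inequality to a one-line statement about Pell numbers. Since $s_n(\beta)=\gamma_{n-1,\beta}\alpha_n/\gamma_{n,\beta}$ and all of $\alpha_n,\alpha_{n-1},\gamma_{n,\beta},\gamma_{n-1,\beta}$ are positive, the inequality $\alpha_{n-1}<s_n(\beta)$ is equivalent to $\alpha_{n-1}\gamma_{n,\beta}<\alpha_n\gamma_{n-1,\beta}$. I would substitute the explicit formulas from \eqref{adef} and \eqref{gammadef}, treating $n$ even and $n$ odd separately: in each case $n-1$ has the opposite parity, so one of $\gamma_{n,\beta},\gamma_{n-1,\beta}$ comes from the ``even'' branch of \eqref{gammadef} and the other from the ``odd'' branch.

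For $n$ even one has $\alpha_{n-1}=P_{n+1}/P_{n-1}$, $\alpha_n=H_{n+2}/H_n$, $\gamma_{n,\beta}=\frac{H_{n+2}}{(\beta+1)P_{n+1}}$, $\gamma_{n-1,\beta}=\frac{2P_{n+1}}{(\beta+1)H_n-(\beta-1)}$. After substituting, cancelling the common factor $H_{n+2}>0$, and cross-multiplying (legitimate since $(\beta+1)H_n-(\beta-1)=\beta(H_n-1)+(H_n+1)>0$ for $\beta\geq 1$), the inequality becomes $H_n\bigl((\beta+1)H_n-(\beta-1)\bigr)<2(\beta+1)P_{n+1}P_{n-1}$. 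Now \eqref{htop} gives $2P_{n+1}P_{n-1}=H_n^2+(-1)^n=H_n^2+1$ since $n$ is even, so the right side is $(\beta+1)(H_n^2+1)$ and the whole inequality collapses to $-(\beta-1)H_n<\beta+1$, which holds for \emph{every} $\beta\geq 1$ because the left side is nonpositive. This is the $n$-even assertion.

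For $n$ odd one has $\alpha_{n-1}=H_{n+1}/H_{n-1}$, $\alpha_n=P_{n+2}/P_n$, $\gamma_{n,\beta}=\frac{2P_{n+2}}{(\beta+1)H_{n+1}-(\beta-1)}$, $\gamma_{n-1,\beta}=\frac{H_{n+1}}{(\beta+1)P_n}$. Substituting, cancelling the common factor $H_{n+1}P_{n+2}>0$, and cross-multiplying (again $(\beta+1)H_{n+1}-(\beta-1)=\beta(H_{n+1}-1)+(H_{n+1}+1)>0$), the inequality reduces to $2(\beta+1)P_n^2<H_{n-1}\bigl((\beta+1)H_{n+1}-(\beta-1)\bigr)$. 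The identity needed here is $H_{n-1}H_{n+1}=2P_n^2-(-1)^n$, which follows by applying \eqref{hh} with base index $n-1$ and gap $j=2$ (giving $H_{n-1}H_{n+1}=2P_{n-1}P_{n+1}+3(-1)^{n-1}$) together with \eqref{psquared} ($P_{n-1}P_{n+1}=P_n^2+(-1)^n$); this is exactly the computation $H_{2k}H_{2k+2}=2P_{2k+1}^2+1$ already made in the proof of Proposition \ref{ordergamma}. Since $n$ is odd this gives $H_{n-1}H_{n+1}=2P_n^2+1$, and plugging in, the inequality collapses to $(\beta-1)H_{n-1}<\beta+1$. When $n=1$ we have $H_{n-1}=H_0=1$ and this holds automatically (matching the convention $b_{-1}=\infty$); when $n\geq 3$ we have $H_{n-1}\geq H_2=3>1$, and dividing through by $H_{n-1}-1$ yields exactly $\beta<\frac{H_{n-1}+1}{H_{n-1}-1}=1+\frac{2}{H_{n-1}-1}=b_{n-2}$ by \eqref{bdef}.

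The argument is essentially pure computation, so there is no serious obstacle; the only thing requiring care is bookkeeping the parities and picking the right specialization of the Pell identities \eqref{htop}, \eqref{hh}, \eqref{psquared} so that the signs $(-1)^n$ come out correctly, after which each case telescopes to a trivial inequality in $\beta$.
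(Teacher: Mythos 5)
Your argument is correct and essentially matches the paper's: both reduce the claim to $\frac{\gamma_{n,\beta}}{\alpha_n}<\frac{\gamma_{n-1,\beta}}{\alpha_{n-1}}$ and then apply (\ref{htop}) in the $n$-even case and the identity $H_{n-1}H_{n+1}=2P_n^2+1$ (from (\ref{hh}) and (\ref{psquared})) in the $n$-odd case, arriving at the same final inequalities $-(\beta-1)H_n<\beta+1$ and $(\beta-1)H_{n-1}<\beta+1$. The only cosmetic difference is that the paper evaluates the comparison as a single ratio via (\ref{gammaovera}) while you cross-multiply, and you additionally flag the $n=1$ edge case explicitly.
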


\begin{proof}  Evidently $\alpha_{n-1}<s_n(\beta)$ if and only if $\frac{\gamma_{n,\beta}}{\alpha_n}<\frac{\gamma_{n-1,\beta}}{\alpha_{n-1}}$.  We see from the definitions that, for $k\in \N$, \begin{equation}\label{gammaovera} \frac{\gamma_{2k,\beta}}{\alpha_{2k}}=\frac{H_{2k}}{(\beta+1)P_{2k+1}}\qquad\qquad \frac{\gamma_{2k-1,\beta}}{\alpha_{2k-1}}=\frac{2P_{2k-1}}{(\beta+1)H_{2k}-(\beta-1)}.\end{equation}  So \begin{align*}
\frac{\frac{\gamma_{2k,\beta}}{\alpha_{2k}}}{\frac{\gamma_{2k-1,\beta}}{\alpha_{2k-1}}}&= \frac{(\beta+1)H_{2k}^{2}-(\beta-1)H_{2k}}{(\beta+1)2P_{2k+1}P_{2k-1}}=\frac{(\beta+1)(2P_{2k+1}P_{2k-1}-1)-(\beta-1)H_{2k}}{(\beta+1)2P_{2k+1}P_{2k-1}}<1
\end{align*} where we have used (\ref{htop}).   This proves the first clause of the proposition.

For the second clause, write $n=2k+1$ and note that (\ref{hh}) and (\ref{pp}) imply that $H_{2k}H_{2k+2}=2P_{2k}P_{2k+2}+3=2P_{2k+1}^{2}+1$, and so \begin{align*}
\frac{\frac{\gamma_{2k+1,\beta}}{\alpha_{2k+1}}}{\frac{\gamma_{2k,\beta}}{\alpha_{2k}}}&= \frac{\frac{2P_{2k+1}}{(\beta+1)H_{2k+2}-(\beta-1)}}{\frac{H_{2k}}{(\beta+1)P_{2k+1}}} = \frac{2P_{2k+1}^{2}}{H_{2k}H_{2k+2}-\frac{\beta-1}{\beta+1}H_{2k}}
\\&= \frac{2P_{2k+1}^{2}}{2P_{2k+1}^{2}+1-\frac{\beta-1}{\beta+1}H_{2k}},
\end{align*} which is smaller than one provided that $\frac{\beta-1}{\beta+1}H_{2k}<1$, \emph{i.e.} provided that $\beta<1+\frac{2}{H_{2k}-1}$.
\end{proof}

\begin{cor}\label{firstpiece}
If $\beta\in [b_{2k+1},b_{2k-1}]$ then for all $n\in \{0,\ldots,2k-1\}$ we have $\alpha_{n-1}<s_n(\beta)\leq \alpha_{n}$. Hence \[ \max\{\Gamma_{\alpha,\beta}(FM_{-1}),\ldots,\Gamma_{\alpha,\beta}(FM_{2k-1})\}=\left\{\begin{array}{ll} \Gamma_{\alpha,\beta}(FM_{-1}) & 1\leq \alpha\leq s_0(\beta) \\
\Gamma_{\alpha,\beta}(FM_n) & s_n(\beta)\leq \alpha\leq s_{n+1}(\beta)\,\, (0\leq n\leq 2k-2) \\ \Gamma_{\alpha,\beta}(FM_{2k-1}) & \alpha\geq s_{2k-1}(\beta) \end{array}\right. \]
\end{cor}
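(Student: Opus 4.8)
The plan is to derive the corollary purely formally from Corollary \ref{orderint} and Proposition \ref{asorder} together with the definition $s_n(\beta)=\gamma_{n-1,\beta}\alpha_n/\gamma_{n,\beta}$ in (\ref{sndef}), the whole thing being driven by two monotonicity facts: that $n\mapsto\gamma_{n,\beta}$ is non-decreasing and that $n\mapsto\gamma_{n,\beta}/\alpha_n$ is strictly decreasing, both on the index range $\{-1,\dots,2k-1\}$ for $\beta\in[b_{2k+1},b_{2k-1}]$.

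First I would establish the inequalities $\alpha_{n-1}<s_n(\beta)\le\alpha_n$ for $n\in\{0,\dots,2k-1\}$. The right-hand inequality $s_n(\beta)\le\alpha_n$ is equivalent to $\gamma_{n-1,\beta}\le\gamma_{n,\beta}$. Corollary \ref{orderint} gives the strict chain $\gamma_{-1,\beta}<\gamma_{0,\beta}<\cdots<\gamma_{2k-1,\beta}$ for $\beta$ in each of the open intervals $(b_{2k+1},b_{2k})$ and $(b_{2k},b_{2k-1})$; since each $\gamma_{n,\cdot}$ is continuous, this persists in the non-strict form $\gamma_{-1,\beta}\le\gamma_{0,\beta}\le\cdots\le\gamma_{2k-1,\beta}$ on the whole closed interval $[b_{2k+1},b_{2k-1}]$, which is all that is needed. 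The left-hand inequality $\alpha_{n-1}<s_n(\beta)$ is exactly the conclusion of Proposition \ref{asorder}: for even $n$ it is unconditional, and for odd $n$ with $1\le n\le 2k-1$ the required hypothesis $\beta<b_{n-2}$ holds because the $b_j$ are strictly decreasing and $\beta\le b_{2k-1}<b_{2k-3}\le b_{n-2}$. I would also record for later use that $\alpha_{n-1}<s_n(\beta)$ is literally the statement $\gamma_{n,\beta}/\alpha_n<\gamma_{n-1,\beta}/\alpha_{n-1}$, so the ``linear slopes'' $\gamma_{n,\beta}/\alpha_n$ are strictly decreasing in $n$; and that $s_0(\beta)=\beta+1$, so $1<s_0(\beta)\le\alpha_0$ using $\beta\in(1,2]$.

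Given these facts, the breakpoints satisfy $1<s_0(\beta)\le\alpha_0<s_1(\beta)\le\alpha_1<\cdots<s_{2k-1}(\beta)\le\alpha_{2k-1}$, and the piecewise description of the maximum follows from a standard upper-envelope argument. Writing $\Gamma_{\alpha,\beta}(FM_n)=\min\{(\gamma_{n,\beta}/\alpha_n)\alpha,\ \gamma_{n,\beta}\}$ for $n\ge 0$ (valid by the explicit formulas preceding (\ref{adef})) and $\Gamma_{\alpha,\beta}(FM_{-1})=1=\gamma_{-1,\beta}$ on $\alpha\ge 1$, I would fix $n\in\{0,\dots,2k-1\}$ and $\alpha\in[s_n(\beta),s_{n+1}(\beta)]$ (with the convention $s_{2k}(\beta):=+\infty$) and check $\Gamma_{\alpha,\beta}(FM_n)\ge\Gamma_{\alpha,\beta}(FM_m)$ for every $m\in\{-1,\dots,2k-1\}$. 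For $m<n$ one has $\alpha\ge s_n(\beta)>\alpha_{n-1}\ge\alpha_m$, so $\Gamma_{\alpha,\beta}(FM_m)=\gamma_{m,\beta}\le\gamma_{n-1,\beta}$, while $\Gamma_{\alpha,\beta}(FM_n)\ge\min\{(\gamma_{n,\beta}/\alpha_n)s_n(\beta),\gamma_{n,\beta}\}=\min\{\gamma_{n-1,\beta},\gamma_{n,\beta}\}=\gamma_{n-1,\beta}$. For $m>n$ one has $\alpha\le s_{n+1}(\beta)\le\alpha_{n+1}\le\alpha_m$, so $\Gamma_{\alpha,\beta}(FM_m)=(\gamma_{m,\beta}/\alpha_m)\alpha\le(\gamma_{n,\beta}/\alpha_n)\alpha$ by the decreasing-slope fact; if $\alpha\le\alpha_n$ this is at most $\Gamma_{\alpha,\beta}(FM_n)$, and if $\alpha>\alpha_n$ then $(\gamma_{m,\beta}/\alpha_m)\alpha\le(\gamma_{n+1,\beta}/\alpha_{n+1})s_{n+1}(\beta)=\gamma_{n,\beta}=\Gamma_{\alpha,\beta}(FM_n)$. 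The remaining range $\alpha\in[1,s_0(\beta)]$, where the claim is that $FM_{-1}$ dominates, is the same computation: there $\alpha\le s_0(\beta)\le\alpha_0\le\alpha_m$ for $m\ge 0$, and $(\gamma_{m,\beta}/\alpha_m)\alpha\le(\gamma_{0,\beta}/\alpha_0)s_0(\beta)=\gamma_{-1,\beta}=1$. Assembling these pointwise comparisons yields exactly the displayed formula.

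I do not expect a genuine obstacle; the content lies entirely in isolating the two monotonicity statements above and then running the routine staircase/envelope comparison. The only points that need care are (i) that Corollary \ref{orderint} is stated on open $\beta$-intervals, so a short continuity remark is needed to handle the endpoints $b_{2k+1},b_{2k},b_{2k-1}$ of $[b_{2k+1},b_{2k-1}]$, and (ii) the index bookkeeping that makes the hypothesis $\beta<b_{n-2}$ of Proposition \ref{asorder} available for every odd $n\le 2k-1$, for which the strict monotonicity $b_{2k-1}<b_{2k-3}$ is the key.
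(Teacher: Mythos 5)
Your proof is correct and follows essentially the same route as the paper. Both arguments derive the bracketing inequalities $\alpha_{n-1}<s_n(\beta)\le\alpha_n$ from Corollary \ref{orderint} (monotonicity of $\gamma_{n,\beta}$) together with Proposition \ref{asorder}, and then read off the displayed formula from the piecewise-linear structure of the $\Gamma_{\cdot,\beta}(FM_n)$. The only cosmetic differences are (a) your envelope step compares $FM_n$ directly to each $FM_m$ pointwise, whereas the paper compares consecutive pairs and chains the results, and (b) you invoke continuity to pass from the open-interval form of Corollary \ref{orderint} to the closed interval $[b_{2k+1},b_{2k-1}]$; the paper instead implicitly relies on the final sentence of Proposition \ref{ordergamma} (the non-strict version of (iii) and (iv)), which actually gives the non-strict chain on $\{\beta\le b_{2k-1}\}$ without a limiting argument.
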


\begin{proof}  Throughout the proof we only consider values of $n$ in the set $\{0,\ldots,2k-1\}$.

Corollary \ref{orderint} shows that $\gamma_{n-1,\beta}\leq \gamma_{n,\beta}$ for all such $n$, which as noted earlier implies that $s_n(\beta)\leq \alpha_n$.  Since the $b_n$ form a strictly decreasing sequence, we have $b_{2k-1}<b_{n-2}$ for $n=0,\ldots,2k-1$, so Proposition \ref{asorder} applies to show that $\alpha_{n-1}<s_n(\beta)$ for all $\beta$ in the interval under consideration.  This proves the first sentence of the proposition.

Given this, since the $\Gamma_{\cdot,\beta}(FM_n)$ are all globally nondecreasing and, on the interval $[\alpha_{n-1},\alpha_n]$, $\Gamma_{\cdot,\beta}(FM_{n-1})$ is constant while $\Gamma_{\cdot,\beta}(FM_n)$ is strictly increasing, with $\Gamma_{s_n(\beta),\beta}(FM_{n-1})=\Gamma_{s_n(\beta),\beta}(FM_n)$, it follows that $\Gamma_{\alpha,\beta}(FM_{n-1})\geq \Gamma_{\alpha,\beta}(FM_n)$ for all $\alpha\in [\alpha_{n-1},s_n(\beta)]$ and $\Gamma_{\alpha,\beta}(FM_{n})\geq \Gamma_{\alpha,\beta}(FM_{n-1})$ on $[s_n(\beta),\alpha_n]$.  Moreover since both $\Gamma_{\cdot,\beta}(FM_
{n-1})$ and $\Gamma_{\cdot,\beta}(FM_n)$ are constant on $[\alpha_n,\infty)$ and linear on $[1,\alpha_{n-1}]$ these inequalities extend to $\Gamma_{\cdot,\beta}(FM_{n-1})\geq \Gamma_{\cdot,\beta}(FM_n)$ on $[1,s_n(\beta)]$ and $\Gamma_{\cdot,\beta}(FM_n)\geq \Gamma_{\cdot,\beta}(FM_{n-1})$ on $[s_n(\beta),\infty)$.  Since the $s_n(\beta)$ form an increasing sequence in $n$, applying this repeatedly shows that, for $j\geq n$, $\Gamma_{\cdot,\beta}(FM_{n-1})\geq \Gamma_{\cdot,\beta}(FM_j)$ on $[1,s_n(\beta)]$ and for $j< n$, $\Gamma_{\cdot,\beta}(FM_n)\geq \Gamma_{\cdot,\beta}(FM_j)$ on $[s_n(\beta),\infty)$.  Hence on each interval $[s_n(\beta),s_{n+1}(\beta)]$ the $\Gamma_{\cdot,\beta}(FM_j)$ are maximized by setting $j=n$, while $\Gamma_{\cdot,\beta}(FM_{-1})$ is maximal on $[1,s_0(\beta)]$ and $\Gamma_{\cdot,\beta}(FM_{2k-1})$ is maximal on $[s_{2k-1}(\beta),\infty)$.  This is precisely what is stated in the second sentence of the corollary.
\end{proof}

In view of Proposition \ref{supreduce}, Corollary \ref{firstpiece} gives an explicit piecewise formula for $\sup_{n\in \N\cup\{-1\}}\Gamma_{\alpha,\beta}(FM_n)$ in the case that $\beta$ lies in an interval of the form $[b_{2k},b_{2k-1}]$.  If instead $\beta\in (b_{2k+1},b_{2k})$ for some $k$ then we  have 
$\sup_{n\in \N\cup\{-1\}}\Gamma_{\alpha,\beta}(FM_n)=\max\{\Gamma_{\alpha,\beta}(FM_{-1}),\ldots,\Gamma_{\alpha,\beta}(FM_{2k-1}),\Gamma_{\alpha,\beta}(FM_{2k+1})\}$ and so we need to take into account the relationship of $\Gamma_{\cdot,\beta}(FM_{2k-1})$ and $\Gamma_{\cdot,\beta}(FM_{2k+1})$.  Accordingly let us write \begin{equation}\label{sprimedef} s'_{2k}(\beta)=\frac{\gamma_{2k-1,\beta}\alpha_{2k+1}}{\gamma_{2k+1,\beta}},\end{equation} so that $s'_{2k}(\beta)$ is the value of $\alpha$ at which the linear piece of $\Gamma_{\cdot,\beta}(FM_{2k+1})$ coincides with the constant piece of $\Gamma_{\cdot,\beta}(FM_{2k-1})$.  Since for $\beta\in [b_{2k+1},b_{2k}]$ we have $\gamma_{2k-1,\beta}\leq \gamma_{2k+1,\beta}$, it holds that $s'_{2k}(\beta)\leq \alpha_{2k+1}$.  To compare $s'_{2k}(\beta)$ to $\alpha_{2k-1}$, we first use (\ref{gammaovera}) to see that \begin{align}\nonumber \frac{\alpha_{2k-1}}{s'_{2k}(\beta)}&=\frac{\frac{\gamma_{2k+1,\beta}}{\alpha_{2k+1}}}{\frac{\gamma_{2k-1,\beta}}{\alpha_{2k-1}}}=\frac{\frac{P_{2k+1}}{(\beta+1)H_{2k+2}-(\beta-1)}}{\frac{P_{2k-1}}{(\beta+1)H_{2k}-(\beta-1)}}
\\&=\frac{(\beta+1)P_{2k+1}H_{2k}-(\beta-1)P_{2k+1}}{(\beta+1)P_{2k-1}H_{2k+2}-(\beta-1)P_{2k-1}}.\label{sprimeratio}\end{align} 
Now \begin{align*} P_{2k+1}H_{2k}-P_{2k-1}H_{2k+2}&=(2P_{2k}+P_{2k-1})H_{2k}-P_{2k-1}(2H_{2k+1}+H_{2k})\\&=2(P_{2k}H_{2k}-P_{2k-1}H_{2k+1})=-2,
\end{align*} in view of which the numerator of (\ref{sprimeratio}) is smaller than the denominator for every $\beta\geq 1$.  

So when $\beta\in [b_{2k+1},b_{2k}]$ we have \begin{equation}\label{sprimea} \alpha_{2k-1}<s'_{2k}(b)\leq \alpha_{2k+1},\end{equation} and \[ \Gamma_{\alpha,\beta}(FM_{2k-1})\geq \Gamma_{\alpha,\beta}(FM_{2k+1})\mbox{ for }\alpha \leq s'_{2k}(\beta),\quad \Gamma_{\alpha,\beta}(FM_{2k+1})\geq \Gamma_{\alpha,\beta}(FM_{2k-1})\mbox{ for }\alpha\geq s'_{2k}(\beta).\]  The fact that $s'_{2k}(\beta)>\alpha_{2k-1}$ implies that $s'_{2k}(\beta)>s_{2k-1}(\beta)$, and so these calculations together with Corollary \ref{firstpiece} imply that, for $\beta\in [b_{2k+1},b_{2k}]$, \begin{align*} &
\max\{\Gamma_{\alpha,\beta}(FM_{-1}),\ldots,\Gamma_{\alpha,\beta}(FM_{2k-1}),\Gamma_{\alpha,\beta}(FM_{2k+1})\}\\ &\qquad\qquad=\left\{\begin{array}{ll} \Gamma_{\alpha,\beta}(FM_{-1}) & 1\leq \alpha\leq s_0(\beta) \\
\Gamma_{\alpha,\beta}(FM_n) & s_n(\beta)\leq \alpha\leq s_{n+1}(\beta)\,\, (0\leq n\leq 2k-2) \\ \Gamma_{\alpha,\beta}(FM_{2k-1}) &  s_{2k-1}(\beta)\leq \alpha\leq s'_{2k}(\beta) \\ \Gamma_{\alpha,\beta}(FM_{2k+1}) & \alpha \geq s'_{2k}(\beta) \end{array}\right. 
\end{align*}

For future reference we rephrase this derivation as follows.

\begin{prop}\label{supformula} If $\beta\in [b_{2k},b_{2k-1}]$ we have \[ \sup_{n\in \N\cup\{-1\}}\Gamma_{\alpha,\beta}(FM_n)=\left\{\begin{array}{ll} \gamma_{n-1,\beta} & \alpha_{n-1}\leq \alpha\leq s_n(\beta),\,n\in\{0,\ldots,2k-1\} \\
\frac{\gamma_{n,\beta}\alpha}{\alpha_n} & s_n(\beta)\leq \alpha\leq \alpha_n,\,n\in\{0,\ldots,2k-1\}  \\ 
\gamma_{2k-1,\beta} & \alpha\geq \alpha_{2k-1}\end{array}\right., \] and if $\beta\in [b_{2k+1},b_{2k}]$ then \[ 
\sup_{n\in \N\cup\{-1\}}\Gamma_{\alpha,\beta}(FM_n)=\left\{\begin{array}{ll} \gamma_{n-1,\beta} & \alpha_{n-1}\leq \alpha\leq s_n(\beta),\,n\in\{0,\ldots,2k-1\} \\
\frac{\gamma_{n,\beta}\alpha}{\alpha_n} & s_n(\beta)\leq \alpha\leq \alpha_n,\,n\in\{0,\ldots,2k-1\} \\ 
\gamma_{2k-1,\beta} & \alpha_{2k-1}\leq \alpha\leq s'_{2k}(\beta) \\ \frac{\gamma_{2k+1,\beta}\alpha}{\alpha_{2k+1}} & s'_{2k}(\beta)\leq \alpha\leq \alpha_{2k+1} \\
\gamma_{2k+1,\beta} & \alpha\geq \alpha_{2k+1}\end{array}\right..\]
Here $\alpha_n,\gamma_{n,\beta},b_n,s_n(\beta),s'_{2k}(\beta)$ are defined respectively in (\ref{adef}),(\ref{gammadef}),(\ref{bdef}),(\ref{sndef}), and (\ref{sprimedef}).  
\end{prop}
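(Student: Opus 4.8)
The plan is to obtain Proposition \ref{supformula} as a bookkeeping reorganization of Proposition \ref{supreduce}, Corollary \ref{firstpiece}, and the $s'_{2k}(\beta)$ computation carried out just above its statement: I would take the description of $\sup_n\Gamma_{\alpha,\beta}(FM_n)$ as a maximum of the functions $\Gamma_{\cdot,\beta}(FM_n)$ on a partition of $[1,\infty)$ by the points $s_n(\beta)$ (and $s'_{2k}(\beta)$), and then on each piece substitute the explicit two-part formula for $\Gamma_{\alpha,\beta}(FM_n)$ recorded after (\ref{fmclass}), together with the identities $\gamma_{n,\beta}=\Gamma_{\alpha_n,\beta}(FM_n)$ of (\ref{gammadef}) and $\Gamma_{\alpha,\beta}(FM_{-1})\equiv 1=\gamma_{-1,\beta}$ (with $\alpha_{-1}=1$).

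First I would treat $\beta\in[b_{2k},b_{2k-1}]$. Since $b_{2k+1}<b_{2k}$ we have $[b_{2k},b_{2k-1}]\subseteq[b_{2k+1},b_{2k-1}]$, so by Proposition \ref{supreduce} the supremum equals $\max\{\Gamma_{\alpha,\beta}(FM_{-1}),\dots,\Gamma_{\alpha,\beta}(FM_{2k-1})\}$ and Corollary \ref{firstpiece} identifies this as $\Gamma_{\cdot,\beta}(FM_{-1})$ on $[1,s_0(\beta)]$, as $\Gamma_{\cdot,\beta}(FM_n)$ on each $[s_n(\beta),s_{n+1}(\beta)]$ for $0\le n\le 2k-2$, and as $\Gamma_{\cdot,\beta}(FM_{2k-1})$ on $[s_{2k-1}(\beta),\infty)$. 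The first sentence of Corollary \ref{firstpiece}, reindexed, gives $s_n(\beta)\le\alpha_n<s_{n+1}(\beta)$ for the relevant $n$, so on each interval $[s_n(\beta),s_{n+1}(\beta)]$ the breakpoint $\alpha_n$ of the piecewise-linear function $\Gamma_{\cdot,\beta}(FM_n)$ lies in the interior; there $\Gamma_{\alpha,\beta}(FM_n)$ equals $\frac{\gamma_{n,\beta}\alpha}{\alpha_n}$ on $[s_n(\beta),\alpha_n]$ and the constant $\gamma_{n,\beta}$ on $[\alpha_n,s_{n+1}(\beta)]$. Splicing consecutive intervals together (the constant tail $\gamma_{n,\beta}$ of one piece abutting the linear head $\frac{\gamma_{n+1,\beta}\alpha}{\alpha_{n+1}}$ of the next at $\alpha=s_{n+1}(\beta)$, with $\Gamma_{\cdot,\beta}(FM_{-1})\equiv 1$ furnishing the initial constant floor) produces exactly the first displayed formula.

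Next I would handle $\beta\in[b_{2k+1},b_{2k}]$, where Proposition \ref{supreduce} gives $\sup_n\Gamma_{\alpha,\beta}(FM_n)=\max\{\Gamma_{\alpha,\beta}(FM_{-1}),\dots,\Gamma_{\alpha,\beta}(FM_{2k-1}),\Gamma_{\alpha,\beta}(FM_{2k+1})\}$. Since $[b_{2k+1},b_{2k}]\subseteq[b_{2k+1},b_{2k-1}]$, Corollary \ref{firstpiece} still describes the maximum of the first $2k+1$ terms, so it remains only to merge in $\Gamma_{\cdot,\beta}(FM_{2k+1})$ — which is precisely the content of the computation preceding the proposition: from (\ref{sprimea}) and the observation $s'_{2k}(\beta)>\alpha_{2k-1}\ge s_{2k-1}(\beta)$ one gets $\Gamma_{\alpha,\beta}(FM_{2k-1})\ge\Gamma_{\alpha,\beta}(FM_{2k+1})$ for $\alpha\le s'_{2k}(\beta)$ and the reverse for $\alpha\ge s'_{2k}(\beta)$, while $\alpha_{2k-1}<s'_{2k}(\beta)\le\alpha_{2k+1}$ places the breakpoint $\alpha_{2k+1}$ of $\Gamma_{\cdot,\beta}(FM_{2k+1})$ to the right of $s'_{2k}(\beta)$. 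Substituting the two-part form of $\Gamma_{\cdot,\beta}(FM_{2k-1})$ (constant $=\gamma_{2k-1,\beta}$ on $[\alpha_{2k-1},\infty)$) and of $\Gamma_{\cdot,\beta}(FM_{2k+1})$ (equal to $\frac{\gamma_{2k+1,\beta}\alpha}{\alpha_{2k+1}}$ on $[s'_{2k}(\beta),\alpha_{2k+1}]$, then $\gamma_{2k+1,\beta}$) on the region $\alpha\ge s_{2k-1}(\beta)$, while leaving the output of Corollary \ref{firstpiece} unchanged on $[1,s_{2k-1}(\beta)]$, gives the second displayed formula; the degenerate case $k=0$ is handled by the convention that the ranges $n\in\{0,\dots,2k-1\}$ are empty.

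There is no genuine obstacle here, since every ingredient has already been proved; the only thing requiring care is the interval bookkeeping that guarantees each breakpoint $\alpha_n$ (and $\alpha_{2k+1}$) lands strictly inside the interval on which the corresponding $\Gamma_{\cdot,\beta}(FM_n)$ realizes the maximum, so that both a linear piece and a constant piece genuinely appear in the output — this is ensured by the strict inequalities $\alpha_{n-1}<s_n(\beta)$ from Proposition \ref{asorder}/Corollary \ref{firstpiece} and $\alpha_{2k-1}<s'_{2k}(\beta)$ from (\ref{sprimea}), combined with the non-strict bounds $s_n(\beta)\le\alpha_n$ and $s'_{2k}(\beta)\le\alpha_{2k+1}$ valid on the indicated $\beta$-intervals.
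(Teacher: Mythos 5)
Your proposal is correct and matches the paper's own treatment: the paper states Proposition \ref{supformula} with the preamble ``For future reference we rephrase this derivation as follows,'' indicating that it is simply a bookkeeping reorganization of Proposition \ref{supreduce}, Corollary \ref{firstpiece}, and the computation preceding the statement (in particular (\ref{sprimea}) and the comparison of $\Gamma_{\cdot,\beta}(FM_{2k-1})$ with $\Gamma_{\cdot,\beta}(FM_{2k+1})$), exactly as you describe.
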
 

Provided that $\beta$ lies in the interior $(b_{m},b_{m-1})$ of an interval between consecutive $b_n$, the $m+1$ values $\gamma_{-1,\beta},\gamma_{0,\beta},\ldots,\gamma_{m-1,\beta}$ (if $m$ is even) or $\gamma_{-1,\beta},\gamma_{0,\beta},\ldots,\gamma_{m-2,\beta},\gamma_{m,\beta}$ (if $m$ is odd) form a strictly increasing sequence by Corollary \ref{orderint}, and so the graph of $\sup_{n\in \N\cup\{-1\}}\Gamma_{\cdot,\beta}(FM_n)$ consists of $m$ distinct nontrivial ``steps'' from $\gamma_{-1,\beta}=1$ to $\gamma_{0,\beta}$, $\gamma_{0,\beta}$ to $\gamma_{1,\beta}$, and so on, ending at a step from $\gamma_{m-2,\beta}$ to either $\gamma_{m-1,\beta}$ or $\gamma_{m,\beta}$ depending on the parity of $m$.  (If $m=0$, so that $\beta\in (3,\infty)$, then $\sup_n\Gamma_{\alpha,\beta}(FM_n)$ is just the constant function $1$, corresponding to the non-squeezing theorem.)  As $\beta$ approaches $b_{m-1}$ from below, two of the heights $\gamma_{i,b}$ approach each other and so one of the steps collapses to a constant.

Note that since $H_n,P_n$ are each asymptotic to constants times $\sigma^n$ where $\sigma=1+\sqrt{2}$, the formula (\ref{bdef}) makes clear that $b_m-1$ is asymptotic to a constant times $\sigma^{-m}$. Thus the number of steps in the graph of $\sup_n\Gamma_{\cdot,\beta}(FM_n)$ is comparable to $\log(1/(\beta-1))$, which of course diverges to infinity as $\beta\to 1$, but does so rather slowly.  For example the interval $[b_{10},b_9)$ of values of $\beta$ for which the graph has $10$ steps  is $\left[\frac{13861}{13859},\frac{3364}{3362}\right)$.

\subsection{Sharpness of the lower bound} \label{fmsupproof}

As noted earlier, the existence of the Frenkel-M\"uller classes $FM_n$ immediately implies an inequality $C_{\beta}(\alpha)\geq \sup_{n\in\N\cup\{-1\}}\Gamma_{\alpha,\beta}(FM_n)$ for all $\alpha$, so to prove Theorem \ref{fmsup} we just need to establish the reverse inequality for $\alpha\leq \sigma^2$.
In fact, consulting the formula in Proposition \ref{supformula}, we see that it is sufficient to establish the reverse inequality at the various points $s_n(\beta)$ and (when $\beta\in (b_{2k+1},b_{2k})$) $s'_{2k}(\beta)$ that appear in that formula, together with a single point $\alpha$ with $\alpha\geq \sigma^2$.  Indeed if we know that $C_{\beta}(s_n(\beta))\leq \sup_n\Gamma_{s_n(\beta),\beta}(s_n(\beta))$, then obvious inequality $C_{\beta}(\alpha)\leq C_{\beta}(\alpha')$ for $\alpha\leq \alpha'$ will then imply that $C_{\beta}(\alpha)\leq \sup_n\Gamma_{\alpha,\beta}(FM_n)$  for $\alpha\in [\alpha_{n-1},s_n(\beta)]$, and the sublinearity inequality $C_{\beta}(t\alpha)\leq tC_{\beta}(\alpha)$ for $t\geq 1$ noted in the proof of Proposition \ref{Gammadef} will imply that $C_{\beta}(\alpha)\leq \sup_n\Gamma_{\alpha,\beta}(\alpha)$ for $\alpha\in [s_{n}(\beta),\alpha_{n}]$; similar remarks apply to the other intervals in (\ref{supformula}).  

Thus we must show that:
\begin{itemize}
\item[(I)] $C_{\beta}(s_n(\beta))\leq \gamma_{n-1,\beta}$ for $\beta\leq b_n$ if $n$ is odd, and for $\beta\leq b_{n+1}$ if $n$ is even,
\item[(II)] $C_{\beta}(s'_{2k}(\beta))\leq \gamma_{2k-1,\beta}$ for $b_{2k+1}\leq \beta\leq b_{2k}$, and 
\item[(III)] $C_{\beta}(\alpha)\leq \gamma_{2k-1,\beta}$ for some $\alpha\geq \sigma^2$, whenever $b_{2k}\leq \beta\leq b_{2k-2}$.
\end{itemize}

Note that it is sufficient to prove (I), (II), and (III) when $\beta$ (and hence also $s_n(\beta)$ and $s'_{2k}(\beta)$) is rational, as this will be sufficient to prove the equality $C_{\beta}(\alpha)=\sup_n\Gamma_{\alpha,\beta}(FM_n)$ for $\beta\in [1,\infty)\cap\Q$ and $\alpha\leq 3+2\sqrt{2}$, and both sides of this equality are easily seen to vary continuously with $\beta$. The forthcoming discussion will prove statements (I), (II), and (III) for rational $\beta>1$.  Specifically: \begin{itemize} \item The case of (I) with $n$ odd follows from Proposition \ref{bigreduce} and Corollary \ref{evencor}, while the case of (I) with $n$ even follows from Proposition \ref{typodd}.
\item (II) follows from Proposition \ref{excodd}.
\item Since the condition $b_{2k}\leq \beta\leq b_{2k-2}$ is equivalent to the condition that $P_{2k}(\beta-1)\leq \beta+1\leq P_{2k+2}(\beta-1)$, (III) follows by combining Propositions \ref{p2k} and \ref{lastplat}.
\end{itemize}

The statements listed above all amount to showing that a certain class of the form $\left(\gamma\beta,\gamma;\mathcal{W}(1,\alpha)\right)$ belongs to the appropriate symplectic cone closure $\bar{\mathcal{C}}_K(X_N)$.  Proposition \ref{genexp} shows that, if $\alpha\in\left[\frac{P_{2k+1}}{P_{2k-1}},\frac{P_{2k+2}}{P_{2k}}\right]\cap\Q$ and $\gamma\geq \frac{\alpha+1}{2\beta+2}$, then $\left(\gamma\beta,\gamma;\mathcal{W}(1,\alpha)\right)$ is Cremona equivalent to the class denoted there by $\Sigma_{\alpha,\beta,\gamma}^{k}$, which may be written as \[ \Sigma_{\alpha,\beta,\gamma}^{k}=\langle Z;A,B,\mathcal{W}(C,D),\mathcal{W}(E,F)\rangle \] where:
\begin{equation}\label{zadef} Z=P_{2k+1}\left(\gamma(\beta+1)-1\right)-P_{2k}\alpha, \quad A=\frac{H_{2k}}{2}\gamma(\beta+1)-P_{2k+1}+\gamma\left(\frac{\beta-1}{2}\right),\end{equation} \[ B=\frac{H_{2k}}{2}\gamma(\beta+1)-P_{2k+1}-\gamma\left(\frac{\beta-1}{2}\right),\quad C=\frac{P_{2k+2}}{2}-\frac{P_{2k}}{2}\alpha,\quad D=P_{2k-1}\alpha-P_{2k+1},\] and  \[ E=\frac{P_{2k}}{2}\left(2\gamma(\beta+1)-\alpha-1\right) ,\quad  F= P_{2k+1}\left(2\gamma(\beta+1)-\alpha-1\right).\] (We use this notation even if $k=0$, although in that case $E$ (which is zero) and $F$ (which is typically nonzero) are not relevant to $\Sigma_{\alpha,\beta,\gamma}^{k}$. As noted in Remark \ref{k0}, when $k=0$ we do not need to assume that $\gamma\geq\frac{\alpha+1}{2\beta+2}$.) 

Throughout the rest of this section $Z,A,B,C,D,E,F$ will refer to the above quantities.

\subsubsection{The case $\gamma=\gamma_{2k,\beta}$}  The statements (I),(II),(III) in the beginning of Section \ref{fmsupproof} involve one case of embedding an ellipsoid $E(1,\alpha)^{\circ}$ into $\gamma_{2k,\beta}P(1,\beta)$ (with $\alpha=s_{2k+1}(\beta)$ and $\beta\leq b_{2k+1}$), and three cases of embedding an ellipsoid $E(1,\alpha)^{\circ}$ into $\gamma_{2k-1,\beta}P(1,\beta)$ (with $\alpha=s_{2k}(\beta)$ and $\beta\leq b_{2k+1}$, with $\alpha=s'_{2k}(\beta)$ and $b_{2k+1}\leq \beta\leq b_{2k}$, and with $\alpha$ equal to some value greater than $\sigma^2$ and $b_{2k}\leq \beta\leq b_{2k-2}$).  This subsection will
establish the one case involving $\gamma=\gamma_{2k,\beta}=\frac{H_{2k+2}}{(\beta+1)P_{2k+1}}$.

We accordingly assume that $\beta\leq b_{2k+1}=1+\frac{2}{H_{2k+2}-1}$ (equivalently, that $(\beta+1)\geq H_{2k+2}(\beta-1)$).  Proposition \ref{asorder} (and the fact that $b_{2k+1}<b_{2k-1}$) shows that $\alpha_{2k}\leq s_{2k+1}(\beta)$, while Proposition \ref{ordergamma} shows that $\gamma_{2k,\beta}\leq \gamma_{2k+1,\beta}$ and hence that $s_{2k+1}(\beta)=\frac{\gamma_{2k,\beta}}{\gamma_{2k+1,\beta}}\alpha_{2k+1}\leq \alpha_{2k+1}$.  In particular since $\alpha_{2k-1}=\frac{P_{2k+1}}{P_{2k-1}}<\alpha_{2k}$ and $\alpha_{2k+1}<\sigma^2<\frac{P_{2k+2}}{P_{2k}}$ we have $s_{2k+1}(\beta)\in \left[\frac{P_{2k+1}}{P_{2k-1}},\frac{P_{2k+2}}{P_{2k}}\right]$, and so Proposition \ref{bigreduce} (or, if $k=0$, Remark \ref{k0}) is applicable to the question of whether there is a symplectic embedding $E(1,s_{2k+1}(b))^{\circ}\hookrightarrow \gamma_{2k,b}P(1,b)$.


\begin{prop}\label{evenids}
Let $\gamma=\gamma_{2k,\beta}$ and $\alpha=s_{2k+1}(\beta)$.  Then for any $\beta$ we have: \begin{itemize}\item[(i)] $Z=2C$, \item[(ii)] $D+F=4C$, \item[(iii)] $A+B+E=C$, and \item[(iv)] $A=F=P_{2k+3}-P_{2k+1}\alpha$.\end{itemize} Under the additional assumption that $1\leq \beta\leq b_{2k+1}$, we have: \begin{itemize} \item[(v)] $A\leq C$, and \item[(vi)] $B\geq 0$.\end{itemize}
\end{prop}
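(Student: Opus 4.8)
\textbf{Plan of proof of Proposition \ref{evenids}.}
The key observation that makes everything routine is that the choice $\gamma=\gamma_{2k,\beta}$ eliminates $\gamma$ in favor of Pell numbers: by (\ref{gammadef}) we have $\gamma_{2k,\beta}(\beta+1)=\frac{H_{2k+2}}{P_{2k+1}}$, so I would set $g:=\gamma(\beta+1)=H_{2k+2}/P_{2k+1}$ and $t:=\frac{\beta-1}{\beta+1}$, so that $\gamma(\beta-1)=gt$ and $gP_{2k+1}=H_{2k+2}$. Substituting these into the definitions (\ref{zadef}) and the two lines after it turns $Z,A,B,C,D,E,F$ into explicit expressions in the $P$'s, $H$'s, $g$, $t$, and $\alpha$. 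Next I would record, from (\ref{sndef}),(\ref{adef}),(\ref{gammadef}), the closed form $\alpha=s_{2k+1}(\beta)=\frac{H_{2k+2}(H_{2k+2}-t)}{2P_{2k+1}^2}$, or equivalently the relation $2P_{2k+1}\alpha=g(H_{2k+2}-t)$; this is the single identity I will use whenever $t$ or $\alpha$ must be eliminated in favor of the other.

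With these substitutions in hand, each of (i)--(iv) becomes a pure Pell identity. For (i), $Z-2C=P_{2k+1}(g-1)-P_{2k+2}=H_{2k+2}-P_{2k+1}-P_{2k+2}$, which vanishes by $H_{2k+2}=P_{2k+2}+P_{2k+1}$ from (\ref{hpadd}). For (ii), using $P_{2k+1}-P_{2k-1}=2P_{2k}$ and $gP_{2k+1}=H_{2k+2}$ one gets $D+F=2(H_{2k+2}-P_{2k+1})-2P_{2k}\alpha=2(P_{2k+2}-P_{2k}\alpha)=4C$. For (iii), the $\pm\frac{gt}{2}$ terms in $A,B$ cancel, and collecting the remaining $g$-terms via $H_{2k}+P_{2k}=P_{2k+1}$ yields $A+B+E=H_{2k+2}-2P_{2k+1}-\frac{P_{2k}}{2}\alpha-\frac{P_{2k}}{2}$; comparing with $C=\frac{P_{2k+2}}{2}-\frac{P_{2k}}{2}\alpha$ reduces the claim to $H_{2k+2}-2P_{2k+1}=\frac{P_{2k}+P_{2k+2}}{2}=H_{2k+1}$, i.e.\ $H_{2k+2}-H_{2k+1}=2P_{2k+1}$, both of which are instances of (\ref{2hp}) and (\ref{hpadd}). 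For (iv), substituting $t=H_{2k+2}-2P_{2k+1}\alpha/g$ into $A=\tfrac{g}{2}(H_{2k}+t)-P_{2k+1}$ and using $H_{2k}+H_{2k+2}=4P_{2k+1}$ from (\ref{4hp}) gives $A=2gP_{2k+1}-P_{2k+1}-P_{2k+1}\alpha=2H_{2k+2}-P_{2k+1}-P_{2k+1}\alpha$; since $2H_{2k+2}-P_{2k+1}=2P_{2k+2}+P_{2k+1}=P_{2k+3}$, this is $P_{2k+3}-P_{2k+1}\alpha$, and the same elementary manipulation applied to $F=P_{2k+1}(2g-\alpha-1)$ produces the identical expression.

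For (v) and (vi), where the hypothesis $1\le\beta\le b_{2k+1}$ enters, I would proceed as follows. From (iii) we have $C-A=B+E$, so (v) follows once $B\ge 0$ and $E\ge 0$ are known. Here $E=\frac{P_{2k}}{2}(2g-1-\alpha)$, and $2g-1=(2H_{2k+2}-P_{2k+1})/P_{2k+1}=P_{2k+3}/P_{2k+1}=\alpha_{2k+1}$; since $s_{2k+1}(\beta)=\frac{\gamma_{2k,\beta}}{\gamma_{2k+1,\beta}}\alpha_{2k+1}\le\alpha_{2k+1}$ by Proposition \ref{ordergamma}(ii), we get $E\ge0$ for every $\beta\ge1$. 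Finally, clearing denominators in $B\ge0$ and using the identity $H_{2k}H_{2k+2}=2P_{2k+1}^2+1$ (which comes from (\ref{hh}) and (\ref{psquared}), exactly as in the proof of Proposition \ref{asorder}), the inequality $B\ge0$ becomes $tH_{2k+2}\le 1$, i.e.\ $\frac{\beta-1}{\beta+1}\le\frac{1}{H_{2k+2}}$, i.e.\ $\beta\le 1+\frac{2}{H_{2k+2}-1}=b_{2k+1}$ — precisely the standing hypothesis — with the strict and non-strict versions equivalent as claimed; and then (v) follows as above. The computation is mechanical once the coordinates $g,t,\alpha$ and the relation $2P_{2k+1}\alpha=g(H_{2k+2}-t)$ are fixed; the only spot needing a little care is the bookkeeping in (iii), where both the constant and the $\alpha$-linear parts of $A+B+E$ must be tracked, and the recognition in (vi) that the relevant Pell identity is one already isolated earlier. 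I anticipate no genuine obstacle — the main risk is arithmetic slips, which I would minimize by invoking the recorded identities (\ref{hpadd}), (\ref{2hp}), (\ref{4hp}), (\ref{hh}), (\ref{psquared}), (\ref{htop}) rather than expanding the closed forms (\ref{closedform}).
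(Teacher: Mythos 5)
Your proof is correct and follows essentially the same strategy as the paper's: substitute $\gamma(\beta+1)=H_{2k+2}/P_{2k+1}$ and reduce each claim to a Pell identity. The one genuinely nice improvement is your argument for $E\geq 0$ in part (v) — observing that $2g-1=\alpha_{2k+1}$ and invoking Proposition \ref{ordergamma}(ii) to conclude $\alpha=s_{2k+1}(\beta)\leq\alpha_{2k+1}$ shows $E\geq 0$ holds for \emph{all} $\beta\geq 1$, whereas the paper deduces $E\geq 0$ from the chain $F=A\geq B\geq 0$ and so needs the hypothesis $\beta\leq b_{2k+1}$ even for that piece; your version isolates the hypothesis as being needed only for $B\geq 0$.
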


\begin{proof}
By the definition of $\gamma_{2k,\beta}$ we have $P_{2k+1}\gamma(\beta+1)=H_{2k+2}$, so \[ Z=H_{2k+2}-P_{2k+1}-P_{2k}\alpha=P_{2k+2}-P_{2k}\alpha=2C,\] proving (i).  Next, notice that \begin{align*} F&=2P_{2k+1}\gamma(\beta+1)-P_{2k+1}\alpha-P_{2k+1}=(2H_{2k+2}-P_{2k+1})-P_{2k+1}\alpha
\\ &= (2P_{2k+2}+P_{2k+1})-P_{2k+1}\alpha=P_{2k+3}-P_{2k+1}\alpha,\end{align*} which proves the second inequality in (iv), and also implies that \[ F+D=(P_{2k+3}-P_{2k+1})-(P_{2k+1}-P_{2k-1})\alpha=2P_{2k+2}-2P_{2k}\alpha=4C,\] proving (ii).  Also, using again that $\gamma(\beta+1)=\frac{H_{2k+2}}{P_{2k+1}}$, we see that 
\begin{align}\nonumber A&=\frac{H_{2k}H_{2k+2}}{2P_{2k+1}}-P_{2k+1}+\frac{H_{2k+2}}{2P_{2k+1}}\frac{\beta-1}{\beta+1}
\\ &= \frac{1}{2P_{2k+1}}\left(H_{2k}H_{2k+2}-2P_{2k+1}^{2}+H_{2k+2}\frac{\beta-1}{\beta+1}\right) = \frac{1}{2P_{2k+1}}\left(1+H_{2k+2}\frac{\beta-1}{\beta+1}\right)\label{A}\end{align} (using (\ref{hh}) and (\ref{psquared})) and similarly \begin{equation}\label{B} B=\frac{1}{2P_{2k+1}}\left(1-H_{2k+2}\frac{\beta-1}{\beta+1}\right).\end{equation}  Since the condition that $\beta\leq b_{2k+1}$ is equivalent to the statement that $\frac{\beta-1}{\beta+1}\leq \frac{1}{H_{2k+2}}$ this last equation immediately implies (vi).  

Since $E=\frac{P_{2k}}{2P_{2k+1}}F$, we also find that \begin{align*} C-E&=\left(\frac{P_{2k+2}}{2}-\frac{P_{2k}}{2}\alpha\right)-\frac{P_{2k}}{2P_{2k+1}}(P_{2k+3}-P_{2k+1}\alpha)=\frac{1}{2P_{2k+1}}(P_{2k+1}P_{2k+2}-P_{2k}P_{2k+3})
\\ &=\frac{1}{2P_{2k+1}}(P_{2k+1}P_{2k+2}-2P_{2k}P_{2k+2}-P_{2k}P_{2k+1})=\frac{1}{2P_{2k+1}}\left(P_{2k+1}(P_{2k+2}-P_{2k})-2P_{2k}P_{2k+2}\right) \\ &= \frac{1}{P_{2k+1}}(P_{2k+1}^{2}-P_{2k}P_{2k+2})=\frac{1}{P_{2k+1}} \end{align*} where the last equation uses (\ref{psquared}).
But (\ref{A}) and (\ref{B}) clearly imply that $A+B=\frac{1}{P_{2k+1}}$ also, so $C-E=A+B$, which is equivalent to (iii).

So far we have not used the assumption that $\alpha=s_{2k+1}(\beta)$; however  this assumption will be relevant to the remaining two  statements. We have \[ \alpha=\gamma\frac{\alpha_{2k+1}}{\gamma_{2k+1,\beta}}=\gamma\frac{H_{2k+2}(\beta+1)-(\beta-1)}{2P_{2k+1}}.\]  We see then using (\ref{A}) that \begin{align*}
A+P_{2k+1}\alpha &= \frac{1}{2P_{2k+1}}\left(1+H_{2k+2}\frac{\beta-1}{\beta+1}\right)+\frac{\gamma}{2}\left(H_{2k+2}(\beta+1)-(\beta-1)\right)
\\ &= \frac{1}{2P_{2k+1}}\left(1+H_{2k+2}\frac{\beta-1}{\beta+1}+H_{2k+2}^{2}-H_{2k+2}\frac{\beta-1}{\beta+1}\right)=\frac{H_{2k+2}^{2}+1}{2P_{2k+1}}=P_{2k+3}
\end{align*} where the last equation uses (\ref{htop}). This proves (iv) since we have already seen that $F=P_{2k+3}-P_{2k+1}\alpha$.

It remains to prove (v). It is obvious from the definitions that $A\geq B$ and that $E$ and $F$ have the same sign.  So since $B\geq 0$ by (vi) and  $A=F$ by (iv) we deduce that also $E\geq 0$.  But then (iii) gives $A=C-B-E\leq C$, as desired.
\end{proof} 

\begin{cor}\label{evencor}
For $\gamma=\gamma_{2k,\beta},\alpha=s_{2k+1}(\beta),1\leq \beta\leq b_{2k+1}$, we have $2\gamma(\beta+1)-\alpha-1\geq 0$, and the class $\Sigma_{\alpha,\beta,\gamma}^{k}$ belongs to $\bar{\mathcal{C}}_K(X_N)$ for appropriate $N$.
\end{cor}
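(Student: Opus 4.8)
The plan is to read off both assertions from Proposition~\ref{evenids} together with the tiling criterion of Corollary~\ref{tilecrit}.

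I would first settle the inequality $2\gamma(\beta+1)-\alpha-1\geq 0$. From the formulas (\ref{zadef}) one has $A-B=\gamma(\beta-1)\geq 0$, while Proposition~\ref{evenids}(vi) gives $B\geq 0$; combined with Proposition~\ref{evenids}(iv), which says $F=A$, this yields $F=A\geq B\geq 0$. Since $F=P_{2k+1}\bigl(2\gamma(\beta+1)-\alpha-1\bigr)$ and $P_{2k+1}>0$, we conclude $2\gamma(\beta+1)-\alpha-1\geq 0$, and hence also $E=\frac{P_{2k}}{2}\bigl(2\gamma(\beta+1)-\alpha-1\bigr)\geq 0$. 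Thus all of $A,B,C,D,E,F$ are $\geq 0$ (here $D\geq 0$ because $\alpha\geq\frac{P_{2k+1}}{P_{2k-1}}$, and in fact $C>0$ because $\alpha=s_{2k+1}(\beta)\leq\alpha_{2k+1}<\frac{P_{2k+2}}{P_{2k}}$ by Proposition~\ref{orderratios}), so that $\Sigma_{\alpha,\beta,\gamma}^{k}=\langle Z;A,B,\mathcal{W}(C,D),\mathcal{W}(E,F)\rangle$ is a genuine class of the advertised shape (when $k=0$ the block $\mathcal{W}(E,F)$ is simply absent, cf.\ Remark~\ref{k0}).

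For membership in $\bar{\mathcal{C}}_K(X_N)$ I would use the relations $Z=2C$, $A+B+E=C$ and $D=4C-F=4C-A$ furnished by Proposition~\ref{evenids}(i),(iii),(ii),(iv), together with $0\leq A\leq C$ from part~(v). These force $3C\leq D\leq 4C$, so peeling two squares of side $C$ off a $C$-by-$D$ rectangle gives the identity of weight sequences $\mathcal{W}(C,D)=\mathcal{W}(C,2C)\sqcup\mathcal{W}(C,D-2C)$ with $D-2C=2C-A$. Hence $\Sigma_{\alpha,\beta,\gamma}^{k}$ is literally the class $\langle 2C;\mathcal{W}(A,A),\mathcal{W}(B,B),\mathcal{W}(C,2C),\mathcal{W}(C,2C-A),\mathcal{W}(E,F)\rangle$, and every rectangle occurring here (an $A$-by-$A$, a $B$-by-$B$, a $C$-by-$2C$, a $C$-by-$(2C-A)$, and an $E$-by-$F$ with $F=A$) has both side-lengths at most $2C$, using $0\leq A,B,E\leq C$ (the bound on $E$ because $E\leq A+B+E=C$) and $F=A\leq C$. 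It remains to pack these with disjoint interiors into a $2C$-by-$2C$ square: put the $C$-by-$2C$ rectangle as the left half of the square, the $C$-by-$(2C-A)$ rectangle against the bottom of the right half, and then fit the $A$-by-$A$ square, the $E$-by-$F$ rectangle (of shape $E$-by-$A$), and the $B$-by-$B$ square side by side into the remaining $C$-by-$A$ strip across the top of the right half — this is possible exactly because $A+E+B=C$ and $B\leq A$. Corollary~\ref{tilecrit} then gives $\Sigma_{\alpha,\beta,\gamma}^{k}\in\bar{\mathcal{C}}_K(X_N)$; for $k=0$ the identical packing works with the $E$-by-$F$ rectangle deleted, since then $A+B=C$.

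The only mildly delicate points are the bookkeeping identity $\mathcal{W}(C,D)=\mathcal{W}(C,2C)\sqcup\mathcal{W}(C,D-2C)$ (which relies on $D\geq 2C$, guaranteed by $A\leq C$) and the explicit verification that each piece of the proposed packing fits inside the $2C$-by-$2C$ square; the various degenerate configurations ($B=0$ when $\beta=b_{2k+1}$, or $A\in\{0,C\}$, or $k=0$) only simplify the packing and cost at most a sentence of comment. I do not expect any genuinely hard step once the arithmetic relations of Proposition~\ref{evenids} are in hand — everything is essentially forced by those identities.
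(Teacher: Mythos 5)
Your proof is correct and follows essentially the same approach as the paper: you establish $F\geq 0$ from Proposition~\ref{evenids}, then use the identities $Z=2C$, $D=4C-A$, $A+B+E=C$, $F=A$, $0\leq B\leq A\leq C$ to exhibit a tiling of a $2C\times 2C$ square, invoking Corollary~\ref{tilecrit}. The only cosmetic difference is that you peel off a $C$-by-$2C$ rectangle plus a $C$-by-$(2C-A)$ rectangle from $\mathcal{W}(C,D)$, whereas the paper peels off three $C$-squares and a $C$-by-$(D-3C)$ rectangle; the resulting packings are the same. (Your derivation of $2\gamma(\beta+1)-\alpha-1\geq 0$ via $F=A\geq B\geq 0$ is a small shortcut relative to the paper, which instead uses $\alpha<\alpha_{2k+1}$ together with $F=P_{2k+3}-P_{2k+1}\alpha$; both are fine.)
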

\begin{proof}
We noted earlier that $\alpha<\alpha_{2k+1}$ (as a consequence of Proposition \ref{ordergamma}), so since $\alpha_{2k+1}=\frac{P_{2k+3}}{P_{2k+1}}$ Proposition \ref{evenids} (iv) shows that $F>0$. But $F$ has the same sign as $2\gamma(\beta+1)-\alpha-1$, proving the first statement of the corollary.  

Proposition \ref{evenids} (ii),(iv), and (v) together show that $D=4C-A>3C$.  (Note also that $C,D$ are nonnegative since, as noted earlier, $\alpha\in \left[\frac{P_{2k+1}}{P_{2k-1}},\frac{P_{2k+2}}{P_{2k}}\right]$.)  Thus $\Sigma_{\alpha,\beta,\gamma}^{k}$ can be rewritten (also using Proposition \ref{evenids} (i)) as \[ \langle 2C;A,B,C^{\times 3},\mathcal{W}(C,D-3C),\mathcal{W}(E,F)\rangle.\]  We will see that this class satisfies the tiling criterion of Corollary \ref{tilecrit}.   Note that obviously $A\geq B$, and $B\geq 0$ by Proposition \ref{evenids} (vi); we have also already noted that $C$, $D-3C$, and $F$ are positive, and so $E$ is also nonnegative since $E$ is a nonnegative multiple of $F$.

We must show that a square of sidelength $2C$ contains, disjointly, the interiors of $3$ squares of sidelength $C$, squares of sidelengths $A$ and $B$, a $C$-by-$(D-3C)$ rectangle, and a $E$-by-$F$ rectangle.  We can place the $3$ squares of sidelengths $C$ in three of the four quadrants of the square of sidelength $2C$, so it suffices to show that the remaining quadrant (also a square of sidelength $C$) can be tiled by squares of sidelengths $A$ and $B$ together with a $C$-by-$(D-3C)$ rectangle and a $E$-by-$F$ rectangle.  

Proposition \ref{evenids} (ii) and (iv) shows that $A+(D-3C)=C$. So by placing the $C$-by-$(D-3C)$ rectangle along one side of the remaining quadrant we see that it suffices to tile an $A$-by-$C$ rectangle by a square of sidelength $A$, a square of sidelength $B$, and a $E$-by-$F$ rectangle.  Since (using various statements in Proposition \ref{evenids}) $F=A\leq C$, $A+B+E=C$, and $A,B,E\geq 0$ (and hence $B\leq C$), this is straightforward to do by simply stacking the two squares and the rectangle on top of each other.  Thus $\Sigma_{\alpha,\beta,\gamma}^{k}$ satisfies the tiling criterion, so belongs to the appropriate $\bar{\mathcal{C}}_K(X_N)$ by Corollary \ref{tilecrit}.  See Figure \ref{evenfig}.
\end{proof}

The fact that $(\gamma_{2k,\beta}\beta,\gamma_{2k,\beta};\mathcal{W}(1,s_{2k+1}(\beta)))\in \bar{\mathcal{C}}_K(X_N)$  for $\beta\leq b_{2k+1}$, or equivalently that there is a symplectic embedding $E(1,s_{2k+1}(\beta))^{\circ}\hookrightarrow \gamma_{2k,\beta}P(1,\beta)$, now follows directly from Proposition \ref{bigreduce} (or Remark \ref{k0} in the case that $k=0$).

\begin{center}
\begin{figure}
\includegraphics[height=2 in]{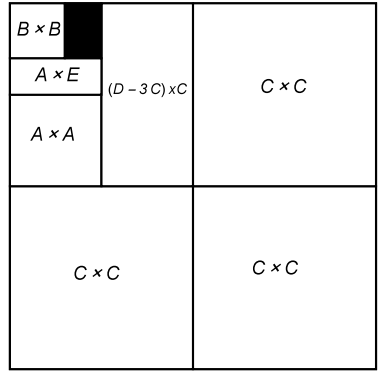}
\caption{The tiling used to prove Corollary \ref{evencor}.}
\label{evenfig}
\end{figure}
\end{center}

\subsubsection{The case $\gamma=\gamma_{2k-1,\beta}$}
We now turn to the various embeddings $E(1,\alpha)^{\circ}\hookrightarrow \gamma P(1,\beta)$ that we require when \[ \gamma=\gamma_{2k-1,\beta}=\frac{2P_{2k+1}}{H_{2k}(\beta+1)-(\beta-1)}.\]  Continue to denote by $Z,A,B,C,D,E,F$ the functions of $k,\alpha,\beta,\gamma$ given by the formulas starting with (\ref{zadef}).  

\begin{lemma}\label{genodd}
If $\gamma=\gamma_{2k-1,\beta}$, then for any $k,\alpha,\beta$ we have:
\begin{itemize} \item[(i)] $A=\gamma(\beta-1)$,
\item[(ii)] $B=0$, and
\item[(iii)] $A+C+E=Z$.
\end{itemize}
\end{lemma}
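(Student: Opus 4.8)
The plan is to read off all three identities from the defining relation $\gamma_{2k-1,\beta}=\frac{2P_{2k+1}}{H_{2k}(\beta+1)-(\beta-1)}$, equivalently $\gamma\bigl(H_{2k}(\beta+1)-(\beta-1)\bigr)=2P_{2k+1}$, together with the explicit formulas for $A,B,C,E,Z$ recorded starting with (\ref{zadef}). The crucial observation is that (ii) is essentially a restatement of the definition of $\gamma_{2k-1,\beta}$: combining the two $\gamma$-terms in $B=\frac{H_{2k}}{2}\gamma(\beta+1)-P_{2k+1}-\gamma\frac{\beta-1}{2}$ gives $B=\frac{\gamma}{2}\bigl(H_{2k}(\beta+1)-(\beta-1)\bigr)-P_{2k+1}$, which vanishes precisely because $\gamma=\gamma_{2k-1,\beta}$. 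So I would establish (ii) first.

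Given (ii), statement (i) is immediate: the formulas for $A$ and $B$ differ only in the sign of the term $\gamma\frac{\beta-1}{2}$, so $A=B+\gamma(\beta-1)=\gamma(\beta-1)$.

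For (iii) I would substitute $A=\gamma(\beta-1)$ (from (i)) together with the formulas for $C$ and $E$ into $A+C+E$, expand, and collect the $\alpha$-terms and the $\gamma(\beta+1)$-terms. Two elementary Pell identities enter: first $P_{2k+2}-P_{2k}=2P_{2k+1}$, which is an immediate consequence of the recurrence $P_{n+2}=2P_{n+1}+P_n$, used to simplify the constant part of $A+C+E$ to $P_{2k+1}$; and second $P_{2k+1}-P_{2k}=H_{2k}$ from (\ref{hpadd}). A short manipulation then shows $A+C+E-Z=\gamma(\beta-1)-H_{2k}\gamma(\beta+1)+2P_{2k+1}=-\gamma\bigl(H_{2k}(\beta+1)-(\beta-1)\bigr)+2P_{2k+1}$, which is zero by the defining relation for $\gamma_{2k-1,\beta}$. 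Thus (iii), like (ii), reduces to the definition of $\gamma_{2k-1,\beta}$.

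The computation is entirely routine and there is no genuine obstacle; the only point requiring a little care is keeping the Pell indices straight in the last step.
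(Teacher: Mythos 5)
Your proposal is correct and is essentially the same argument as the paper's: the paper likewise observes that $\frac{H_{2k}}{2}\gamma(\beta+1)-P_{2k+1}=\gamma\frac{\beta-1}{2}$ (equivalent to your $B=0$), from which (i) and (ii) both follow, and then verifies (iii) by the same expansion using $P_{2k+2}-P_{2k}=2P_{2k+1}$ and $P_{2k+1}-P_{2k}=H_{2k}$ before invoking the defining relation for $\gamma_{2k-1,\beta}$.
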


\begin{proof}
Indeed, we find that \[ \frac{H_{2k}}{2}\gamma(\beta+1)-P_{2k+1}=\frac{(2P_{2k+1}H_{2k}-2P_{2k+1}H_{2k})(\beta+1)+2P_{2k+1}(\beta-1)    }{2H_{2k}(\beta+1)-2(\beta-1)}=\gamma\frac{\beta-1}{2},\] which immediately implies (i) and (ii) by the definitions of $A$ and $B$.  

Also, \begin{align*} A+C+E &= \gamma(\beta-1)+\left(\frac{P_{2k+2}}{2}-\frac{P_{2k}}{2}\alpha\right)+\frac{P_{2k}}{2}(2\gamma(\beta+1)-\alpha-1)
\\ &= \gamma(\beta-1)+P_{2k}\gamma(\beta+1)-P_{2k}\alpha+\frac{P_{2k+2}-P_{2k}}{2}.\end{align*} Now we can rewrite $P_{2k}$ as $P_{2k+1}-H_{2k}$, and $\frac{P_{2k+2}-P_{2k}}{2}$ as $P_{2k+1}$, giving \begin{align*} A+C+E&=P_{2k+1}(\gamma(\beta+1)-1)-P_{2k}\alpha+2P_{2k+1}+\gamma(\beta-1)-H_{2k}\gamma(\beta+1)
\\ &=Z+2P_{2k+1}-\gamma(H_{2k}(\beta+1)-(\beta-1))=Z,\end{align*} proving (iii).
\end{proof}

\begin{prop}\label{efpos}
If $\gamma=\gamma_{2k-1,\beta}$ and $\alpha,\beta\geq 1$, then $2\gamma(\beta+1)-\alpha-1\geq 0$ provided that one of the following holds:
\begin{itemize} \item[(i)] $\alpha\leq \frac{H_{2k+2}}{H_{2k}}$, or
\item[(ii)] $\alpha=s'_{2k}(\beta)$, or 
\item[(iii)] $\alpha=\frac{P_{2k+2}}{P_{2k}}$ and $\beta+1\leq H_{2k+1}(\beta-1)$, or
\item[(iv)] $\alpha=\frac{2P_{2k+1}}{H_{2k+1}}\frac{P_{2k+2}(\beta+1)-(\beta-1)}{H_{2k}(\beta+1)-(\beta-1)}$ and $\beta+1\leq H_{2k+2}(\beta-1)$.\end{itemize}
\end{prop}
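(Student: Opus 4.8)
The plan is to convert the inequality $2\gamma(\beta+1)-\alpha-1\geq 0$ into a polynomial inequality in Pell numbers and $\beta$, and then treat the four cases using the identities collected in Section~\ref{pellprelim}. Since $\gamma=\gamma_{2k-1,\beta}=\frac{2P_{2k+1}}{H_{2k}(\beta+1)-(\beta-1)}$ and the denominator $H_{2k}(\beta+1)-(\beta-1)=\beta(H_{2k}-1)+(H_{2k}+1)$ is strictly positive, clearing it shows that the assertion is equivalent to
\[ (\alpha+1)\bigl(H_{2k}(\beta+1)-(\beta-1)\bigr)\leq 4P_{2k+1}(\beta+1). \]
So in each case I would compute (or bound) $\alpha+1$ and substitute into this inequality.

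For (i), the identity (\ref{4hp}) gives $\alpha+1\leq\frac{H_{2k+2}+H_{2k}}{H_{2k}}=\frac{4P_{2k+1}}{H_{2k}}$, so the left-hand side is at most $4P_{2k+1}(\beta+1)-\frac{4P_{2k+1}}{H_{2k}}(\beta-1)\leq 4P_{2k+1}(\beta+1)$ because $\beta\geq 1$. For (ii), one first uses the formulas (\ref{gammadef}), (\ref{adef}) together with (\ref{sprimedef}) to rewrite $s'_{2k}(\beta)=\frac{H_{2k+2}(\beta+1)-(\beta-1)}{H_{2k}(\beta+1)-(\beta-1)}$, so again by (\ref{4hp}) the left-hand side of the displayed inequality equals $4P_{2k+1}(\beta+1)-2(\beta-1)\leq 4P_{2k+1}(\beta+1)$. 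For (iii), $\alpha+1=\frac{P_{2k+2}+P_{2k}}{P_{2k}}=\frac{2H_{2k+1}}{P_{2k}}$ by (\ref{2hp}); substituting and using $2P_{2k}P_{2k+1}=H_{2k}H_{2k+1}-1$ from (\ref{consec}), the inequality collapses (after multiplying through by $P_{2k}$) to exactly $H_{2k+1}(\beta-1)\geq\beta+1$, the stated hypothesis.

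Case (iv) is the computational heart of the argument. Here I would expand $\alpha+1$ over the common denominator $H_{2k+1}\bigl(H_{2k}(\beta+1)-(\beta-1)\bigr)$, reduce the numerator with $2P_{2k+1}P_{2k+2}=H_{2k+1}H_{2k+2}+1$ and $2P_{2k}P_{2k+1}=H_{2k}H_{2k+1}-1$ (the two parities of (\ref{consec})), and collect coefficients using $2P_{2k+1}+H_{2k+1}=H_{2k+2}$ and $P_{2k+2}+P_{2k}=2H_{2k+1}$ (from (\ref{hpadd}) and (\ref{2hp})). The crucial point is that the coefficient of $(\beta+1)$ in $4P_{2k+1}H_{2k+1}(\beta+1)$ minus the reduced numerator simplifies all the way down to $-1$, so the target inequality becomes $H_{2k+2}(\beta-1)-(\beta+1)\geq 0$, which is precisely the side condition in (iv). The only real obstacle is the bookkeeping in this last case; all index parities are fixed as written, so no sign ambiguities arise, and every identity needed is an elementary consequence of (\ref{closedform}) already recorded in Section~\ref{pellprelim}.
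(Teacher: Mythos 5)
Your proposal is correct and amounts to essentially the same computation as the paper's, just organized differently: the paper computes $2\gamma(\beta+1)-\alpha-1$ (or $2\gamma(\beta+1)-1$, etc.) directly as a single fraction over the positive denominator $H_{2k}(\beta+1)-(\beta-1)$ and simplifies the numerator, while you clear that denominator up front and reduce each case to a cleaned-up polynomial inequality $(\alpha+1)(H_{2k}(\beta+1)-(\beta-1))\leq 4P_{2k+1}(\beta+1)$. Both routes rest on the positivity of that denominator and the same Pell identities from Section~\ref{pellprelim}, including the key collapse $4P_{2k+1}H_{2k+1}-2P_{2k+1}P_{2k+2}-H_{2k}H_{2k+1}=-1$ in case (iv), so the arguments are equivalent.
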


\begin{proof}
Using (\ref{4hp}), we have
\begin{equation}\label{gen2g} 2\gamma(\beta+1)-1=\frac{(4P_{2k+1}-H_{2k})(\beta+1)+(\beta-1)}{H_{2k}(\beta+1)-(\beta-1)}=\frac{H_{2k+2}(\beta+1)+(\beta-1)}{H_{2k}(\beta+1)-(\beta-1)}.
\end{equation} This is obviously greater than $\frac{H_{2k+2}}{H_{2k}}$, proving (i). 
As for (ii), we have \[ s'_{2k}(\beta)=\gamma\frac{\alpha_{2k+1}}{\gamma_{2k+1,\beta}}=\gamma\frac{H_{2k+2}(\beta+1)-(\beta-1)}{2P_{2k+1}},\] so if $\alpha=s'_{2k}(\beta)$ then (\ref{4hp}) gives \[ 2\gamma(\beta+1)-\alpha=\gamma\frac{(4P_{2k+1}-H_{2k+2})(\beta+1)+(\beta-1) }{2P_{2k+1}}=\gamma\frac{H_{2k}(\beta+1)+(\beta-1)}{H_{2k}(\beta+1)-(\beta-1)}\geq 1,\] which suffices to prove (ii).

Next, we see from (\ref{2hp}) and (\ref{consec}) that \begin{align*} 2\gamma(\beta+1)-\frac{P_{2k+2}}{P_{2k}}-1 &=2\gamma(\beta+1)-\frac{P_{2k+2}+P_{2k}}{P_{2k}} =2\gamma(\beta+1)-\frac{2H_{2k+1}}{P_{2k}} 
\\ &=\frac{2\left(2P_{2k}P_{2k+1}(\beta+1)-H_{2k+1}H_{2k}(\beta+1)+H_{2k+1}(\beta-1)\right)}{P_{2k}(H_{2k}(\beta+1)-(\beta-1))}
\\ &=\frac{2}{P_{2k}(H_{2k}(\beta+1)-(\beta-1))}\left(-(\beta+1)+H_{2k+1}(\beta-1)\right),\end{align*} which immediately implies (iii).

Finally let $\alpha=\frac{2P_{2k+1}}{H_{2k+1}}\frac{P_{2k+2}(\beta+1)-(\beta-1)}{H_{2k}(\beta+1)-(\beta-1)}$.  By (\ref{gen2g}), we have \begin{align*} 2\gamma(\beta+1)&-\alpha-1=\frac{H_{2k+1}H_{2k+2}(\beta+1)+H_{2k+1}(\beta-1)-2P_{2k+1}P_{2k+2}(\beta+1)+2P_{2k+1}(\beta-1)    }{H_{2k+1}(H_{2k}(\beta+1)-(\beta-1))}
\\ &=\frac{(H_{2k+1}H_{2k+2}-2P_{2k+1}P_{2k+2})(\beta+1)+(H_{2k+1}+2P_{2k+1})(\beta-1)}{H_{2k+1}(H_{2k}(\beta+1)-(\beta-1))} \\&= \frac{-(\beta+1)+H_{2k+2}(\beta-1)}{H_{2k+1}(H_{2k}(\beta+1)-(\beta-1))},\end{align*} where the last equation follows from (\ref{consec}) and (\ref{hpadd}). Thus (iv) holds.
\end{proof}

\begin{prop} \label{typodd}
If $1\leq \beta\leq b_{2k+1}$, $\gamma=\gamma_{2k-1,\beta}$, and $\alpha=s_{2k}(\beta)$ with $\mathcal{W}(1,\alpha)$ having length $N-1$ then $(\gamma \beta,\gamma;\mathcal{W}(1,\alpha))\in\bar{\mathcal{C}}_K(X_N)$.
\end{prop}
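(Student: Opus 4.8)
The plan is to follow the strategy used for Corollary~\ref{evencor}: for $k\geq 1$ apply Proposition~\ref{bigreduce} to replace $(\gamma\beta,\gamma;\mathcal{W}(1,\alpha))$ by the Cremona–equivalent class $\Sigma_{\alpha,\beta,\gamma}^{k}=\langle Z;A,B,\mathcal{W}(C,D),\mathcal{W}(E,F)\rangle$ with $Z,A,B,C,D,E,F$ as in (\ref{zadef}), and then check that $\Sigma_{\alpha,\beta,\gamma}^{k}$ satisfies the tiling criterion of Corollary~\ref{tilecrit}; the case $k=0$ is identical, with Remark~\ref{k0} playing the role of Proposition~\ref{bigreduce}. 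Throughout, $\alpha=s_{2k}(\beta)$ and $\gamma=\gamma_{2k-1,\beta}$.

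First I would check that Proposition~\ref{bigreduce} applies. Proposition~\ref{asorder} (with $n=2k$ even) gives $\alpha_{2k-1}<s_{2k}(\beta)$ for every $\beta$, and the hypothesis $\beta\leq b_{2k+1}$ together with Proposition~\ref{ordergamma}(iii) gives $\gamma_{2k-1,\beta}\leq\gamma_{2k,\beta}$, hence $s_{2k}(\beta)=\frac{\gamma_{2k-1,\beta}}{\gamma_{2k,\beta}}\alpha_{2k}\leq\alpha_{2k}$. Since $\alpha_{2k-1}=\frac{P_{2k+1}}{P_{2k-1}}$ and $\alpha_{2k}=\frac{H_{2k+2}}{H_{2k}}<\sigma^{2}<\frac{P_{2k+2}}{P_{2k}}$ by Proposition~\ref{orderratios}, this places $\alpha$ in $\left(\frac{P_{2k+1}}{P_{2k-1}},\frac{P_{2k+2}}{P_{2k}}\right)\cap\Q$; and since $\alpha\leq\alpha_{2k}=\frac{H_{2k+2}}{H_{2k}}$, Proposition~\ref{efpos}(i) shows $2\gamma(\beta+1)-\alpha-1\geq 0$, so Proposition~\ref{bigreduce} produces the class $\Sigma_{\alpha,\beta,\gamma}^{k}$.

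The heart of the argument is a short computation. Combining the definition $s_{2k}(\beta)=\frac{\gamma_{2k-1,\beta}\alpha_{2k}}{\gamma_{2k,\beta}}$ with the formulas (\ref{gammadef}) for $\gamma_{2k-1,\beta}$ and $\gamma_{2k,\beta}$ one finds $\gamma(\beta+1)=\frac{H_{2k}\alpha}{P_{2k+1}}$. Substituting this into the formulas for $Z$, $D$, and $F$ and simplifying using $H_{2k}-P_{2k}=P_{2k-1}$ and $2H_{2k}-P_{2k+1}=P_{2k-1}$ (from (\ref{hpadd}) and (\ref{2hp})) yields $Z=D=F=P_{2k-1}\alpha-P_{2k+1}$. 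Meanwhile Lemma~\ref{genodd}, which is exactly the $\gamma=\gamma_{2k-1,\beta}$ case we are in, provides $B=0$, $A=\gamma(\beta-1)\geq 0$, and $A+C+E=Z$; moreover $C=\frac{P_{2k+2}}{2}-\frac{P_{2k}}{2}\alpha>0$ because $\alpha<\frac{P_{2k+2}}{P_{2k}}$, and $E\geq 0$ because $2\gamma(\beta+1)-\alpha-1\geq 0$. Hence, after deleting the zero entry $B$, $\Sigma_{\alpha,\beta,\gamma}^{k}=\langle Z;A,\mathcal{W}(C,Z),\mathcal{W}(E,Z)\rangle$.

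Finally I would verify the tiling criterion. Because $D=F=Z$, the weight sequences $\mathcal{W}(C,Z)$ and $\mathcal{W}(E,Z)$ are realized by a $C$-by-$Z$ rectangle and an $E$-by-$Z$ rectangle, \emph{i.e.} by two full-height columns; placing these side by side with a square of sidelength $A$ fills a square of sidelength $C+E+A=Z$, the square fitting since $A\leq Z$ (as $C,E\geq 0$). Thus $\Sigma_{\alpha,\beta,\gamma}^{k}$ satisfies the tiling criterion, so lies in the relevant $\bar{\mathcal{C}}_K(X_N)$ by Corollary~\ref{tilecrit}, and hence so does the Cremona–equivalent class $(\gamma\beta,\gamma;\mathcal{W}(1,\alpha))$. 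I do not expect a genuine obstacle here: the only points needing care are the Pell-number bookkeeping that collapses $D$ and $F$ to $Z$, the verification that $s_{2k}(\beta)$ lands in the interval where Proposition~\ref{bigreduce} applies, and keeping track of the degenerate case $k=0$ (where $E=0$ and one invokes Remark~\ref{k0}) — all of which are routine.
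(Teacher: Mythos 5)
Your proof is correct and takes essentially the same approach as the paper: reduce via Proposition~\ref{bigreduce} (or Remark~\ref{k0} when $k=0$) to the class $\Sigma_{\alpha,\beta,\gamma}^{k}$, compute from $\gamma(\beta+1)=H_{2k}\alpha/P_{2k+1}$ that $Z=D=F$, invoke Lemma~\ref{genodd} for $B=0$ and $Z=A+C+E$, and then tile a $Z$-by-$Z$ square by a $C$-by-$Z$ column, an $E$-by-$Z$ column, and an $A$-square. The only cosmetic difference is that you re-derive the inequality $\alpha_{2k-1}<s_{2k}(\beta)\leq\alpha_{2k}$ directly from Propositions~\ref{asorder} and~\ref{ordergamma} rather than quoting Corollary~\ref{firstpiece}, which amounts to the same thing.
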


\begin{proof} Corollary \ref{firstpiece} shows (under the assumption that $\beta\leq b_{2k+1}$) that $\alpha_{2k-1}\leq s_{2k}(\beta)\leq \alpha_{2k}$ for any $k$, \emph{i.e.} that $s_{2k}(\beta)\in \left[\frac{P_{2k+1}}{P_{2k-1}},\frac{H_{2k+2}}{H_{2k}}\right]$.  In particular Proposition \ref{bigreduce} applies, with the same value of $k$, when $\alpha=s_{2k}(\beta)$, and Case (i) of Proposition \ref{efpos} shows that $0\leq 2\gamma(\beta+1)-\frac{H_{2k+2}}{H_{2k}}-1\leq 2\gamma(\beta+1)-\alpha-1$.  So the parameters $E=\frac{P_{2k}}{2}(2\gamma(\beta+1)-\alpha-1)$ and $F=P_{2k+1}(2\gamma(\beta+1)-\alpha-1)$ are both nonnegative, and (using Proposition \ref{bigreduce} and Lemma  \ref{genodd}) the class $(\gamma \beta,\gamma;\mathcal{W}(1,\alpha))$ is Cremona equivalent to \begin{equation}\label{typoddeqn} \langle Z;A,0,\mathcal{W}(C,D),\mathcal{W}(E,F)\rangle,\end{equation} where as before $Z=P_{2k+1}(\gamma(\beta+1)-1)-P_{2k}\alpha$, $A=\gamma(\beta-1)$, $C=\frac{P_{2k+2}}{2}-\frac{P_{2k}}{2}\alpha$ and $D=P_{2k-1}\alpha-P_{2k+1}$, and where moreover $Z=A+C+E$.

Now by definition $\alpha=\frac{\alpha_{2k}}{\gamma_{2k,\beta}}\gamma=\frac{P_{2k+1}(\beta+1)\gamma}{H_{2k}}$.  So by (\ref{2hp}) \[ F-D=2P_{2k+1}\gamma(\beta+1)-(P_{2k+1}+P_{2k-1})\alpha=2P_{2k+1}\gamma(\beta+1)-2H_{2k}\alpha=0.\]  

Also \[ Z-F=-P_{2k+1}\gamma(\beta+1)+(P_{2k+1}-P_{2k})\alpha=-P_{2k+1}\gamma(\beta+1)+H_{2k}\alpha=0 .\] So (\ref{typoddeqn}) can be rewritten as\[ \langle F;A,0,\mathcal{W}(C,F),\mathcal{W}(E,F)\rangle.\]
We have already noted that $E,F\geq 0$, and clearly $A=\gamma(\beta-1)\geq 0$.  Also $C>0$ since $\alpha\leq \frac{H_{2k+2}}{H_{2k}}<\frac{P_{2k+2}}{P_{2k}}$.  Moreover by Lemma \ref{genodd} $F=A+C+E$ (which of course in particular implies that $A\leq F$).  Consequently the class satisfies the tiling criterion (see Figure \ref{tile2}), and so our original class $(\gamma \beta,\gamma;\mathcal{W}(1,\alpha))$ belongs to $\bar{\mathcal{C}}_K(X_N)$.
\end{proof}

\begin{center}
\begin{figure}
\includegraphics[height=2 in]{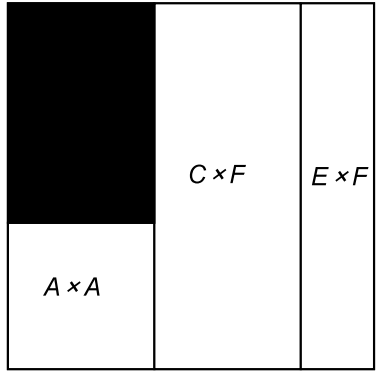}
\caption{The tiling used to prove Proposition \ref{typodd}.}
\label{tile2}
\end{figure}
\end{center}

\begin{prop} \label{excodd}
If $b_{2k+1}\leq  \beta\leq b_{2k}$, $\gamma=\gamma_{2k-1,\beta}$, and $\alpha=s'_{2k}(\beta)$ with $\mathcal{W}(1,\alpha)$ having length $N-1$  then $(\gamma \beta,\gamma;\mathcal{W}(1,\alpha))\in \bar{\mathcal{C}}_K(X_N)$.
\end{prop}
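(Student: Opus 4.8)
The plan is to follow the same strategy used in the preceding two propositions (Propositions \ref{typodd} and \ref{evencor}): reduce via Proposition \ref{bigreduce} to the class $\Sigma_{\alpha,\beta,\gamma}^{k}$, use the structural identities for $\gamma=\gamma_{2k-1,\beta}$ collected in Lemma \ref{genodd} (namely $A=\gamma(\beta-1)$, $B=0$, $A+C+E=Z$), exploit the specific value $\alpha=s'_{2k}(\beta)$ to pin down the remaining parameters $D$, $F$, and $Z$ in terms of $C$ and $E$, and finally verify that the resulting class satisfies the tiling criterion of Corollary \ref{tilecrit}.

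First I would check that Proposition \ref{bigreduce} is applicable, i.e.\ that $\alpha=s'_{2k}(\beta)\in\left[\frac{P_{2k+1}}{P_{2k-1}},\frac{P_{2k+2}}{P_{2k}}\right]$: the lower bound follows from $(\ref{sprimea})$ together with $\alpha_{2k-1}=\frac{P_{2k+1}}{P_{2k-1}}$, and the upper bound from $s'_{2k}(\beta)\le\alpha_{2k+1}<\sigma^2<\frac{P_{2k+2}}{P_{2k}}$. Case (ii) of Proposition \ref{efpos} gives $2\gamma(\beta+1)-\alpha-1\ge 0$, so $E,F\ge 0$ and Proposition \ref{bigreduce} tells us $(\gamma\beta,\gamma;\mathcal{W}(1,\alpha))$ is Cremona equivalent to $\Sigma_{\alpha,\beta,\gamma}^{k}=\langle Z;A,0,\mathcal{W}(C,D),\mathcal{W}(E,F)\rangle$ with $Z=A+C+E$ by Lemma \ref{genodd}. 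The next step is the key computation: substitute $\alpha=s'_{2k}(\beta)=\gamma\frac{H_{2k+2}(\beta+1)-(\beta-1)}{2P_{2k+1}}$ (from $(\ref{sprimedef})$ and $(\ref{gammaovera})$) into the formulas for $D$, $E$, $F$, and $Z$ from $(\ref{zadef})$ and the lines following it. Using the Pell identities $(\ref{2hp})$, $(\ref{4hp})$, $(\ref{hpadd})$, $(\ref{consec})$, $(\ref{psquared})$, $(\ref{htop})$ — as in the proofs of Propositions \ref{evenids} and \ref{typodd} — I expect relations of the shape $F = D + (\text{something})$, $Z$ expressible cleanly in terms of $C$, $D$, $E$ (perhaps $Z = C + D$ or $Z$ equal to a small multiple of $C$ plus $F$), and inequalities $A\le$ (one of the big sidelengths), $C\ge 0$, $D\ge 0$. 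The point is to arrange $\Sigma_{\alpha,\beta,\gamma}^{k}$ as $\langle Z; A, 0, C^{\times a}, \mathcal{W}(C,D-aC),\mathcal{W}(E,F)\rangle$ for a suitable integer $a$ so that a square of sidelength $Z$ decomposes into squares of sidelength $C$ plus a region tiled by the leftover rectangles.

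Then I would exhibit the explicit tiling of the $Z$-by-$Z$ square: place the sidelength-$C$ squares in the corners/quadrants, use the identity $A+C+E=Z$ (and whatever relation between $D$ and $F$ emerges) to fit the $C$-by-$(D-aC)$ rectangle along one edge and the $E$-by-$F$ rectangle plus the $A$-by-$A$ square into the remaining $A$-by-$Z$ or $A$-by-$C$ strip, exactly parallel to the end of the proof of Proposition \ref{typodd} (and I would reference a figure analogous to Figure \ref{tile2}). Invoking Corollary \ref{tilecrit} then gives $\Sigma_{\alpha,\beta,\gamma}^{k}\in\bar{\mathcal{C}}_K(X_N)$, and since $\bar{\mathcal{C}}_K(X_N)$ is Cremona-invariant the same holds for $(\gamma\beta,\gamma;\mathcal{W}(1,\alpha))$.

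The main obstacle I anticipate is the bookkeeping in the middle step: unlike the case $\gamma=\gamma_{2k,\beta}$, here the hypothesis is the wider range $b_{2k+1}\le\beta\le b_{2k}$, so I will need to be careful about which inequalities among the parameters (particularly the sign of $D-aC$ for the right choice of $a$, and whether $A\le C$ or only $A\le F$ or $A\le E$) actually hold throughout that whole $\beta$-interval rather than just at an endpoint. Identifying the correct combinatorial arrangement of the five tiles — which may differ from the arrangement in Figure \ref{tile2} precisely because $D$ no longer equals $F$ — and confirming that all the relevant sidelength inequalities are consequences of $b_{2k+1}\le\beta\le b_{2k}$ via the Pell identities is where the real work lies; everything else is a direct translation of the preceding proofs.
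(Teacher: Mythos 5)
Your setup and preliminary reductions are correct and match the paper: the range check via (\ref{sprimea}) and Proposition~\ref{orderratios}, the sign condition via Proposition~\ref{efpos}(ii), and the structural identities $A=\gamma(\beta-1)$, $B=0$, $Z=A+C+E$ from Lemma~\ref{genodd} are all used exactly as you expect. The Pell-identity computations you anticipate do pin things down, though the clean relations that emerge are $F=A$ and $D-2C=Z-C+E$ (rather than something of the form $F=D+\cdots$), so that $\Sigma_{\alpha,\beta,\gamma}^k = \langle Z;A,0,C^{\times 2},\mathcal{W}(C,Z-C+E),\mathcal{W}(E,A)\rangle$ with $Z=A+C+E$.

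The genuine gap is in your final step. Your plan is to tile the $Z\times Z$ square directly with a square of side $A$, two squares of side $C$, a $C\times(D-2C)$ rectangle and an $E\times F$ rectangle, ``exactly parallel to the end of the proof of Proposition~\ref{typodd}.'' But these pieces do not pack into a $Z\times Z$ square. Proposition~\ref{typodd} worked because there $D=F=Z$, so the rectangles lined up along one side; here they do not. For a concrete check take $k=1$, $\beta=b_2=\tfrac{13}{11}$, $\alpha=\alpha_3=\tfrac{29}{5}$: after clearing denominators one would need to pack $10\times 10$, $7\times 7$, $7\times 7$, $7\times 14$, and $2\times 10$ pieces inside a $19\times 19$ square, and a short case analysis shows this is impossible, even though the total area falls short of $Z^2$ by $(C+E)(A-C+E)$. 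The paper's resolution is to apply one further Cremona move before invoking the tiling criterion: $\frak{c}_{023}$, with $\delta_{023}=Z-A-2C=E-C$, turns $\Sigma_{\alpha,\beta,\gamma}^k$ into $\langle Z-C+E;\,A-C+E,\,0,\,E^{\times 2},\,\mathcal{W}(C,Z-C+E),\,\mathcal{W}(E,A)\rangle$. Now the ambient square has side $Z-C+E=A+2E$, the two side-$C$ squares have become side-$E$ squares, and the tiling is immediate: glue $E^{\times 2}$ and $\mathcal{W}(E,A)$ into a single $E\times(A+2E)$ strip and stand it beside the $C\times(A+2E)$ rectangle and the side-$(A-C+E)$ square, whose widths sum to $E+C+(A-C+E)=A+2E$ (Figure~\ref{tile3fig}). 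The one nontrivial inequality is $A-C+E\geq 0$, and the computation $A-C+E=Z-2C=\frac{-(\beta+1)+H_{2k+2}(\beta-1)}{H_{2k}(\beta+1)-(\beta-1)}$ shows this holds precisely because $\beta\geq b_{2k+1}$. So the hypothesis $b_{2k+1}\leq\beta$ is consumed by this extra Cremona step, not by a direct tiling of $\Sigma_{\alpha,\beta,\gamma}^k$; without that move the argument stalls.
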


\begin{proof}
By (\ref{sprimea}), we have $\alpha_{2k-1}<\alpha\leq a_{2k+1}$, \emph{i.e.} $\frac{P_{2k+1}}{P_{2k-1}}<\alpha\leq \frac{P_{2k+3}}{P_{2k+1}}$.  So (since $\frac{P_{2k+3}}{P_{2k+1}}<\sigma^2<\frac{P_{2k+2}}{P_{2k}}$) Proposition \ref{bigreduce} applies, with the same value of $k$, when $\alpha=s'_{2k}(b)$; moreover Proposition \ref{efpos} (ii) shows that we have $2\gamma(\beta+1)-\alpha-1\geq 0$.  So our class $(\gamma \beta,\gamma;\mathcal{W}(1,\alpha))$ is Cremona equivalent to $\langle Z;A,0,\mathcal{W}(C,D),\mathcal{W}(E,F)\rangle$ where $Z,A,C,D,E,F$ are defined by the usual formulas starting with (\ref{zadef}).

Note that the definitions yield \[ s'_{2k}(\beta)=\frac{H_{2k+2}(\beta+1)-(\beta-1)}{H_{2k}(\beta+1)-(\beta-1)}.\]  In particular we have $\alpha=s'_{2k}(\beta)\geq \frac{H_{2k+2}}{H_{2k}}$.  Consequently \[ D-2C=(P_{2k-1}\alpha-P_{2k+1})-(P_{2k+2}-P_{2k}\alpha)=H_{2k}\alpha-H_{2k+2}\geq 0 \] and \[ \mathcal{W}(C,D)=(C^{\times 2})\sqcup \mathcal{W}(C,D-2C).\]  Also, 
\begin{align*} 
F &= P_{2k+1}(2\gamma(\beta+1)-\alpha-1)=P_{2k+1}\frac{(4P_{2k+1}-H_{2k+2}-H_{2k})(\beta+1)+2(\beta-1)   }{H_{2k}(\beta+1)-(\beta-1)}\\ &= \frac{2P_{2k+1}(\beta-1)}{H_{2k}(\beta+1)-(\beta-1)}=\gamma(\beta-1)=A 
\end{align*} where the third equality uses (\ref{4hp}) and the last uses Lemma \ref{genodd} (i).  Moreover, \begin{align*} Z+C+E&=(P_{2k+1}+P_{2k})\gamma(\beta+1)+\left(-P_{2k+1}+\frac{P_{2k+2}}{2}-\frac{P_{2k}}{2}\right)-2P_{2k}\alpha
\\ &= H_{2k+1}\gamma(\beta+1)-2P_{2k}\alpha,\end{align*} and so \begin{equation}\label{zced} Z+C+E-D  = H_{2k+1}\gamma(\beta+1)-(2P_{2k}+P_{2k-1})\alpha+P_{2k+1}=H_{2k+1}\gamma(\beta+1)-P_{2k+1}(\alpha-1). \end{equation} Now \[ \alpha-1=\frac{(H_{2k+2}(\beta+1)-(\beta-1))-(H_{2k}(\beta+1)-(\beta-1))}{H_{2k}(\beta+1)-(\beta-1)}=\frac{2H_{2k+1}(\beta+1)}{H_{2k}(\beta+1)-(\beta-1)}.\]  Thus (\ref{zced}) shows that \[ Z+C+E-D=\frac{2H_{2k+1}P_{2k+1}(\beta+1)-2P_{2k+1}H_{2k+1}(\beta+1)}{H_{2k}(\beta+1)-(\beta-1)}=0.\] So $D-2C=Z-C+E$.

Thus the class $\langle Z;A,0,\mathcal{W}(C,D),\mathcal{W}(E,F)\rangle$ can be rewritten as \[ \langle Z;A,0,C^{\times 2},\mathcal{W}(C,Z-C+E),\mathcal{W}(E,A)\rangle.\]  Applying the Cremona move $\frak{c}_{023}$ and recalling from Lemma \ref{genodd} that $Z=A+C+E$ and hence $Z-A-2C=E-C$, we conclude that $(\gamma \beta,\gamma;\mathcal{W}(1,\alpha))$ is Cremona equivalent to \begin{equation} \label{lastpre} \langle Z-C+E;A-C+E,0,E^{\times 2},\mathcal{W}(C,Z-C+E),\mathcal{W}(E,A)\rangle.\end{equation}  Now rearranging the equation $Z=A+C+E$ shows that $A+2E=Z-C+E$, and also that \[ (A-C+E)+E+C=(A+C+E)-C+E=Z-C+E.\]  So by combining the $E^{\times 2}$ with the $\mathcal{W}(E,A)$ into a $E\times (Z-C+E)$ rectangle and placing this in between a $C\times (Z-C+E)$ rectangle and a square of sidelength $A-C+E$ as in Figure \ref{tile3fig} shows that (\ref{lastpre}) satisfies the tiling criterion, provided that it holds that $A-C+E\geq 0$.

\begin{center}
\begin{figure}
\includegraphics[height=2 in]{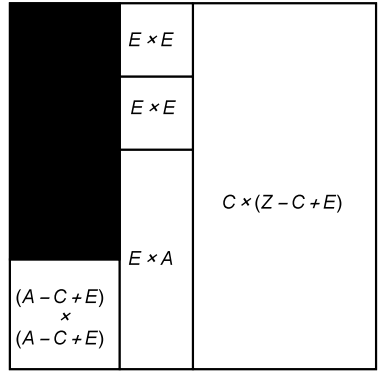}
\caption{The tiling used to prove Proposition \ref{excodd}.}
\label{tile3fig}
\end{figure}
\end{center}

Since $A+C+E=Z$, we have \begin{align*} 
A-C+E &=Z-2C = P_{2k+1}(\gamma(\beta+1)-1)-P_{2k+2}=P_{2k+1}\gamma(\beta+1)-H_{2k+2}
\\ &= \frac{(2P_{2k+1}^{2}-H_{2k}H_{2k+2})(\beta+1)+H_{2k+2}(\beta-1)}{H_{2k}(\beta+1)-(\beta-1)}=\frac{-(\beta+1)+H_{2k+2}(\beta-1)}{H_{2k}(\beta+1)-(\beta-1)} \end{align*} where the last equation follows from (\ref{hh}) and (\ref{phloc}).  Our assumption that $\beta\geq b_{2k+1}$ is equivalent to the statement that $\beta+1\leq H_{2k+2}(\beta-1)$, and so we indeed have $A-C+E\geq 0$.  So Figure \ref{tile3fig} shows that (\ref{lastpre}) satisfies the tiling criterion and hence that the class $(\gamma \beta,\gamma;\mathcal{W}(1,\alpha))$ (to which it is Cremona equivalent) belongs to $\bar{\mathcal{C}}_K(X_N)$.
\end{proof}

\begin{prop}\label{p2k}
Assume that $P_{2k}(\beta-1)\leq \beta+1\leq H_{2k+1}(\beta-1)$ where $k\geq 1$, and let $\gamma=\gamma_{2k-1,\beta}$ and $\alpha=\frac{P_{2k+2}}{P_{2k}}$.  Then $(\gamma \beta,\gamma;\mathcal{W}(1,\alpha))\in\bar{\mathcal{C}}_K(X_N)$ where $N-1$ is the length of $\mathcal{W}(1,\alpha)$. 
\end{prop}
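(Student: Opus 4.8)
The plan is to follow the same route as Propositions \ref{typodd} and \ref{excodd}: reduce $(\gamma\beta,\gamma;\mathcal{W}(1,\alpha))$ by Cremona moves to the class $\Sigma_{\alpha,\beta,\gamma}^{k}$ of Proposition \ref{bigreduce}, simplify it using Lemma \ref{genodd}, and then verify the tiling criterion. First, $\alpha=\frac{P_{2k+2}}{P_{2k}}$ is the right endpoint of $\left[\frac{P_{2k+1}}{P_{2k-1}},\frac{P_{2k+2}}{P_{2k}}\right]$ (and exceeds $\sigma^2$ by Proposition \ref{orderratios}), so it is an admissible input for Proposition \ref{bigreduce} with this $k$. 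The assumptions guarantee that $\gamma=\gamma_{2k-1,\beta}\geq 1$ (this uses $P_{2k}(\beta-1)\leq\beta+1$, i.e.\ $\beta\leq b_{2k-2}$) and that $2\gamma(\beta+1)-\alpha-1\geq 0$ (by case (iii) of Proposition \ref{efpos}, whose hypothesis $\beta+1\leq H_{2k+1}(\beta-1)$ is exactly the remaining assumption). Hence Proposition \ref{bigreduce} shows $(\gamma\beta,\gamma;\mathcal{W}(1,\alpha))$ is Cremona equivalent to $\Sigma_{\alpha,\beta,\gamma}^{k}=\langle Z;A,B,\mathcal{W}(C,D),\mathcal{W}(E,F)\rangle$ with $Z,A,B,C,D,E,F$ given by (\ref{zadef}).

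The crucial simplification is that $C=\frac{P_{2k+2}}{2}-\frac{P_{2k}}{2}\cdot\frac{P_{2k+2}}{P_{2k}}=0$, so $\mathcal{W}(C,D)$ is the empty sequence; together with Lemma \ref{genodd}, which gives $B=0$, $A=\gamma(\beta-1)$, and $A+C+E=Z$ (hence $Z=A+E$), this identifies $\Sigma_{\alpha,\beta,\gamma}^{k}$, after discarding zero weights, with the class $\langle Z;\mathcal{W}(A,A),\mathcal{W}(E,F)\rangle$, where $A$, $E=\frac{P_{2k}}{2}(2\gamma(\beta+1)-\alpha-1)$, and $F=P_{2k+1}(2\gamma(\beta+1)-\alpha-1)$ are all nonnegative. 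It then remains to place an $E$-by-$F$ rectangle and a square of sidelength $A$, with disjoint interiors, inside a square of sidelength $Z=A+E$. Since $A\leq Z$ and $E\leq Z$ are automatic, the only real content is the inequality $F\leq Z$.

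Because $F=\frac{2P_{2k+1}}{P_{2k}}E$ and $Z=A+E$, the inequality $F\leq Z$ is equivalent to $(2P_{2k+1}-P_{2k})E\leq P_{2k}A$. Using (\ref{gen2g}) to write $E=\frac{H_{2k+1}(\beta-1)-(\beta+1)}{H_{2k}(\beta+1)-(\beta-1)}$ and $A=\gamma(\beta-1)=\frac{2P_{2k+1}(\beta-1)}{H_{2k}(\beta+1)-(\beta-1)}$ over a common denominator, clearing it, and using the Pell identities $H_{2k}H_{2k+1}=2P_{2k}P_{2k+1}+1$ (from (\ref{consec})) and $2P_{2k+1}-P_{2k}-H_{2k}=P_{2k+1}$, this reduces to $(H_{2k+1}P_{2k+1}+1)(\beta-1)\leq (2P_{2k+1}-P_{2k})(\beta+1)$. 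Here $2P_{2k+1}-P_{2k}=P_{2k+1}+H_{2k}$, and invoking the hypothesis $\beta-1\leq\frac{\beta+1}{H_{2k+1}}$ bounds the left side by $(P_{2k+1}+\frac{1}{H_{2k+1}})(\beta+1)$, which is at most $(P_{2k+1}+H_{2k})(\beta+1)$ since $\frac{1}{H_{2k+1}}\leq 1\leq H_{2k}$. With $F\leq Z$ in hand, putting the $E$-by-$F$ rectangle in a corner of the $Z$-by-$Z$ square and the side-$A$ square in the leftover $A$-by-$Z$ strip shows $\langle Z;\mathcal{W}(A,A),\mathcal{W}(E,F)\rangle$ satisfies the tiling criterion, so by Corollary \ref{tilecrit} and Cremona invariance of $\bar{\mathcal{C}}_K(X_N)$ the original class lies in $\bar{\mathcal{C}}_K(X_N)$. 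The main obstacle is purely computational: marshalling the Pell identities to prove $F\leq Z$; everything else is immediate from the cited results.
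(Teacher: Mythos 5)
Your reduction to the class $\langle Z;A,\mathcal{W}(E,F)\rangle$ with $Z=A+E$, $A=\gamma(\beta-1)$, $E=\frac{P_{2k}}{2}(2\gamma(\beta+1)-\alpha-1)$, $F=\frac{2P_{2k+1}}{P_{2k}}E$, all nonnegative, is correct and matches the paper's setup. But the tiling you propose — one $E$-by-$F$ rectangle in a corner of the $Z$-by-$Z$ square, with the side-$A$ square in the remaining $A$-by-$Z$ strip — requires $F\leq Z$, and this is \emph{false} in general. Your derivation of it quietly reverses the hypothesis: you invoke ``$\beta-1\leq\frac{\beta+1}{H_{2k+1}}$,'' but the actual assumption $\beta+1\leq H_{2k+1}(\beta-1)$ says precisely $\beta-1\geq\frac{\beta+1}{H_{2k+1}}$, the opposite direction. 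With the correct sign, the inequality $(P_{2k+1}H_{2k+1}+1)(\beta-1)\leq(P_{2k+1}+H_{2k})(\beta+1)$ you need does not follow. Concretely, take $k=1$ and $\beta=3$ (which satisfies $P_2(\beta-1)=4\leq 4=\beta+1\leq H_3(\beta-1)=14$): then $\gamma=1$, $A=2$, $E=1$, $F=5$, $Z=3$, so $F=5>3=Z$ and your rectangle simply does not fit.

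The paper's proof avoids this by decomposing $\mathcal{W}(E,F)$ further. Since $F-4E=\frac{2P_{2k-1}}{P_{2k}}E$ satisfies $0\leq F-4E\leq E$, the class is $\langle A+E;A,E^{\times 4},\mathcal{W}(E,F-4E)\rangle$, and the four side-$E$ squares plus the small rectangle can be distributed around the $A$-by-$A$ square (Figure \ref{tile4fig}). The key estimate that makes this arrangement possible is $A\geq 2E$, which follows from a short computation yielding $A-2E=\frac{2\left((\beta+1)-P_{2k}(\beta-1)\right)}{H_{2k}(\beta+1)-(\beta-1)}\geq 0$ — i.e., the \emph{other} half of your hypothesis, $P_{2k}(\beta-1)\leq\beta+1$, which in your write-up was only used for the inessential observation $\gamma\geq 1$. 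So the gap is both a sign error on the $H_{2k+1}$ inequality and a missed role for the $P_{2k}$ inequality, which together require replacing the single-rectangle tiling by the finer one.
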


\begin{proof}
By Proposition \ref{bigreduce} the statement is equivalent to the statement that $\langle Z;A,B,\mathcal{W}(C,D),\mathcal{W}(E,F)\rangle$  belongs to the appropriate $\bar{\mathcal{C}}_K(X_N)$ provided that $E,F\geq 0$, and Proposition \ref{efpos} (iii) shows that we indeed have $E,F\geq 0$ due to the assumption that $\beta+1\leq H_{2k+1}(\beta-1)$. Moreover the hypothesis that $\alpha=\frac{P_{2k+2}}{P_{2k}}$ means that $C=0$, so that $\mathcal{W}(C,D)$ is the empty sequence.  Also Lemma \ref{genodd} shows that $B=0$ and that (since $C=0$) $Z=A+E$, so we just need to consider the class \begin{equation}\label{p2kclass} \langle A+E;A,\mathcal{W}(E,F)\rangle \end{equation} where \[ A=\gamma(\beta-1), \] \[ E=\frac{P_{2k}}{2}\left(2\gamma(\beta+1)-\frac{P_{2k+2}}{P_{2k}}-1\right)=P_{2k}\gamma(\beta+1)-\frac{P_{2k+2}+P_{2k}}{2}=P_{2k}\gamma(\beta+1)-H_{2k+1} \] (using (\ref{2hp})), and \[ F=\frac{2P_{2k+1}}{P_{2k}}E.\] 

Now \[ F-4E=\frac{2P_{2k+1}-4P_{2k}}{P_{2k}}E=\frac{2P_{2k-1}}{P_{2k}}{E} \geq 0,\] so (\ref{p2kclass}) is equal to \begin{equation}\label{p2kclass2} \langle A+E;A,E^{\times 4},\mathcal{W}(E,F-4E)\rangle,\end{equation} where $F-4E=\frac{2P_{2k-1}}{P_{2k}}{E}=\left(1-\frac{P_{2k-2}}{P_{2k}}\right)E\leq E$.  Also \begin{align*} A-2E &= \gamma(\beta-1)-2P_{2k}\gamma(\beta+1)+2H_{2k+1}\\ &=\frac{2P_{2k+1}(\beta-1)-4P_{2k+1}P_{2k}(\beta+1)+2H_{2k+1}H_{2k}(\beta+1)-2H_{2k+1}(\beta-1)   }{H_{2k}(\beta+1)-(\beta-1)}
\\ &= \frac{2(\beta+1)-(2H_{2k+1}-2P_{2k+1})(\beta-1)}{H_{2k}(\beta+1)-(\beta-1)}=\frac{2\left((\beta+1)-P_{2k}(\beta-1)\right)}{H_{2k}(\beta+1)-(\beta-1)}\geq 0 \end{align*} by our assumption on $\beta$. So $A\geq 2E$.

The facts that $A\geq 2E\geq 0$ and that $0\leq F-4E\leq E$ imply that the class (\ref{p2kclass}) satisfies the tiling criterion, as can be seen for instance from Figure \ref{tile4fig}.  Since $(\gamma \beta,\gamma;\mathcal{W}(1,\alpha))$ is Cremona equivalent to (\ref{p2kclass2}) this proves the proposition.
\end{proof}

\begin{center}
\begin{figure}
\includegraphics[height=2 in]{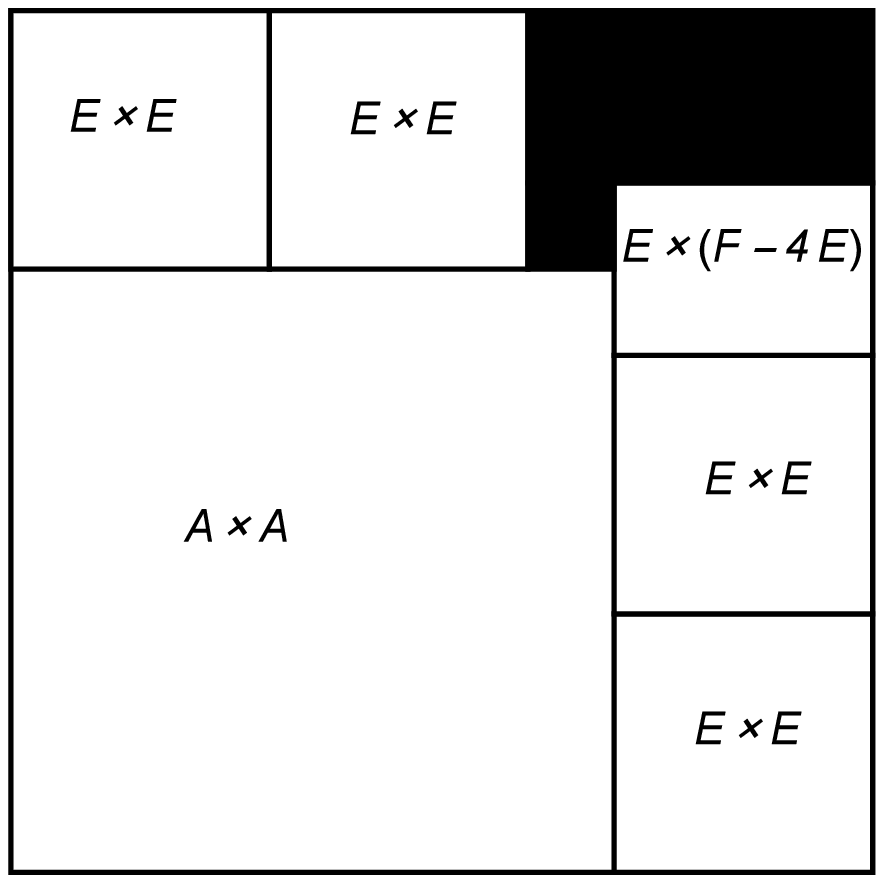}
\caption{The tiling used to prove Proposition \ref{p2k}.}
\label{tile4fig}
\end{figure}
\end{center}

\begin{prop}\label{lastplat} Assume that $H_{2k+1}(\beta-1)\leq \beta+1\leq P_{2k+2}(\beta-1)$ and let $\alpha=\frac{2P_{2k+1}}{H_{2k+1}}\frac{P_{2k+2}(\beta+1)-(\beta-1)}{H_{2k}(\beta+1)-(\beta-1)}$.  Then $\sigma^2<\alpha <\frac{P_{2k+2}}{P_{2k}}$. Furthermore, if $\gamma=\gamma_{2k-1,\beta}$ and $\mathcal{W}(1,\alpha)$ has length $N-1$ then $(\gamma \beta,\gamma;\mathcal{W}(1,\alpha))\in \bar{\mathcal{C}}_K(X_N)$.
\end{prop}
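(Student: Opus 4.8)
The plan is to treat the two assertions separately: an elementary two-sided estimate on $\alpha$, and then a Cremona-moves-plus-tiling argument for membership in $\bar{\mathcal{C}}_K(X_N)$. As elsewhere in this section it suffices to take $\beta$ (hence $\alpha$) rational.

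For the estimate I would set $t=\frac{\beta-1}{\beta+1}$, so that the hypothesis $H_{2k+1}(\beta-1)\le\beta+1\le P_{2k+2}(\beta-1)$ becomes precisely $t\in\left[\frac{1}{P_{2k+2}},\frac{1}{H_{2k+1}}\right]$ (a nonempty interval, since $H_{2k+1}<P_{2k+2}$), while dividing numerator and denominator by $\beta+1$ gives $\alpha=\frac{2P_{2k+1}}{H_{2k+1}}\cdot\frac{P_{2k+2}-t}{H_{2k}-t}$. Because $P_{2k+2}>H_{2k}$, the map $t\mapsto\frac{P_{2k+2}-t}{H_{2k}-t}$ has derivative $\frac{P_{2k+2}-H_{2k}}{(H_{2k}-t)^2}>0$, so $\alpha$ is increasing in $t$ and it is enough to evaluate the two endpoints. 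At $t=\frac{1}{P_{2k+2}}$, the case $n=2k+1,\,j=1$ of (\ref{ph}) gives $P_{2k+2}H_{2k}-1=P_{2k+1}H_{2k+1}$, which collapses the formula to $\alpha=\frac{2(P_{2k+2}^{2}-1)}{H_{2k+1}^{2}}$, and this lies strictly between $\sigma^2$ and $\frac{P_{2k+2}}{P_{2k}}$ by Proposition \ref{28649}. At $t=\frac{1}{H_{2k+1}}$, the case $n=2k$ of (\ref{consec}) gives $H_{2k}H_{2k+1}-1=2P_{2k}P_{2k+1}$, so $\alpha=\frac{P_{2k+2}H_{2k+1}-1}{P_{2k}H_{2k+1}}<\frac{P_{2k+2}}{P_{2k}}$. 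Monotonicity then forces $\sigma^2<\alpha<\frac{P_{2k+2}}{P_{2k}}$ throughout; together with Proposition \ref{orderratios} this also places $\alpha$ inside $\left(\frac{P_{2k+1}}{P_{2k-1}},\frac{P_{2k+2}}{P_{2k}}\right)$.

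For the embedding statement I would follow the template of Propositions \ref{typodd}, \ref{excodd}, \ref{p2k}. Our $\alpha$ is exactly the value appearing in case (iv) of Proposition \ref{efpos}, whose hypothesis $\beta+1\le H_{2k+2}(\beta-1)$ follows from the given $\beta+1\le P_{2k+2}(\beta-1)$ and $P_{2k+2}<H_{2k+2}$ (note $\beta>1$, since otherwise the left inequality would read $2\le 0$); hence $2\gamma(\beta+1)-\alpha-1\ge 0$. With $\alpha$ in the interval found above, Proposition \ref{bigreduce} then shows that $(\gamma\beta,\gamma;\mathcal{W}(1,\alpha))$ is Cremona equivalent to $\Sigma_{\alpha,\beta,\gamma}^{k}=\langle Z;A,B,\mathcal{W}(C,D),\mathcal{W}(E,F)\rangle$ with $Z,A,B,C,D,E,F$ as in (\ref{zadef}), and since $\gamma=\gamma_{2k-1,\beta}$, Lemma \ref{genodd} at once gives $B=0$, $A=\gamma(\beta-1)$ and $Z=A+C+E$. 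The remainder is bookkeeping with the identities of Section \ref{pellprelim}, aimed at rewriting $\Sigma_{\alpha,\beta,\gamma}^{k}$ into a shape covered by the tiling criterion of Corollary \ref{tilecrit}. I expect to need: $D+F=2Z$ (expand both sides using $P_{2k+1}-P_{2k-1}=2P_{2k}$); $P_{2k}F=2P_{2k+1}E$, hence $F=\frac{2P_{2k+1}}{P_{2k}}E\ge 4E$ with $0\le F-4E=\frac{2P_{2k-1}}{P_{2k}}E\le E$; $D-2C=H_{2k}\alpha-H_{2k+2}>0$ because $\alpha>\sigma^2>\frac{H_{2k+2}}{H_{2k}}$; $F<D$ (hence $F<Z<D$), from the computation $F-D=\frac{4P_{2k+1}(H_{2k}t-1)}{H_{2k+1}(H_{2k}-t)}<0$, valid since $t\le\frac{1}{H_{2k+1}}<\frac{1}{H_{2k}}$; $C\ge E$ (from $C-E=\frac{1-H_{2k+1}t}{H_{2k}-t}\ge 0$); and the crucial nonnegativity $Z-2C=P_{2k+1}\gamma(\beta+1)-H_{2k+2}\ge 0$, which (using $H_{2k}H_{2k+2}=2P_{2k+1}^{2}+1$) unwinds to exactly $t\ge\frac{1}{H_{2k+2}}$ and so holds since $t\ge\frac{1}{P_{2k+2}}>\frac{1}{H_{2k+2}}$.

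Since $D>Z$, the $C$-by-$D$ rectangle cannot be placed whole, so the plan is to split $\mathcal{W}(C,D)=\mathcal{W}(C,mC)\sqcup\mathcal{W}(C,D-mC)$ at a suitable multiple of $C$ and, as in the proof of Proposition \ref{excodd}, apply a Cremona move $\frak{c}_{xyz}$ (iterated if need be) that trades a block of side-$C$ squares for squares of side $Z-2C=A-C+E\ge 0$, simultaneously shrinking $Z$, so that the resulting class presents itself in the form $\langle Z';\mathcal{W}(a_1,b_1),\ldots\rangle$ with every $a_i,b_i\le Z'$. The hard part will be the final packing of a square of side $Z'$ by these rectangles: in contrast to the tidy figures behind Propositions \ref{typodd}--\ref{p2k}, here the number of side-$C$ squares grows without bound as $\alpha\to\frac{P_{2k+2}}{P_{2k}}$, so the picture must accommodate a variable grid of these squares wrapped around the reshaped $A$-block and the $E$-by-$F$ block; the inequalities listed above (in particular $A-C+E\ge 0$, $4E\le F<Z$, $C\ge E$, $D+F=2Z$) are precisely what one needs to check that nothing overflows the ambient square, and getting the combinatorics of that layout right — perhaps with one more Cremona preprocessing step than in the earlier propositions — is where the real work lies.
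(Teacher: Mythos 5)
Your first paragraph is correct and essentially identical to the paper's argument: the paper also observes that $\frac{P_{2k+2}(\beta+1)-(\beta-1)}{H_{2k}(\beta+1)-(\beta-1)}$ is increasing in $\beta$, evaluates at the two endpoints $\beta=1+\frac{2}{P_{2k+2}-1}$ and $\beta=1+\frac{2}{H_{2k+1}-1}$ to get exactly $\alpha=\frac{2(P_{2k+2}^{2}-1)}{H_{2k+1}^{2}}$ and $\alpha=\frac{P_{2k+2}}{P_{2k}}-\frac{1}{P_{2k}H_{2k+1}}$, and invokes Proposition \ref{28649}; your substitution $t=\frac{\beta-1}{\beta+1}$ is cosmetic. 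Your setup of the second half---applying Proposition \ref{efpos}~(iv), Proposition \ref{bigreduce}, and Lemma \ref{genodd}---is also the paper's route, and the identities you list ($D+F=2Z$, $F=\frac{2P_{2k+1}}{P_{2k}}E$, $0\le F-4E\le E$, $D-2C>0$, $F<D$, $C\ge E$, $Z-2C\ge 0$) are all correct.

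However, the second half is not a proof: you say yourself that ``getting the combinatorics of that layout right \dots\ is where the real work lies,'' and that is precisely the part that is missing. Concretely, you have not found the identity that collapses the bookkeeping. After splitting off four copies of $C$ (you need $D-4C\ge 0$, which follows from $\alpha>\frac{P_{2k+3}}{P_{2k+1}}$; your observation $D-2C>0$ is too weak) and applying $\frak{c}_{023}$ and $\frak{c}_{045}$ with $\delta=E-C$, the paper reaches $\langle A-C+3E;A-2C+2E,0,E^{\times 4},\mathcal{W}(C,D-4C),\mathcal{W}(E,F)\rangle$ and then establishes
\[
A-C+3E \;=\; D-4C \;=\; F+4E,
\]
which is what makes the final square of side $F+4E$ decompose cleanly into a $C$-strip, an $E$-strip (built from four $E$-squares plus the $E$-by-$F$ rectangle), and a square of side $F-C+3E$. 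This identity does not appear in your list; combined with your $D+F=2Z$ it is equivalent to the further relation $A-F=C+E$, which you also do not have. Moreover the nonnegativity you verify, $Z-2C=A-C+E\ge 0$ (equivalent to $t\ge\frac{1}{H_{2k+2}}$), is strictly weaker than what the tiling actually requires, namely $A-2C+2E\ge 0$, and this stronger inequality is where the hypothesis $\beta+1\le P_{2k+2}(\beta-1)$ is genuinely used: one computes $A-2C+2E=\frac{2(P_{2k+2}(\beta-1)-(\beta+1))}{H_{2k}(\beta+1)-(\beta-1)}$, so without the full strength of the lower bound on $t$ the argument fails. So while your intermediate facts are all true, they do not yet assemble into a tiling, and the plan of ``trading $C$-squares for squares of side $Z-2C$'' mischaracterizes what the Cremona moves actually do (they replace $C$-squares by $E$-squares).
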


\begin{proof}
Note that \[ \frac{P_{2k+2}(\beta+1)-(\beta-1)}{H_{2k}(\beta+1)-(\beta-1)}=1+\frac{P_{2k+2}-H_{2k}}{H_{2k}-\frac{\beta-1}{\beta+1}} \] is an increasing function of $\beta$, so to prove the first sentence it suffices to check that $\alpha\in \left(\sigma^2,\frac{P_{2k+2}}{P_{2k}}\right)$ just when $\beta$ is one of the endpoints of the interval of possible values of $\beta$ under consideration, \emph{i.e.} when $\beta+1=P_{2k+2}(\beta-1)$ or when $\beta+1=H_{2k+1}(\beta-1)$.  If $\beta+1=P_{2k+2}(\beta-1)$ (\emph{i.e.}, $\beta=1+\frac{2}{P_{2k+2}-1}$) then, using (\ref{phloc}), \[ \alpha=\frac{2P_{2k+1}}{H_{2k+1}}\frac{P_{2k+2}^{2}-1}{P_{2k+2}H_{2k}-1}=\frac{2P_{2k+1}}{H_{2k+1}}\frac{P_{2k+2}^{2}-1}{P_{2k+1}H_{2k+1}}=\frac{2(P_{2k+2}^{2}-1)}{H_{2k+1}^{2}},\] which is greater than $\sigma^2$ by Proposition \ref{28649}.  On the other hand if $\beta+1=H_{2k+1}(\beta-1)$ (\emph{i.e.}, $\beta=1+\frac{2}{H_{2k+1}-1}$) then, using (\ref{consec}), \[ \alpha=\frac{2P_{2k+1}}{H_{2k+1}}\frac{P_{2k+2}H_{2k+1}-1}{H_{2k}H_{2k+1}-1}=\frac{2P_{2k+1}(P_{2k+2}H_{2k+1}-1)}{H_{2k+1}(2P_{2k}P_{2k+1})}=\frac{P_{2k+2}}{P_{2k}}-\frac{1}{P_{2k}H_{2k+1}}.\]  So since $\alpha$ is an increasing function of $\beta$, for any $\beta\in \left[1+\frac{2}{P_{2k+2}-1},1+\frac{2}{H_{2k+1}-1}\right]$ we will have inequalities \[ \sigma^2<\frac{2(P_{2k+2}^{2}-1)}{H_{2k+1}^{2}}\leq \alpha\leq \frac{P_{2k+2}}{P_{2k}}-\frac{1}{P_{2k}H_{2k+1}} <\frac{P_{2k+2}}{P_{2k}},\] proving the first sentence of the proposition.

Now since $H_{2k+2}>P_{2k+2}$, Proposition \ref{efpos} shows that $2\gamma(\beta+1)-\alpha-1\geq 0$.  By what we have already shown, we have \[ \frac{P_{2k+1}}{P_{2k-1}}<\frac{P_{2k+3}}{P_{2k+1}}<\sigma^2<\alpha<\frac{P_{2k+2}}{P_{2k}}.\]  So Proposition \ref{bigreduce} applies with the same value of $k$, so that $(\gamma \beta,\gamma;\mathcal{W}(1,\alpha))$ is Cremona equivalent to $\langle Z;A,0,\mathcal{W}(C,D),\mathcal{W}(E,F)\rangle$ where $Z,A,C,D,E,F$ have their usual meanings (and we have used Proposition \ref{genodd} to see that $B=0$). 

Now \[ D-4C=\left(P_{2k-1}\alpha-P_{2k+1}\right)-2\left(P_{2k+2}-P_{2k}\alpha\right)=P_{2k+1}\alpha-P_{2k+3},\] which is positive since, as noted above, $\alpha>\frac{P_{2k+3}}{P_{2k+1}}$.  So using Lemma \ref{genodd} (iii) we can rewrite $\langle Z;A,0,\mathcal{W}(C,D),\mathcal{W}(E,F)\rangle$ as \[ \langle A+C+E;A,0,C^{\times 4},\mathcal{W}(C,D-4C),\mathcal{W}(E,F)\rangle.\]  Applying the Cremona moves $\frak{c}_{023}$ and $\frak{c}_{045}$ successively yields $\delta_{023}=\delta_{045}=E-C$, so $(\gamma \beta,\gamma;\mathcal{W}(1,\alpha))$ is Cremona equivalent to \begin{equation} \label{first4} \langle A-C+3E;A-2C+2E,0,E^{\times 4},\mathcal{W}(C,D-4C),\mathcal{W}(E,F)\rangle.\end{equation}  Now we noted earlier that $D-4C=P_{2k+1}\alpha-P_{2k+3}$, while, using Lemma \ref{genodd}, \begin{align}\nonumber A-C+3E&= \frac{P_{2k}}{2}\alpha-\frac{P_{2k+2}}{2}+\frac{3P_{2k}}{2}\left(2\gamma(\beta+1)-\alpha-1\right)+\gamma(\beta-1)
\\ &= \gamma(3P_{2k}(\beta+1)+(\beta-1))-P_{2k}\alpha-\frac{1}{2}(P_{2k+2}+3P_{2k})\nonumber \\ &=\gamma(3P_{2k}(\beta+1)+(\beta-1))-P_{2k}\alpha-(P_{2k+1}+2P_{2k}).\label{ac3e} \end{align}
So \[
(A-C+3E)-(D-4C)= \gamma(3P_{2k}(\beta+1)+(\beta-1))-(P_{2k+1}+P_{2k})\alpha+(P_{2k+3}-P_{2k+1}-2P_{2k}).\]  We have $P_{2k+1}+P_{2k}=H_{2k+1}$, and $P_{2k+3}-P_{2k+1}-2P_{2k}=2P_{2k+2}-2P_{2k}=4P_{2k+1}$.  So we obtain 
\begin{align*} (A-C+3E)&-(D-4C)= \gamma(3P_{2k}(\beta+1)+(\beta-1))-H_{2k+1}\alpha+4P_{2k+1}
\\ &= \frac{(6P_{2k}P_{2k+1}-2P_{2k+1}P_{2k+2}+4H_{2k}P_{2k+1})(\beta+1)+(2P_{2k+1}+2P_{2k+1}-4P_{2k+1})(\beta-1)}{H_{2k}(\beta+1)-(\beta-1)}
\\ &= \frac{2P_{2k+1}(\beta+1)}{H_{2k}(\beta+1)-(\beta-1)}(3P_{2k}-P_{2k+2}+2H_{2k}) =0 
\end{align*} since $P_{2k+2}=2(P_{2k+1}-P_{2k})+3P_{2k}=2H_{2k}+3P_{2k}$.

\begin{center}
\begin{figure}
\includegraphics[height=2.5 in]{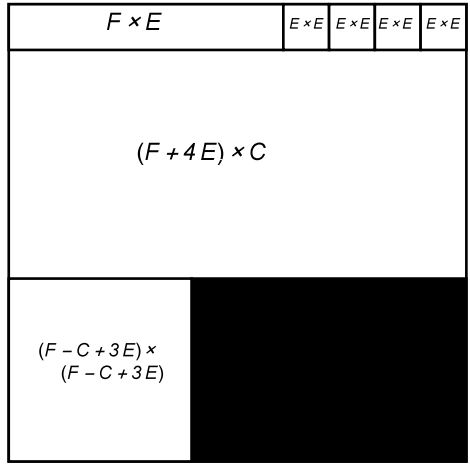}
\caption{The tiling used to prove Proposition \ref{lastplat}.}
\label{tile5fig}
\end{figure}
\end{center}

Furthermore, we have $F+4E=(P_{2k+1}+2P_{2k})(2\gamma(\beta+1)-\alpha-1)$, so by (\ref{ac3e}) we find \begin{align*} (A-C+3E)&-(F+4E)=\gamma\left((3P_{2k}-2P_{2k+1}-4P_{2k})(\beta+1)+(\beta-1)\right)+(P_{2k+1}+P_{2k})\alpha
\\ &= \gamma(-P_{2k+2}(\beta+1)+(\beta-1))+H_{2k+1}\alpha=0 \end{align*} since by definition we have $\alpha=\frac{P_{2k+2}(\beta+1)-(\beta-1)}{H_{2k+1}}\gamma$.  We have thus shown that $D-4C=A-C+3E=F+4E$, in view of which (\ref{first4}) can be rewritten as \[ \langle F+4E;F-C+3E,0,E^{\times 4},\mathcal{W}(C,F+4E),\mathcal{W}(E,F)\rangle.\]  So since we already know that $C,E,F\geq 0$, by joining together four squares of sidelength $E$ with a $F$-by-$E$ rectangle to form a $(F+4E)$-by-$E$ rectangle and then combining this with a $(F+4E)$-by-$C$ rectangle and a square of sidelength $F-C+3E$ as in Figure \ref{tile5fig}, it will follow that (\ref{first4}) satisfies the tiling criterion provided merely that we show that $F-C+3E=A-2C+2E$ is nonnegative. In fact, we find (using (\ref{2hp}) and (\ref{consec})) that \begin{align*} A-2C+2E&=\gamma(\beta-1)-P_{2k+2}+P_{2k}\alpha+P_{2k}(2\gamma(\beta+1)-\alpha-1)
\\ &= \gamma(2P_{2k}(\beta+1)+(\beta-1))-2H_{2k+1} \\ &= \frac{(4P_{2k}P_{2k+1}-2H_{2k}H_{2k+1})(\beta+1)+(2P_{2k+1}+2H_{2k+1})(\beta-1) }{H_{2k}(\beta+1)-(\beta-1)}
\\ &= \frac{-2(\beta+1)+2P_{2k+2}(\beta-1)}{H_{2k}(\beta+1)-(\beta-1)} \geq 0 \end{align*} since we assume that $\beta+1\leq P_{2k+2}(\beta-1)$.  So (\ref{first4}) indeed satisfies the tiling criterion and the proposition follows.
\end{proof}


We have now proven all of the propositions listed near the start of Section \ref{fmsupproof}, and hence have completed the proof of Theorem \ref{fmsup}.

\begin{remark} In the case that $k=0$, Corollary \ref{evencor} and Propositions \ref{typodd}, \ref{excodd}, and \ref{lastplat} can be read off from results in \cite{GU}.  (The $k=0$ case of Proposition \ref{p2k}, on the other hand, is vacuous since $H_{1}=1$ and so the condition $\beta+1\leq H_{2k+1}(\beta-1)$ can never be satisfied.)  Indeed \cite[Proposition 3.8]{GU} can be rephrased as saying that if $m\in \Z_+,\beta\in \R$ with $\beta\geq m$ then $C_{\beta}(m+\beta)\leq 1$. Since $\gamma_{-1,\beta}=1$, $b_1=2$, and $b_0=3$, and since easy calculations show that $s_{0}(\beta)=1+\beta$ and $s'_0(\beta)=2+\beta$, the $k=0$ case of Proposition \ref{typodd} amounts to the statement that $C_{\beta}(1+\beta)\leq  1$ when $1\leq \beta\leq 2$, and that of Proposition \ref{excodd} amounts to the statement that $C_{\beta}(2+\beta)\leq 1$ when $2\leq \beta\leq 3$.  Also the $k=0$ case of Proposition \ref{lastplat} shows that if $\beta\geq 3$ then $C_{\beta}(3+\beta)\leq 1$.  So when $k$ is set equal to zero each of these three propositions becomes a special case of \cite[Proposition 3.8]{GU}.  

Meanwhile, \cite[Proposition 3.10]{GU} (with $y=b=\beta$ and $a=1$) shows, for $1\leq \beta\leq 2$, that for all $\lambda>1$ there is a symplectic embedding $E\left(\frac{1+\beta}{3},2+\beta\right)\hookrightarrow \lambda P(1,\beta)$ and hence that $C_{\beta}\left(\frac{3(\beta+2)}{\beta+1}\right)\leq \frac{3}{\beta+1}$, which is easily seen to be equivalent to the $k=0$ case of Corollary \ref{evencor}.  
\end{remark}

\section{Finding the new staircases}\label{findsect}

\subsection{A criterion for an infinite staircase}\label{critsect}

Recall from the start of Section \ref{intronew} that we say that $C_{\beta}$ has an infinite staircase if there are infinitely many distinct affine functions each of which agrees with $C_{\beta}$ on some nonempty open interval; we have also defined in Section \ref{intronew} the notion of an accumulation point of an infinite staircase.  The new infinite staircases that we find in this paper will be deduced from the existence of certain infinite families of perfect classes $\left(a,b;\mathcal{W}(c,d)\right)$.  (Recall from the introduction that a class of the form $\left(a,b;\mathcal{W}(c,d)\right)$ is said to be perfect if it belongs to the set $\mathcal{E}$ of exceptional classes, and quasi-perfect if it satisfies the weaker condition of having Chern number $1$ and self-intersection $-1$.  Note that distinct perfect classes always have positive intersection number with each other.)   Before constructing these classes, we will derive in this section a general sufficient criterion (see Theorem \ref{infcrit}) for an infinite sequence of perfect classes to guarantee the existence of an infinite staircase for some $C_{\beta}$.

To put the following results into context, we remark that a very similar argument to those used in  \cite[Lemma 2.1.5]{MS} and \cite[Lemma 4.11]{FM} shows that, if $C=(a,b;\mathcal{W}(c,d))$ is a perfect class, then $C$ is the unique class in $\mathcal{E}$ with $C_{\frac{a}{b}}(c/d)=\mu_{\frac{c}{d},\frac{a}{b}}(C)$.  Thus every perfect class exerts some influence on the function of two variables $(\alpha,\beta)\mapsto C_{\beta}(\alpha)$.  We will require a somewhat more flexible version of this statement, showing that a perfect class $(a,b;\mathcal{W}(c,d))$ sharply obstructs embeddings of ellipsoids into dilates of $P(1,\beta )$ for all $\beta $ in an explicit neighborhood of $\frac{a}{b}$, not just into dilates of $P(1,\frac{a}{b})$.\footnote{We take $\beta $ to be less than or equal to $\frac{a}{b}$ in what follows because this turns out to be true in all of the examples that we consider later, but a similar argument with slightly different inequalities gives an analogous result when $\beta >\frac{a}{b}$.}

\begin{lemma}\label{aLbineq}
Assume that $C=(a,b;\mathcal{W}(c,d))\in \mathcal{E}$, with $c\geq d$, $\gcd(c,d)=1$, and $a\geq \beta b$ where $\beta > 1$.  Assume moreover that we have inequalities \begin{equation}\label{ineqs} (a-\beta b)^2<2\beta  \quad\mbox{and}\quad (a-b)(a-\beta b)<1+\beta .\end{equation} Then \[ \mu_{\frac{c}{d},\beta }(C) > \max\left\{\sqrt{\frac{c}{2\beta d}}, \sup_{E\in \mathcal{E}\setminus\{C\}}\mu_{\frac{c}{d},\beta }(E)\right\}.\] 
\end{lemma}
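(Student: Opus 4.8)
The plan is to prove the two halves of the claimed strict inequality separately, in both cases exploiting the fact that $C\in\mathcal{E}$ so that $C$ pairs nonnegatively with every other $E\in\mathcal{E}$. First I would record the basic data: since $C=(a,b;\mathcal{W}(c,d))\in\mathcal{E}$ with $\gcd(c,d)=1$, it is quasi-perfect, so by the discussion preceding Definition~\ref{bmovedef} (Chern number $1$, self-intersection $-1$) we have $2(a+b)=c+d$ and $2ab-cd=-1$. Also $w(c/d)=\frac1d\mathcal{W}(c,d)$ and $w(c/d)\cdot w(c/d)=c/d$, so as in the proof of Proposition~\ref{Gammadef}, $\mu_{c/d,\beta}(C)=\frac{c}{a+\beta b}$. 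The volume comparison is then purely algebraic: $\mu_{c/d,\beta}(C)>\sqrt{\frac{c}{2\beta d}}$ is equivalent to $c\cdot 2\beta d>(a+\beta b)^2$, i.e. to $2\beta cd-(a+\beta b)^2>0$. Using $cd=2ab+1$ this becomes $2\beta(2ab+1)-(a^2+2\beta ab+\beta^2 b^2)=2\beta-(a-\beta b)^2>0$, which is exactly the first hypothesis in~(\ref{ineqs}). So the volume half is immediate.

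For the harder half I would take an arbitrary $E=(x,y;\vec m)\in\mathcal{E}\setminus\{C\}$ and show $\mu_{c/d,\beta}(E)<\frac{c}{a+\beta b}$. If $E=E'_i$ then $\mu=0$ and there is nothing to prove, so assume $E$ has nonnegative coordinates with $x,y\geq 0$, $x+y>0$. Write $\mu_{c/d,\beta}(E)=\frac{\vec m\cdot w(c/d)}{x+\beta y}=\frac{\vec m\cdot\mathcal{W}(c,d)}{d(x+\beta y)}$. Since $E\neq C$ are distinct elements of $\mathcal{E}$, we have $E\cdot C\geq 0$, i.e. $ay+bx-\vec m\cdot\mathcal{W}(c,d)\geq 0$, so $\vec m\cdot\mathcal{W}(c,d)\leq ay+bx$. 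Therefore it suffices to show
\[
\frac{ay+bx}{d(x+\beta y)}<\frac{c}{a+\beta b},
\]
i.e. $(ay+bx)(a+\beta b)<cd(x+\beta y)$. Expanding the left side and using $cd=2ab+1$ on the right, this is
\[
a^2 y+\beta ab y+abx+\beta b^2 x<(2ab+1)x+\beta(2ab+1)y,
\]
which rearranges to $0<(ab-a^2+1)y+(ab-\beta b^2+1)x+(\text{cross terms in }x,y)$; carrying out the rearrangement carefully I expect to land on
\[
(a-b)(a-\beta b)\,\big(\text{something}\big)+1+\beta>\big(\text{something involving }x,y\big),
\]
and the second hypothesis $(a-b)(a-\beta b)<1+\beta$ should be precisely what closes this. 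The main obstacle is doing this rearrangement so that the coefficients of $x$ and of $y$ each come out manifestly positive (or controllable) under the two hypotheses in~(\ref{ineqs}) together with $a\geq\beta b>b$ (note $\beta>1$ gives $a>b$, and $a\geq\beta b$ gives $a-\beta b\geq 0$, so $(a-b)(a-\beta b)\geq 0$); I anticipate needing to use $x+y\geq 1$ and the Chern relation $2(x+y)-\sum m_i=1$ for $E$ (so $\sum m_i$, hence $\vec m\cdot\mathcal{W}(c,d)\leq (\max_i w_i)\sum m_i\leq\sum m_i<2(x+y)$) as an auxiliary bound to handle the case where $E\cdot C\geq 0$ alone is too weak, e.g. when $\vec m\cdot\mathcal{W}(c,d)$ is small relative to $ay+bx$ but $x+\beta y$ is also small.

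Finally I would address the strictness: equality $\mu_{c/d,\beta}(E)=\frac{c}{a+\beta b}$ would force equality throughout, in particular $E\cdot C=0$; but $C\in\mathcal{E}$ and $E\in\mathcal{E}$ distinct with $E\cdot C=0$, combined with equality in the numerical estimate above, should be incompatible with the strict inequalities in~(\ref{ineqs}) — I'd trace which inequality becomes non-strict and show it contradicts~(\ref{ineqs}). If a cleaner route is available, I would instead note that any $E$ achieving the supremum would, by the uniqueness statement recalled just before the lemma (the analogue of \cite[Lemma 2.1.5]{MS}, \cite[Lemma 4.11]{FM}), have to equal $C$, contradicting $E\neq C$; then only the non-strict version of the middle inequality needs the hands-on argument, and strictness is automatic. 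Either way the supremum over $\mathcal{E}\setminus\{C\}$ is attained or approached strictly below $\mu_{c/d,\beta}(C)$, and since that supremum is in fact a maximum over a finite relevant set (the argument in the Remark in Section~\ref{intronew} bounds all $\mu_{\alpha,\beta}(E)\leq 2$ and only finitely many $E$ can exceed the volume bound near a fixed $\alpha$), the strict inequality for the supremum follows.
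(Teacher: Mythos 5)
Your treatment of the volume bound is correct and is essentially the paper's. The gap is in the second half. After using $E\cdot C\geq 0$ to obtain $\vec m\cdot\mathcal W(c,d)\leq bx+ay$, you propose to prove the intermediate inequality $(ay+bx)(a+\beta b)<cd(x+\beta y)$ for every $(x,y;\vec m)\in\mathcal E\setminus\{C\}$. Expanding with $cd=2ab+1$ (your own rearrangement has a sign/factor error in the $y$-coefficient, and there are no ``cross terms'' since everything is linear in $(x,y)$), this is equivalent to $\bigl(b(a-\beta b)+1\bigr)x+\bigl(\beta-a(a-\beta b)\bigr)y>0$. The $x$-coefficient is always positive, but the $y$-coefficient $\beta-a(a-\beta b)$ is \emph{not} forced to be nonnegative by the hypotheses (\ref{ineqs}), and in fact $a(a-\beta b)>\beta$ actually happens in the setting where the lemma is applied: for $C=A_{0,n}^{(k)}=FM_{2k-1}$ with $\beta=L_{n,k}$ and $k$ large, $a-b=1$ and $a-\beta b$ stays bounded in $(0,2)$, while $a\to\infty$ and $\beta\to 1$. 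Taking $E=(0,1;1,0,\dots)\in\mathcal E$ (i.e.\ $(x,y)=(0,1)$), your target inequality then reduces to $a(a-\beta b)<\beta$ and fails; the true obstruction $\mu_{\frac{c}{d},\beta}(E)=1/\beta$ is still less than $c/(a+\beta b)$ (because $c\geq a+b$ and $\beta>1$), but the upper bound $bx+ay$ coming from $E\cdot C\geq 0$ is far too lossy when $y/x$ is large. Your fallback Chern estimate $\vec m\cdot w(c/d)<2(x+y)$ does not repair this: at $(x,y)=(0,1)$ it only gives $\mu<2/\beta$, and $2/\beta<c/(a+\beta b)$ is generally false for $\beta<2$, which is exactly the regime of interest since $L_{n,k}\to 1$.

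The missing idea is a reduction to the half-cone $x\geq y$. If $(x,y;\vec m)\in\mathcal E$ with $y>x$ then $(y,x;\vec m)\in\mathcal E$ as well, and $\mu_{\frac{c}{d},\beta}\bigl((y,x;\vec m)\bigr)>\mu_{\frac{c}{d},\beta}\bigl((x,y;\vec m)\bigr)$ because $\beta>1$; so the supremum over $\mathcal E\setminus\{C\}$ is controlled by classes with $x\geq y$, together with the one class $(b,a;\mathcal W(c,d))$ whose flip is $C$ itself (disposed of directly, its obstruction being visibly smaller than $\mu_{\frac{c}{d},\beta}(C)$). Once $x\geq y$, set $t=y/x\in[0,1]$: the quantity $\frac{bx+ay}{x+\beta y}=b\cdot\frac{1+(a/b)t}{1+\beta t}$ is nondecreasing in $t$ since $a/b\geq\beta$, so it is maximized at $t=1$ with value $\frac{a+b}{1+\beta}$; equivalently, the affine-in-$t$ expression $\bigl(b(a-\beta b)+1\bigr)+\bigl(\beta-a(a-\beta b)\bigr)t$ is positive at $t=0$ and at $t=1$ (this is exactly where $(a-b)(a-\beta b)<1+\beta$ enters) and hence on $[0,1]$. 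The single remaining check $(a+b)(a+\beta b)<cd(1+\beta)$ rearranges to precisely the second hypothesis in (\ref{ineqs}). Without the flip reduction, your expansion is trying to make the same affine expression positive on all of $[0,\infty)$, and it is not. (Strictness, which you worry about separately, is automatic once one has this chain of bounds: the final step is strict.)
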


\begin{proof}
We first compare $\mu_{\frac{c}{d},\beta }(C)$ to the volume bound $\sqrt{\frac{c}{2\beta d}}$.  Since $\mathcal{W}(c,d)=dw(c/d)$, we have $w(c/d)\cdot W(c,d)=cd/d=c$, and so \[ \mu_{\frac{c}{d},\beta }(C)=\frac{c}{a+\beta b}.\]  This is greater than the volume bound if and only if $\frac{c^2}{(a+\beta b)^2}>\frac{c}{2\beta d}$, \emph{i.e.} if and only if \[ (a+\beta b)^2<2\beta cd=2\beta (2ab+1) \] where the equality $cd=2ab+1$ follows from $C$ having self-intersection $-1$.  Since $(a+\beta b)^2-4\beta ab=(a-\beta b)^2$ this latter inequality is equivalent to $(a-\beta b)^2<2\beta $, as is assumed to hold in the statement of the lemma.

As for the comparison to $\sup_{E\in \mathcal{E}\setminus\{C\}}\mu_{\frac{c}{d},\beta }(E)$, note first that if $E=(x,y;\vec{m})\in\mathcal{E}$ with $y>x$ then $(y,x;\vec{m})$ also lies in $\mathcal{E}$, and \begin{equation}\label{switch} \mu_{\frac{c}{d},\beta }((y,x;\vec{m}))=\frac{w(c/d)\cdot\vec{m}}{y+\beta x}>\frac{w(c/d)\cdot\vec{m}}{x+\beta y}=\mu_{\frac{c}{d},\beta }((x,y;\vec{m})) \end{equation} (since we assume that $\beta >1$).  Thus \[ \sup_{E\in E\setminus\{C\}}\mu_{\frac{c}{d},\beta }(E)=\max\left\{\mu_{\frac{c}{d},\beta }((b,a;\mathcal{W}(c,d)),\sup_{(x,y;\vec{m})\in\mathcal{E}\setminus\{C\},x\geq y}\mu_{\frac{c}{d},\beta }((x,y;\vec{m}))\right\}.\] We have $\mu_{\frac{c}{d},\beta }(C)>\mu_{\frac{c}{d},\beta }((b,a;\mathcal{W}(c,d)))$ as a special case of (\ref{switch}).  

Consider now any $(x,y;\vec{m})\in\mathcal{E}\setminus\{C\}$ with $x\geq y$.  Since $(x,y;\vec{m})$ and $C=(a,b;\mathcal{W}(c,d))$ belong to $\mathcal{E}$ they have nonnegative intersection number, \emph{i.e.} $bx+ay-\mathcal{W}(c,d)\cdot\vec{m}\geq 0$.  So since $w(c/d)=\frac{1}{d}\mathcal{W}(c,d)$, we find \begin{align*} \mu_{\frac{c}{d},\beta }(E)&=\frac{w(c/d)\cdot\vec{m}}{x+\beta y} \leq \frac{bx+ay}{d(x+\beta y)}
\\ &= \frac{b}{d}\frac{1+\frac{a}{b}\frac{y}{x}}{1+\beta \frac{y}{x}} 
\\ & \leq \frac{a+b}{d(1+\beta )}. \end{align*}  Here the final inequality follows from the assumption that $y\leq x$ and the fact that, since we assume $\frac{a}{b}\geq \beta $, the function $t\mapsto \frac{1+\frac{a}{b}t}{1+\beta t}$ is nondecreasing.

So to complete the proof it suffices to show that $\frac{a+b}{d(1+\beta )}<\frac{c}{a+\beta b}$, \emph{i.e.} that \begin{equation}\label{a+b} (a+b)(a+\beta b)<cd(1+\beta ). \end{equation} But since $C$ has self-intersection $-1$ we have $cd=2ab+1$, so (\ref{a+b}) follows immediately from the second inequality in (\ref{ineqs}) and the observation that $(a+b)(a+\beta b)-2(1+\beta )ab=(a-b)(a-\beta b)$.
\end{proof}

\begin{prop}\label{interval} Assume that $C\in\mathcal{E}$ satisfies the hypotheses of Lemma \ref{aLbineq}. Then there is $\delta>0$ such that for all rational $\alpha\geq 1$ with $|\alpha-\frac{c}{d}|<\delta$, the class $C$ is the unique class in $\mathcal{E}$ with \[ C_{\beta }(\alpha)=\mu_{\alpha,\beta }(C).\]
\end{prop}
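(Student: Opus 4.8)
The plan is to prove that for every rational $\alpha\ge 1$ sufficiently close to $\alpha_0:=c/d$ the class $C$ strictly dominates the volume bound $\sqrt{\alpha/(2\beta )}$ and also $\mu_{\alpha,\beta }(E)$ for every $E\in\mathcal E\setminus\{C\}$; by Corollary \ref{cbe} (equivalently \eqref{dirlim}) this gives at once that $C_{\beta }(\alpha)=\mu_{\alpha,\beta }(C)$ and that $C$ is the unique class realizing this value. At $\alpha=\alpha_0$ these strict inequalities are exactly Lemma \ref{aLbineq}, with $v_0:=\mu_{\alpha_0,\beta }(C)=\frac{c}{a+\beta b}$, so the task is purely one of propagating them to a neighborhood of $\alpha_0$. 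Two infinite subfamilies are handled for free: since $\sqrt{\alpha/(2\beta )}$ and $\mu_{\cdot,\beta }(C)$ are continuous, $\mu_{\alpha,\beta }(C)>\sqrt{\alpha/(2\beta )}$ persists near $\alpha_0$; and the coordinate-swapped class $C':=(b,a;\mathcal W(c,d))$, which also lies in $\mathcal E$, satisfies $\mu_{\alpha,\beta }(C')<\mu_{\alpha,\beta }(C)$ for every $\alpha\ge 1$, because $a+\beta b<b+\beta a$ (here $a>b$, since $a\ge\beta b$ and $\beta >1$) while $w(\alpha)\cdot\mathcal W(c,d)>0$. What remains is a single uniform estimate covering all other classes.

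That estimate comes from combining the intersection inequality already used in the proof of Lemma \ref{aLbineq} with a comparison of weight sequences. For $E=(x,y;\vec m)\in\mathcal E\setminus\{C\}$ with $x\ge y$, nonnegativity of $E\cdot C$ gives $\mathcal W(c,d)\cdot\vec m\le bx+ay$, hence $w(\alpha_0)\cdot\vec m=\tfrac1d\mathcal W(c,d)\cdot\vec m\le\tfrac{bx+ay}{d}$, and then, exactly as in that proof (using $x\ge y$ and $\tfrac ab\ge\beta $, together with the second inequality in \eqref{ineqs}), $\frac{w(\alpha_0)\cdot\vec m}{x+\beta y}\le\frac{a+b}{d(1+\beta )}<v_0$. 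To replace $\alpha_0$ by a nearby $\alpha$ I would prove the following elementary lemma about weight sequences: writing $w(\alpha_0)=(w_1,\dots,w_L)$, for every $\rho>0$ there is $\delta>0$ such that every rational $\alpha$ with $|\alpha-\alpha_0|<\delta$ has $w(\alpha)=(w_1+e_1,\dots,w_L+e_L,t_1,t_2,\dots)$ with $|e_i|<\rho$ for all $i$ and $\sum_j t_j^2<\rho$. This follows by running the greedy \emph{remove-the-largest-square} algorithm: because $\gcd(c,d)=1$, the algorithm for $w(\alpha_0)$ terminates with a run of copies of $1/d$; for $\alpha$ close to $\alpha_0$ the algorithm for $w(\alpha)$ executes the same subtractions, with each intermediate remainder (hence each output) perturbed by only $O(|\alpha-\alpha_0|)$, and after that last run it is left with a rectangle one of whose sides is $O(|\alpha-\alpha_0|)$, so all subsequent squares $t_j$ have side $O(|\alpha-\alpha_0|)$; the bound $\sum_j t_j^2<\rho$ then comes from the identity $\sum_i w(\alpha)_i^2=\alpha$.

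Granting the comparison lemma, Cauchy--Schwarz on the tail yields, for any nonnegative integer vector $\vec m$ and $\alpha$ as above, $w(\alpha)\cdot\vec m\le w(\alpha_0)\cdot\vec m+\rho\sum_i m_i+\sqrt\rho\,(\sum_i m_i^2)^{1/2}$; and for $E=(x,y;\vec m)\in\mathcal E\setminus\{C,C'\}$ with $x\ge y$, using $\sum_i m_i=2(x+y)-1<2(x+y)$, $(\sum_i m_i^2)^{1/2}\le\sum_i m_i$, and $x+y\le x+\beta y$, this gives
\[ \mu_{\alpha,\beta }(E)=\frac{w(\alpha)\cdot\vec m}{x+\beta y}\le\frac{a+b}{d(1+\beta )}+2(\rho+\sqrt\rho). \]
Choosing $\rho$ small enough that the right-hand side is $<v_0$, and then shrinking $\delta$ (by continuity of $\mu_{\cdot,\beta }(C)$) so that on $(\alpha_0-\delta,\alpha_0+\delta)$ one also has $\mu_{\alpha,\beta }(C)>\frac{a+b}{d(1+\beta )}+2(\rho+\sqrt\rho)$ and $\mu_{\alpha,\beta }(C)>\sqrt{\alpha/(2\beta )}$, every $E\in\mathcal E\setminus\{C\}$ is strictly dominated by $\mu_{\alpha,\beta }(C)$: the displayed bound handles $E$ with first coordinate $\ge$ second other than $C'$; the case $E=C'$ was treated in the first paragraph; an $E$ with first coordinate $<$ second reduces via \eqref{switch} to one of the preceding cases applied to its coordinate-swap; and any $E'_i$ has $\mu_{\alpha,\beta }(E'_i)=0$. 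This is exactly what the proposition asserts. The step I expect to be the main obstacle is the weight-sequence comparison lemma: it is elementary, but pinning down precisely how the greedy algorithm for $w(\alpha)$ shadows that for $w(\alpha_0)$ --- in particular that for $\alpha$ close enough no extra copy appears in the terminal run of $1/d$'s, so that the perturbed prefix really has length exactly $L$ --- is where the genuine bookkeeping lies.
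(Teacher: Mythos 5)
Your argument is correct and is essentially the paper's: both propagate the strict separation supplied by Lemma \ref{aLbineq} at $\alpha=c/d$ to a neighborhood by establishing a uniform-in-$E$ modulus of continuity for $\alpha\mapsto\mu_{\alpha,\beta}(E)$, which in turn reduces to controlling how $w(\alpha)$ depends on $\alpha$ near $c/d$. The paper obtains the uniform estimate in one Cauchy--Schwarz step, $|\mu_{\alpha_0,\beta}(E)-\mu_{\alpha_1,\beta}(E)|\le\|w(\alpha_0)-w(\alpha_1)\|\,\|\vec m\|/(x+\beta y)\le\|w(\alpha_0)-w(\alpha_1)\|\sqrt{1/\beta+1}$, using the self-intersection identity $\|\vec m\|^2=2xy+1$; you instead split $w(\alpha)$ into a perturbed prefix plus a small tail and appeal to the Chern bound $\sum m_i<2(x+y)$, giving a cruder but equally serviceable constant, and you re-unpack the $C'$/swap case analysis that the paper keeps encapsulated inside Lemma \ref{aLbineq}. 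The weight-sequence comparison lemma you flag as the main remaining gap is exactly what the paper extracts from \cite[Lemma 2.2.1]{MS}: for $\alpha$ near $c/d$ one can write $w(\alpha)=(\vec\ell(\alpha),\vec r(\alpha))$ with $\vec\ell$ piecewise linear of fixed length $L$, equal to $w(c/d)$ at $c/d$, whence $\|\vec r(\alpha)\|^2=\alpha-\|\vec\ell(\alpha)\|^2\to 0$; citing this disposes of the greedy-algorithm bookkeeping, and in particular of your worry about the terminal run of copies of $1/d$, since the prefix length is fixed.
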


\begin{proof} 
For any rational $\alpha\geq 1$ we have \begin{equation}\label{obschar} C_{\beta }(\alpha)=\sup\left(\left\{\sqrt{\frac{\alpha}{2\beta }}\right\}\cup \{\mu_{\alpha,\beta }(E)|E\in\mathcal{E}\}\right) \end{equation} (see \cite[Section 2.1]{CFS}).  So Lemma \ref{aLbineq} shows that our class $C$ obeys $C_{\beta}\left(\frac{c}{d}\right)=\mu_{\frac{c}{d},\beta }(C)$, and  more strongly that, for $\alpha=\frac{c}{d}$, there is a strict positive lower bound on the difference between $\mu_{\alpha,\beta }(C)$ and any of the other values in the set over which the supremum is taken on the right hand side of (\ref{obschar}).  It will follow that this property continues to hold for all $\alpha$ sufficiently close to $\frac{c}{d}$ provided that the family of functions $\left\{\left.\alpha\mapsto \mu_{\alpha,\beta }(E)\right|E\in\mathcal{E}\right\}$ is equicontinuous at $\alpha=\frac{c}{d}$ (\emph{i.e.} for any $\ep>0$ there should be $\delta>0$ independent of $E$ such that if $|\alpha-\frac{c}{d}|<\delta$ then $|\mu_{\alpha,\beta }(E)-\mu_{\frac{c}{d},\beta }(E)|<\ep$ for all $E\in\mathcal{E}$). Of course since $\mu_{\alpha,\beta}(E'_i)=0$ for all $\alpha,\beta$ it suffices to restrict attention to $E\in\mathcal{E}\setminus\cup_i\{E'_i\}$.

Now we find, for $E=(x,y;\vec{m})\in\mathcal{E}\setminus \cup_i\{E'_i\}$ (so that $x,y$ are not both zero, and $2xy-\|\vec{m}\|^2=-1$ since $E$ has self-intersection $-1$) and $\alpha_0,\alpha_1\in\mathbb{Q}$, \begin{align*} |\mu_{\alpha_0,\beta }(E)-\mu_{\alpha_1,\beta }(E)|&=\left|\frac{(w(\alpha_0)-w(\alpha_1))\cdot\vec{m}}{x+\beta y}\right|\leq \frac{\|w(\alpha_0)-w(\alpha_1)\|\|\vec{m}\|}{x+\beta y}
\\ &= \|w(\alpha_0)-w(\alpha_1)\|\sqrt{\frac{2xy+1}{(x+\beta y)^2}}\leq \|w(\alpha_0)-w(\alpha_1)\|\sqrt{\frac{(\frac{x}{\sqrt{\beta}}+\sqrt{\beta}y)^2}{(x+\beta y)^2}+1}
\\ & = \|w(\alpha_0)-w(\alpha_1)\|\sqrt{\frac{1}{\beta}+1}.
\end{align*}  So our family is equicontinuous at $\frac{c}{d}$ provided that the weight sequence function $w\co\Q\cap [1,\infty)\to \Q^{\infty}=\cup_n\Q^n$ is continuous at $\frac{c}{d}$ (with respect to the obvious metric on $\Q^{\infty}$ that restricts to each $\Q^n$ as the Euclidean metric).  This latter fact follows easily from \cite[Lemma 2.2.1]{MS}, which shows that, if the length of the vector $w(c/d)$ is $n_{0}$, then for $\alpha$ in a suitable neighborhood of $\frac{c}{d}$, we can write $w(\alpha)$ as $(\vec{\ell}(\alpha), \vec{r}(\alpha))$ where $\vec{\ell}$ is a piecewise linear $\Q^{n_0}$-valued function equal to $w(c/d)$ at $\alpha=c/d$.  Thus within this neighborhood we have a Lipschitz bound $\|\vec{\ell}(\alpha)-w(c/d)\|\leq M|\alpha-c/d|$, and so \begin{align*} \|\vec{r}(\alpha)\|^2&=\alpha-\|\vec{\ell}(\alpha)\|^2 \\ & \leq \alpha-\left(\|w(c/d)\|-M|\alpha-c/d|\right)^{2}\leq (\alpha-c/d)+2M|\alpha-c/d|\sqrt{c/d}.\end{align*}  Thus $|w(\alpha)-w(c/d)|^{2}=\|\vec{\ell}(\alpha)-w(c/d)\|^2+\|\vec{r}(\alpha)\|^2$ converges to zero as $\alpha\to c/d$.  
\end{proof}

Having shown that, for any perfect class $C=(a,b;\mathcal{W}(c,d))$, $\mu_{\cdot,\beta }(C)$ is equal to $C_{\beta} $ in a neighborhood of $\frac{c}{d}$, we now use results from \cite{MS} to identify $\mu_{\cdot,\beta }(C)$ in such a neighborhood with the function $\Gamma_{\cdot,\beta}(C)$ from Proposition \ref{Gammadef}.

\begin{prop}\label{obsshape}
If $C=(a,b;\mathcal{W}(c,d))$ is a perfect class with $c\geq d$ and $\gcd(c,d)=1$, and if $\beta \geq 1$ is arbitrary, then there is $\delta>0$ such that for all $\alpha\in (\frac{c}{d}-\delta,\frac{c}{d}+\delta)$ we have \[ \mu_{\alpha,\beta }(C)=\Gamma_{\alpha,\beta}(C)=\left\{\begin{array}{ll} \frac{d\alpha}{a+\beta b} & \mbox{ if }\alpha\leq \frac{c}{d} \\ \frac{c}{a+\beta b} & \mbox{ if }\alpha\geq \frac{c}{d} \end{array}\right.. \]
\end{prop}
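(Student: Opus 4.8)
The plan is to strip the statement down to a purely combinatorial identity about weight sequences. By Definition \ref{obsdef}, since $C=(a,b;\mathcal{W}(c,d))$ we have $\mu_{\alpha,\beta}(C)=\frac{w(\alpha)\cdot\mathcal{W}(c,d)}{a+\beta b}$, and since $w(c/d)=\frac1d\mathcal{W}(c,d)$ (as used in the proof of Proposition \ref{Gammadef}) this reads $\mu_{\alpha,\beta}(C)=\frac{d\bigl(w(\alpha)\cdot w(c/d)\bigr)}{a+\beta b}$. Comparing with the definition of $\Gamma_{\alpha,\beta}(C)$ in Proposition \ref{Gammadef}, the proposition is therefore equivalent to the assertion that for all rational $\alpha$ in some neighborhood of $\frac cd$,
\begin{equation}\label{minweight} w(\alpha)\cdot w(c/d)=\min\{\alpha,\tfrac cd\}. \end{equation}
Indeed, for $\alpha\le\frac cd$ the right side of (\ref{minweight}) is $\alpha$, giving $\mu_{\alpha,\beta}(C)=\frac{d\alpha}{a+\beta b}$, and for $\alpha\ge\frac cd$ it is $\frac cd$, giving $\mu_{\alpha,\beta}(C)=\frac{d\cdot c/d}{a+\beta b}=\frac{c}{a+\beta b}$, which is exactly $\Gamma_{\alpha,\beta}(C)$; note also that at $\alpha=\frac cd$ both sides of (\ref{minweight}) equal $\|w(c/d)\|^2=\frac cd$ using $w(x)\cdot w(x)=x$.

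To prove (\ref{minweight}) I would induct on the length of the weight sequence $w(c/d)$, writing $\frac cd=\frac pq$ in lowest terms with $p\ge q\ge 1$. If $q=1$, so $\frac pq=p\in\Z$ and $w(p)=(1^{\times p})$, one checks (\ref{minweight}) directly near $\alpha=p$: for $p\le\alpha<p+1$, $w(\alpha)=(1^{\times p},\mathcal{W}(1,\alpha-p))$ so the dot product with $(1^{\times p})$ is $p$; for $p-1<\alpha<p$, $w(\alpha)=(1^{\times(p-1)},\alpha-p+1,\dots)$ so the dot product is $(p-1)+(\alpha-p+1)=\alpha$. If $q\ge 2$, write $p=kq+r$ with $0<r<q$ (so $\gcd(r,q)=1$ and $k=\lfloor p/q\rfloor\ge 1$); then $w(p/q)=(1^{\times k},\mathcal{W}(1,r/q))$, and for $\alpha$ in the neighborhood $(k,k+1)$ of $\frac pq$ likewise $w(\alpha)=(1^{\times k},\mathcal{W}(1,\alpha-k))$. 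Peeling off the $k$ common leading $1$'s gives $w(\alpha)\cdot w(p/q)=k+\mathcal{W}(1,\alpha-k)\cdot\mathcal{W}(1,r/q)$. The key point is the homogeneity of $\mathcal{W}$: for $0<s<1$ one has $\mathcal{W}(1,s)=s\cdot w(1/s)$, so with $s=r/q$ and $\alpha'=\alpha-k$,
\[ \mathcal{W}(1,\alpha')\cdot\mathcal{W}(1,s)=s\alpha'\bigl(w(1/\alpha')\cdot w(q/r)\bigr). \]
Since $w(q/r)$ has strictly smaller length than $w(p/q)$ (it is what remains after removing the $k\ge1$ leading $1$'s and rescaling), and since $1/\alpha'$ is rational and tends to $q/r$ as $\alpha\to\frac pq$, the inductive hypothesis with base point $q/r$ gives $w(1/\alpha')\cdot w(q/r)=\min\{1/\alpha',q/r\}$; then $\mathcal{W}(1,\alpha')\cdot\mathcal{W}(1,s)=\min\{s\alpha'/\alpha',\,s\alpha'\cdot q/r\}=\min\{s,\alpha'\}$ because $sq/r=1$. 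Hence $w(\alpha)\cdot w(p/q)=k+\min\{r/q,\alpha-k\}=\min\{p/q,\alpha\}$, closing the induction.

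Combining (\ref{minweight}) with the formula for $\mu_{\alpha,\beta}(C)$ above then yields the proposition, taking $\delta$ to be the minimum of the (finitely many) neighborhood radii produced along the induction. The main obstacle is establishing (\ref{minweight}): one must check carefully that the reciprocal step genuinely shortens the weight sequence (so the induction is well-founded) and that the successive one-sided neighborhood conditions at $k$, at $\frac pq$, and at the reduced base points can be intersected to a single $\delta>0$. As an alternative to the self-contained induction, (\ref{minweight}) can also be deduced from the local description of $w(\alpha)$ near a rational point in \cite[Lemma 2.2.1]{MS} — already invoked in the proof of Proposition \ref{interval} — which writes $w(\alpha)=(\vec{\ell}(\alpha),\vec{r}(\alpha))$ with $\vec{\ell}$ piecewise linear and $\vec{\ell}(c/d)=w(c/d)$, once one identifies the slopes of the two relevant linear pieces of $\alpha\mapsto \vec{\ell}(\alpha)\cdot w(c/d)$.
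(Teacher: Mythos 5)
Your proof is correct, and it opens with exactly the paper's reduction: rewriting $\mu_{\alpha,\beta}(C)=\frac{d}{a+\beta b}\,w(\alpha)\cdot w(c/d)$ shows the claim is equivalent to the local identity $w(\alpha)\cdot w(c/d)=\min\{\alpha,c/d\}$ near $c/d$. Where the paper simply cites \cite[Lemma 2.2.1 and Corollary 2.2.7]{MS} for this identity, you supply a self-contained induction on the length of $w(c/d)$, and it is sound: the base case $q=1$ is verified directly on either side of an integer; for $q\geq 2$ you peel off the $k=\lfloor p/q\rfloor\geq 1$ leading $1$'s shared by $w(\alpha)$ and $w(p/q)$, use the homogeneity $\mathcal{W}(1,s)=s\,w(1/s)$ to rewrite the remaining dot product as $s\alpha'\bigl(w(1/\alpha')\cdot w(q/r)\bigr)$ with $\alpha'=\alpha-k$, and apply the hypothesis at $q/r$, whose weight sequence is strictly shorter; since $sq/r=1$ this collapses to $\min\{s,\alpha'\}$ and hence to $\min\{p/q,\alpha\}$. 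The admissible $\delta$ comes from intersecting $(k,k+1)$ with the preimage of the inductive neighborhood of $q/r$ under the continuous map $\alpha\mapsto 1/(\alpha-k)$, so only finitely many open constraints accumulate along the descent. This is a readable, self-contained replacement for the citation; your alternative suggestion of reading slopes off \cite[Lemma 2.2.1]{MS} is closer in spirit to what the paper does. A minor remark: perfectness of $C$ is not actually used here --- quasi-perfectness (so that $\mathcal{W}(c,d)$ has the stated form and $a+b\geq 1$) is enough, since the reduction and the combinatorial identity make no reference to intersections with other exceptional classes.
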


\begin{proof}
Since \[ \mu_{\alpha,\beta }(C)=\frac{w(\alpha)\cdot \mathcal{W}(c,d)}{a+\beta b}=\frac{d}{a+\beta b}w(\alpha)\cdot w(c/d) \] the proposition is equivalent to the statement that, for all $\alpha$ in a neighborhood of $\frac{c}{d}$, we have \begin{equation}\label{wdot} w(\alpha)\cdot w(c/d)=\left\{ \begin{array}{ll} \alpha & \mbox{ if }\alpha\leq\frac{c}{d} \\ \frac{c}{d} & \mbox{ if } \alpha\geq\frac{c}{d}\end{array}\right..\end{equation} But (\ref{wdot}) can be read off directly from  \cite[Lemma 2.2.1 and Corollary 2.2.7]{MS}.  (Note that, as is alluded to at the start of \cite[Section 2.2]{MS}, the number denoted therein as $N$ can be taken either even or odd according to taste, by allowing the possibility of taking the final term $\ell_N$ in the continued fraction expansion of $\frac{c}{d}$ to be $1$.
\end{proof}

\begin{theorem} \label{infcrit}
Suppose that $\beta >1$ has the property that there is an infinite collection $\{(a_i,b_i;\mathcal{W}(c_i,d_i))\}_{i=1}^{\infty}$ of distinct perfect classes all having $a_i\geq \beta b_i$, $c_i\geq d_i$, $\gcd(c_i,d_i)=1$, $(a_i-\beta b_i)^2<2\beta $, and $(a_i-b_i)(a_i-\beta b_i)<1+\beta $.  Then the embedding capacity function $C_{\beta}$ has an infinite staircase.  

Moreover this infinte staircase has an accumulation point $S=\lim_{i\to\infty}\frac{c_i}{d_i}$ which is the unique number greater than $1$ that obeys \[  \frac{(1+S)^2}{S}=\frac{2(1+\beta )^2}{\beta }.\]
\end{theorem}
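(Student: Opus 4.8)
The plan is to combine the three preliminary results (Proposition~\ref{interval}, Proposition~\ref{obsshape}, and a study of the numbers $\frac{c_i}{d_i}$) into a proof of Theorem~\ref{infcrit}. First I would fix the hypotheses: we have infinitely many distinct perfect classes $C_i=(a_i,b_i;\mathcal{W}(c_i,d_i))$ with $a_i\ge\beta b_i$, $c_i\ge d_i$, $\gcd(c_i,d_i)=1$, and the two strict inequalities $(a_i-\beta b_i)^2<2\beta$ and $(a_i-b_i)(a_i-\beta b_i)<1+\beta$. Each $C_i$ therefore satisfies the hypotheses of Lemma~\ref{aLbineq}, hence of Proposition~\ref{interval} and of Proposition~\ref{obsshape}. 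Consequently, for each $i$ there is $\delta_i>0$ so that on $(\frac{c_i}{d_i}-\delta_i,\frac{c_i}{d_i}+\delta_i)$ the function $C_\beta$ agrees with $\mu_{\cdot,\beta}(C_i)=\Gamma_{\cdot,\beta}(C_i)$; in particular $C_\beta$ agrees with the affine function $\alpha\mapsto \frac{d_i\alpha}{a_i+\beta b_i}$ on the nonempty open interval $(\frac{c_i}{d_i}-\delta_i,\frac{c_i}{d_i})$.

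The next step is to check that these affine functions are pairwise distinct, which is what forces $\mathcal{S}_\beta$ to be infinite. The affine function attached to $C_i$ on the left-hand neighborhood has slope $\frac{d_i}{a_i+\beta b_i}$ and value $\frac{c_i}{a_i+\beta b_i}$ at $\alpha=\frac{c_i}{d_i}$; I would observe that since $C_i$ is the \emph{unique} element of $\mathcal{E}$ attaining $C_\beta(\alpha)=\mu_{\alpha,\beta}(C_i)$ for $\alpha$ near $\frac{c_i}{d_i}$ (Proposition~\ref{interval}), distinct $C_i$ cannot induce the same affine piece of $C_\beta$ on overlapping intervals; more simply, if two of these affine functions coincided as functions then by Proposition~\ref{obsshape} the corresponding $\frac{c_i}{d_i}$ would have to coincide, and then a short argument (e.g.\ comparing slopes $\frac{d_i}{a_i+\beta b_i}$ and using $c_i=\frac{c_i}{d_i}d_i$ together with $c_id_i=2a_ib_i+1$) shows $(a_i,b_i,c_i,d_i)=(a_j,b_j,c_j,d_j)$, contradicting distinctness. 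Either way, $\mathcal{S}_\beta$ contains infinitely many distinct functions, so $C_\beta$ has an infinite staircase by definition.

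For the accumulation point, I would first argue that $\frac{c_i}{d_i}$ must be a bounded sequence: from the Remark in Section~\ref{intronew}, once $\alpha\ge 8\beta$ the function $C_\beta$ equals the volume bound and in particular has no steps there, so all but finitely many $\frac{c_i}{d_i}$ lie in $[1,8\beta]$; passing to a subsequence we may assume $\frac{c_i}{d_i}\to S$ for some $S\in[1,8\beta]$. The key numerical input is the self-intersection condition $c_id_i=2a_ib_i+1$ together with the Chern-number condition $2(a_i+b_i)=c_i+d_i$ (which holds because the $C_i$ are quasi-perfect with $\gcd(c_i,d_i)=1$, exactly as in the discussion preceding Definition~\ref{bmovedef}). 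Writing $t_i=\frac{a_i}{b_i}$ and $r_i=\frac{c_i}{d_i}$, these two identities give $\frac{(1+r_i)^2}{r_i}=\frac{(c_i+d_i)^2}{c_id_i}=\frac{4(a_i+b_i)^2}{2a_ib_i+1}$, which differs from $\frac{2(1+t_i)^2}{t_i}=\frac{2(a_i+b_i)^2}{a_ib_i}$ by a term of order $\frac{1}{a_ib_i}$. The hypothesis $(a_i-\beta b_i)^2<2\beta$ forces $t_i=\frac{a_i}{b_i}\to\beta$ while $b_i\to\infty$ (otherwise $a_i,b_i$ stay bounded, only finitely many distinct classes arise), so $a_ib_i\to\infty$ and hence $\frac{(1+r_i)^2}{r_i}\to\frac{2(1+\beta)^2}{\beta}$. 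Taking the limit, $S>1$ (since $r_i>1$ away from the degenerate case and the limiting equation has no solution at $S=1$ unless $\beta=1$, excluded) and $\frac{(1+S)^2}{S}=\frac{2(1+\beta)^2}{\beta}$; the function $S\mapsto\frac{(1+S)^2}{S}$ is strictly increasing on $[1,\infty)$, so $S$ is uniquely determined and the whole sequence $\frac{c_i}{d_i}$ converges to it. Finally, since $C_\beta$ has a genuine step of $\mathcal{S}_\beta$ arbitrarily close to each $\frac{c_i}{d_i}$ and these cluster at $S$, every neighborhood of $S$ contains infinitely many of these steps, so $S$ is an accumulation point in the sense defined in Section~\ref{intronew}.

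The main obstacle I expect is the bookkeeping in the accumulation-point step: one must carefully extract from $(a_i-\beta b_i)^2<2\beta$ that $b_i\to\infty$ and $\frac{a_i}{b_i}\to\beta$ (ruling out the possibility that infinitely many classes have bounded $a_i,b_i$, which would contradict distinctness only after one also controls $c_i,d_i$ via $c_i+d_i=2(a_i+b_i)$ and $c_id_i=2a_ib_i+1$), and then to justify that the $O(1/(a_ib_i))$ error term genuinely vanishes and that $S>1$ rather than $S=1$. The staircase-existence step, by contrast, is essentially immediate from Propositions~\ref{interval} and~\ref{obsshape} once distinctness of the affine pieces is noted.
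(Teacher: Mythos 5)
Your overall strategy — apply Propositions~\ref{interval} and~\ref{obsshape} to each $C_i=(a_i,b_i;\mathcal{W}(c_i,d_i))$ to get a piece of the staircase near $\frac{c_i}{d_i}$, then deduce the accumulation point from the Chern-number and self-intersection identities together with the bounds $(a_i-\beta b_i)^2<2\beta$ and $(a_i-b_i)(a_i-\beta b_i)<1+\beta$ — is the same as the paper's, and the accumulation-point computation is essentially right (the paper skips your preliminary boundedness step by invoking properness of $x\mapsto\frac{(1+x)^2}{x}$ on $[1,\infty)$, which gives $\frac{c_i}{d_i}\to S$ directly from $\frac{(1+c_i/d_i)^2}{c_i/d_i}\to\frac{2(1+\beta)^2}{\beta}$, but your route through boundedness and subsequences is also fine).

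There is, however, a real gap in your argument that $\mathcal{S}_\beta$ is infinite. You track the \emph{linear} piece $\alpha\mapsto\frac{d_i\alpha}{a_i+\beta b_i}$ of $\Gamma_{\cdot,\beta}(C_i)$, and to conclude these give infinitely many distinct elements of $\mathcal{S}_\beta$ you need the slopes $\frac{d_i}{a_i+\beta b_i}$ to be pairwise distinct — but neither of your two suggested justifications establishes this. The first, that ``distinct $C_i$ cannot induce the same affine piece on overlapping intervals,'' is moot since uniqueness from Proposition~\ref{interval} makes the intervals pairwise disjoint and says nothing about identical affine functions occurring at separated locations; the second, that coincidence of the linear functions would force $\frac{c_i}{d_i}=\frac{c_j}{d_j}$ via Proposition~\ref{obsshape}, does not follow (two lines through the origin with the same slope could arise from distinct classes with distinct kink points). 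The paper avoids this by instead tracking the \emph{constant} piece, which takes the value $\frac{c_i}{a_i+\beta b_i}$: since the $\frac{c_i}{d_i}$ are pairwise distinct (by uniqueness), the disjoint intervals $I_i$ are linearly ordered, and since $C_\beta$ is nondecreasing and strictly increases from the left endpoint of each $I_j$ up to its constant plateau, it follows that $\frac{c_i}{d_i}<\frac{c_j}{d_j}$ forces $\frac{c_i}{a_i+\beta b_i}<\frac{c_j}{a_j+\beta b_j}$. These are therefore infinitely many distinct constant affine functions in $\mathcal{S}_\beta$. Switching your argument to the constant pieces and using this monotonicity observation closes the gap.
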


\begin{proof} Write $C_i=(a_i,b_i;\mathcal{W}(c_i,d_i))$.  Proposition \ref{interval} shows that for each $i$ there is an open interval $I_i$ containing $\frac{c_i}{d_i}$ on which $C_{\beta}$ is identically equal to $\alpha\mapsto \mu_{\alpha,\beta }(C_i)$; moreover the uniqueness statement in that proposition implies that the intervals $I_i$ can be taken pairwise disjoint (so in particular the various $\frac{c_i}{d_i}$ are all distinct). Now the function $C_{\beta}$ is nondecreasing, and Proposition \ref{obsshape} shows that, for each $i$, $C_{\beta}$ is equal to the constant $\frac{c_i}{a_i+\beta b_i}$ on a nonempty open subinterval of $I_i$, but takes a strictly larger value at the right endpoint of $I_i$ than at the left endpoint.  In particular this forces the various $\frac{c_i}{a_i+\beta b_i}$ to all be distinct (more specifically, if $\frac{c_i}{d_i}<\frac{c_j}{d_j}$ then $\frac{c_i}{a_i+\beta b_i}<\frac{c_j}{a_j+\beta b_j}$).  So there are infinitely many distinct real numbers $\frac{c_i}{a_i+\beta b_i}$ such that $C_{\beta}$ is identically equal to $\frac{c_i}{a_i+\beta b_i}$ on some nonempty open interval, which suffices to prove that $C_{\beta}$ has an infinite staircase.

As for the accumulation point, since the open intervals on which $C_{\beta}$ is equal to $\frac{c_i}{a_i+\beta b_i}$  contain $\frac{c_i}{d_i}$ in their respective closures, it is clear that if $\lim_{i\to\infty}\frac{c_i}{d_i}$ exists then this limit is an accumulation point for the infinite staircase.  So it remains only to show that $\frac{c_i}{d_i}\to S$ where $S$ is as described in the statement of the corollary. 

The fact that we have a bound $(a_i-\beta b_i)^2<2\beta $ where $\beta >1$ and the $(a_i,b_i)$ comprise an infinite subset of $\N^2$ implies that the values $|a_i- b_i|$ diverge to infinity, in view of which the bound $(a_i-b_i)(a_i-\beta b_i)<1+\beta $ (and the fact that both factors are nonnegative) forces $a_i-\beta b_i\to 0$.  
Now since $C_i$ has Chern number $1$ and self-intersection $-1$, we have \begin{align*} 
c_i+d_i &= 2(a_i+b_i) \\ c_id_i &= 2a_ib_i+1.  
\end{align*}  Dividing the square of the first equation by the second shows that \[ \frac{(c_i+d_i)^2}{c_id_i}=\frac{4\left((1+\beta )b_i+(a_i-\beta b_i)\right)^2}{2\beta b_i^2+2(a_i-\beta b_i)b_i+1} .\]  Our hypotheses imply that the $b_i$ diverge to $\infty$ while (since $a_i$ is a bounded distance from $\beta b_i$ and $(a_i-\beta b_i)(a_i-b_i)$ is bounded with $\beta \neq 1$) $(a_i-\beta b_i)b_i$ remains bounded.  Hence the limit of the right hand side above is $\frac{2(1+\beta )^2}{\beta }$.  So writing $S_i=\frac{c_i}{d_i}$ we obtain $\lim_{i\to\infty}\frac{(1+S_i)^2}{S_i}=\frac{2(1+\beta )^2}{\beta }$.  But the function $x\mapsto \frac{(1+x)^2}{x}=x+2+\frac{1}{x}$ restricts to $[1,\infty)$ as  proper and strictly increasing, so this forces $S_i\to S$ where $S$ is the unique solution that is greater than $1$ to the equation $\frac{(1+S)^2}{S}=\frac{2(1+\beta )^2}{\beta }$.
\end{proof}

\subsection{The classes $A_{i,n}^{(k)}$}\label{ainksect}

We now begin the construction of explicit families of perfect classes that give rise to infinite staircases.   
For any positive integer $n$  let us define a sequence of elements \begin{equation}\label{vindef} \vec{v}_{i,n}=(a_{i,n},b_{i,n},c_{i,n},d_{i,n})\in \Z^4\end{equation} recursively by \begin{align*} \vec{v}_{0,n}&=(1,0,1,1) \\
\vec{v}_{1,n} &= (n,1,2n+1,1) \\
\vec{v}_{i+2,n} &= 2n\vec{v}_{i+1,n}-\vec{v}_{i,n}.\end{align*}

We then let \[A_{i,n}=\left(a_{i,n},b_{i,n};\mathcal{W}(c_{i,n},d_{i,n})\right).\]  Notice that the identity $c_{i,n}+d_{i,n}=2(a_{i,n}+b_{i,n})$ clearly holds by induction on $i$, so that the $A_{i,n}$ can be expressed in the form (\ref{xdeform}).  The classes $A_{i,n}^{(k)}$ promised in the introduction are then obtained by applying the $k$th-order Brahmagupta move of Definition \ref{bmovedef} to $A_{i,n}$.  (In particular, $A_{i,n}^{(0)}=A_{i,n}$).

Clearly $A_{0,n}=\left(1,0;1\right)=\langle 0;0,-1\rangle\in\mathcal{E}$.  It is not hard to check that $A_{1,n}=(n,1;\mathcal{W}(2n+1,1))\in\mathcal{E}$; indeed $A_{1,n}$ coincides with the class denoted $E_n$ in \cite{CFS}, and this class is shown to belong to $\mathcal{E}$ in \cite[Lemma 3.2]{CFS}. Consequently the following will show that every $A_{i,n}\in\mathcal{E}$.

\begin{lemma}\label{indain}
For $i\geq 2$ and $n\geq 1$, $A_{i,n}$ is Cremona equivalent to $A_{i-2,n}$.
\end{lemma}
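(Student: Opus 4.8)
The strategy is to realize the recursion $\vec{v}_{i+2,n}=2n\vec{v}_{i+1,n}-\vec{v}_{i,n}$ as a consequence of an \emph{algebraic} recursion one step down, and then to recognize that two-step shift as the first-order Brahmagupta move (possibly composed with some bookkeeping). Write $A_{i,n}=\left(\tfrac{x_i+\ep_i}{2},\tfrac{x_i-\ep_i}{2};\mathcal{W}(x_i+\delta_i,x_i-\delta_i)\right)$ in the form (\ref{xdeform}), so that $x_i=a_{i,n}+b_{i,n}=\tfrac{c_{i,n}+d_{i,n}}{2}$, $\ep_i=a_{i,n}-b_{i,n}$, and $\delta_i=\tfrac{c_{i,n}-d_{i,n}}{2}$. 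The first thing to check is that $\ep_i$ does not depend on $i$ (for $i\geq 1$) — one expects $\ep_i=n-1$ for all $i\geq 1$, which follows by a one-line induction from the linear recursion since the second components $b_{i,n}$ and the difference $a_{i,n}-b_{i,n}$ both satisfy $u_{i+2}=2nu_{i+1}-u_i$ and one matches initial conditions. (The case $i=0$ is slightly different, $\ep_0 = 1$, which is why $A_{0,n}$ and $A_{1,n}$ are handled separately in the text and why the lemma is only claimed for $i\ge 2$.) Once $\ep$ is constant, the class $A_{i,n}$ is completely determined by the pair $(x_i,\delta_i)$, and both sequences $\{x_i\}$ and $\{\delta_i\}$ satisfy the same linear recursion $u_{i+2}=2nu_{i+1}-u_i$.

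The key step is then to identify the map $(x_i,\delta_i)\mapsto(x_{i+2},\delta_{i+2})$ with the first-order Brahmagupta move $(x,\delta)\mapsto(x_1,\delta_1)=(3x+4\delta,2x+3\delta)$ from Proposition \ref{pellcrem} — or, more precisely, to show it is conjugate to that move, possibly after swapping the roles of the two slots or after a sign change in $\delta$ (which corresponds to reflecting the rectangle $\mathcal{W}(c,d)\leftrightarrow\mathcal{W}(d,c)$, harmless since $\mathcal{W}$ is symmetric). Concretely: from the two-step recursion on $\{x_i\}$, $\{\delta_i\}$ one gets $x_{i+2}=(2n\cdot2n-2)x_i-\text{(lower)}$, etc.; I would compute the $2\times2$ matrix $M_n$ with $\binom{x_{i+2}}{\delta_{i+2}}=M_n\binom{x_i}{\delta_i}$ (using the base relations $c_{1,n}=2n+1$, $d_{1,n}=1$, $c_{0,n}=d_{0,n}=1$, $a_{1,n}=n$, $b_{1,n}=1$ to get two consecutive data points and solve for $M_n$), and check that $M_n$ is $GL_2(\Z)$-conjugate to $\left(\begin{smallmatrix}3&4\\2&3\end{smallmatrix}\right)$ — both have determinant $1$ and, one expects, the same trace $6$, so they are conjugate over $\Z$ once one matches a fixed vector or a primitive eigen-type vector. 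Translating that conjugation back through the change of variables (\ref{xdeform}) exhibits an explicit $\C P^2$-coordinate change that, together with Proposition \ref{xiaction} applied via $\Xi^{(2)}$ exactly as in the proof of Proposition \ref{pellcrem}, sends $A_{i,n}$ to $A_{i-2,n}$ by Cremona moves.

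The main obstacle is the bookkeeping in matching the recursion to Brahmagupta's move: the naive two-step map of the vectors $\vec v_{i,n}$ is not literally $(x,\delta)\mapsto(3x+4\delta,2x+3\delta)$ on the nose — the coefficients depend on $n$ — so what actually has to happen is that the \emph{rectangle reduction} inside $\mathcal{W}(c_{i,n},d_{i,n})$ peels off more than five squares, i.e.\ $c_{i,n}=qd_{i,n}'+\cdots$ with a quotient depending on $n$, and one iterates $\Xi^{(2)}$ (or a suitably re-indexed $\Xi^{(m)}$) several times rather than once. So the cleanest write-up mirrors the proof of Proposition \ref{pellcrem} but replaces the single application of $\Xi$ with an induction that at each stage subtracts a block of the weight sequence $\mathcal{W}(c_{i,n},d_{i,n})$ corresponding to one Cremona reduction, showing that after finitely many such steps (the number governed by the continued-fraction expansion of $\tfrac{c_{i,n}}{d_{i,n}}$ relative to $\tfrac{c_{i-2,n}}{d_{i-2,n}}$, controlled by $n$) the class $\langle\cdots\rangle$ obtained from $A_{i,n}$ reduces, after deleting zeros, to $A_{i-2,n}$. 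Verifying that the weight-sequence identity $\mathcal{W}(c_{i,n},d_{i,n})=(\text{block})\sqcup\mathcal{W}(c_{i-2,n},d_{i-2,n})$ holds — analogously to the step $\mathcal{W}(5x+7\delta,x+\delta)=((x+\delta)^{\times5},2\delta)\sqcup\mathcal{W}(2\delta,x-\delta)$ in the proof of Proposition \ref{pellcrem}, but now with a block of length depending on $n$ — is the part that requires the most care, though it is purely a Pell/continued-fraction computation once the linear recursion for $(x_i,\delta_i)$ is in hand.
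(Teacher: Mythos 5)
There is a concrete error at the start of your plan that would derail the write-up: the claim that $\ep_i=a_{i,n}-b_{i,n}$ is constant for $i\geq1$ is false. Your own one-line induction proves the opposite: $\ep_i$ satisfies the same recurrence $\ep_{i+2}=2n\ep_{i+1}-\ep_i$, and a constant solution $\ep_i\equiv c$ would force $c(n-1)=0$. Directly, $\ep_1=n-1$ while $\ep_2=2n^2-2n-1$, which differ for every $n\geq1$. So $A_{i,n}$ is \emph{not} determined by $(x_i,\delta_i)$ with a fixed $\ep$, and the reduction to a two-variable recursion does not hold.

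More seriously, the proposed $GL_2(\Z)$-conjugacy to $\bigl(\begin{smallmatrix}3&4\\2&3\end{smallmatrix}\bigr)$ cannot work. Since $x_i$ and $\delta_i$ both satisfy $u_{i+2}=2nu_{i+1}-u_i$, the two-step shift $(x_i,\delta_i)\mapsto(x_{i+2},\delta_{i+2})$ has eigenvalues $\omega_n^{\pm2}$ and hence trace $4n^2-2$, whereas $\bigl(\begin{smallmatrix}3&4\\2&3\end{smallmatrix}\bigr)$ has trace $6$; conjugate matrices share a trace, so equality would need $n^2=2$. Conceptually, $\Xi$ realizes multiplication by $3+2\sqrt2$ in $\Z[\sqrt2]$, while the shift $A_{i,n}\to A_{i+2,n}$ is multiplication by $\omega_n^2$ in $\Z[\sqrt{n^2-1}]$; these live in different rings and no iterate or fixed conjugate of $\Xi$ can bridge them. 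So the plan of ``iterate $\Xi^{(2)}$ (or a re-indexed $\Xi^{(m)}$) several times'' does not succeed either.

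Your third paragraph is much closer to what is actually true. The key identity, which you correctly anticipate, is
\[
\mathcal{W}(c_{i,n},d_{i,n})=\bigl(d_{i,n}^{\times(2n+2)},\,c_{i-2,n}^{\times(2n-2)}\bigr)\sqcup\mathcal{W}(c_{i-2,n},d_{i-2,n}),
\]
which follows from the algebraic relations $c_{i,n}-(2n+2)d_{i,n}=c_{i-2,n}$ and $d_{i,n}-(2n-2)c_{i-2,n}=d_{i-2,n}$. But the Cremona moves that exploit this are not iterates of $\Xi$; one needs a bespoke, $n$-dependent chain of roughly $4n$ moves of type $\frak{c}_{xyz}$, organized in four stages with $\delta$-values $b_{i,n}-2d_{i,n}$, $-c_{i-2,n}$, $c_{i-2,n}-2(b_{i,n}-d_{i,n})$, and $-b_{i-2,n}$ in turn, tailored to the $(2n+2)$- and $(2n-2)$-length blocks. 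So the ``peel off a block whose size depends on $n$'' philosophy is right, but you must drop the Brahmagupta-conjugation framework and design the Cremona sequence directly from the block structure.
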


\begin{proof}
First we relate $\mathcal{W}(c_{i,n},d_{i,n})$ to $\mathcal{W}(c_{i-2,n},d_{i-2,n})$.  We have $c_{0,n}=1$, $c_{1,n}=2n+1$, $d_{0,n}=d_{1,n}=1$, and $c_{j,n}=2nc_{j-1,n}-c_{j-2,n}$ and $d_{j,n}=2nd_{j-1,n}-d_{j-2,n}$. So we find $c_{2,n}=4n^2+2n-1$ and $d_{2,n}=2n-1$. It is sometimes convenient to allow the index $j$ in $c_{j,n}$ and $d_{j,n}$ to take negative values; the recurrences then give $c_{-1,n}=-1$ and $d_{-1,n}=2n-1$.  From these formulas we see that, for both $j=1$ and $j=2$, we have identities \begin{align}\nonumber c_{j,n}-(2n+2)d_{j,n}&=c_{j-2,n} \\ d_{j,n}-(2n-2)c_{j-2,n}&=d_{j-2,n}. \label{cdred}\end{align}  But then the recurrence relations defining our sequences make clear that if the identities (\ref{cdred}) hold for $j=1,2$ then they continue to hold for all $j$.  

Now it is easy to see that (using that $n\geq 1$) $\{c_{i,n}\}_{i=0}^{\infty}$ and $\{d_{i,n}\}_{i=0}^{\infty}$ are monotone increasing sequences, and so in particular $c_{i,n},d_{i,n}>0$ for all $i\geq 0$.  If $i\geq 2$ we have, using (\ref{cdred}) \begin{align*}
\mathcal{W}(c_{i,n},d_{i,n}) &= \left(d_{i,n}^{\times(2n+2)}\right)\sqcup\mathcal{W}(c_{i-2,n},d_{i,n})
\\ &= \left(d_{i,n}^{\times(2n+2)},c_{i-2,n}^{\times(2n-2)}\right)\sqcup\mathcal{W}(c_{i-2,n},d_{i-2,n}).
\end{align*}

We thus have \begin{align*} A_{i,n}&=\left(a_{i,n},b_{i,n};d_{i,n}^{\times(2n+2)},c_{i-2,n}^{\times(2n-2)},\mathcal{W}(c_{i-2,n},d_{i-2,n})\right) 
\\ &= \left\langle a_{i,n}+b_{i,n}-d_{i,n};a_{i,n}-d_{i,n},b_{i,n}-d_{i,n},d_{i,n}^{\times(2n+1)},c_{i-2,n}^{\times(2n-2)},\mathcal{W}(c_{i-2,n},d_{i-2,n})\right\rangle.\end{align*}  Applying the sequence $\frak{c}_{023}\circ \frak{c}_{045}\circ\cdots\circ \frak{c}_{0,2n,2n+1}$ of $n$ Cremona moves each with $\delta=b_{i,n}-2d_{i,n}$ yields \begin{align*} &\left\langle a_{i,n}+(n+1)b_{i,n}-(2n+1)d_{i,n};a_{i,n}+ nb_{i,n}-(2n+1)d_{i,n},(b_{i,n}-d_{i,n})^{\times (2n+1)},d_{i,n},\right.\\ & \qquad\qquad \qquad \qquad \qquad \qquad \qquad \qquad \left. c_{i-2,n}^{\times(2n-2)},\mathcal{W}(c_{i-2,n},d_{i-2,n})\right\rangle. \end{align*} Now for all $i$ one has $a_{i,n}+(n+1)b_{i,n}=c_{i,n}$ (indeed this clearly holds for $i=0,1$, and therefore it holds for all $i$ since $a_{i,n},b_{i,n},c_{i,n}$ all satisfy the same linear recurrence), and the first equation in (\ref{cdred}) shows that $c_{i,n}-(2n+1)d_{i,n}=c_{i-2,n}+d_{i,n}$.  So the class displayed above can be rewritten as \[ \left\langle c_{i-2,n}+d_{i,n};c_{i-2,n}-(b_{i,n}-d_{i,n}),(b_{i,n}-d_{i,n})^{\times (2n+1)},d_{i,n},c_{i-2,n}^{\times(2n-2)},\mathcal{W}(c_{i-2,n},d_{i-2,n})\right\rangle. \] We can then apply $n-1$ Cremona moves $\frak{c}_{2n+2,2n+3,2n+4},\frak{c}_{2n+2,2n+5,2n+6},\ldots,\frak{c}_{2n+2,4n-1,4n}$, each with $\delta=-c_{i-2,n}$, to obtain \[ \left\langle d_{i,n}-(n-2)c_{i-2,n};c_{i-2,n}-(b_{i,n}-d_{i,n}), (b_{i,n}-d_{i,n})^{\times(2n+1)},d_{i,n}-(n-1)c_{i-2,n},0^{\times(2n-2)},\mathcal{W}(c_{i-2,n},d_{i-2,n})\right\rangle.\]  Now delete the zeros and apply $n$ Cremona moves $\frak{c}_{1,2,2n+2},\frak{c}_{3,4,2n+2},\ldots,\frak{c}_{2n-1,2n,2n+2}$, each with $\delta=c_{i-2,n}-2(b_{i,n}-d_{i,n})$, to obtain \begin{align}\label{ain} & \left\langle (2n+1)d_{i,n}+2c_{i-2,n}-2nb_{i,n};(c_{i-2,n}-(b_{i,n}-d_{i,n}))^{\times(2n+1)},b_{i,n}-d_{i,n},\right.\\ & \qquad \qquad \qquad \qquad \qquad \qquad \left.d_{i,n}+c_{i-2,n}-2n(b_{i,n}-d_{i,n}),\mathcal{W}(c_{i-2,n},d_{i-2,n})\right\rangle.\nonumber\end{align}  Let us simplify some of the terms in (\ref{ain}). First, using the first equation in (\ref{cdred}),\[ (2n+1)d_{i,n}+2c_{i-2,n}-2nb_{i,n}=c_{i-2,n}+c_{i,n}-d_{i,n}-2nb_{i,n}=c_{i-2,n} \] where the fact that \begin{equation}\label{cdnb} c_{i,n}-d_{i,n}-2nb_{i,n}=0\end{equation} follows by a straightforward induction argument.  Also, we find that \begin{equation}\label{bdcb} b_{i,n}-d_{i,n}=c_{i-2,n}-b_{i-2,n};\end{equation} indeed (extending the definition of $b_{j,n}$ using the recurrence, so that $b_{-1,n}=2nb_{0,n}-b_{1,n}=-1$) this is easily seen to hold for $i=1,2$ and hence it holds for all $i$ since $b_{i,n},c_{i,n},d_{i,n}$ all obey the same linear recurrence.  This implies that \[ c_{i-2,n}-(b_{i,n}-d_{i,n})=b_{i-2,n}.\] Moreover \[ d_{i,n}+c_{i-2,n}-2n(b_{i,n}-d_{i,n})=-2nb_{i,n}-d_{i,n}+\left(c_{i-2,n}+(2n+2)d_{i,n}\right)=-2n b_{i,n}-d_{i,n}+c_{i,n}=0\] where we have used (\ref{cdred}) and (\ref{cdnb}).  Thus after deleting a zero the class (\ref{ain}) (which is Cremona equivalent to $A_{i,n}$) can be rewritten as \[ \left\langle c_{i-2,n};b_{i-2,n}^{\times(2n+1)},c_{i-2,n}-b_{i-2,n},\mathcal{W}(c_{i-2,n},d_{i-2,n})\right\rangle \] The sequence of $n$ Cremona moves $\frak{c}_{1,2,2n+1}\circ \frak{c}_{3,4,2n+1}\circ\cdots\circ \frak{c}_{2n-1,2n,2n+1}$, each with $\delta=-b_{i-2,n}$, transforms this to \[ \left\langle c_{i-2,n}-nb_{i-2,n};b_{i-2,n},0^{\times(2n)},c_{i-2,n}-(n+1)b_{i-2,n},d_{i-2,n},\mathcal{W}(c_{i-2,n}-d_{i-2,n},d_{i-2,n})\right\rangle.\]  Deleting the zeros, then applying one last Cremona move $\frak{c}_{012}$ with $\delta=-d_{i-2,n}$, and then deleting another zero gives \begin{align*} & \left\langle c_{i-2,n}-nb_{i-2,n}-d_{i-2,n};b_{i-2,n}-d_{i-2,n},c_{i-2,n}-(n+1)b_{i-2,n}-d_{i-2,n},\mathcal{W}(c_{i-2,n}-d_{i-2,n},d_{i-2,n})\right\rangle \\ & \qquad =\left(c_{i-2,n}-(n+1)b_{i-2,n},b_{i-2,n};d_{i-2,n},\mathcal{W}(c_{i-2,n}-d_{i-2,n},d_{i-2,n})\right) \\ & \qquad =\left(a_{i-2,n},b_{i-2,n};\mathcal{W}(c_{i-2,n},d_{i-2,n})\right) =A_{i-2,n} \end{align*} where the identity $c_{j,n}-(n+1)b_{j,n}=a_{j,n}$ (applied here with $j=i-2$) follows as usual by checking it for $j=0,1$ and using the fact that $a_{j,n},b_{j,n},c_{j,n}$ all satisfy the same recurrence.  So we have found a sequence of Cremona moves that maps $A_{i,n}$ to $A_{i-2,n}$ when $i\geq 2$.
\end{proof}

\begin{cor}\label{aine}
For any $n\geq 1$ and $i,k\geq 0$ the class $A_{i,n}^{(k)}$ belongs to $\mathcal{E}$.
\end{cor}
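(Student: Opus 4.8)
The plan is to reduce the statement to two facts already in hand: Lemma \ref{indain}, which carries $A_{i,n}$ to $A_{i-2,n}$ by an explicit chain of Cremona moves, and Proposition \ref{pellcrem}, which says the $k$th-order Brahmagupta move preserves $\mathcal{E}$ on classes of the form $(a,b;\mathcal{W}(c,d))$ with $\gcd(c,d)=1$. First I would prove $A_{i,n}\in\mathcal{E}$ for every $i\geq 0$ and $n\geq 1$ by induction on $i$. The base cases are $A_{0,n}=\left(1,0;1\right)=\langle 0;0,-1\rangle\in\mathcal{E}$ and $A_{1,n}=(n,1;\mathcal{W}(2n+1,1))$, which is the class called $E_n$ in \cite{CFS} and is shown to lie in $\mathcal{E}$ in \cite[Lemma 3.2]{CFS}. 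For $i\geq 2$, Lemma \ref{indain} gives that $A_{i,n}$ is Cremona equivalent to $A_{i-2,n}$, and since Cremona moves preserve $\mathcal{E}$ (Section \ref{cremintro}) the induction closes; hence every $A_{i,n}$ is a perfect class.

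Next I would verify the hypotheses of Proposition \ref{pellcrem} for $C=A_{i,n}$. Coprimality $\gcd(c_{i,n},d_{i,n})=1$ is automatic: any class $(a,b;\mathcal{W}(c,d))$ with $c+d=2(a+b)$ has Chern number $\gcd(c,d)$ (the computation preceding Definition \ref{bmovedef}, using $\sum_j m_j=c+d-\gcd(c,d)$), while every element of $\mathcal{E}$ has Chern number $1$; since $A_{i,n}\in\mathcal{E}$ and $c_{i,n}+d_{i,n}=2(a_{i,n}+b_{i,n})$, this forces $\gcd(c_{i,n},d_{i,n})=1$. (Alternatively, descent through (\ref{cdred}) shows any common divisor of $c_{i,n},d_{i,n}$ divides $c_{i-2,n}$ and then $d_{i-2,n}$, hence divides $\gcd(c_{0,n},d_{0,n})=\gcd(c_{1,n},d_{1,n})=1$.) One also records that $A_{i,n}$ is literally of the form (\ref{xdeform}): with $x=a_{i,n}+b_{i,n}$, $\delta=\tfrac12(c_{i,n}-d_{i,n})=nb_{i,n}$ (using (\ref{cdnb}), and $0\leq\delta\leq x$ since $a_{i,n}=d_{i,n}+(n-1)b_{i,n}\geq(n-1)b_{i,n}$), and $\ep=a_{i,n}-b_{i,n}$, one has $A_{i,n}=\left(\tfrac{x+\ep}{2},\tfrac{x-\ep}{2};\mathcal{W}(x+\delta,x-\delta)\right)$, so Proposition \ref{pellcrem} delivers $A_{i,n}^{(k)}\in\mathcal{E}$.

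The only point needing care is that $\ep=a_{i,n}-b_{i,n}$ must be nonnegative for that literal application; setting $e_i=a_{i,n}-b_{i,n}$ one has $e_0=1$, $e_1=n-1$, $e_{i+2}=2ne_{i+1}-e_i$, which stays nonnegative and nondecreasing for $n\geq 2$ but becomes negative when $n=1$ and $i\geq 2$. For $n=1$, $i\geq 2$ I would instead run the argument on the coordinate-swapped class $(b_{i,1},a_{i,1};\mathcal{W}(c_{i,1},d_{i,1}))$: by (\ref{convert1}) it differs from $A_{i,1}$ only by interchanging the $\C P^2$-coordinates $s_0$ and $s_1$, i.e.\ by the Cremona move $\frak{c}_{01}$, so it lies in $\mathcal{E}$; in fact it equals $A_{1,i}$, for which $\ep=i-1\geq 0$ and $\gcd(2i+1,1)=1$, so Proposition \ref{pellcrem} applies directly and gives $A_{1,i}^{(k)}\in\mathcal{E}$, while $A_{1,i}^{(k)}$ differs from $A_{i,1}^{(k)}$ by the same $\frak{c}_{01}$. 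I expect this $n=1$ bookkeeping to be the only mildly delicate point; the substance of the corollary is entirely carried by Lemma \ref{indain} and Proposition \ref{pellcrem}, so there is no real obstacle to surmount.
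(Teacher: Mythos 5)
Your proof is correct and follows the same route as the paper's one-line proof: perfectness of $A_{i,n}$ via Lemma \ref{indain} (inducting from $A_{0,n}$ and $A_{1,n}=E_n$), then Proposition \ref{pellcrem}. The extra detail you supply---coprimality of $c_{i,n},d_{i,n}$ and the sign of $\ep=a_{i,n}-b_{i,n}$---is exactly what is implicitly needed, and your observation that $\ep<0$ when $n=1,\,i\geq 2$ (so Definition \ref{bmovedef} does not literally apply to $A_{i,1}$) is a genuine corner case the paper's terse proof glosses over; your fix (pass to the $\frak{c}_{01}$-swapped class $A_{1,i}$, apply Proposition \ref{pellcrem} there, and check via (\ref{ink}) that the swap commutes with the Brahmagupta move) closes it cleanly.
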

\begin{proof} This follows immediately from Lemma \ref{indain}, Proposition \ref{pellcrem}, and the fact that $A_{0,n},A_{1,n}\in\mathcal{E}$.\end{proof}

\subsection{Verifying the infinite staircase criterion}\label{versect}

To analyze the obstructions imposed by the classes $A_{i,n}^{(k)}\in\mathcal{E}$ it is useful to first give closed-form expressions for the integers $a_{i,n},b_{i,n},c_{i,n},d_{i,n}$ from (\ref{vindef}).  Throughout this section we will assume that $n\geq 2$. Let  \[ \omega_n=n+\sqrt{n^2-1}.\]  Then $\omega_n$ is the larger solution to the equation $t^2=2nt-1$, the smaller solution being $\omega_{n}^{-1}=n-\sqrt{n^2-1}$.  Thus both $x_i=\omega_{n}^{i}$ and $x_{i}=\omega_{n}^{-i}$ give solutions to the recurrence $x_{i+2}=2nx_{i+1}-x_i$; these solutions are linearly independent since we assume that $n> 1$, and so any solution $\{x_i\}_{i=0}^{\infty}$ to $x_{i+2}=2nx_{i+1}-x_i$ must be a linear combination (with coefficients independent of $i$) of $\omega_{n}^{i}$ and $\omega_{n}^{-i}$.  This in particular applies to each of the sequences $\{a_{i,n}\}_{i=0}^{\infty}$, $\{b_{i,n}\}_{i=0}^{\infty}$, $\{c_{i,n}\}_{i=0}^{\infty}$, and $\{d_{i,n}\}_{i=0}^{\infty}$, and using the initial conditions from (\ref{vindef}) (and the fact that $\omega_{n}-\omega_{n}^{-1}=2\sqrt{n^2-1}$ while $\omega_{n}+\omega_{n}^{-1}=2n$) to evaluate the coefficients shows that: \begin{align}\label{abcd}
a_{i,n} &= \frac{1}{2}(\omega_{n}^{i}+\omega_{n}^{-i}), \\ b_{i,n} &=  \nonumber \frac{1}{2\sqrt{n^2-1}}(\omega_{n}^{i}-\omega_{n}^{-i}),
\\ c_{i,n} &= \nonumber  \frac{1}{2\sqrt{n^2-1}}\left((\omega_n+1)\omega_{n}^{i}-(\omega_{n}^{-1}+1)\omega_{n}^{-i}\right),
\\ d_{i,n} &=\nonumber  \frac{1}{2\sqrt{n^2-1}}\left((1-\omega_{n}^{-1})\omega_{n}^{i}+(\omega_{n}-1)\omega_{n}^{-i}\right).\end{align}

In particular, \begin{equation}\label{n21} \lim_{i\to\infty}\frac{a_{i,n}}{b_{i,n}}=\sqrt{n^2-1}.\end{equation}

Applying the $k$th-order Brahmagupta move, we find that the classes $A_{i,n}^{(k)}$ can be written as $A_{i,n}^{(k)}=(a_{i,n,k},b_{i,n,k};\mathcal{W}(c_{i,n,k},d_{i,n,k}))$ where, as one can verify using the facts that $c_{i,n}+d_{i,n}=2(a_{i,n}+b_{i,n})$ (since $A_{i,n}$ has Chern number $1$) and that $c_{i,n}-d_{i,n}=2nb_{i,n}$ (by (\ref{cdnb})) together with (\ref{hpadd}), the entries $a_{i,n,k},b_{i,n,k},c_{i,n,k},d_{i,n,k}$ are given by \begin{align}\nonumber
a_{i,n,k} &= \frac{1}{2}\left((H_{2k}+1)a_{i,n}+(H_{2k}+2nP_{2k}-1)b_{i,n}\right) \\ \nonumber
b_{i,n,k} &= \frac{1}{2}\left((H_{2k}-1)a_{i,n}+(H_{2k}+2nP_{2k}+1)b_{i,n}\right) \\ \nonumber
c_{i,n,k} &= \frac{1}{2}(P_{2k+2}c_{i,n}-P_{2k}d_{i,n}) \\ \label{ink}
d_{i,n,k} &= \frac{1}{2}(P_{2k}c_{i,n}-P_{2k-2}d_{i,n}).\end{align}

Let $L_{n,k}=\lim_{i\to\infty}\frac{a_{i,n,k}}{b_{i,n,k}}$, so using (\ref{n21}) \begin{equation}\label{Lnkdef} L_{n,k}=\frac{H_{2k}(\sqrt{n^2-1}+1)+2nP_{2k}+(\sqrt{n^2-1}-1)}{H_{2k}(\sqrt{n^2-1}+1)+2nP_{2k}-(\sqrt{n^2-1}-1)}.\end{equation}


We will see that each $A_{i,n}^{(k)}$ satisfies the requirements of Lemma \ref{aLbineq} with $\beta =L_{n,k}$.

\begin{lemma}\label{aLb} For $i,k\geq 0$ and $n\geq 2$ we have \[ a_{i,n,k}-L_{n,k}b_{i,n,k}=\frac{2\omega_{n}^{-i}(H_{2k}+nP_{2k})}{H_{2k}(\sqrt{n^2-1}+1)+2nP_{2k}-(\sqrt{n^2-1}-1)}.\]
\end{lemma}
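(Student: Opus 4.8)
The plan is to prove the identity by direct substitution, using only formulas already in place: the closed forms (\ref{abcd}) for $a_{i,n}$ and $b_{i,n}$, the expressions (\ref{ink}) writing $a_{i,n,k}$ and $b_{i,n,k}$ as explicit $\Z$-linear combinations of $a_{i,n}$ and $b_{i,n}$, and the definition (\ref{Lnkdef}) of $L_{n,k}$. Write $L_{n,k}=N_0/M$ where $M=H_{2k}(\sqrt{n^2-1}+1)+2nP_{2k}-(\sqrt{n^2-1}-1)$ is the denominator appearing on the right-hand side of the claim and $N_0=H_{2k}(\sqrt{n^2-1}+1)+2nP_{2k}+(\sqrt{n^2-1}-1)$ is the numerator of $L_{n,k}$ from (\ref{Lnkdef}). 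Then it suffices to show that $M\,a_{i,n,k}-N_0\,b_{i,n,k}=2\,\omega_n^{-i}(H_{2k}+nP_{2k})$.

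First I would substitute (\ref{ink}) into $M\,a_{i,n,k}-N_0\,b_{i,n,k}$ and collect the coefficients of $a_{i,n}$ and of $b_{i,n}$. Introducing the abbreviations $A=H_{2k}(\sqrt{n^2-1}+1)+2nP_{2k}$ and $s=\sqrt{n^2-1}-1$, so that $M=A-s$ and $N_0=A+s$, the coefficient of $a_{i,n}$ is $\tfrac12\big(M(H_{2k}+1)-N_0(H_{2k}-1)\big)=A-sH_{2k}=2(H_{2k}+nP_{2k})$, and a parallel computation gives the coefficient of $b_{i,n}$ as $\tfrac12\big(M(H_{2k}+2nP_{2k}-1)-N_0(H_{2k}+2nP_{2k}+1)\big)=-A-s(H_{2k}+2nP_{2k})=-2\sqrt{n^2-1}\,(H_{2k}+nP_{2k})$. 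Hence $M\,a_{i,n,k}-N_0\,b_{i,n,k}=2(H_{2k}+nP_{2k})\big(a_{i,n}-\sqrt{n^2-1}\,b_{i,n}\big)$.

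Finally, plugging the closed forms (\ref{abcd}) into the remaining factor gives $a_{i,n}-\sqrt{n^2-1}\,b_{i,n}=\tfrac12(\omega_n^i+\omega_n^{-i})-\tfrac12(\omega_n^i-\omega_n^{-i})=\omega_n^{-i}$, so $M\,a_{i,n,k}-N_0\,b_{i,n,k}=2\,\omega_n^{-i}(H_{2k}+nP_{2k})$, and dividing by $M$ yields the stated formula. The argument is purely a bookkeeping computation; the only step needing a little care is the collection of coefficients in the second paragraph—specifically verifying that the $A$-terms (those not involving the $\pm 1$ summands in (\ref{ink})) cancel against half of the $s$-terms so as to leave exactly the factor $H_{2k}+nP_{2k}$—so I expect no genuine obstacle beyond tracking signs.
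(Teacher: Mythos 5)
Your proof is correct, and all the algebra checks out: with $A=H_{2k}(\sqrt{n^2-1}+1)+2nP_{2k}$ and $s=\sqrt{n^2-1}-1$, the coefficient of $a_{i,n}$ is indeed $A-sH_{2k}=2(H_{2k}+nP_{2k})$ and the coefficient of $b_{i,n}$ is $-A-s(H_{2k}+2nP_{2k})=-2\sqrt{n^2-1}(H_{2k}+nP_{2k})$, so the factorization through $a_{i,n}-\sqrt{n^2-1}\,b_{i,n}=\omega_n^{-i}$ is exactly right.

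Your organization of the calculation is mildly different from the paper's. The paper first substitutes (\ref{abcd}) into (\ref{ink}) to express $a_{i,n,k}$ and $b_{i,n,k}$ fully in terms of $\omega_n^{i}$ and $\omega_n^{-i}$, observes that the numerator and denominator of $L_{n,k}$ are precisely the $\omega_n^{i}$-coefficients of $a_{i,n,k}$ and $b_{i,n,k}$ (so the $\omega_n^{i}$ terms cancel in $a_{i,n,k}-L_{n,k}b_{i,n,k}$), and then simplifies the surviving $\omega_n^{-i}$ coefficient. You instead clear the denominator $M$ first, work the $2\times 2$ linear algebra of (\ref{ink}) in the $(a_{i,n},b_{i,n})$ basis, factor out $H_{2k}+nP_{2k}$, and only at the last step invoke the closed form to identify $a_{i,n}-\sqrt{n^2-1}\,b_{i,n}=\omega_n^{-i}$. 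Both proofs exploit the same underlying fact—$L_{n,k}$ was defined so that the exponentially growing parts cancel—but yours keeps the intermediate expressions shorter and isolates that cancellation in the single identity $a_{i,n}-\sqrt{n^2-1}\,b_{i,n}=\omega_n^{-i}$, which is arguably a bit cleaner to verify.
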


\begin{proof}
The above formulas for $a_{i,n},b_{i,n},a_{i,n,k},b_{i,n,k}$ yield \begin{align*} a_{i,n,k} =\frac{1}{4\sqrt{n^2-1}}\left((H_{2k}(\sqrt{n^2-1}+1)\right. & +2nP_{2k}+(\sqrt{n^2-1}-1))\omega_{n}^{i}\\ & \left. +(H_{2k}(\sqrt{n^2-1}-1)-2nP_{2k}+(\sqrt{n^2-1}+1))\omega_{n}^{-i}\right) \end{align*} and \begin{align*} b_{i,n,k} = \frac{1}{4\sqrt{n^2-1}}\left((H_{2k}(\sqrt{n^2-1}+1) \right. & +2nP_{2k}-(\sqrt{n^2-1}-1))\omega_{n}^{i}\\ & \left. +(H_{2k}(\sqrt{n^2-1}-1)-2nP_{2k}-(\sqrt{n^2-1}+1))\omega_{n}^{-i}\right).\end{align*}  The coefficient of $\omega_{n}^{i}$ in the formula for $a_{i,n,k}$ is the numerator of (\ref{Lnkdef}) and the coefficient of $\omega_{n}^{i}$ in the formula for $b_{i,n,k}$ is the denominator of (\ref{Lnkdef}) (of course this is not a coincidence since $L_{n,k}=\lim_{i\to\infty}\frac{a_{i,n,k}}{b_{i,n,k}}$), and we obtain \begin{align*} & a_{i,n,k}-L_{n,k}b_{i,n,k} = \frac{\omega_{n}^{-i}}{4\sqrt{n^2-1}}\left((L_{n,k}-1)(2nP_{2k}-H_{2k}(\sqrt{n^2-1}-1))+(L_{n,k}+1)(\sqrt{n^2-1}+1)\right)
\\ &= \frac{\omega_{n}^{-i}}{4\sqrt{n^2-1}} \frac{2(\sqrt{n^2-1}-1)(2nP_{2k}-H_{2k}(\sqrt{n^2-1}-1))+2(H_{2k}(\sqrt{n^2-1}+1)+2nP_{2k})(\sqrt{n^2-1}+1)}{H_{2k}(\sqrt{n^2-1}+1)+2nP_{2k}-(\sqrt{n^2-1}-1)} 
\\ &= \frac{\omega_{n}^{-i}\left(H_{2k}\left((\sqrt{n^2-1}+1)^2-(\sqrt{n^2-1}-1)^2\right)+2nP_{2k}\left((\sqrt{n^2-1}+1)+(\sqrt{n^2-1}-1)\right)\right)}{2\sqrt{n^2-1}\left(H_{2k}(\sqrt{n^2-1}+1)+2nP_{2k}-(\sqrt{n^2-1}-1)\right)}
\\ &= \frac{\omega_{n}^{-i}(2H_{2k}+2nP_{2k})}{H_{2k}(\sqrt{n^2-1}+1)+2nP_{2k}-(\sqrt{n^2-1}-1)}.
\end{align*}
\end{proof}



We can now prove the inequalities in (\ref{aLbineq}) for our classes $A_{i,n}^{(k)}$.

\begin{prop}\label{beatvol} For any $i,k\geq 0$ and $n\geq 2$ we have $(a_{i,n,k}-L_{n,k}b_{i,n,k})^2< L_{n,k}$.\end{prop}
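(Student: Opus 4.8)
The plan is to combine the explicit formula for $a_{i,n,k}-L_{n,k}b_{i,n,k}$ from Lemma \ref{aLb} with the observation that this quantity is largest when $i=0$, and then check the inequality at $i=0$ directly by a computation in the Pell numbers. First I would note that, by Lemma \ref{aLb},
\[ a_{i,n,k}-L_{n,k}b_{i,n,k}=\frac{2\omega_{n}^{-i}(H_{2k}+nP_{2k})}{H_{2k}(\sqrt{n^2-1}+1)+2nP_{2k}-(\sqrt{n^2-1}-1)}, \]
and since $\omega_n=n+\sqrt{n^2-1}\geq 1$ (indeed $\omega_n\geq 2$ for $n\geq 2$), this is a positive, decreasing function of $i$. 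Hence $(a_{i,n,k}-L_{n,k}b_{i,n,k})^2\leq (a_{0,n,k}-L_{n,k}b_{0,n,k})^2$, so it suffices to prove the inequality $(a_{0,n,k}-L_{n,k}b_{0,n,k})^2<L_{n,k}$.

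Next I would write $D=H_{2k}(\sqrt{n^2-1}+1)+2nP_{2k}-(\sqrt{n^2-1}-1)$ for the common denominator appearing in both $L_{n,k}$ and the quantity from Lemma \ref{aLb}, and let $N=H_{2k}(\sqrt{n^2-1}+1)+2nP_{2k}+(\sqrt{n^2-1}-1)$ be the numerator of $L_{n,k}=N/D$; note $N-D=2(\sqrt{n^2-1}-1)>0$ since $n\geq 2$, and $N+D=2(H_{2k}(\sqrt{n^2-1}+1)+2nP_{2k})$. At $i=0$ the inequality to be proven becomes $\frac{4(H_{2k}+nP_{2k})^2}{D^2}<\frac{N}{D}$, i.e. $4(H_{2k}+nP_{2k})^2<ND$. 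I would then expand $ND=(N+D)^2/4-(N-D)^2/4=(H_{2k}(\sqrt{n^2-1}+1)+2nP_{2k})^2-(\sqrt{n^2-1}-1)^2$, so the claim reduces to
\[ 4(H_{2k}+nP_{2k})^2+(\sqrt{n^2-1}-1)^2<\left(H_{2k}(\sqrt{n^2-1}+1)+2nP_{2k}\right)^2. \]
The plan is to expand both sides, cancelling the $4n^2P_{2k}^2$ terms and organizing the remainder as a polynomial in $\sqrt{n^2-1}$ with coefficients that are quadratic forms in $H_{2k},P_{2k}$; the main work is checking that the resulting inequality holds, where I expect to use $n^2-1=(\sqrt{n^2-1})^2$ together with the identity $H_{2k}^2=2P_{2k}^2+1$ from (\ref{htop}) to simplify the constant (non-radical) part, and the fact that $\sqrt{n^2-1}\geq\sqrt{3}>1$ together with $n\geq 2$, $H_{2k}\geq 1$, $P_{2k}\geq 0$ to control the coefficient of $\sqrt{n^2-1}$. (It may be cleaner to substitute $r=\sqrt{n^2-1}$ and verify the inequality as a statement about the real number $r\geq\sqrt{3}$ and the nonnegative integers $H_{2k},P_{2k}$ subject to $H_{2k}^2-2P_{2k}^2=1$.)

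The main obstacle will be the bookkeeping in this final algebraic verification: after expansion one gets several cross-terms involving $H_{2k}P_{2k}\sqrt{n^2-1}$ and $H_{2k}^2\sqrt{n^2-1}$, and one must be careful that the difference (right side minus left side) is genuinely positive for all admissible values rather than merely for large $k$ or large $n$. I anticipate that the difference will turn out to equal something like $2H_{2k}^2(\sqrt{n^2-1}-1)\sqrt{n^2-1}+(\text{manifestly nonnegative terms})+ (H_{2k}^2-2P_{2k}^2-1)\cdot(\cdots)$, the last term vanishing by (\ref{htop}), so that strict positivity follows from $H_{2k}\geq 1$ and $\sqrt{n^2-1}>1$; but pinning down the exact grouping is where the care is needed. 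Once that inequality is established at $i=0$, the monotonicity in $i$ observed at the start finishes the proof for all $i\geq 0$.
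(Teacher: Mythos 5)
Your reduction to $i=0$, the identity
\[
(a_{0,n,k}-L_{n,k}b_{0,n,k})^2 < L_{n,k}
\iff 4(H_{2k}+nP_{2k})^2 < ND,
\]
and the expansion $ND=\left(H_{2k}(\sqrt{n^2-1}+1)+2nP_{2k}\right)^2-(\sqrt{n^2-1}-1)^2$ all match the paper's argument exactly, so the approach is the same up to the final algebraic check. Where you propose to expand everything and shepherd the cross-terms, the paper closes the argument in one line by rewriting $H_{2k}(\sqrt{n^2-1}+1)+2nP_{2k}=\bigl(2H_{2k}+2nP_{2k}\bigr)+H_{2k}(\sqrt{n^2-1}-1)$; squaring this and subtracting $(2H_{2k}+2nP_{2k})^2$ leaves $2(2H_{2k}+2nP_{2k})H_{2k}(\sqrt{n^2-1}-1)+(H_{2k}^2-1)(\sqrt{n^2-1}-1)^2$, which is manifestly $>(\hspace{0pt}$indeed $\geq)$ $\,0$ and in particular beats the subtracted $(\sqrt{n^2-1}-1)^2$, using only $H_{2k}\geq 1$ and $\sqrt{n^2-1}>1$. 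One small point worth flagging: your expectation that the Pell identity $H_{2k}^2=2P_{2k}^2+1$ from (\ref{htop}) will be needed is a red herring — the inequality holds for all integers $H_{2k}\geq 1$, $P_{2k}\geq 0$, with no arithmetic relation between them required, and bringing (\ref{htop}) in would only complicate the bookkeeping you are worried about. The brute-force expansion you sketch does go through (the difference factors as $(\sqrt{n^2-1}-1)\bigl[H_{2k}^2(\sqrt{n^2-1}+3)+4nH_{2k}P_{2k}-(\sqrt{n^2-1}-1)\bigr]$, and the bracket is positive since $H_{2k}\geq 1$), so the plan is sound; you would simply want to drop the appeal to (\ref{htop}) and, ideally, spot the grouping that makes the positivity immediate.
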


\begin{proof} It is clear from Lemma \ref{aLb} that it suffices to prove this in the case that $i=0$, as the left hand side is a decreasing function of $i$.  We find from Lemma \ref{aLb} that \begin{align*} &
\frac{(a_{0,n,k}-L_{n,k}b_{0,n,k})^2}{L_{n,k}} \\  & \,\, = \frac{(2H_{2k}+2nP_{2k})^2}{(H_{2k}(\sqrt{n^2-1}+1)+2nP_{2k}+(\sqrt{n^2-1}-1))(H_{2k}(\sqrt{n^2-1}+1)+2nP_{2k}-(\sqrt{n^2-1}-1)) }\\ &\quad = 
\frac{(2H_{2k}+2nP_{2k})^2}{(H_{2k}(\sqrt{n^2-1}+1)+2nP_{2k})^{2}-(\sqrt{n^2-1}-1)^2}
\\ &\quad = \frac{(2H_{2k}+2nP_{2k})^{2}}{\left((2H_{2k}+2nP_{2k})+H_{2k}(\sqrt{n^2-1}-1)\right)^2-(\sqrt{n^2-1}-1)^2}<1.
\end{align*}
\end{proof}

\begin{prop}\label{beatsup} For any $i,k\geq 0$ and $n\geq 2$ we have \[ (a_{i,n,k}-b_{i,n,k})(a_{i,n,k}-L_{n,k}b_{i,n,k})\leq 1.\]\end{prop}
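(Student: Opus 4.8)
The plan is to reduce to the extreme case and then produce an explicit closed form for the product. Since Lemma \ref{aLb} shows that $a_{i,n,k}-L_{n,k}b_{i,n,k}$ is a positive decreasing function of $i$ (it equals a positive constant times $\omega_n^{-i}$), and since the explicit formulas \eqref{abcd}, \eqref{ink} make clear that $a_{i,n,k}-b_{i,n,k}$ is increasing in $i$ for $n\geq 2$, it is \emph{not} immediately obvious that the product is maximized at $i=0$; so I would instead compute the product directly for general $i$ and show it is at most $1$. First I would use the formulas for $a_{i,n,k}$ and $b_{i,n,k}$ displayed in the proof of Lemma \ref{aLb} to get \[ a_{i,n,k}-b_{i,n,k} = \frac{\omega_n^i(\sqrt{n^2-1}-1)+\omega_n^{-i}(\sqrt{n^2-1}+1)}{2\sqrt{n^2-1}}. \] Multiplying this by the expression for $a_{i,n,k}-L_{n,k}b_{i,n,k}$ from Lemma \ref{aLb}, the $\omega_n^{\pm i}$ factors partially cancel, leaving \[ (a_{i,n,k}-b_{i,n,k})(a_{i,n,k}-L_{n,k}b_{i,n,k}) = \frac{(H_{2k}+nP_{2k})\bigl((\sqrt{n^2-1}-1)+\omega_n^{-2i}(\sqrt{n^2-1}+1)\bigr)}{\sqrt{n^2-1}\bigl(H_{2k}(\sqrt{n^2-1}+1)+2nP_{2k}-(\sqrt{n^2-1}-1)\bigr)}. \]

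The quantity $(\sqrt{n^2-1}-1)+\omega_n^{-2i}(\sqrt{n^2-1}+1)$ is a decreasing function of $i$, so the product \emph{is} in fact maximized at $i=0$, where it becomes $(\sqrt{n^2-1}-1)+(\sqrt{n^2-1}+1)=2\sqrt{n^2-1}$. Thus it suffices to prove the $i=0$ inequality \[ \frac{2(H_{2k}+nP_{2k})\sqrt{n^2-1}}{\sqrt{n^2-1}\bigl(H_{2k}(\sqrt{n^2-1}+1)+2nP_{2k}-(\sqrt{n^2-1}-1)\bigr)}\leq 1, \] i.e. (cancelling $\sqrt{n^2-1}$ and rearranging) \[ 2H_{2k}+2nP_{2k}\leq H_{2k}(\sqrt{n^2-1}+1)+2nP_{2k}-(\sqrt{n^2-1}-1), \] which simplifies to $H_{2k}+1\leq H_{2k}\sqrt{n^2-1}+2-\sqrt{n^2-1}$, that is, $(H_{2k}-1)(\sqrt{n^2-1}-1)\geq 0$. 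This holds since $H_{2k}\geq 1$ for all $k\geq 0$ and $\sqrt{n^2-1}\geq \sqrt 3>1$ for $n\geq 2$, with equality precisely when $k=0$ (so the bound is sharp there, consistently with Proposition \ref{undervol} and the discussion of the volume bound).

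The main obstacle I anticipate is purely bookkeeping: correctly carrying out the cancellation of the $\omega_n^{\pm i}$ terms when multiplying the two expressions, since both Lemma \ref{aLb} and the formula for $a_{i,n,k}-b_{i,n,k}$ have denominators involving $\sqrt{n^2-1}$ and numerators that are linear combinations of $\omega_n^{\pm i}$ with the rather intricate coefficients $H_{2k}(\sqrt{n^2-1}\pm 1)+2nP_{2k}\pm(\sqrt{n^2-1}\mp 1)$. A convenient shortcut is to avoid recomputing $a_{i,n,k}-b_{i,n,k}$ from scratch and instead observe that $a_{i,n,k}-b_{i,n,k}=\tfrac12\bigl((H_{2k}+1-(H_{2k}-1))a_{i,n}+(H_{2k}+2nP_{2k}-1-(H_{2k}+2nP_{2k}+1))b_{i,n}\bigr)=a_{i,n}-b_{i,n}$, and then use \eqref{abcd} directly: $a_{i,n}-b_{i,n}=\frac{1}{2\sqrt{n^2-1}}\bigl((\sqrt{n^2-1}-1)\omega_n^i+(\sqrt{n^2-1}+1)\omega_n^{-i}\bigr)$. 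Pairing this with Lemma \ref{aLb}'s numerator $2\omega_n^{-i}(H_{2k}+nP_{2k})$ makes the algebra transparent and the proof short.
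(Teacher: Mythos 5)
Your proof is correct and takes essentially the same approach as the paper: observe that $a_{i,n,k}-b_{i,n,k}=a_{i,n}-b_{i,n}$, multiply the closed-form expression from \eqref{abcd} by the formula in Lemma \ref{aLb}, note the resulting product is a decreasing function of $i$, and check the inequality at $i=0$, where it reduces to $(H_{2k}-1)(\sqrt{n^2-1}-1)\geq 0$. The only difference is cosmetic presentation of the final inequality (the paper writes it as $H_{2k}(\sqrt{n^2-1}+1)-(\sqrt{n^2-1}-1)=2H_{2k}+(H_{2k}-1)(\sqrt{n^2-1}-1)\geq 2H_{2k}$).
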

\begin{proof}  We have \[ a_{i,n,k}-b_{i,n,k}=a_{i,n}-b_{i,n}=\frac{1}{2\sqrt{n^2-1}}\left((\sqrt{n^2-1}-1)\omega_{n}^{i}+(\sqrt{n^2-1}+1)\omega_{n}^{-i}\right).\]  Multiplying this by the identity in Lemma \ref{aLb} gives \begin{align*} (a_{i,n,k}-b_{i,n,k})&(a_{i,n,k}-L_{n,k}b_{i,n,k})=\frac{(\sqrt{n^2-1}-1)(H_{2k}+nP_{2k})}{\sqrt{n^2-1}\left(H_{2k}(\sqrt{n^2-1}+1)+2nP_{2k}-(\sqrt{n^2-1}-1)\right)}\\ &+\omega_{n}^{-2i}\frac{(\sqrt{n^2-1}+1)(H_{2k}+nP_{2k})}{\sqrt{n^2-1}\left(H_{2k}(\sqrt{n^2-1}+1)+2nP_{2k}-(\sqrt{n^2-1}-1)\right)}.\end{align*}

In particular, $(a_{i,n,k}-b_{i,n,k})(a_{i,n,k}-L_{n,k}b_{i,n,k})$ is a decreasing function of $i$, and its value for $i=0$ is \begin{equation}\label{prod0} \frac{2(H_{2k}+nP_{2k})}{H_{2k}(\sqrt{n^2-1}+1)+2nP_{2k}-(\sqrt{n^2-1}-1)}.\end{equation}  This is  at most $1$, since \[ H_{2k}(\sqrt{n^2-1}+1)-(\sqrt{n^2-1}-1)=2H_{2k}+(H_{2k}-1)(\sqrt{n^2-1}-1)\geq 2H_{2k}.\]
\end{proof}

We can now establish Theorem \ref{stairmain} from the introduction.  Specifically:

\begin{cor} \label{nkstair}
For any $k\geq 0$ and $n\geq 2$, $C_{L_{n,k}}$ has an infinite staircase, with accumulation point \begin{equation}\label{snkformula} S_{n,k}=\frac{(\sqrt{n^2-1}+1)P_{2k+1}+nH_{2k+1}}{(\sqrt{n^2-1}+1)P_{2k-1}+nH_{2k-1}} \end{equation} satisfying the equation $\frac{(1+S_{n,k})^2}{S_{n,k}}=\frac{2(1+L_{n,k})^2}{L_{n,k}}$.
\end{cor}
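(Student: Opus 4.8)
The plan is to apply Theorem~\ref{infcrit} with $\beta=L_{n,k}$ to the family $\{A_{i,n}^{(k)}\}_{i\ge 1}$ (the class $A_{0,n}^{(k)}$ plays no role and can be discarded), since essentially every hypothesis of that theorem has already been prepared in Sections~\ref{ainksect} and \ref{versect}. First, $L_{n,k}>1$ is immediate from (\ref{Lnkdef}): for $n\ge 2$ the numerator exceeds the denominator by $2(\sqrt{n^2-1}-1)>0$, and the denominator is positive. Each $A_{i,n}^{(k)}=(a_{i,n,k},b_{i,n,k};\mathcal{W}(c_{i,n,k},d_{i,n,k}))$ is perfect by Corollary~\ref{aine}. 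Coprimality of $c_{i,n,k}$ and $d_{i,n,k}$ follows from $A_{i,n}^{(k)}$ having Chern number $1$: since $c_{i,n,k}+d_{i,n,k}=2(a_{i,n,k}+b_{i,n,k})$ and the length of $\mathcal{W}(c,d)$ is $c+d-\gcd(c,d)$ by \cite[Lemma~1.2.6]{MS}, the Chern number equals $\gcd(c_{i,n,k},d_{i,n,k})$, forcing it to be $1$. The inequality $c_{i,n,k}\ge d_{i,n,k}$ follows from (\ref{ink}): one computes $c_{i,n,k}-d_{i,n,k}=P_{2k+1}c_{i,n}-P_{2k-1}d_{i,n}\ge (P_{2k+1}-P_{2k-1})d_{i,n}\ge 0$ using $c_{i,n}\ge d_{i,n}>0$ and $P_{2k+1}\ge P_{2k-1}$. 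Finally, distinctness of the classes is clear from the closed forms (\ref{abcd}) and (\ref{ink}), which show that $a_{i,n,k}$ is strictly increasing in $i$.

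The two analytic inequalities demanded by Theorem~\ref{infcrit} are exactly Propositions~\ref{beatvol} and \ref{beatsup}: Proposition~\ref{beatvol} gives $(a_{i,n,k}-L_{n,k}b_{i,n,k})^2<L_{n,k}<2L_{n,k}$, and Proposition~\ref{beatsup} gives $(a_{i,n,k}-b_{i,n,k})(a_{i,n,k}-L_{n,k}b_{i,n,k})\le 1<1+L_{n,k}$. Moreover $a_{i,n,k}\ge L_{n,k}b_{i,n,k}$ by Lemma~\ref{aLb}, whose right-hand side is a positive multiple of $\omega_{n}^{-i}$ (the denominator $H_{2k}(\sqrt{n^2-1}+1)+2nP_{2k}-(\sqrt{n^2-1}-1)$ is positive because $H_{2k}\ge 1$). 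Thus all hypotheses of Theorem~\ref{infcrit} hold, so $C_{L_{n,k}}$ has an infinite staircase whose accumulation point is $S=\lim_{i\to\infty}\frac{c_{i,n,k}}{d_{i,n,k}}$, characterized as the unique number greater than $1$ with $\frac{(1+S)^2}{S}=\frac{2(1+L_{n,k})^2}{L_{n,k}}$; in particular this last identity will hold for $S_{n,k}$ as soon as we show $S_{n,k}=S$.

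It remains to evaluate the limit and match it with (\ref{snkformula}). From (\ref{abcd}), $\frac{c_{i,n}}{d_{i,n}}\to\frac{\omega_n+1}{1-\omega_n^{-1}}=\frac{\sqrt{n^2-1}+1+n}{\sqrt{n^2-1}+1-n}=:S_{n,0}$, and then (\ref{ink}) gives $\frac{c_{i,n,k}}{d_{i,n,k}}\to\frac{P_{2k+2}S_{n,0}-P_{2k}}{P_{2k}S_{n,0}-P_{2k-2}}$. Writing $q=\sqrt{n^2-1}+1$, so $S_{n,0}=\frac{q+n}{q-n}$, clearing denominators and using $P_{2k+2}-P_{2k}=2P_{2k+1}$, $P_{2k}-P_{2k-2}=2P_{2k-1}$ (the Pell recurrence) together with $P_{2k+2}+P_{2k}=2H_{2k+1}$, $P_{2k}+P_{2k-2}=2H_{2k-1}$ (from (\ref{2hp})) yields
\[ S=\lim_{i\to\infty}\frac{c_{i,n,k}}{d_{i,n,k}}=\frac{P_{2k+1}q+nH_{2k+1}}{P_{2k-1}q+nH_{2k-1}}=\frac{(\sqrt{n^2-1}+1)P_{2k+1}+nH_{2k+1}}{(\sqrt{n^2-1}+1)P_{2k-1}+nH_{2k-1}}=S_{n,k}, \]
which is (\ref{snkformula}); the identity $\frac{(1+S_{n,k})^2}{S_{n,k}}=\frac{2(1+L_{n,k})^2}{L_{n,k}}$ then follows from the characterization of the accumulation point in Theorem~\ref{infcrit}. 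I do not anticipate any real obstacle: every nontrivial input is already in hand, so the work consists only of the bookkeeping needed to verify the hypotheses of Theorem~\ref{infcrit} and this short limit computation.
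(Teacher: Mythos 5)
Your proposal is correct and follows the same overall strategy as the paper: verify the hypotheses of Theorem~\ref{infcrit} for the family $\{A_{i,n}^{(k)}\}$ using Corollary~\ref{aine}, Lemma~\ref{aLb}, Propositions~\ref{beatvol} and \ref{beatsup}, and then compute $\lim_{i\to\infty}c_{i,n,k}/d_{i,n,k}$ via (\ref{ink}) and (\ref{abcd}). The one place you diverge from the paper is the coprimality check: the paper uses the recursion (\ref{cdred}) to reduce $\gcd(c_{i,n},d_{i,n})$ to the base cases $d_{0,n}=d_{1,n}=1$ and then invokes the Brahmagupta move's preservation of coprimality (as in Corollary~\ref{bqperfect}), whereas you deduce $\gcd(c_{i,n,k},d_{i,n,k})=1$ directly from the Chern number formula, using that $A_{i,n}^{(k)}\in\mathcal{E}$ already forces Chern number $1$. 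Your route is a little slicker and is not circular (Corollary~\ref{aine} rests only on the Cremona equivalences of Lemma~\ref{indain} and Proposition~\ref{pellcrem}, which do not assume coprimality), but note the small slip in phrasing: it is the \emph{sum of the entries} of $\mathcal{W}(c,d)$, not its length, that equals $c+d-\gcd(c,d)$ by \cite[Lemma~1.2.6]{MS}; the computation Chern number $=2(a+b)-\sum m_i=\gcd(c,d)$ is what you want.
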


\begin{proof}
Indeed it follows quickly from what we have done that the distinct perfect classes $A_{i,n}^{(k)}$ all satisfy the criteria of Theorem \ref{infcrit} with $\beta=L_{n,k}$: that $a_{i,n,k}\geq L_{n,k}b_{i,n,k}$ is immediate from Lemma \ref{aLb}, and the inequalities $(a_{i,n,k}-L_{n,k}b_{i,n,k})^2<2L_{n,k}$ and $(a_{i,n,k}-b_{i,n,k})(a_{i,n,k}-L_{n,k}b_{i,n,k})< 1+L_{n,k}$ are given by Propositions \ref{beatvol} and \ref{beatsup} respectively. Finally, we need to check that $c_{i,n,k}$ and $d_{i,n,k}$ are relatively prime.  The proof of Corollary \ref{bqperfect} shows that the Brahmagupta moves preserve the property that $\gcd(c,d)=1$, so it suffices to show that $\gcd(c_{i,n},d_{i,n})=1$. By (\ref{cdred}), for $i\geq 2$ the ideal generated by $c_{i,n}$ and $d_{i,n}$ is the same as that generated by $c_{i-2,n}$ and $d_{i-2,n}$, reducing us to the case that $i\in\{0,1\}$.  But this case is obvious since $d_{0,n}=d_{1,n}=1$.  Thus Theorem \ref{infcrit} immediately implies the corollary, except that we still need to prove the formula (\ref{snkformula}) for $S_{n,k}$.  

To prove this formula, recall that by Theorem \ref{infcrit} our infinite staircase will have an accumulation point at $S_{n,k}=\lim_{i\to\infty}\frac{c_{i,n,k}}{d_{i,n,k}}$ so we just need to check that this limit is equal to the right hand side of (\ref{snkformula}).  Now it is immediate from (\ref{abcd}) that $\lim_{i\to\infty}\frac{c_{i,n}}{d_{i,n}}=\frac{1+\omega_n}{1-\omega_{n}^{-1}}$.  So by taking the limit as $i\to\infty$ of the ratio of the last two equations in (\ref{ink}) we find (using the identities $P_{m}-P_{m-2}=2P_{m-1}$ and $P_m+P_{m-2}=2H_{m-1}$): \begin{align*}
S_{n,k}&=\frac{P_{2k+2}(1+\omega_n)-P_{2k}(1-\omega_{n}^{-1})}{P_{2k}(1+\omega_n)-P_{2k-2}(1-\omega_{n}^{-1})} 
\\  &= \frac{2P_{2k+1}+P_{2k+2}(n+\sqrt{n^2-1})+P_{2k}(n-\sqrt{n^2-1})}{2P_{2k-1}+P_{2k}(n+\sqrt{n^2-1})+P_{2k-2}(n-\sqrt{n^2-1})}
\\ &= \frac{(1+\sqrt{n^2-1})P_{2k+1}+nH_{2k+1}}{(1+\sqrt{n^2-1})P_{2k-1}+nH_{2k-1}}.
\end{align*}
\end{proof}

\subsection{$A_{i,n}$ and the volume constraint}\label{undervolsect}

The proof of Corollary \ref{nkstair} (specifically, a combination of Propositions \ref{interval}, \ref{obsshape}, \ref{beatvol}, and \ref{beatsup})  shows that for each $i$ there is an open interval $U$ around $\frac{c_{i,n,k}}{d_{i,n,k}}$ such that, \begin{equation}\label{stairnhd} \mbox{for all $\alpha\in U$},\,\,\, C_{L_{n,k}}(\alpha)=\Gamma_{\alpha,L_{n,k}}(A_{i,n}^{(k)}).\end{equation}   This is enough to show that $C_{L_{n,k}}$ has an infinite staircase, but it leaves a number of other questions about $C_{L_{n,k}}$ unanswered since these open intervals may be rather small.   

Proposition \ref{Gammadef} shows that, for \emph{all} $\alpha$, we have $C_{L_{n,k}}(\alpha)\geq\sup_i\Gamma_{\alpha,L_{n,k}}(A_{i,n}^{(k)})$.  
Now the formulas (\ref{abcd}) obviously imply that $\frac{c_{i,n}}{d_{i,n}}<\frac{c_{i+1,n}}{d_{i+1,n}}$ and then based on (\ref{ink}) and (\ref{psquared}) (specifically the fact that $P_{2k}^{2}-P_{2k-2}P_{2k+2}=4>0$) it follows that $\frac{c_{i,n,k}}{d_{i,n,k}}<\frac{c_{i+1,n,k}}{d_{i+1,n,k}}$ for all $i,n,k$.  So by examining $\sup_i\Gamma_{\cdot,L_{n,k}}(A_{i,n}^{(k)})$ on the interval $[\frac{c_{i,n,k}}{d_{i,n,k}},\frac{c_{i+1,n,k}}{d_{i+1,n,k}}]$ we conclude that
\begin{equation}
\label{iiplus1} 
C_{L_{n,k}}(\alpha)\geq \max\left\{\frac{c_{i,n,k}}{a_{i,n,k}+L_{n,k}b_{i,n,k}} , \frac{d_{i+1,n,k}\alpha}{a_{i+1,n,k}+L_{n,k}b_{i+1,n,k}}\right\}\quad \mbox{for }\alpha\in \left[\frac{c_{i,n,k}}{d_{i,n,k}},\frac{c_{i+1,n,k}}{d_{i+1,n,k}}\right],\end{equation} with equality on a neighborhood of the endpoints of $\left[\frac{c_{i,n,k}}{d_{i,n,k}},\frac{c_{i+1,n,k}}{d_{i+1,n,k}}\right]$. Moreover our analysis shows that the maximum above is attained by the first term for $\alpha=\frac{c_{i,n,k}}{d_{i,n,k}}$ and by the second term for $\alpha=\frac{c_{i+1,n,k}}{d_{i,n,k}}$, so the value $\alpha=\frac{c_{i,n,k}(a_{i+1,n,k}+L_{n,k}b_{i+1,n,k})}{d_{i+1,n,k}(a_{i,n,k}+L_{n,k}b_{i,n,k})}$ at which the two terms are equal lies in the interval $\left[\frac{c_{i,n,k}}{d_{i,n,k}},\frac{c_{i+1,n,k}}{d_{i+1,n,k}}\right]$.
It turns out that (at least for $k=0$, though more extensive calculations would likely yield the same conclusion for arbitrary $k$) equality does \emph{not} hold in (\ref{iiplus1}) throughout the entire interval $\left[\frac{c_{i,n,k}}{d_{i,n,k}},\frac{c_{i+1,n,k}}{d_{i+1,n,k}}\right]$.

More specifically, restricting to the case $k=0$,  we have $L_{n,0}=\sqrt{n^2-1}$ and \[ a_{i,n,0}+L_{n,0}b_{i,n,0}=a_{i,n}+\sqrt{n^2-1}b_{i,n} = \omega_{n}^{i}.\]  Thus the right hand side of (\ref{iiplus1}) simplifies to \[ \max\{c_{i,n}\omega_{n}^{-i},d_{i+1,n}\alpha\omega_{n}^{-i-1}\} \] and the two terms in the maximum are equal to each other when $\alpha=\frac{c_{i,n}\omega_n}{d_{i+1,n}}$.

\begin{prop}\label{undervol}  For $i\geq 0$ and $n\geq 2$ we have \begin{equation}\label{undervolineq} c_{i,n}^2\omega_{n}^{-2i}<\frac{c_{i,n}\omega_n}{2\sqrt{n^2-1}d_{i+1,n}}.\end{equation}
\end{prop}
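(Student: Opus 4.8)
The plan is to reduce the claimed inequality to a clean statement about powers of $\omega_n$ and then verify it by a short direct expansion using the closed forms in (\ref{abcd}). First I would clear denominators: the quantities $c_{i,n}$, $d_{i+1,n}$, $\omega_n$, and $\sqrt{n^2-1}$ are all positive (positivity of the $c_{j,n}$ and $d_{j,n}$ was observed in the proof of Lemma \ref{indain}), so multiplying (\ref{undervolineq}) through by $\frac{2\sqrt{n^2-1}\,d_{i+1,n}}{c_{i,n}}\,\omega_n^{2i}$ shows that it is equivalent to the inequality
\[ 2\sqrt{n^2-1}\,c_{i,n}\,d_{i+1,n} < \omega_n^{2i+1}. \]

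The next step is to compute the left-hand side explicitly. Writing $\omega = \omega_n$ and substituting the formulas for $c_{i,n}$ and $d_{i+1,n}$ from (\ref{abcd}), one gets
\[ 2\sqrt{n^2-1}\,c_{i,n}\,d_{i+1,n} = \frac{1}{2\sqrt{n^2-1}}\bigl((\omega+1)\omega^i - (\omega^{-1}+1)\omega^{-i}\bigr)\bigl((1-\omega^{-1})\omega^{i+1} + (\omega-1)\omega^{-i-1}\bigr). \]
Expanding the product of the two binomials produces four terms; using the elementary identities $(\omega+1)(1-\omega^{-1}) = \omega-\omega^{-1}$, $(\omega+1)(\omega-1)\omega^{-1} = \omega-\omega^{-1}$, $(\omega^{-1}+1)(1-\omega^{-1})\omega = \omega-\omega^{-1}$, and $(\omega^{-1}+1)(\omega-1) = \omega-\omega^{-1}$, the two terms that are independent of $i$ cancel each other, and the surviving terms collapse to
\[ 2\sqrt{n^2-1}\,c_{i,n}\,d_{i+1,n} = \frac{\omega-\omega^{-1}}{2\sqrt{n^2-1}}\bigl(\omega^{2i+1} - \omega^{-2i-1}\bigr) = \omega^{2i+1} - \omega^{-2i-1}, \]
since $\omega - \omega^{-1} = 2\sqrt{n^2-1}$.

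Finally, because $\omega_n > 1$ we have $\omega_n^{-2i-1} > 0$, so $\omega_n^{2i+1} - \omega_n^{-2i-1} < \omega_n^{2i+1}$, which is exactly the reduced inequality; unwinding the reduction gives (\ref{undervolineq}). There is no genuine obstacle here: the argument requires no case analysis in $n$ or $i$, and the only point demanding care is the bookkeeping in the four-term expansion and the observation that the $i$-independent cross terms cancel. (Conceptually, the computation is just the statement that $c_{i,n}d_{i+1,n} - c_{i+1,n}d_{i,n}$ and similar "Pell-type" cross terms are constant, so the interesting part of $c_{i,n}d_{i+1,n}$ is the single geometric term $\tfrac{1}{2\sqrt{n^2-1}}(\omega_n+1)(1-\omega_n^{-1})\omega_n^{2i+1}$, which is strictly less than $\omega_n^{2i+1}$.)
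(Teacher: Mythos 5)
Your argument is correct and follows essentially the same route as the paper's: both reduce (\ref{undervolineq}) to the inequality $2\sqrt{n^2-1}\,c_{i,n}d_{i+1,n}<\omega_n^{2i+1}$ and then show via (\ref{abcd}) that $2\sqrt{n^2-1}\,c_{i,n}d_{i+1,n}=\omega_n^{2i+1}-\omega_n^{-2i-1}$. The only difference is bookkeeping: the paper first factors $c_{i,n}=\frac{\omega_n+1}{2\sqrt{n^2-1}}(\omega_n^i-\omega_n^{-i-1})$ and $d_{i+1,n}=\frac{\omega_n-1}{2\sqrt{n^2-1}}(\omega_n^i+\omega_n^{-i-1})$ and then applies $\omega_n^2-1=2\sqrt{n^2-1}\,\omega_n$, whereas you expand all four cross terms and cancel, arriving at the same simplification.
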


\begin{proof}
Note first of all that \[ \omega_{n}^{2}-1=\left(2n^2-1+2n\sqrt{n^2-1}\right)-1 = 2\sqrt{n^2-1}\left(n+\sqrt{n^2-1}\right)=2\sqrt{n^2-1}\omega_n.\]  

From this together with (\ref{abcd}) one computes: \begin{align*}
2\sqrt{n^2-1}c_{i,n}d_{i+1,n}&=\frac{1}{2\sqrt{n^2-1}}\left((\omega_n+1)\omega_{n}^{i}-(\omega_{n}+1)\omega_{n}^{-i-1}\right)\left((\omega_n-1)\omega_{n}^{i}+(\omega_n-1)\omega_{n}^{-i-1}\right)
\\ &= \frac{\omega_{n}^{2}-1}{2\sqrt{n^2-1}}\left(\omega_{n}^{2i}-\omega_{n}^{-2i-2}\right)
\\ &= \omega_{n}^{2i+1}-\omega_{n}^{-2i-1}.
\end{align*} 
Since (\ref{undervolineq}) is obviously equivalent to the inequality \[ 2\sqrt{n^2-1}c_{i,n}d_{i+1,n}<\omega_{n}^{2i+1} \] the conclusion is immediate.
\end{proof}

\begin{cor}\label{undervolcor}
For $\alpha=\frac{c_{i,n}\omega_n}{d_{i+1,n}}$, we have \begin{equation}\label{undervolcorineq} C_{L_{n,0}}(\alpha)\geq \sqrt{\frac{\alpha}{2L_{n,0}}}>\sup_i\Gamma_{\alpha,L_{n,0}}(A_{i,n}).\end{equation}
\end{cor}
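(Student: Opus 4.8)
The plan is to deduce Corollary \ref{undervolcor} directly from Proposition \ref{undervol} together with the lower bound \eqref{iiplus1} that was established just before Proposition \ref{undervol}. First I would record that, for the specific value $\alpha=\frac{c_{i,n}\omega_n}{d_{i+1,n}}$, the two terms in the maximum on the right-hand side of \eqref{iiplus1} (in the simplified form valid when $k=0$) are equal to each other; this is precisely the content of the computation preceding the Proposition, and it also shows that $\alpha$ lies in the interval $\left[\frac{c_{i,n}}{d_{i,n}},\frac{c_{i+1,n}}{d_{i+1,n}}\right]$, so that \eqref{iiplus1} applies. Hence $\sup_i\Gamma_{\alpha,L_{n,0}}(A_{i,n})$ is squeezed between $c_{i,n}\omega_n^{-i}$ from below and the common value $c_{i,n}\omega_n^{-i}$ of the two terms, so in fact $\sup_i\Gamma_{\alpha,L_{n,0}}(A_{i,n})=c_{i,n}\omega_n^{-i}$ at this point—or at least is equal to $c_{i,n}\omega_n^{-i}$ since the supremum over all $i$ cannot exceed the value coming from the two neighboring classes on this interval. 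Strictly speaking I should be a little careful here: \eqref{iiplus1} only gives a lower bound on $C_{L_{n,0}}$, not on the supremum of the $\Gamma$'s, but the relevant inequality I need for the corollary is that $\sup_i\Gamma_{\alpha,L_{n,0}}(A_{i,n})=c_{i,n}\omega_n^{-i}$, which follows because on the interval $\left[\frac{c_{i,n}}{d_{i,n}},\frac{c_{i+1,n}}{d_{i+1,n}}\right]$ each $\Gamma_{\cdot,L_{n,0}}(A_{j,n})$ for $j\le i$ is bounded by its plateau value $c_{j,n}\omega_n^{-j}\le c_{i,n}\omega_n^{-i}$ and each with $j\ge i+1$ is bounded by its rising-linear value $d_{j,n}\alpha\omega_n^{-j}\le d_{i+1,n}\alpha\omega_n^{-i-1}=c_{i,n}\omega_n^{-i}$ (using monotonicity of the plateau values and of the $\frac{c_j}{d_j}$, which were noted in Section \ref{undervolsect}).

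Next I would compute the volume bound at this $\alpha$: since $L_{n,0}=\sqrt{n^2-1}$, the volume bound is $\sqrt{\frac{\alpha}{2L_{n,0}}}=\sqrt{\frac{c_{i,n}\omega_n}{2\sqrt{n^2-1}\,d_{i+1,n}}}$. The inequality I must establish is therefore
\[ c_{i,n}\omega_n^{-i} < \sqrt{\frac{c_{i,n}\omega_n}{2\sqrt{n^2-1}\,d_{i+1,n}}}, \]
which upon squaring is exactly the inequality \eqref{undervolineq} proven in Proposition \ref{undervol}. Since the left side $c_{i,n}\omega_n^{-i}$ is positive, squaring is reversible, so Proposition \ref{undervol} gives precisely $\sup_i\Gamma_{\alpha,L_{n,0}}(A_{i,n}) = c_{i,n}\omega_n^{-i} < \sqrt{\frac{\alpha}{2L_{n,0}}}$. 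Finally, $C_{L_{n,0}}(\alpha)\ge\sqrt{\frac{\alpha}{2L_{n,0}}}$ is just the volume bound from the introduction (valid for every $\beta$ and $\alpha$), so chaining the two gives \eqref{undervolcorineq}.

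There is no real obstacle here—the work has all been done in Proposition \ref{undervol} and in the derivation of \eqref{iiplus1}. The only point requiring a modicum of care is the bookkeeping in the first paragraph: making sure that at $\alpha=\frac{c_{i,n}\omega_n}{d_{i+1,n}}$ the supremum $\sup_j\Gamma_{\alpha,L_{n,0}}(A_{j,n})$ over \emph{all} $j$ (not merely $j\in\{i,i+1\}$) is still equal to the common crossing value $c_{i,n}\omega_n^{-i}$, so that the strict inequality from Proposition \ref{undervol} genuinely separates this supremum from the volume bound. This is handled by the monotonicity observations already recorded in Section \ref{undervolsect} (that $\frac{c_{j,n,k}}{d_{j,n,k}}$ is increasing in $j$, and that the plateau heights $\frac{c_{j,n,k}}{a_{j,n,k}+L_{n,k}b_{j,n,k}}$ are increasing in $j$ — here specialized to $k=0$), so I would simply cite those facts rather than re-deriving them. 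I would then close by remarking that Corollary \ref{undervolcor} shows the lower bound $\sup_i\Gamma_{\cdot,L_{n,0}}(A_{i,n})$ cannot be the whole story for $C_{L_{n,0}}$, motivating the introduction of the classes $\hat A_{i,n}^{(k)}$ in Section \ref{ahatsect}.
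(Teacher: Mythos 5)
Your proof is correct and follows the same route as the paper's: the paper likewise deduces the corollary by observing that, at $\alpha=\frac{c_{i,n}\omega_n}{d_{i+1,n}}$, the right-hand side of (\ref{iiplus1}) (which equals $\sup_j\Gamma_{\alpha,L_{n,0}}(A_{j,n})$) squares to the left-hand side of (\ref{undervolineq}), while the volume bound squares to the right-hand side. Your extra paragraph verifying carefully that the supremum over all $j$ equals the two-term maximum on $\left[\frac{c_{i,n}}{d_{i,n}},\frac{c_{i+1,n}}{d_{i+1,n}}\right]$ is sound (and relies on the same monotonicity facts the paper established in the proof of Theorem \ref{infcrit}); the paper leaves this implicit in its phrase ``which we noted earlier.''
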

\begin{proof}
Indeed the first inequality is just the volume constraint.  To prove the second inequality, observe that the left hand side of (\ref{undervolineq}) is the square of the right hand side of (\ref{iiplus1}) at $\alpha=\frac{c_{i,n}\omega_n}{d_{i+1,n}}$, which we noted earlier is equal to $\sup_{i}\Gamma_{\alpha,L_{n,0}}(A_{i,n})$, while the right hand side of (\ref{undervolineq}) is the square of the volume obstruction $\sqrt{\frac{\alpha}{2L_{n,0}}}$. \end{proof}

Thus the classes $A_{i,n}$ do not suffice by themselves to determine the behavior of $C_{L_{n,0}}$ between the stairs (at $\frac{c_{i,n}}{d_{i,n}}$) that we identified in proving Corollary \ref{nkstair}.  In fact we will see in Section \ref{ahatsect} that the first inequality in (\ref{undervolcorineq}) is also strict, as there are classes $\hat{A}_{i,n}$ which give obstructions that are stronger than the volume on the intervals on which $\sup_i\Gamma_{\alpha,L_{n,0}}(A_{i,n})$ falls under the volume constraint.

Computer calculations show that the analogous statement to Corollary \ref{undervolcor} continues to hold at least for many other values of $n$ and $k$ (not just for $k=0$ as above).  If $\alpha=\frac{c_{i,n,k}(a_{i+1,n,k}+L_{n,k}b_{i+1,n,k})}{d_{i+1,n,k}(a_{i,n,k}+L_{n,k}b_{i,n,k})}$ is the point at which the two terms in the maximum on the right hand side of (\ref{iiplus1}) agree, the statement that $\sqrt{\frac{\alpha}{2L_{n,k}}}>\sup_i\Gamma_{\alpha,L_{n,k}}(A_{i,n}^{(k)})$ is easily seen to be equivalent to the statement that \[ 2L_{n,k}c_{i,n,k}d_{i+1,n,k}-(a_{i,n,k}+L_{n,k}b_{i,n,k})(a_{i+1,n,k}+L_{n,k}b_{i+1,n,k}) <0. \]  Using the definitions (\ref{abcd}),(\ref{ink}),(\ref{Lnkdef}), one can expand the left hand side above in the form \[ r_{n,k}\omega_{n}^{2i}+s_{n,k}+t_{n,k}\omega_{n}^{-2i} \] where $r_{n,k},s_{n,k},t_{n,k}$ are (at least at first sight) rather complicated expressions involving Pell numbers and $\sqrt{n^2-1}$ but are independent of $i$.  Carrying this out in Mathematica, we have in fact found that $r_{n,k}=s_{n,k}=0$ for all $n,k$, and that $t_{n,k}<0$ whenever $n,k\leq 100$.  Thus for all $n,k\leq 100$ and all $i$ there are points $\alpha\in \left[\frac{c_{i,n,k}}{d_{i,n,k}},\frac{c_{i+1,n,k}}{d_{i+1,n,k}}\right]$ at which $\sqrt{\frac{\alpha}{2L_{n,k}}}>\sup_i\Gamma_{\alpha,L_{n,k}}(A_{i,n}^{(k)})$.

\subsection{Some facts about $L_{n,k}$ and $S_{n,k}$}\label{lnksnk}

Since the formulas (\ref{Lnkdef}) and (\ref{snkformula}) for the ``aspect ratios'' $L_{n,k}$ and the corresponding accumulation points $S_{n,k}$ are a bit complicated, let us point out a few elementary facts about these numbers.

\begin{prop}\label{snkint} For all $n\geq 2$ we we have \begin{equation}\label{sn0bound} 2n+2<S_{n,0}<2n+2+\frac{1}{2n-2},\end{equation} and, for $k\geq 1$, \[ \frac{P_{2k+4}}{P_{2k+2}}<S_{n,k}<S_{n+1,k}<\frac{P_{2k+2}}{P_{2k}}.\]
\end{prop}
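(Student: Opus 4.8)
\textbf{Proof plan for Proposition \ref{snkint}.}

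The plan is to prove all the stated inequalities directly from the closed-form formula (\ref{snkformula}) for $S_{n,k}$, namely
\[ S_{n,k}=\frac{(\sqrt{n^2-1}+1)P_{2k+1}+nH_{2k+1}}{(\sqrt{n^2-1}+1)P_{2k-1}+nH_{2k-1}}, \]
together with the Pell identities collected in Section \ref{pellprelim} and the ordering information in Proposition \ref{orderratios}. The key structural observation is that $S_{n,k}$ is a ``weighted mediant'' of the two ratios $\frac{P_{2k+1}}{P_{2k-1}}$ and $\frac{H_{2k+1}}{H_{2k-1}}$: writing $\lambda=\sqrt{n^2-1}+1>0$ and $\mu=n>0$, we have $S_{n,k}=\frac{\lambda P_{2k+1}+\mu H_{2k+1}}{\lambda P_{2k-1}+\mu H_{2k-1}}$, so $S_{n,k}$ lies strictly between $\frac{P_{2k+1}}{P_{2k-1}}$ and $\frac{H_{2k+1}}{H_{2k-1}}$ (as a general fact about mediants of unequal positive fractions with positive weights, using $\frac{P_{2k+1}}{P_{2k-1}}\neq\frac{H_{2k+1}}{H_{2k-1}}$, which follows from (\ref{phloc})). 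For $k\geq 1$ this already gives both the upper and lower bounds in the second displayed inequality of the proposition \emph{provided} one checks the two facts $\frac{P_{2k+1}}{P_{2k-1}}>\frac{P_{2k+4}}{P_{2k+2}}$ and $\frac{H_{2k+1}}{H_{2k-1}}<\frac{P_{2k+2}}{P_{2k}}$ --- both of which are contained in Proposition \ref{orderratios} (rearranged), plus of course $\frac{H_{2k+1}}{H_{2k-1}}>\frac{P_{2k+4}}{P_{2k+2}}$ to know which endpoint is which, again from Proposition \ref{orderratios}. So the first step is: establish the mediant inequality and cite Proposition \ref{orderratios} to sandwich $S_{n,k}$ between $\frac{P_{2k+4}}{P_{2k+2}}$ and $\frac{P_{2k+2}}{P_{2k}}$.

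The second step is the monotonicity in $n$: $S_{n,k}<S_{n+1,k}$ for $k\geq 1$. Here I would compute the sign of $S_{n+1,k}-S_{n,k}$ by cross-multiplying. Since $S_{n,k}$ is a mediant $\frac{\lambda_n P_{2k+1}+n H_{2k+1}}{\lambda_n P_{2k-1}+n H_{2k-1}}$ with $\lambda_n=\sqrt{n^2-1}+1$, the difference $S_{n+1,k}-S_{n,k}$ has numerator (after clearing denominators) a linear combination of $P_{2k+1}H_{2k-1}-P_{2k-1}H_{2k+1}$, which by (\ref{phloc}) (the case $P_{n+1}H_{n-1}-P_{n-1}H_{n+1}$ iterated, or directly $P_{2k+1}H_{2k-1}-P_{2k-1}H_{2k+1}$, computable from the closed forms or from applying (\ref{phloc}) twice) equals a fixed small constant; the coefficient of this constant will be $(\lambda_{n+1}n - \lambda_n(n+1))$ or similar, whose sign one reads off since $\frac{\lambda_n}{n}=\frac{\sqrt{n^2-1}+1}{n}$ is strictly decreasing in $n$. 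I expect the needed elementary fact is just that $\frac{\sqrt{n^2-1}+1}{n}$ is decreasing, which is clear (its derivative, or: $\frac{\sqrt{n^2-1}+1}{n}<\frac{\sqrt{(n+1)^2-1}+1}{n+1}$ rearranges to an obvious true inequality). Combined with the sign of the Pell constant this pins down $S_{n,k}<S_{n+1,k}$.

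The third step is the $k=0$ case (\ref{sn0bound}), which is separate because $H_{-1}=-1$ and $P_{-1}=1$ make the mediant picture degenerate (the denominator of $S_{n,0}$ becomes $(\sqrt{n^2-1}+1)\cdot 1 + n\cdot(-1)=\sqrt{n^2-1}+1-n$, which is small and positive), so $S_{n,0}$ is large, consistent with the interval $(6,\infty)$ from the introduction when $k=0$. I would just substitute $P_1=H_1=1$, $P_{-1}=1$, $H_{-1}=-1$ into (\ref{snkformula}) to get $S_{n,0}=\frac{\sqrt{n^2-1}+1+n}{\sqrt{n^2-1}+1-n}$, rationalize by multiplying top and bottom by $\sqrt{n^2-1}+1+n$ (note $(\sqrt{n^2-1}+1)^2-n^2=2(\sqrt{n^2-1}+1)-2 = 2\sqrt{n^2-1}$... let me instead write the numerator of the rationalized form), obtaining $S_{n,0}=\frac{(\sqrt{n^2-1}+1+n)^2}{2\sqrt{n^2-1}}$ after simplification, and then the bound $2n+2<S_{n,0}<2n+2+\frac{1}{2n-2}$ becomes an elementary estimate: the lower bound $2n+2<S_{n,0}$ is equivalent to $(\sqrt{n^2-1}+1+n)^2>2\sqrt{n^2-1}(2n+2)$, i.e.\ (expanding) to a true inequality that reduces to $2n\sqrt{n^2-1}+2n^2 > 2\sqrt{n^2-1}(2n+1) - \text{lower order}$, which I would verify by isolating $\sqrt{n^2-1}$ and squaring once; the upper bound is handled the same way. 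The main obstacle I anticipate is purely bookkeeping: getting the signs right in Step 2 (monotonicity in $n$) when cross-multiplying the two mediants, and correctly simplifying the surd expression in Step 3 so that the final one-line estimates $2n+2 < S_{n,0} < 2n+2+\frac{1}{2n-2}$ fall out cleanly --- both are routine but error-prone, and I would double-check Step 3 by testing $n=2$, where $S_{2,0}=\frac{\sqrt3+3}{\sqrt3-1}=\frac{(\sqrt3+3)^2}{2} = \frac{12+6\sqrt3}{2}=6+3\sqrt3\approx 11.196$, which indeed lies in $(6, 6+\tfrac12)$... wait, that contradicts $2n+2=6$ and $2n+2+\frac1{2n-2}=6.5$; so I must recheck: $S_{2,0}$ from the introduction is $\frac{\sqrt{3}+1+2}{\sqrt3+1-2}=\frac{\sqrt3+3}{\sqrt3-1}$, and rationalizing gives $\frac{(\sqrt3+3)(\sqrt3+1)}{(\sqrt3-1)(\sqrt3+1)}=\frac{3+\sqrt3+3\sqrt3+3}{2}=\frac{6+4\sqrt3}{2}=3+2\sqrt3\approx 6.46$, which \emph{does} lie in $(6,6.5)$ --- good, so the arithmetic is delicate and the test case is the safeguard I would use.
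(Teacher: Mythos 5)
Your Step 1 has a genuine gap: the mediant argument does not establish the claimed bounds, and in fact the two auxiliary inequalities you cite from Proposition~\ref{orderratios} are stated backwards. You correctly observe that for $k\geq 1$ the formula (\ref{snkformula}) exhibits $S_{n,k}$ as a weighted mediant of $\frac{P_{2k+1}}{P_{2k-1}}$ and $\frac{H_{2k+1}}{H_{2k-1}}$ with positive weights, so $S_{n,k}$ lies strictly between those two ratios. But Proposition~\ref{orderratios} (with its index $k$ shifted down by one) gives
\[
\frac{P_{2k+1}}{P_{2k-1}}\;<\;\sigma^2\;<\;\frac{P_{2k+4}}{P_{2k+2}}\;<\;\frac{P_{2k+2}}{P_{2k}}\;<\;\frac{H_{2k+1}}{H_{2k-1}},
\]
so the ordering is the opposite of what you wrote: $\frac{P_{2k+1}}{P_{2k-1}}<\frac{P_{2k+4}}{P_{2k+2}}$ and $\frac{H_{2k+1}}{H_{2k-1}}>\frac{P_{2k+2}}{P_{2k}}$. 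In other words, the mediant only places $S_{n,k}$ in the interval $\bigl(\frac{P_{2k+1}}{P_{2k-1}},\frac{H_{2k+1}}{H_{2k-1}}\bigr)$, which strictly \emph{contains} the target interval $\bigl(\frac{P_{2k+4}}{P_{2k+2}},\frac{P_{2k+2}}{P_{2k}}\bigr)$; the crude mediant bound is therefore too weak on both ends, and additional input is required.

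You could repair the upper bound by combining your Step~2 monotonicity with the $n\to\infty$ limit (since by (\ref{hpadd}) the weights $\lambda_n:\mu_n=(\sqrt{n^2-1}+1):n$ tend to $1:1$ and the equal-weight mediant $\frac{P_{2k+1}+H_{2k+1}}{P_{2k-1}+H_{2k-1}}$ is exactly $\frac{P_{2k+2}}{P_{2k}}$), but the lower bound $\frac{P_{2k+4}}{P_{2k+2}}$ would still need a separate computation at $n=2$. The paper sidesteps all of this more efficiently: it first proves the $k=0$ estimate (\ref{sn0bound}) by essentially the direct surd manipulation in your Step~3, and then observes that $S_{n,k}=\frac{P_{2k+2}S_{n,0}-P_{2k}}{P_{2k}S_{n,0}-P_{2k-2}}$ is a strictly increasing fractional-linear function of $S_{n,0}$ (monotonicity follows from $\frac{P_{2k+2}}{P_{2k}}<\frac{P_{2k}}{P_{2k-2}}$, which is Proposition~\ref{orderratios}). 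Then $S_{n,0}>6$ gives $S_{n,k}>\frac{6P_{2k+2}-P_{2k}}{6P_{2k}-P_{2k-2}}=\frac{P_{2k+4}}{P_{2k+2}}$ via (\ref{4gapP}), $S_{n,0}<S_{n+1,0}$ gives $S_{n,k}<S_{n+1,k}$ for free, and $t\to\infty$ gives $\frac{P_{2k+2}}{P_{2k}}$ as the upper bound. Your Step~3 for $k=0$ is sound (and your numerical sanity check at $n=2$ correctly caught and corrected your earlier slip), so the cleanest fix is to keep Step~3, drop the mediant argument, and replace Steps~1 and~2 with the fractional-linear-transformation reduction to the $k=0$ case.
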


\begin{proof} The $k=0$ case of (\ref{snkformula}) gives \[ S_{n,0}=\frac{\sqrt{n^2-1}+1+n}{\sqrt{n^2-1}+1-n}=\frac{1+\omega_n}{1-\omega_{n}^{-1}}.\]
Using that $\omega_{n}^{2}=2n\omega_{n}-1$ (and hence that $-2-\omega_{n}^{-1}=\omega_{n}-(2n+2)$), we find \begin{align*} 
(\omega_{n}-\omega_{n}^{-1})S_{n,0}&=\frac{(\omega_{n}-\omega_{n}^{-1})(\omega_n+1)}{1-\omega_{n}^{-1}}
\\ &= \frac{\omega_{n}^{2}+\omega_n-1-\omega_{n}^{-1}}{1-\omega_{n}^{-1}}=\frac{(2n+1)\omega_{n}-2-\omega_{n}^{-1}}{1-\omega_{n}^{-1}}
\\ &= \frac{(2n+2)\omega_{n}-(2n+2)}{1-\omega_{n}^{-1}}=(2n+2)\omega_{n}.
\end{align*} Thus \[ S_{n,0}=(2n+2)\frac{\omega_{n}}{\omega_{n}-\omega_{n}^{-1}}>2n+2.\]  Furthermore \begin{align*}
(2n-2)(S_{n,0}-(2n+2))&=\frac{4(n^2-1)\omega_{n}^{-1}}{\omega_{n}-\omega_{n}^{-1}}=\frac{4(n^2-1)(n-\sqrt{n^2-1})}{2\sqrt{n^2-1}}
\\ &= 2n\sqrt{n^2-1}-2(n^2-1)<1
\end{align*}
where the last inequality follows from the fact that $\left(n-\frac{1}{2n}\right)^2>n^2-1$, so that $2n\sqrt{n^2-1}<2n\left(n-\frac{1}{2n}\right)=2n^2-1$.  This completes the proof of (\ref{sn0bound}).

For the remaining statement, recall that by construction we have $S_{n,k}=\lim_{i\to\infty}\frac{c_{i,n,k}}{d_{i,n,k}}$.  So by taking the limit of the ratio of the last two equations of (\ref{ink}) as $i\to\infty$ it is clear that \[ S_{n,k}=\frac{P_{2k+2}S_{n,0}-P_{2k}}{P_{2k}S_{n,0}-P_{2k-2}} \] for $k\geq 1$.  Since $\frac{P_{2k+2}}{P_{2k}}<\frac{P_{2k}}{P_{2k-2}}$ by Proposition \ref{orderratios}, the function $t\mapsto \frac{P_{2k+2}t-P_{2k}}{P_{2k}t-P_{2k-2}}$ is strictly increasing.  By (\ref{sn0bound}) and our assumption that $n\geq 2$, we have $S_{n+1,0}>S_{n,0}>6$ and hence \[ \frac{6P_{2k+2}-P_{2k}}{6P_{2k}-P_{2k-2}}< S_{n,k}<S_{n+1,0}<\lim_{t\to\infty}\frac{P_{2k+2}t-P_{2k}}{P_{2k}t-P_{2k-2}}.\]  But the limit on the right is equal to $\frac{P_{2k+2}}{P_{2k}}$, while  (\ref{4gapP}) shows that the expression on the left is equal to $\frac{P_{2k+4}}{P_{2k+2}}$.
\end{proof}

We now describe the locations of the $L_{n,k}$, in particular indicating how they compare to the various $b_m$ from (\ref{bdef}).

\begin{prop}\label{Lsize} For any $n\geq 2$ and $k\geq 0$ we have $L_{n,k}<L_{n+1,k}$, and \[ L_{n,k}\in\left\{\begin{array}{ll} (b_{2k},b_{2k-1}) & \mbox{if }n\geq 4 \\ (b_{2k+1},b_{2k}) & \mbox{if } n=3 \\ (b_{2k+2},b_{2k+1}) & \mbox{if }n=2.\end{array}\right.\]  Also, for all $n\geq 2$ and all $k$ we have \[ \frac{2P_{2k+2}+1}{2P_{2k+2}-1}<L_{n,k}<\frac{H_{2k+1}+1}{H_{2k+1}-1}=\lim_{n\to\infty}L_{n,k}.\]
\end{prop}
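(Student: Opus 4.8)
The plan is to compare every $L_{n,k}$ (and every global bound) with the relevant $b_m$ through a single auxiliary quantity. Clearing the common factor $\sqrt{n^2-1}-1$ from (\ref{Lnkdef}) exhibits $L_{n,k}=1+\frac{2}{g_k(n)-1}$, where for real $n\geq 2$ I set
\[ g_k(n)=\frac{H_{2k}(\sqrt{n^2-1}+1)+2nP_{2k}}{\sqrt{n^2-1}-1}. \]
From (\ref{bdef}) the numbers $b_{2k-1},b_{2k},b_{2k+1},b_{2k+2}$ are respectively $1+\frac{2}{Q-1}$ for $Q=H_{2k},P_{2k+2},H_{2k+2},P_{2k+4}$, while $\frac{2P_{2k+2}+1}{2P_{2k+2}-1}=1+\frac{2}{2P_{2k+2}-1}$ and $\frac{H_{2k+1}+1}{H_{2k+1}-1}=1+\frac{2}{H_{2k+1}-1}$. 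Since $x\mapsto 1+\frac{2}{x-1}$ is strictly decreasing for $x>1$, each comparison between $L_{n,k}$ and one of these reference values is the reverse comparison between $g_k(n)$ and the associated $Q$.

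The first ingredient I would record is that $g_k$ is strictly decreasing on $[2,\infty)$ with $\lim_{n\to\infty}g_k(n)=H_{2k}+2P_{2k}=H_{2k+1}$ (last equality by (\ref{hpadd})). For monotonicity, write $g_k(n)=H_{2k}\,\frac{s+1}{s-1}+2P_{2k}\,\frac{n}{s-1}$ with $s=\sqrt{n^2-1}$: the first summand is plainly decreasing in $n$, and for the second it suffices to note that its square equals $\bigl(1-2\sqrt{n^2-1}/n^2\bigr)^{-1}$ while $\sqrt{n^2-1}/n^2$ has derivative $\frac{2-n^2}{n^3\sqrt{n^2-1}}<0$ for $n\geq 2$. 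Monotonicity immediately gives $L_{n,k}<L_{n+1,k}$, and with the value of the limit it gives $L_{n,k}<\frac{H_{2k+1}+1}{H_{2k+1}-1}=\lim_{n\to\infty}L_{n,k}$ for every finite $n$, as well as $g_k(n)>H_{2k+1}>H_{2k}$, i.e.\ $L_{n,k}<b_{2k-1}$, for all $n\geq 2$.

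For the remaining comparisons I would clear the positive denominator $\sqrt{n^2-1}-1$ and isolate the surd: for $Q>H_{2k}$ one has $g_k(n)>Q$ iff $(Q-H_{2k})\sqrt{n^2-1}<(Q+H_{2k})+2nP_{2k}$, hence (both sides positive, so squaring is reversible) iff $(Q-H_{2k})^2(n^2-1)<\bigl((Q+H_{2k})+2nP_{2k}\bigr)^2$. Putting $u=P_{2k+1}$, $v=P_{2k}$ (so $u\geq v\geq 0$) and using the identities $H_{2k}=u-v$, $H_{2k+1}=u+v$, $H_{2k+2}-H_{2k}=2H_{2k+1}$, $H_{2k+2}+H_{2k}=4u$, $P_{2k+2}=2u+v$, $P_{2k+4}=12u+5v$ from Section \ref{pellprelim}, each needed case collapses to an elementary inequality: $g_k(2)>H_{2k+2}$ becomes $3(2H_{2k+1})^2<(4H_{2k+1})^2$ (so $L_{2,k}<b_{2k+1}$); $g_k(2)<P_{2k+4}$ becomes $3(11u+6v)^2>(13u+8v)^2$, i.e.\ $194u^2+188uv+44v^2>0$ (so $L_{2,k}>b_{2k+2}$); $g_k(3)>P_{2k+2}$ becomes $8(u+2v)^2<9(u+2v)^2$ (so $L_{3,k}<b_{2k}$); $g_k(3)<H_{2k+2}$ becomes $8(2H_{2k+1})^2>(4H_{2k+1}+2v)^2$, which holds since $H_{2k+1}=u+v\geq 2v$ (so $L_{3,k}>b_{2k+1}$); $g_k(4)<P_{2k+2}$ becomes $15(u+2v)^2>(3u+8v)^2$, i.e.\ $3u^2+6uv-2v^2>0$, which holds since $u\geq v$ (so $L_{4,k}>b_{2k}$); and $g_k(2)<2P_{2k+2}$ becomes $3(3H_{2k+1})^2>(5H_{2k+1})^2$, i.e.\ $27>25$.

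It then remains to assemble the pieces. For $n\geq 4$, monotonicity promotes $L_{4,k}>b_{2k}$ to $L_{n,k}>b_{2k}$, and with $L_{n,k}<b_{2k-1}$ this yields $L_{n,k}\in(b_{2k},b_{2k-1})$; the cases $n=3$ and $n=2$ are exactly the placements $L_{3,k}\in(b_{2k+1},b_{2k})$ and $L_{2,k}\in(b_{2k+2},b_{2k+1})$ just obtained; and $g_k(2)<2P_{2k+2}$ with monotonicity gives $g_k(n)<2P_{2k+2}$, i.e.\ $L_{n,k}>\frac{2P_{2k+2}+1}{2P_{2k+2}-1}$, for all $n\geq 2$, which together with the upper bound from the second paragraph finishes the two-sided estimate; finally $L_{n,k}<L_{n+1,k}$ was already noted. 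The one genuinely delicate point is the monotonicity of the second summand of $g_k$; everything else is bookkeeping with the Pell recurrences, the main hazard being to keep the (reversed) inequality directions straight through the substitution. For $k=0$ one reads $b_{-1}=\infty$, so $L_{n,0}<b_{-1}$ is vacuous, $P_0=0$, and the argument is otherwise unchanged.
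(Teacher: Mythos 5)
Your proof is correct and shares with the paper its central mechanism — working with the quantity $g_k(n)$, which is precisely the $1/m_{n,k}$ of the paper's proof — but differs in two substantive ways. First, for the monotonicity $L_{n,k}<L_{n+1,k}$ the paper appeals to Proposition \ref{snkint} ($S_{n,k}<S_{n+1,k}$) together with the relation $\frac{(1+S_{n,k})^2}{S_{n,k}}=\frac{2(1+L_{n,k})^2}{L_{n,k}}$, whereas you prove it self-containedly by checking that $g_k$ is a decreasing function of real $n\geq 2$ via a small calculus computation; this also hands you $\lim_{n\to\infty}L_{n,k}$ and the upper bound $L_{n,k}<\frac{H_{2k+1}+1}{H_{2k+1}-1}<b_{2k-1}$ in one stroke, rather than deriving them separately. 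Second, for the interval placements the paper evaluates $1/m_{n,k}$ at $n=2,3,4$ as an explicit $\Z$-combination of $H_{2k},P_{2k}$ with irrational coefficients and sandwiches those coefficients between consecutive integers (e.g.\ $2<\frac{9+4\sqrt{2}}{7}<3$), while you clear the surd by squaring and reduce each comparison to a polynomial inequality in $u=P_{2k+1}$, $v=P_{2k}$. Your route is more systematic and removes the numerology; the paper's is slightly quicker to state once one trusts the numerical bounds. One small redundancy: you verify both $g_k(2)<P_{2k+4}$ and $g_k(2)<2P_{2k+2}$, but the latter already implies the former since $2P_{2k+2}<P_{2k+4}$, so $L_{2,k}>b_{2k+2}$ is a consequence of the sharper lower bound (as the paper itself notes in passing). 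The one step in your argument where I would tighten the exposition is the inequality $8(2H_{2k+1})^2>(4H_{2k+1}+2v)^2$: after expansion it reduces to $4H_{2k+1}^2>4H_{2k+1}v+v^2$, and while this does follow from $u\geq v$, the chain of estimates deserves a line rather than a citation of $H_{2k+1}\geq 2v$ alone.
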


\begin{proof}
Since $x\mapsto \frac{(1+x)^2}{x}$ is a strictly increasing function of $x$ as long as  $x>1$, the equation $\frac{(1+S_{n,k})^2}{S_{n,k}}=\frac{2(1+L_{n,k})^2}{L_{n,k}}$ and the fact that $S_{n,k},L_{n,k}>1$ allow us to conclude that $L_{n,k}<L_{n+1,k}$ directly from the fact (proven in Proposition \ref{snkint}) that $S_{n,k}<S_{n+1,k}$.

Let \begin{equation}\label{mnkdef} m_{n,k}=\frac{L_{n,k}-1}{L_{n,k}+1}=\frac{\sqrt{n^2-1}-1}{H_{2k}(\sqrt{n^2-1}+1)+2nP_{2k}}.\end{equation}  Since $t\mapsto \frac{t-1}{t+1}$ is a strictly increasing function for $t>0$, to show that $L_{n,k}$ lies in some interval $(s,t)$ it suffices to show that $m_{n,k}$ lies in $\left(\frac{s-1}{s+1},\frac{t-1}{t+1}\right)$.

From (\ref{bdef}) one finds \[ \frac{b_{2k}+1}{b_{2k}-1}=P_{2k+2},\qquad \frac{b_{2k+1}+1}{b_{2k+1}-1}=H_{2k+2}.\]    We see that \begin{align*} \frac{1}{m_{2,k}}&=\frac{(\sqrt{3}+1)H_{2k}+4P_{2k}}{\sqrt{3}-1}=(2+\sqrt{3})H_{2k}+(2+2\sqrt{3})P_{2k}
\\ &= 2P_{2k+1}+\sqrt{3}H_{2k+1}\in (H_{2k+2},2P_{2k+2}) \end{align*} since $H_{2k+2}=2P_{2k+1}+H_{2k+1}$ while $2P_{2k+2}=2P_{2k+1}+2H_{2k+1}$. Thus $m_{2,k}\in(\frac{1}{2P_{2k+2}},\frac{1}{H_{2k+2}})\subset \left(\frac{b_{2k+2}-1}{b_{2k+2}+1},\frac{b_{2k+1}-1}{b_{2k+1}+1}\right)$ and so $L_{2,k}\in (b_{2k+2},b_{2k+1})$ (and the argument shows more specifically that $L_{2,k}>\frac{2P_{2k+2}+1}{2P_{2k+2}-1}$, which is larger than $b_{2k+2}$).  

Also, \begin{align*} \frac{1}{m_{3,k}} &= \frac{\sqrt{8}+1}{\sqrt{8}-1}H_{2k}+\frac{6}{\sqrt{8}-1}P_{2k}=\left(\frac{9+4\sqrt{2}}{7}\right)H_{2k}+\left(\frac{6+12\sqrt{2}}{7}\right)P_{2k}
\\ &\quad\in \left( 2H_{2k}+3P_{2k},3H_{2k}+4P_{2k}\right) = \left(2P_{2k+1}+P_{2k},3P_{2k+1}+P_{2k}\right)\\ &\quad\,\, =(P_{2k+2},P_{2k+2}+P_{2k+1})=(P_{2k+2},H_{2k+2})  \end{align*} where we have used both equalities in (\ref{hpadd}) and the facts that $2<\frac{9+4\sqrt{2}}{7}<3$ and $3<\frac{6+12\sqrt{2}}{7}<4$.  So $m_{3,k}\in (\frac{1}{H_{2k+2}},\frac{1}{P_{2k+2}})$ and so $L_{3,k}\in (b_{2k+1},b_{2k})$.

Similarly, \begin{align*} \frac{1}{m_{4,k}} &= \frac{\sqrt{15}+1}{\sqrt{15}-1}
H_{2k}+\frac{8}{\sqrt{15}-1}P_{2k}=\left(\frac{8+\sqrt{15}}{7}  \right)H_{2k}+
\left(\frac{4+4\sqrt{15}}{7} \right)P_{2k} \\ &< P_{2k+2} \end{align*} which 
implies that $L_{4,k}>b_{2k}$.  So since $L_{n,k}<L_{n+1,k}$ for all $n$ we 
have $L_{n,k}>b_{2k}$ for $n\geq 4$. Furthermore \[ L_{n,k}<\lim_{n\to\infty}L
_{n,k}=\frac{H_{2k}+2P_{2k}+1}{H_{2k}+2P_{2k}-1}=\frac{H_{2k+1}+1}{H_{2k+1}-1}.
\] Since $\frac{H_{2k+1}+1}{H_{2k+1}-1}<\frac{H_{2k}+1}{H_{2k}-1}=b_{2k-1}$ 
this suffices to complete the proof.
\end{proof}

We now give a bit more information about the function of two variables $(\alpha,\beta)\mapsto C_{\beta}(\alpha)$ near $(\alpha,\beta)=(S_{n,k},L_{n,k})$.  Some useful context is provided by the following mild extension of \cite[Corollary 4.9]{FM}:

\begin{prop}\label{finaway}
Fix $\gamma>1$.  Then there are only finitely many elements $E=(x,y;\vec{m})\in\tilde{\mathcal{E}}$ having all $m_i\neq 0$ for which there exist $\alpha,\beta>1$ such that $\mu_{\alpha,\beta}(E)\geq \gamma\sqrt{\frac{\alpha}{2\beta}}$.
\end{prop}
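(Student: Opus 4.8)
The plan is to bound, for each candidate $E=(x,y;\vec m)\in\tilde{\mathcal E}$ with all $m_i\neq 0$, the quantities $x,y$ and the length $N$ of $\vec m$ in terms of $\gamma$, so that only finitely many such $E$ can satisfy $\mu_{\alpha,\beta}(E)\geq \gamma\sqrt{\frac{\alpha}{2\beta}}$ for some $\alpha,\beta>1$. The Chern and self-intersection conditions on $E\in\tilde{\mathcal E}$ give $2(x+y)-\sum_i m_i=1$ and $\sum_i m_i^2=2xy+1$, so in particular $\sum_i m_i=2(x+y)-1$ and (since all $m_i\geq 1$) $N\leq \sum_i m_i^2=2xy+1$. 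Thus it suffices to produce a $\gamma$-dependent bound on $x+y$ (equivalently on $xy$, using $2xy+1=\sum m_i^2\leq (\sum m_i)^2=(2(x+y)-1)^2$, which already controls $xy$ by $x+y$ and vice versa).

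\textbf{Key steps.} First I would follow the estimate from the Remark after Theorem~\ref{stairmain}: for any $\alpha\in\Q$ and $\beta\geq 1$, $\vec m\cdot w(\alpha)\leq \sum_i m_i=2(x+y)-1<2(x+y)$, so $\mu_{\alpha,\beta}(E)=\frac{\vec m\cdot w(\alpha)}{x+\beta y}\leq \frac{2(x+y)}{x+\beta y}\leq 2$. Hence if $E$ obstructs with the required strength we need $\gamma\sqrt{\frac{\alpha}{2\beta}}\leq 2$, i.e. $\alpha\leq \frac{8\beta}{\gamma^2}$. Next, I would use the more refined bound available because all $m_i\neq 0$: since $w(\alpha)=\mathcal W(1,\alpha)/1$ begins with entries at most $1$ and the weight sequence has the property that $\|w(\alpha)\|^2=\alpha$, Cauchy–Schwarz gives $\vec m\cdot w(\alpha)\leq \|\vec m\|\,\|w(\alpha)\|=\sqrt{2xy+1}\cdot\sqrt{\alpha}$. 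Combining, the hypothesis $\mu_{\alpha,\beta}(E)\geq\gamma\sqrt{\frac{\alpha}{2\beta}}$ forces
\[
\gamma\sqrt{\frac{\alpha}{2\beta}}\leq \frac{\sqrt{2xy+1}\sqrt\alpha}{x+\beta y},\qquad\text{so}\qquad \frac{\gamma(x+\beta y)}{\sqrt{2\beta}}\leq\sqrt{2xy+1}.
\]
Squaring, $\gamma^2(x+\beta y)^2\leq 2\beta(2xy+1)=4\beta xy+2\beta$. But $(x+\beta y)^2=(x-\beta y)^2+4\beta xy$, so this rearranges to $\gamma^2(x-\beta y)^2+(\gamma^2-1)4\beta xy\leq 2\beta$. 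Since $\gamma>1$ and $x,y\geq 0$ (not both zero) this bounds $\beta xy$, and hence $xy$, by $\frac{2\beta}{4(\gamma^2-1)}$ — wait, $\beta$ itself is not yet bounded. To close this I would instead divide through by $\beta$: $\frac{\gamma^2(x-\beta y)^2}{\beta}+4(\gamma^2-1)xy\leq 2$, which directly gives $xy\leq \frac{1}{2(\gamma^2-1)}$, a bound depending only on $\gamma$.

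\textbf{Finishing and main obstacle.} From $xy\leq \frac{1}{2(\gamma^2-1)}$ and the integrality of $x,y$ we get only finitely many pairs $(x,y)$ with $x,y\geq 1$; the residual cases where $x=0$ or $y=0$ force, via $\sum m_i^2=2xy+1=1$ with all $m_i\geq 1$, that $\vec m$ has a single entry equal to $1$, giving only the finitely many classes of the form $E'_i$-type or $(1,0;1)$ (these have some $m_i=0$ once we pad, so are actually excluded or trivially finite in number). For each admissible $(x,y)$, the number $N$ of nonzero entries $m_i$ satisfies $N\leq\sum m_i^2=2xy+1$, and each $m_i$ is a positive integer with $\sum m_i=2(x+y)-1$ fixed, so there are finitely many vectors $\vec m$; altogether finitely many $E$. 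The one subtlety I want to be careful about is the Cauchy–Schwarz step: I am using $\|w(\alpha)\|^2=\alpha$, which holds for the weight sequence as noted in Section~\ref{init} (``$\sum_i x_i^2=a$''), and I am using that $\|\vec m\|^2=2xy+1$ from $E\cdot E=-1$; both are clean. The genuinely important point — and the only place the hypothesis ``all $m_i\neq 0$'' is essential — is that \emph{without} it one only gets $\vec m\cdot w(\alpha)\leq \sum m_i$, which does not beat the volume bound by a fixed ratio, so the finiteness fails (indeed classes like the $FM_n$ with huge weight sequences all obstruct $C_1$ with ratio $1$). So the main ``obstacle'' is really just making sure the Cauchy–Schwarz bound is applied correctly and that the division by $\beta$ is legitimate ($\beta>1>0$); everything else is bookkeeping. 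I would also double-check that the displayed inequality manipulation is valid when $x-\beta y$ may be negative (it is, since we only used its square) and when $x$ or $y$ vanishes (handled separately above).
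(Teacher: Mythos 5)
Your proof is correct and follows essentially the same route as the paper's: both arguments apply Cauchy--Schwarz to bound $w(\alpha)\cdot\vec m$ by $\sqrt{\alpha}\sqrt{2xy+1}$, both exploit $(x-\beta y)^2=(x+\beta y)^2-4\beta xy\geq 0$ to eliminate $\beta$, and both arrive at the bound $2xy\leq\frac{1}{\gamma^2-1}$ before handling the degenerate $xy=0$ cases separately. The only cosmetic difference is that the paper uses $(x+\beta y)^2\geq 4\beta xy$ directly to get $\gamma\leq\sqrt{1+\frac{1}{2xy}}$, whereas you expand, cancel, and divide by $\beta$; these are algebraically equivalent.
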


(Recall from the introduction that $\tilde{\mathcal{E}}$ consists of classes $(x,y;\vec{m})\in\mathcal{H}^2$ having Chern number $1$ and self-intersection $-1$ which are either equal to the Poincar\'e duals of the standard exceptional divisors $E'_i$ or else have all coordinates of $\vec{m}$ nonnegative; in particular $\mathcal{E}\subset\tilde{\mathcal{E}}$. The provision that all $m_i\neq 0$ is included due to the trivial point that if $(x,y;\vec{m})$ satisfies the condition then so does $(x,y;0,\ldots,0,\vec{m})$ (which formally speaking represents a different element of $\mathcal{E}\subset \mathcal{H}^2$) where the string of zeros can be arbitrarily long.)
\begin{proof}
Suppose that $E\in\mathcal{E}$ has $\mu_{\alpha,\beta}(E)\geq \gamma\sqrt{\frac{\alpha}{2\beta}}$ where $\alpha,\beta\geq 1$   and write $E=(x,y;\vec{m})$, so the fact that $E$ has self-intersection $-1$ shows that $|\vec{m}|=2xy+1$. Since $\mu_{\alpha,\beta}(E'_i)=0$ by definition, $E$ is not one of the standard classes $E'_i$, so all coordinates of $\vec{m}$ are nonnegative and hence $x+y>0$ with $x,y\geq 0$.  Using the Cauchy-Schwarz inequality and the fact that $|w(\alpha)|^2=\alpha$ we then find that \begin{equation}\label{boundaway} \gamma\sqrt{\frac{\alpha}{2\beta}}\leq \mu_{\alpha,\beta}(E)=\frac{w(\alpha)\cdot\vec{m}}{x+\beta y}\leq \frac{\sqrt{\alpha}\sqrt{2xy+1}}{x+\beta y}.\end{equation}

Now since $0\leq (x-\beta y)^2=(x+\beta y)^2-4\beta xy$ and since $x+\beta y>0$, rearranging (\ref{boundaway}) gives \[ \gamma\leq \sqrt{\frac{4\beta xy+2\beta}{4\beta xy}}=\sqrt{1+\frac{1}{2xy}},\] \emph{i.e.} \[ 2xy\leq \frac{1}{\gamma^2-1}.\]  But there are only finitely many pairs of positive integers $x,y$ obeying $2xy\leq \frac{1}{\gamma^2-1}$; for each of these pairs $(x,y)$ there are only finitely many sequences of positive integers $m_i$ obeying $2xy-\sum m_{i}^{2}=-1$.    Thus there are only finitely many classes satisfying the condition that have both $x,y>0$.  We should also consider the possibility that one of $x,y$ is zero, but in this case the condition $2xy-\sum_{i=1}^{N}m_{i}^{2}=-1$ with all $m_i$ nonzero forces $N$ to be $1$ and $m_1$ to be $\pm 1$, so that the Chern number condition $2(x+y)-\sum m_i=1$ forces either $x+y=0$ and $m_1=-1$ or $x+y=1$ and $m_1=1$.  So allowing $x$ or $y$ to be zero does not change the fact that only finitely many classes in $\tilde{\mathcal{E}}$ obey the condition.
\end{proof}

\begin{cor}\label{accvol}
If $\beta,S\geq 1$ and if the function $C_{\beta}$ has an infinite staircase accumulating at $S$ then $C_{\beta}(S)=\sqrt{\frac{S}{2\beta}}$.
\end{cor}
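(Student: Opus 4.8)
The plan is to argue by contradiction: assume $C_\beta(S)>\sqrt{\tfrac{S}{2\beta}}$ and deduce that $C_\beta$ is piecewise linear with only finitely many linear pieces on an entire neighborhood of $S$, which is incompatible with $S$ being an accumulation point of an infinite staircase. (The reverse inequality $C_\beta(S)\ge\sqrt{\tfrac{S}{2\beta}}$ is just the volume bound, so equality follows.) First I would choose $\gamma$ with $1<\gamma<C_\beta(S)/\sqrt{\tfrac{S}{2\beta}}$ and use the continuity of $C_\beta$ (noted at the end of Section~\ref{init}) together with continuity of $\alpha\mapsto\sqrt{\alpha/(2\beta)}$ to fix an open interval $U\ni S$, with $U\subset(1,\infty)$, on which $C_\beta(\alpha)>\gamma\sqrt{\tfrac{\alpha}{2\beta}}$ for all $\alpha\in U$.

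Next I would bring in Proposition~\ref{finaway}. Its proof shows more generally that the set $\mathcal{F}$ of classes $E\in\tilde{\mathcal{E}}$ with every $\vec m$-coordinate nonzero and $\mu_{\alpha',\beta'}(E)\ge\gamma\sqrt{\tfrac{\alpha'}{2\beta'}}$ for some $\alpha'>1$, $\beta'\ge 1$ is finite (the AM--GM estimate $2x'y'\le\tfrac1{\gamma^2-1}$ there does not use $\beta'>1$), so the case $\beta=1$ is covered. The heart of the argument is the claim that $C_\beta(\alpha)=\max_{E\in\mathcal{F}}\mu_{\alpha,\beta}(E)$ for every rational $\alpha\in U$. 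The inequality ``$\ge$'' is immediate from (\ref{dirlim}). For ``$\le$'': since $C_\beta(\alpha)$ strictly exceeds the volume bound, (\ref{dirlim}) gives $C_\beta(\alpha)=\sup_{E\in\tilde{\mathcal{E}}}\mu_{\alpha,\beta}(E)$; any $E\in\tilde{\mathcal{E}}$ with $\mu_{\alpha,\beta}(E)>\gamma\sqrt{\tfrac{\alpha}{2\beta}}$ is not one of the $E'_i$ (which have $\mu\equiv0$), so all coordinates of its $\vec m$-part are nonnegative, and deleting its zero coordinates leaves the Chern and self-intersection numbers unchanged — so the reduced class $E^\flat$ is still in $\tilde{\mathcal{E}}$ — while not decreasing $\mu_{\alpha,\beta}$, because weight sequences are nonincreasing. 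Thus $E^\flat\in\mathcal{F}$ and $\mu_{\alpha,\beta}(E)\le\mu_{\alpha,\beta}(E^\flat)\le\max_{\mathcal{F}}\mu_{\alpha,\beta}$; running this over a sequence of classes whose obstructions approach $C_\beta(\alpha)$ yields $C_\beta(\alpha)\le\max_{E\in\mathcal{F}}\mu_{\alpha,\beta}(E)$ (a maximum, since $\mathcal{F}$ is finite).

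To finish, I would observe that $\max_{E\in\mathcal{F}}\mu_{\cdot,\beta}(E)$ is continuous and piecewise linear with only finitely many pieces on the bounded interval $U$ (each $\mu_{\cdot,\beta}(E)$ is piecewise linear, as recalled just before Proposition~\ref{Gammadef}, and $\mathcal{F}$ is finite), and it agrees with $C_\beta$ on the dense set $U\cap\Q$; since $C_\beta$ is continuous, $C_\beta=\max_{E\in\mathcal{F}}\mu_{\cdot,\beta}(E)$ on all of $U$. Hence any affine $f\in\mathcal{S}_\beta$ coinciding with $C_\beta$ on a nonempty open subset of $U$ must coincide, on some nonempty open set, with one of the finitely many affine pieces of this function, and so equal that piece; thus only finitely many elements of $\mathcal{S}_\beta$ occur in $U$, contradicting the definition of $S$ as an accumulation point of the infinite staircase. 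I expect the main obstacle to be the ``$\le$'' half of the key claim — carefully justifying that an arbitrary obstructing class can be replaced by a member of the finite set $\mathcal{F}$ without losing obstruction strength and without leaving $\tilde{\mathcal{E}}$ — together with the routine passage from equality on rationals to equality on all of $U$ via continuity; once these are in place the contradiction with the accumulation hypothesis is immediate, and therefore $C_\beta(S)=\sqrt{\tfrac{S}{2\beta}}$.
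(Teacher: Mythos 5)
Your proposal is correct and takes essentially the same route as the paper: argue by contradiction, use continuity to find a neighborhood $U$ of $S$ on which $C_\beta(\alpha)\geq\gamma\sqrt{\alpha/(2\beta)}$ for some $\gamma>1$, invoke Proposition~\ref{finaway} to reduce the supremum in (\ref{dirlim}) on $U$ to a maximum over a finite set of obstruction functions, and then use finite piecewise linearity to contradict accumulation. The extra care you take (trimming zero $\vec m$-coordinates to land in the finite set, passing from rationals to all of $U$ by continuity, and observing that the proof of Proposition~\ref{finaway} really only needs $\beta\geq 1$) is all implicit in the paper's more compressed proof.
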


\begin{proof}
Of course $C_{\beta}(S)\geq \sqrt{\frac{S}{2\beta}}$ by volume considerations.  If equality failed to hold then we could find a neighborhood $U$ of $S$ and a value $\gamma>1$ such that $C_{\beta}(\alpha)\geq \gamma\sqrt{\frac{\alpha}{2\beta}}$ for all $\alpha\in U$.  But then by (\ref{dirlim}) and Proposition \ref{finaway} $C_{\beta}(\alpha)$ would be given for $\alpha\in U$ as the maximum of the values $\mu_{\alpha,\beta}(E)$ where $E$ varies through a finite subset of $\mathcal{E}$ that is independent of $\alpha$.  Since the functions $\alpha\mapsto \mu_{\alpha,\beta}(E)$ are piecewise affine (with finitely many pieces) by \cite[Section 2]{MS} this would contradict the fact that $S$ is an accumulation point of an infinite staircase.  
\end{proof}

In particular Corollary \ref{accvol} applies with $\beta=L_{n,k}$ and $S=S_{n,k}$, so that $C_{L_{n,k}}(S_{n,k})$ agrees with the volume bound, and we have \begin{equation}\label{slvol} \mu_{S_{n,k},L_{n,k}}(E)\leq \sqrt{\frac{S_{n,k}}{2L_{n,k}}}\mbox{  for all }E\in\mathcal{E}.\end{equation}  We will see now that there are at least two distinct choices of $E\in\mathcal{E}$ for which the bound (\ref{slvol}) is sharp.  This will later help give us some indication of what happens to our infinite staircases when $\beta$ is varied away from one of the $L_{n,k}$. 

\begin{prop}\label{2221}
Let $n\geq 2$ and $k\geq 1$.  Then \[ \mu_{S_{n,k},L_{n,k}}\left((2,2;2,1^{\times 5})\right)=\sqrt{\frac{S_{n,k}}{2L_{n,k}}}.\]
\end{prop}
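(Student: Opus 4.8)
The plan is to reduce the claimed identity to explicit computations with the closed forms already established for $S_{n,k}$ and $L_{n,k}$. First I would recall from the remark following Proposition \ref{bigreduce} that for $\alpha$ in the interval $[5,6]$ one has $w(\alpha)=(1^{\times 5},\alpha-5,\mathcal{W}(\alpha-5,6-\alpha))$ and consequently $\mu_{\alpha,\beta}\big((2,2;2,1^{\times 5})\big)=\frac{\alpha+1}{2(\beta+1)}$. By Proposition \ref{snkint} we have $S_{n,k}\in\left(\frac{P_{2k+4}}{P_{2k+2}},\frac{P_{2k+2}}{P_{2k}}\right)$ for $k\geq 1$, and since $\frac{P_{2k+4}}{P_{2k+2}}>\sigma^2>5$ while $\frac{P_{2k+2}}{P_{2k}}>\sigma^2$, I would need to check that $S_{n,k}\leq 6$; in fact Proposition \ref{snkint} gives $S_{n,k}<S_{n+1,k}<\frac{P_{2k+2}}{P_{2k}}$, but more usefully, since $S_{n,k}=\frac{P_{2k+2}S_{n,0}-P_{2k}}{P_{2k}S_{n,0}-P_{2k-2}}$ is increasing in $S_{n,0}$ and $S_{n,0}>6$ for all $n\geq 2$, while $S_{n,k}<\frac{P_{2k+2}}{P_{2k}}\le\frac{P_4}{P_2}=6$ only when $k=1$... so I should instead simply argue directly: evaluate the bound $S_{n,k}<\frac{6P_{2k+2}-P_{2k}}{6P_{2k}-P_{2k-2}}$ from the proof of Proposition \ref{snkint} is false; rather the correct containment needed is $S_{n,k}\in[5,6]$, and for $k\geq 1$ this follows since $\frac{P_{2k+4}}{P_{2k+2}}\geq 5$ (true as $P_6/P_4=169/29>5$ and the sequence decreases to $\sigma^2>5$) gives the lower bound, while $S_{n,k}\le 6$ needs a short check — here I would use that $S_{n,k}$ is strictly increasing in $n$ with $\lim_{n\to\infty}S_{n,k}=\frac{P_{2k+2}}{P_{2k}}\le 6$ (equality only for $k=1$, and for $k=1$ the supremum is not attained, while for $k\ge 2$ it is $<6$). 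Combined with $5\leq S_{n,k}$, we get $S_{n,k}\in[5,6]$, so the formula $\mu_{S_{n,k},L_{n,k}}((2,2;2,1^{\times5}))=\frac{S_{n,k}+1}{2(L_{n,k}+1)}$ applies.

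Next I would square both the claimed value and the target: the proposition is equivalent to $\left(\frac{S_{n,k}+1}{2(L_{n,k}+1)}\right)^2=\frac{S_{n,k}}{2L_{n,k}}$, i.e. $L_{n,k}(S_{n,k}+1)^2=2S_{n,k}(L_{n,k}+1)^2$, i.e. $\frac{(1+S_{n,k})^2}{S_{n,k}}=\frac{2(1+L_{n,k})^2}{L_{n,k}}$. But this is exactly the identity between $S_{n,k}$ and $L_{n,k}$ that was established in Corollary \ref{nkstair}. So once the interval membership $S_{n,k}\in[5,6]$ is in place, the statement follows immediately (noting both sides of the original equation are positive, so squaring is reversible).

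The main obstacle, therefore, is not any deep computation but simply pinning down that $S_{n,k}\in[5,6]$ so that the elementary weight-sequence formula for $\mu_{\alpha,\beta}((2,2;2,1^{\times5}))$ is valid at $\alpha=S_{n,k}$. I expect this to be handled cleanly using Proposition \ref{orderratios} (which gives $\frac{P_{2k+4}}{P_{2k+2}}>\sigma^2>5$, yielding the lower bound via Proposition \ref{snkint}) together with the relation $S_{n,k}=\frac{P_{2k+2}S_{n,0}-P_{2k}}{P_{2k}S_{n,0}-P_{2k-2}}$ and monotonicity in $S_{n,0}$: since this Möbius function is increasing and $S_{n,0}>6$, we have $S_{n,k}>\frac{6P_{2k+2}-P_{2k}}{6P_{2k}-P_{2k-2}}=\frac{P_{2k+4}}{P_{2k+2}}$ by (\ref{4gapP}), and taking $S_{n,0}\to\infty$ gives $S_{n,k}<\frac{P_{2k+2}}{P_{2k}}\le 6$ for $k\ge 1$. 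I would write this out carefully, and then conclude by the algebraic reduction to the Corollary \ref{nkstair} identity described above.

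\begin{proof}
Recall from the remark following Proposition \ref{bigreduce} that for $\alpha\in[5,6]$ we have $w(\alpha)=\left(1^{\times 5},\alpha-5,\mathcal{W}(\alpha-5,6-\alpha)\right)$, and hence
\[ \mu_{\alpha,\beta}\left((2,2;2,1^{\times 5})\right)=\frac{(2,1^{\times 5})\cdot(1^{\times 5},\alpha-5)}{2+2\beta}=\frac{\alpha+1}{2(\beta+1)}. \]
We first verify that $S_{n,k}\in[5,6]$ for all $n\geq 2$ and $k\geq 1$.  By Proposition \ref{snkint}, $S_{n,k}>\frac{P_{2k+4}}{P_{2k+2}}$, and Proposition \ref{orderratios} gives $\frac{P_{2k+4}}{P_{2k+2}}>\sigma^2>5$ (since $\sigma^2=3+2\sqrt 2>5$), so $S_{n,k}>5$.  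For the upper bound, note that by construction $S_{n,k}=\lim_{i\to\infty}\frac{c_{i,n,k}}{d_{i,n,k}}$ and by taking the limit of the ratio of the last two equations in (\ref{ink}) we obtain, for $k\geq 1$,
\[ S_{n,k}=\frac{P_{2k+2}S_{n,0}-P_{2k}}{P_{2k}S_{n,0}-P_{2k-2}}. \]
By Proposition \ref{orderratios} we have $\frac{P_{2k+2}}{P_{2k}}<\frac{P_{2k}}{P_{2k-2}}$, so the function $t\mapsto\frac{P_{2k+2}t-P_{2k}}{P_{2k}t-P_{2k-2}}$ is strictly increasing for $t$ in the relevant range, with $\lim_{t\to\infty}\frac{P_{2k+2}t-P_{2k}}{P_{2k}t-P_{2k-2}}=\frac{P_{2k+2}}{P_{2k}}$.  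Hence $S_{n,k}<\frac{P_{2k+2}}{P_{2k}}\leq \frac{P_4}{P_2}=\frac{12}{2}=6$ for $k\geq 1$ (using Proposition \ref{orderratios} again for $k\geq 2$, and direct evaluation for $k=1$).  Thus $S_{n,k}\in[5,6]$, and the displayed formula for $\mu_{\alpha,\beta}\left((2,2;2,1^{\times 5})\right)$ applies at $\alpha=S_{n,k}$, $\beta=L_{n,k}$:
\[ \mu_{S_{n,k},L_{n,k}}\left((2,2;2,1^{\times 5})\right)=\frac{S_{n,k}+1}{2(L_{n,k}+1)}. \]

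It remains to check that $\frac{S_{n,k}+1}{2(L_{n,k}+1)}=\sqrt{\frac{S_{n,k}}{2L_{n,k}}}$.  Both sides are positive, so squaring, this is equivalent to
\[ \frac{(S_{n,k}+1)^2}{4(L_{n,k}+1)^2}=\frac{S_{n,k}}{2L_{n,k}}, \]
i.e. to $2L_{n,k}(S_{n,k}+1)^2=4S_{n,k}(L_{n,k}+1)^2$, i.e. to
\[ \frac{(1+S_{n,k})^2}{S_{n,k}}=\frac{2(1+L_{n,k})^2}{L_{n,k}}. \]
This last identity is precisely the relation between $S_{n,k}$ and $L_{n,k}$ established in Corollary \ref{nkstair}.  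This completes the proof.
\end{proof}
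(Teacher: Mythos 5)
Your proof is correct and follows essentially the same route as the paper's: establish $S_{n,k}\in(\sigma^2,6)\subset[5,6]$ using Propositions \ref{orderratios} and \ref{snkint}, use the formula $\mu_{\alpha,\beta}\big((2,2;2,1^{\times 5})\big)=\frac{\alpha+1}{2(\beta+1)}$ valid on $[5,6]$ (which the paper recomputes directly rather than citing the remark following Proposition \ref{bigreduce}, but the calculation is identical), and then reduce the claim to the identity $\frac{(1+S_{n,k})^2}{S_{n,k}}=\frac{2(1+L_{n,k})^2}{L_{n,k}}$ from Corollary \ref{nkstair}. The only real difference is that you re-derive the upper bound $S_{n,k}<6$ from the Möbius form $S_{n,k}=\frac{P_{2k+2}S_{n,0}-P_{2k}}{P_{2k}S_{n,0}-P_{2k-2}}$, whereas the paper simply cites $S_{n,k}<\frac{P_{2k+2}}{P_{2k}}\leq\frac{P_4}{P_2}=6$ directly from Proposition \ref{snkint} together with the monotonicity of $\frac{P_{2k+2}}{P_{2k}}$ in $k$ (from Proposition \ref{orderratios}); citing \ref{snkint} directly would have saved you the detour.
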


\begin{proof}
By Propositions \ref{orderratios} and \ref{snkint} we have \[ \sigma^2<S_{n,k}<\frac{P_{4}}{P_2}=6,\] so since $\sigma^2=3+2\sqrt{2}>5$ we have \[ w(S_{n,k})=(1^{\times 5},\mathcal{W}(1,S_{n,k}-5))=(1^{\times 5},S_{n,k}-5,\mathcal{W}(6-S_{n,k},S_{n,k}-5)).\]  Hence \begin{align*}
\mu_{S_{n,k},L_{n,k}}&\left((2,2;2,1^{\times 5})\right)= \frac{(2,1^{\times 5})\cdot (1^{\times 5},S_{n,k}-5,\mathcal{W}(6-S_{n,k},S_{n,k}-5))}{2+2L_{n,k}} \\ &= \frac{2+4+S_{n,k}-5}{2(1+L_{n,k})}=\frac{1}{2}\frac{1+S_{n,k}}{1+L_{n,k}}.
\end{align*} But the identity $\frac{(1+S_{n,k})^2}{S_{n,k}}=\frac{2(1+L_{n,k})^2}{L_{n,k}}$ from Corollary \ref{nkstair} immediately implies that $\frac{1}{2}\frac{1+S_{n,k}}{1+L_{n,k}}=\sqrt{\frac{S_{n,k}}{2L_{n,k}}}$.
\end{proof}

Proposition \ref{2221} does not apply to the case $k=0$ because $S_{n,0}>6$, leading $w(S_{n,0})$ to have a different form.  Here is the analogous statement for that case.

\begin{prop}  \label{gnmatch}
For any $n\geq 2$ let $G_n=\left(2n^2-n-1,2n-1;2n-1,(2n-2)^{\times(2n+1)},1^{\times (2n-2)}\right)$.  Then $G_n\in\mathcal{E}$, and \[ \mu_{S_{n,0},L_{n,0}}(G_n)=\sqrt{\frac{S_{n,0}}{2L_{n,0}}}=\frac{\omega_n}{\omega_n-1}.\]
\end{prop}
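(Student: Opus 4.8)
The proof splits into two essentially independent parts: showing $G_n\in\mathcal{E}$, and evaluating the obstruction $\mu_{S_{n,0},L_{n,0}}(G_n)$.

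\emph{Part 1: $G_n\in\mathcal{E}$.} The plan is to exhibit an explicit chain of Cremona moves carrying $G_n$ to the standard exceptional class $E_0=(1,0;1)$, which by \cite[Proposition 1.2.12(iii)]{MS} (as recalled in Section~\ref{cremintro}) proves $G_n\in\mathcal{E}$. First I would record that $G_n\in\tilde{\mathcal{E}}$, i.e.\ that it has Chern number $1$, self-intersection $-1$, and all entries of $\vec m$ nonnegative --- a one-line computation. Converting to $\mathbb{C}P^2$-coordinates via (\ref{convert1}) gives $G_n=\langle 2n^2-n-1;\,2n^2-3n,\,0,\,(2n-2)^{\times(2n+1)},\,1^{\times(2n-2)}\rangle$. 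I would then run $n$ moves of the form $\mathfrak{c}_{012}$, at each stage having reordered so that $a_1=a_2=2n-2$; since the difference ``$d-a_0$'' stays equal to $2n-1$ throughout, each such move has $\delta=(2n-1)-2(2n-2)=3-2n$ and thus converts a pair of $(2n-2)$'s into a pair of $1$'s while decreasing $d$ and $a_0$ each by $2n-2$. After $n$ of these (and discarding zero entries) one reaches $\langle 2n-1;\,2n-2,\,1^{\times(4n-2)}\rangle$. Writing $m=2n-1$, this has the shape $\langle m;\,m-1,\,1^{\times 2m}\rangle$, and one more $\mathfrak{c}_{012}$ (using $a_0=m-1$ and two of the $1$'s, so $\delta=-1$) produces $\langle m-1;\,m-2,\,1^{\times 2(m-1)}\rangle$; iterating this telescoping move from $m=2n-1$ down to $m=1$ and applying $\mathfrak{c}_{012}$ once more terminates at $\langle 0;-1,0\rangle=E_0$. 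All moves are legitimate because the relevant coordinates remain nonnegative and enough $(2n-2)$'s (resp.\ $1$'s) are available at each stage; this is the only bookkeeping one must be careful with.

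\emph{Part 2: the obstruction value.} By Definition~\ref{obsdef}, $\mu_{S_{n,0},L_{n,0}}(G_n)=\frac{w(S_{n,0})\cdot\vec m}{a+L_{n,0}b}$ with $a=2n^2-n-1$, $b=2n-1$, $L_{n,0}=\sqrt{n^2-1}$, and $\vec m=(2n-1,(2n-2)^{\times(2n+1)},1^{\times(2n-2)})$. For the denominator I would use $\omega_n^2=2n\omega_n-1$ to check directly that $a+\sqrt{n^2-1}\,b=(2n-1)\omega_n-1=\omega_n^2-\omega_n=\omega_n(\omega_n-1)$ (equivalently $(a,b)=(a_{2,n}-a_{1,n},\,b_{2,n}-b_{1,n})$, so by (\ref{abcd}) it equals $\omega_n^2-\omega_n$). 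For the numerator I would use the weight expansion of $S_{n,0}$: by (\ref{sn0bound}) we have $2n+2<S_{n,0}<2n+2+\tfrac{1}{2n-2}<2n+3$, so $w(S_{n,0})=(1^{\times(2n+2)},\mathcal{W}(1,t))$ with $t=S_{n,0}-(2n+2)\in\bigl(0,\tfrac{1}{2n-2}\bigr)$, and $\mathcal{W}(1,t)=\mathcal{W}(t,1)$ begins with $\lfloor 1/t\rfloor\ge 2n-2$ copies of $t$. Pairing the $4n$ entries of $\vec m$ against the first $4n$ entries of $w(S_{n,0})$ --- namely $2n+2$ ones followed by $2n-2$ copies of $t$ --- yields $w(S_{n,0})\cdot\vec m=(2n-1)+(2n+1)(2n-2)+(2n-2)t=(4n^2-3)+(2n-2)t$. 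Now $(2n-2)t=(2n-2)\bigl(S_{n,0}-(2n+2)\bigr)=2n\sqrt{n^2-1}-2(n^2-1)$ (the identity displayed in the proof of Proposition~\ref{snkint}), so $w(S_{n,0})\cdot\vec m=2n^2-1+2n\sqrt{n^2-1}=\omega_n^2$. Dividing, $\mu_{S_{n,0},L_{n,0}}(G_n)=\omega_n^2/\bigl(\omega_n(\omega_n-1)\bigr)=\omega_n/(\omega_n-1)$.

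It then remains only to verify the algebraic identity $\tfrac{\omega_n}{\omega_n-1}=\sqrt{\tfrac{S_{n,0}}{2L_{n,0}}}$: squaring and using $2L_{n,0}=\omega_n-\omega_n^{-1}=\tfrac{(\omega_n-1)(\omega_n+1)}{\omega_n}$ reduces this to $S_{n,0}=\tfrac{\omega_n(\omega_n+1)}{\omega_n-1}$, which is the formula $S_{n,0}=\tfrac{1+\omega_n}{1-\omega_n^{-1}}$ from the proof of Proposition~\ref{snkint}. I expect the only genuine obstacle to be the Cremona bookkeeping of Part 1; Part 2's one subtlety is ensuring $w(S_{n,0})$ really supplies $2n-2$ copies of $t$ to pair against the trailing $1$'s of $\vec m$, which is precisely the content of the bound $S_{n,0}<2n+2+\tfrac{1}{2n-2}$.
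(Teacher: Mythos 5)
Your argument is correct and follows essentially the same two-part strategy as the paper: the first $n$ Cremona moves with $\delta=3-2n$ (so each one decreases $d$ and $a_0$ by $2n-3$, not $2n-2$ as you wrote --- a harmless slip, since you still correctly land on $\langle 2n-1;2n-2,1^{\times(4n-2)}\rangle$) are the paper's $\frak{c}_{034},\ldots,\frak{c}_{0,2n+1,2n+2}$ in different indexing, and your Part 2 computation of the numerator $\omega_n^2$ and denominator $\omega_n^2-\omega_n$ mirrors the paper's exactly. The only structural difference is that once $\langle 2n-1;2n-2,1^{\times(4n-2)}\rangle$ is reached the paper recognizes it as the class $A_{1,2n-2}$ and appeals to its earlier membership in $\mathcal{E}$ (via \cite[Lemma 3.2]{CFS}), whereas your telescoping chain down to $E_0$ re-derives that fact from scratch; both are fine.
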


\begin{proof} Changing basis as usual we find \[ G_n=\left\langle 2n^2-n-1;2n^2-3n,0,(2n-2)^{\times (2n+1)},1^{\times (2n-2)}\right\rangle.\]  Applying $n$ Cremona moves $\frak{c}_{034},\frak{c}_{056},\ldots,\frak{c}_{0,2n+1,2n+2}$, each with $\delta=-2n+3$, reduces this to \[ \langle 2n-1;0,0,2n-2,1^{\times (4n-2)}\rangle \] which after deleting zeros and changing basis simply yields $(2n-2,1;\mathcal{W}(4n-3,1))$ which is the familiar class $A_{1,2n-2}$ that of course belongs to $\mathcal{E}$. Thus $G_n\in\mathcal{E}$.

Now in view of Proposition \ref{snkint} we have \begin{align*} w(S_{n,0})&=\left(1^{\times(2n+2)},\mathcal{W}(1,S_{n,0}-(2n+2))\right)
\\ &= \left(1^{\times(2n+2)},(S_{n,0}-(2n+2))^{\times (2n-2)},\mathcal{W}(1-(2n-2)(S_{n,0}-(2n+2)),S_{n,0}-(2n+2))\right) \end{align*}
and hence \begin{align*} & \left(2n-1,(2n-2)^{\times(2n+1)},1^{\times (2n-2)}\right)\cdot w(S_{n,0}) = 2n-1+(2n-2)(2n+1)+(2n-2)\left(S_{n,0}-(2n+2)\right) \\ \qquad &= 1+(2n-2)(2n+2)+(2n-2)\left(S_{n,0}-(2n+2)\right) =(2n-2)S_{n,0}+1.\end{align*}  Hence \begin{equation}\label{gnobs} \mu_{S_{n,0},L_{n,0}}(G_n)= \frac{(2n-2)S_{n,0}+1}{(2n^2-n-1)+(2n-1)L_{n,0}}.\end{equation}

Now $L_{n,0}=\sqrt{n^2-1}$, so the denominator of the above fraction is \[ (2n^2-n-1)+(2n-1)\sqrt{n^2-1}=(2n-1)(n+\sqrt{n^2-1})-1=\omega_{n}^{2}-\omega_{n}.\]  So since $S_{n,0}=\frac{\omega_{n}+1}{1-\omega_{n}^{-1}}$ and $\omega_{n}^{2}=2n\omega_{n}-1$, \begin{align} \nonumber
\mu_{S_{n,0},L_{n,0}}(G_n) &= \frac{(2n-2)\frac{\omega_{n}+1}{1-\omega_{n}^{-1}}+1}{\omega_{n}^{2}-\omega_{n}} =\frac{(2n-2)(\omega_{n}+1)+(1-\omega_{n}^{-1})}{(\omega_{n}-1)^2}\\  \nonumber &= \frac{\omega_{n}^{2}-2\omega_n+2n-\omega_{n}^{-1}}{(\omega_{n}-1)^2} = \frac{\omega_{n}^{2}-\omega_{n}}{(\omega_{n}-1)^2} \\ \label{mugncalc} &= \frac{\omega_n}{\omega_n-1}.\end{align}

On the other hand since $2L_{n,0}=2\sqrt{n^2-1}=\omega_{n}-\omega_{n}^{-1}$ we have \begin{align*}
\frac{S_{n,0}}{2L_{n,0}} &= \frac{\omega_n+1}{(1-\omega_{n}^{-1})(\omega_{n}-\omega_{n}^{-1})}=\frac{\omega_{n}^{2}(\omega_{n}+1)}{(\omega_{n}-1)(\omega_{n}^2-1)}
\\ &= \frac{\omega_{n}^{2}}{(\omega_n-1)^2}.
\end{align*}
So by (\ref{mugncalc}) we indeed have $\mu_{S_{n,0},L_{n,0}}(G_n)=\sqrt{\frac{S_{n,0}}{2L_{n,0}}}=\frac{\omega_n}{\omega_n-1}$.
\end{proof}

\begin{prop}\label{an+1}
Let $k\geq 0$ and $n\geq 2$.  Then \[ \Gamma_{S_{n,k},L_{n,k}}(A_{1,n+1}^{(k)}) = \sqrt{\frac{S_{n,k}}{2L_{n,k}}}.\]  
\end{prop}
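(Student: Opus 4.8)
The plan is to compute $\Gamma_{S_{n,k},L_{n,k}}(A_{1,n+1}^{(k)})$ directly from the definition in Proposition \ref{Gammadef} and to match it with the value $\sqrt{\frac{S_{n,k}}{2L_{n,k}}}$ using the two identities we already have available: the ``volume identity'' $\frac{(1+S_{n,k})^2}{S_{n,k}}=\frac{2(1+L_{n,k})^2}{L_{n,k}}$ from Corollary \ref{nkstair} (equivalently $\sqrt{\frac{S_{n,k}}{2L_{n,k}}}=\frac{1}{2}\frac{1+S_{n,k}}{1+L_{n,k}}$, which is exactly the reformulation used in the proof of Proposition \ref{2221}), and the recursive formula $S_{n,k}=\frac{P_{2k+2}S_{n,0}-P_{2k}}{P_{2k}S_{n,0}-P_{2k-2}}$ derived in the proof of Proposition \ref{snkint}. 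First I would dispose of whether $S_{n,k}$ lies to the left or right of the breakpoint $\frac{c_{1,n+1,k}}{d_{1,n+1,k}}$ of the piecewise-linear function $\Gamma_{\cdot,L_{n,k}}(A_{1,n+1}^{(k)})$. I expect that $S_{n,k} \geq \frac{c_{1,n+1,k}}{d_{1,n+1,k}}$, so that $\Gamma_{S_{n,k},L_{n,k}}(A_{1,n+1}^{(k)}) = \frac{c_{1,n+1,k}}{a_{1,n+1,k}+L_{n,k}b_{1,n+1,k}}$; this should follow from Proposition \ref{snkint} (which gives $S_{n,k}>\frac{P_{2k+4}}{P_{2k+2}}$, or $S_{n,0}>2n+2$ when $k=0$) together with explicit evaluation of $\frac{c_{1,n+1,k}}{d_{1,n+1,k}}$ via (\ref{ink}) and the fact that for $A_{1,n+1}=(n+1,1;\mathcal{W}(2n+3,1))$ we have $c_{1,n+1}=2n+3$, $d_{1,n+1}=1$.

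The heart of the argument is then the evaluation of $a_{1,n+1,k}+L_{n,k}b_{1,n+1,k}$. Here I would use the closed forms (\ref{abcd}) at $i=1$, namely $a_{1,n+1}=n+1$, $b_{1,n+1}=1$, $c_{1,n+1}=2n+3$, $d_{1,n+1}=1$, plug into (\ref{ink}) to get $a_{1,n+1,k},b_{1,n+1,k},c_{1,n+1,k},d_{1,n+1,k}$ as explicit expressions in $n$ and the Pell numbers $P_{2k-2},P_{2k},P_{2k+2}$, and then form $a_{1,n+1,k}+L_{n,k}b_{1,n+1,k}$ using the explicit formula (\ref{Lnkdef}) for $L_{n,k}$. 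The key algebraic simplification should come from the fact that $L_{n,k}+1$ and $L_{n,k}-1$ have a common denominator (the denominator of (\ref{Lnkdef})), which is precisely the kind of combination that appears in Lemma \ref{aLb}; in fact $a_{i,n,k}-L_{n,k}b_{i,n,k}$ was computed there, and I anticipate a companion computation of $a_{i,n,k}+L_{n,k}b_{i,n,k}$ (the $\omega_n^{i}$-coefficient surviving rather than the $\omega_n^{-i}$-coefficient) being what makes everything collapse. Rather than doing this via the general-$n'$, general-$i$ formulas, the cleanest route is probably to specialize: write $\omega_{n+1}$ as the relevant algebraic number and observe $a_{1,n+1}+\sqrt{(n+1)^2-1}\,b_{1,n+1}=\omega_{n+1}$, mirroring the identity $a_{i,n,0}+L_{n,0}b_{i,n,0}=\omega_n^{i}$ used in Section \ref{undervolsect}; but since here the subscript $n$ of $L$ and the subscript $n+1$ of $A$ differ, this shortcut does not apply verbatim, and I would instead just grind the rational-function identity.

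The main obstacle I foresee is purely bookkeeping: verifying that
\[
\frac{c_{1,n+1,k}}{a_{1,n+1,k}+L_{n,k}b_{1,n+1,k}} = \frac{1}{2}\,\frac{1+S_{n,k}}{1+L_{n,k}}
\]
after clearing denominators becomes a polynomial identity in $n$, $P_{2k-2}$, $P_{2k}$, $P_{2k+2}$, $\sqrt{n^2-1}$, and one must repeatedly invoke the Pell relations (\ref{psquared}), (\ref{hpadd}), (\ref{4gapP}), and the defining recurrence to reduce it to $0=0$; the presence of both Pell numbers and the surd $\sqrt{n^2-1}$ means one effectively checks the rational part and the $\sqrt{n^2-1}$-part separately. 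To keep this manageable I would first handle the base case $k=0$ in full (where $L_{n,0}=\sqrt{n^2-1}$, $S_{n,0}=\frac{1+\omega_n}{1-\omega_n^{-1}}$, and the computation is short and parallels the proof of Proposition \ref{gnmatch}), and then reduce general $k$ to $k=0$ by substituting the relation $S_{n,k}=\frac{P_{2k+2}S_{n,0}-P_{2k}}{P_{2k}S_{n,0}-P_{2k-2}}$ together with the analogous Brahmagupta-type relation expressing $a_{1,n+1,k}+L_{n,k}b_{1,n+1,k}$ in terms of $a_{1,n+1,0}+L_{n,0}b_{1,n+1,0}$ and Pell numbers — though establishing that last relation (i.e.\ that the Brahmagupta move acts on the quantity $a+L b$ by the same Möbius transformation in the Pell numbers that governs $c/d$) is itself the one nontrivial lemma I would need to isolate and prove first.
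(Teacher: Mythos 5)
There is a genuine gap: you have the inequality at the breakpoint backwards. From (\ref{ink}) applied to $A_{1,n+1}=(n+1,1;\mathcal{W}(2n+3,1))$ one gets
\[
\frac{c_{1,n+1,k}}{d_{1,n+1,k}}=\frac{P_{2k+2}(2n+3)-P_{2k}}{P_{2k}(2n+3)-P_{2k-2}}=\frac{nP_{2k+2}+H_{2k+2}}{nP_{2k}+H_{2k}},
\]
and since $S_{n,k}=\frac{P_{2k+2}S_{n,0}-P_{2k}}{P_{2k}S_{n,0}-P_{2k-2}}$ with the M\"obius map $t\mapsto\frac{P_{2k+2}t-P_{2k}}{P_{2k}t-P_{2k-2}}$ increasing, what you need is the \emph{upper} bound from Proposition \ref{snkint}, namely $S_{n,0}<2n+2+\frac{1}{2n-2}<2n+3$; this yields $S_{n,k}<\frac{c_{1,n+1,k}}{d_{1,n+1,k}}$, not $\geq$. (You cite only the lower bound $S_{n,0}>2n+2$, which is consistent with either side of $2n+3$.) Consequently $S_{n,k}$ lies in the \emph{linear} region of $\Gamma_{\cdot,L_{n,k}}(A_{1,n+1}^{(k)})$, so the quantity to compute is $\frac{d_{1,n+1,k}\,S_{n,k}}{a_{1,n+1,k}+L_{n,k}b_{1,n+1,k}}$, not $\frac{c_{1,n+1,k}}{a_{1,n+1,k}+L_{n,k}b_{1,n+1,k}}$. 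These two differ by the factor $\frac{d_{1,n+1,k}S_{n,k}}{c_{1,n+1,k}}<1$, and only the linear-piece value equals the volume bound; a quick check with $n=2$, $k=0$ (where $S_{2,0}=3+2\sqrt3$, $A_{1,3}=(3,1;\mathcal W(7,1))$) gives linear piece $=\tfrac{1+\sqrt3}{2}=\sqrt{S_{2,0}/(2\sqrt3)}$ but constant piece $=\tfrac{7}{3+\sqrt3}\neq\tfrac{1+\sqrt3}{2}$. So the computation as you set it up would not close.

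Once this is corrected, the remainder of your plan is essentially the paper's: it writes out $A_{1,n+1}^{(k)}$ explicitly in Pell numbers from (\ref{ink}), rewrites $\sqrt{S_{n,k}/(2L_{n,k})}$ as $\tfrac{S_{n,k}+1}{2(L_{n,k}+1)}$ exactly as you suggest, and reduces the ratio to $1$ via the Pell identity $H_{2k}H_{2k+1}=2P_{2k}P_{2k+1}+1$. Your proposed alternative of handling $k=0$ first and then propagating via a "Brahmagupta acts on $a+\beta b$" lemma is a reasonable route that the paper does not take (the paper grinds the general $k$ directly), but it is moot until the piecewise case is fixed.
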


\begin{proof}
We have, freely using identities from Section \ref{pellprelim}, \begin{align*} A_{1,n+1}^{(k)}&=\left(\frac{1}{2}((H_{2k}+1)(n+1)+H_{2k}+(2n+2)P_{2k}-1),\frac{1}{2}((H_{2k}-1)(n+1)+H_{2k}+(2n+2)P_{2k}+1);\right.\\ & \quad \left.\mathcal{W}\left(\frac{1}{2}(P_{2k+2}(2n+3)-P_{2k}),\frac{1}{2}(P_{2k}(2n+3)-P_{2k-2})\right)\right)
\\ &= \left(\frac{1}{2}(n(H_{2k+1}+1)+2P_{2k+1}),\frac{1}{2}(n(H_{2k+1}-1)+2P_{2k+1});\mathcal{W}(nP_{2k+2}+H_{2k+2},nP_{2k}+H_{2k})\right).\end{align*}

Now we have $S_{n,k}=\frac{P_{2k+2}S_{n,0}-P_{2k}}{P_{2k}S_{n,0}-P_{2k-2}}$ and $S_{n,0}<2n+3$ by Proposition \ref{snkint}, so since $t\mapsto \frac{P_{2k+2}t-P_{2k}}{P_{2k}t-P_{2k-2}}$ is an increasing function (as can be seen from Proposition \ref{orderratios}) it follows that $S_{n,k}<\frac{c_{1,n+1,k}}{d_{1,n,k}}=\frac{P_{2k+2}(n+1)-P_{2k}}{P_{2k}(n+1)-P_{2k-2}}$.  So $S_{n,k}$ lies in the region on which $\Gamma_{\cdot,L_{n,k}}(A_{1,n+1}^{(k)})$ is linear, and we have \begin{align*} \Gamma_{S_{n,k},L_{n,k}}(A_{1,n+1}^{(k)}) &=\frac{(nP_{2k}+H_{2k})S_{n,k}}{\frac{1}{2}(n(H_{2k+1}+1)+2P_{2k+1})+\frac{1}{2}(n(H_{2k+1}-1)+2P_{2k+1})L_{n,k}} \\ &= \frac{(nP_{2k}+H_{2k})S_{n,k}}{(nH_{2k+1}+2P_{2k+1})(L_{n,k}+1)/2-n(L_{n,k}-1)/2}
\\ &= \frac{(nP_{2k}+H_{2k})\frac{(\sqrt{n^2-1}+1)P_{2k+1}+nH_{2k+1}}{(\sqrt{n^2-1}+1)P_{2k-1}+nH_{2k-1}}}{\frac{(nH_{2k+1}+2P_{2k+1})((\sqrt{n^2-1}+1)H_{2k}+2nP_{2k})-n(\sqrt{n^2-1}-1)    }{H_{2k}(\sqrt{n^2-1}+1)+2nP_{2k}-(\sqrt{n^2-1}-1)    }}\end{align*} 

Meanwhile, \begin{align*} \sqrt{\frac{S_{n,k}}{2L_{n,k}}}&=\frac{S_{n,k}+1}{2(L_{n,k}+1)}
\\ &= \frac{\frac{2(\sqrt{n^2-1}+1)H_{2k}+4nP_{2k}}{(\sqrt{n^2-1}+1)P_{2k-1}+nH_{2k-1}}}{\frac{4(\sqrt{n^2-1}+1)H_{2k}+8nP_{2k}}{H_{2k}(\sqrt{n^2-1}+1)+2nP_{2k}-(\sqrt{n^2-1}-1)}}.\end{align*}

Thus \begin{align*} \frac{\Gamma_{S_{n,k},L_{n,k}}(A_{1,n+1}^{(k)})}{\sqrt{S_{n,k}/(2L_{n,k})}} &= \frac{2(nP_{2k}+H_{2k})((\sqrt{n^2-1}+1)P_{2k+1}+nH_{2k+1})}{(nH_{2k+1}+2P_{2k+1})((\sqrt{n^2-1}+1)H_{2k}+2nP_{2k})-n(\sqrt{n^2-1}-1)}.\end{align*}  By expanding out both the numerator and the denominator and twice using the identity $H_{2k}H_{2k+1}=2P_{2k}P_{2k+1}+1$, one easily finds that the above fraction simplifies to $1$.
\end{proof}

Thus at the accumulation point $S_{n,k}$ of each of our infinite staircases we have two distinct classes (namely $G_n$ and $A_{1,n+1}$ if $k=0$, and $(2,2;2,1^{\times 5})$ and $A_{1,n+1}^{(k)}$ if $k>0$), which are not themselves involved in the infinite staircase for $L_{n,k}$, but whose associated obstructions exactly match the volume bound at $(S_{n,k},L_{n,k})$.  The following discussion will show that these classes lead the infinite staircase to disappear when the aspect ratio $\beta$ of the target polydisk is varied from $L_{n,k}$.  We leave the proof of the following simple calculus exercise to the reader.

\begin{prop}\label{movesign}
Let $a,b,c,d,t_0>0$ and suppose that $\frac{c}{a+t_0b}=\frac{d}{\sqrt{t_0}}$.  Then \[ \frac{d}{dt}\left[\frac{c}{a+tb}-\frac{d}{\sqrt{t}}\right]_{t=t_0} \mbox{ has the same sign as }a-t_0b.\]
\end{prop}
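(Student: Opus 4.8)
This is a short calculus lemma (Proposition~\ref{movesign}), so the proof is a direct computation. The plan is to differentiate the function $t\mapsto \frac{c}{a+tb}-\frac{d}{\sqrt{t}}$ and evaluate at $t=t_0$, then use the hypothesis $\frac{c}{a+t_0b}=\frac{d}{\sqrt{t_0}}$ to eliminate one of the constants and simplify the resulting expression into something whose sign is manifestly that of $a-t_0b$.

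First I would compute the derivative term by term:
\[
\frac{d}{dt}\left[\frac{c}{a+tb}-\frac{d}{\sqrt{t}}\right] = -\frac{cb}{(a+tb)^2}+\frac{d}{2t^{3/2}}.
\]
Next I would evaluate at $t=t_0$ and substitute the hypothesis. From $\frac{c}{a+t_0b}=\frac{d}{\sqrt{t_0}}$ we get $c = \frac{d(a+t_0b)}{\sqrt{t_0}}$, so
\[
-\frac{cb}{(a+t_0b)^2}+\frac{d}{2t_0^{3/2}} = -\frac{db}{\sqrt{t_0}(a+t_0b)}+\frac{d}{2t_0^{3/2}} = \frac{d}{\sqrt{t_0}}\left(\frac{1}{2t_0}-\frac{b}{a+t_0b}\right).
\]
The prefactor $\frac{d}{\sqrt{t_0}}$ is positive since $d,t_0>0$, so the sign is that of $\frac{1}{2t_0}-\frac{b}{a+t_0b} = \frac{(a+t_0b)-2t_0b}{2t_0(a+t_0b)} = \frac{a-t_0b}{2t_0(a+t_0b)}$. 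Since $a,b,t_0>0$ the denominator $2t_0(a+t_0b)$ is positive, so the whole expression has the same sign as $a-t_0b$, as claimed.

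There is essentially no obstacle here — the only thing to be careful about is bookkeeping of signs when substituting the hypothesis, and making sure all the factors pulled out front ($d$, powers of $t_0$, $(a+t_0b)$) are genuinely positive, which they are under the stated assumptions $a,b,c,d,t_0>0$. I would write this up as a two- or three-line display computation.

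\begin{proof}
Differentiating, for $t>0$ we have
\[
\frac{d}{dt}\left[\frac{c}{a+tb}-\frac{d}{\sqrt{t}}\right] = -\frac{cb}{(a+tb)^2}+\frac{d}{2t^{3/2}}.
\]
The hypothesis $\frac{c}{a+t_0b}=\frac{d}{\sqrt{t_0}}$ gives $c=\frac{d(a+t_0b)}{\sqrt{t_0}}$, so evaluating at $t=t_0$,
\[
\left[\frac{d}{dt}\left(\frac{c}{a+tb}-\frac{d}{\sqrt{t}}\right)\right]_{t=t_0} = -\frac{db}{\sqrt{t_0}(a+t_0b)}+\frac{d}{2t_0^{3/2}} = \frac{d}{\sqrt{t_0}}\cdot\frac{(a+t_0b)-2t_0b}{2t_0(a+t_0b)} = \frac{d}{\sqrt{t_0}}\cdot\frac{a-t_0b}{2t_0(a+t_0b)}.
\]
Since $a,b,c,d,t_0>0$, the factor $\frac{d}{\sqrt{t_0}\cdot 2t_0(a+t_0b)}$ is positive, and so the derivative at $t_0$ has the same sign as $a-t_0b$.
\end{proof}
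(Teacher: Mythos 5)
Your proof is correct, and since the paper explicitly leaves this "simple calculus exercise to the reader," the direct differentiate-substitute-simplify computation you give is exactly what is intended.
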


\begin{cor}\label{moveL}
Given $n\geq 2$, $k\geq 0$ there is $\ep>0$ such that \[ \Gamma_{S_{n,k},\beta}(A_{1,n+1}^{(k)})>\sqrt{\frac{S_{n,k}}{2\beta}} \mbox{ for }L_{n,k}<\beta<L_{n,k}+\ep,\] and \[ \mu_{S_{n,k},\beta}\left((2,2;2,1^{\times 5})\right)>\sqrt{\frac{S_{n,k}}{2\beta}}\mbox{ for }L_{n,k}-\ep<\beta <L_{n,k}\mbox{ if }k\geq 1 \] while \[ \mu_{S_{n,k},\beta }(G_n)>\sqrt{\frac{S_{n,k}}{2\beta }}\mbox{ for }L_{n,k}-\ep<\beta <L_{n,k}\mbox{ if }k=0. \]
Thus for any choice of $n,k$ we have $C_{\beta}(S_{n,k})>\sqrt{\frac{S_{n,k}}{2\beta}}$ for $0<|\beta-L_{n,k}|<\ep$. 
\end{cor}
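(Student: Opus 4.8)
The plan is to deduce Corollary \ref{moveL} from Proposition \ref{movesign} applied to the two (or three) distinguished classes whose obstructions exactly match the volume bound at $(S_{n,k},L_{n,k})$, namely $A_{1,n+1}^{(k)}$ together with $(2,2;2,1^{\times 5})$ (when $k\geq 1$) or $G_n$ (when $k=0$). The key input is that for a \emph{fixed} $\alpha$, the obstruction functions $\beta\mapsto \mu_{\alpha,\beta}(E)$ for the relevant classes $E=(x,y;\vec m)$ have the form $\frac{\vec m\cdot w(\alpha)}{x+\beta y}$, i.e.\ exactly the shape $\frac{c}{a+\beta b}$ to which Proposition \ref{movesign} applies, with $a=x$, $b=y$, $c=\vec m\cdot w(\alpha)$ and $d$ chosen so that $\frac{d}{\sqrt{\beta}}=\sqrt{\frac{\alpha}{2\beta}}$, namely $d=\sqrt{\alpha/2}$.

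First I would fix $n\geq 2$ and $k\geq 0$ and set $\beta_0=L_{n,k}$, $\alpha_0=S_{n,k}$. For the class $E=A_{1,n+1}^{(k)}=(x,y;\vec m)$, Proposition \ref{an+1} gives that at $\beta=\beta_0$ the value $\Gamma_{\alpha_0,\beta_0}(E)$ — which, since $\alpha_0$ lies strictly in the linear region of $\Gamma_{\cdot,\beta_0}(E)$ by the argument in the proof of Proposition \ref{an+1}, equals $\frac{\vec m\cdot w(\alpha_0)}{x+\beta_0 y}$ and hence equals $\mu_{\alpha_0,\beta_0}(E)$ — matches $\sqrt{\alpha_0/(2\beta_0)}$. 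So the hypothesis $\frac{c}{a+\beta_0 b}=\frac{d}{\sqrt{\beta_0}}$ of Proposition \ref{movesign} holds; since $\frac{a}{b}=\frac{x}{y}=\frac{a_{1,n+1,k}}{b_{1,n+1,k}}$ and the sequence $\frac{a_{i,m,k}}{b_{i,m,k}}$ increases to $L_{m,k}$, we have $x/y<L_{n+1,k}$, but we actually need $x-\beta_0 y>0$, i.e.\ $x/y>L_{n,k}$: this follows because $\frac{a_{1,n+1,k}}{b_{1,n+1,k}}>L_{n+1,k}\cdot(\text{something})$ — more carefully, $a_{1,m,k}-L_{m,k}b_{1,m,k}>0$ by Lemma \ref{aLb} (with $i=1$), and since $L_{n+1,k}>L_{n,k}$ by Proposition \ref{Lsize} we get $a_{1,n+1,k}-L_{n,k}b_{1,n+1,k}\geq a_{1,n+1,k}-L_{n+1,k}b_{1,n+1,k}>0$. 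Hence Proposition \ref{movesign} gives that $\frac{d}{dt}\big[\frac{c}{x+ty}-\sqrt{\alpha_0/(2t)}\big]$ at $t=\beta_0$ is positive, so $\Gamma_{\alpha_0,\beta}(A_{1,n+1}^{(k)})=\frac{c}{x+\beta y}>\sqrt{\alpha_0/(2\beta)}$ for $\beta\in(L_{n,k},L_{n,k}+\ep)$ for some $\ep>0$ (using also that $\alpha_0$ remains in the linear region of $\Gamma_{\cdot,\beta}$ for $\beta$ near $\beta_0$, by continuity of $c_{1,n+1,k}/d_{1,n+1,k}$ being independent of $\beta$ and the ratio inequality being strict).

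Next, for the class on the other side: when $k\geq 1$, Proposition \ref{2221} gives $\mu_{\alpha_0,\beta_0}\big((2,2;2,1^{\times 5})\big)=\sqrt{\alpha_0/(2\beta_0)}$; here $(x,y)=(2,2)$ so $a-\beta_0 b=2-2L_{n,k}<0$ since $L_{n,k}>1$, so Proposition \ref{movesign} gives that the derivative is negative, hence $\mu_{\alpha_0,\beta}>\sqrt{\alpha_0/(2\beta)}$ for $\beta$ slightly \emph{less} than $L_{n,k}$. Here one must note $\mu_{\alpha,\beta}$ rather than $\Gamma$: the formula $\mu_{\alpha_0,\beta}\big((2,2;2,1^{\times 5})\big)=\frac{(2,1^{\times 5})\cdot w(\alpha_0)}{2+2\beta}$ depends on $w(\alpha_0)$ which is a fixed vector (independent of $\beta$), so the obstruction as a function of $\beta$ is literally of the form $\frac{c}{a+\beta b}$, and Proposition \ref{movesign} applies verbatim. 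When $k=0$, the same argument runs with $G_n=(2n^2-n-1,2n-1;\dots)$ in place of $(2,2;2,1^{\times 5})$, using Proposition \ref{gnmatch} for the matching; here $a-\beta_0 b=(2n^2-n-1)-\sqrt{n^2-1}(2n-1)$, which I must check is negative — equivalently $2n^2-n-1<(2n-1)\sqrt{n^2-1}$, i.e.\ $(n+1)(2n-1)<(2n-1)\sqrt{n^2-1}$ after factoring $2n^2-n-1=(2n-1)(n+1)$, wait that is false since $n+1>\sqrt{n^2-1}$; so actually $a-\beta_0 b>0$ here, and I should instead observe that $\mu_{S_{n,0},\beta}(G_n)$ as a function of $\beta$ still has the form $\frac{c}{a+\beta b}$ with $a=2n^2-n-1$, $b=2n-1$, and $a-\beta_0 b>0$ means by Proposition \ref{movesign} the derivative is \emph{positive} — but then $G_n$ would beat the volume for $\beta>L_{n,0}$, contradicting the desired statement; this sign discrepancy is the main obstacle I must resolve. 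The resolution is that I have the wrong $(a,b)$: in the coordinates $(x,y;\vec m)$ the obstruction is $\frac{\vec m\cdot w(\alpha)}{x+\beta y}$, and for $G_n$ written as $(2n^2-n-1,2n-1;2n-1,(2n-2)^{\times(2n+1)},1^{\times(2n-2)})$ we have $x=2n^2-n-1$, $y=2n-1$, so $x-\beta_0 y=(2n-1)(n+1)-(2n-1)\sqrt{n^2-1}=(2n-1)(n+1-\sqrt{n^2-1})>0$; so indeed the derivative is positive and $G_n$ exceeds the volume for $\beta$ slightly \emph{greater} than $L_{n,0}$, not less. So for $k=0$ I should use $G_n$ on the \emph{upper} side and find a different class for the lower side — but in fact, re-reading, the statement only requires \emph{some} class exceeding the volume on each side, and for $k=0$ the class $A_{1,n+1}^{(0)}$ covers the upper side; for the lower side one needs a class with $x<\beta_0 y$ matching the volume at $(S_{n,0},L_{n,0})$. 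The honest fix: check whether the statement as written is what I want, and if $G_n$ only works on one side, note that $A_{1,n+1}^{(0)}$ and $G_n$ might lie on the same side, in which case I need yet another class; I would look for one by applying Lemma \ref{aLbineq}-style reasoning or by direct search among small classes, or reconcile with the paper's intent by double-checking the sign in Proposition \ref{gnmatch}'s setup. Modulo settling this sign bookkeeping for the $k=0$ lower-side class, the conclusion $C_\beta(S_{n,k})\geq \max$ of the relevant obstructions $>\sqrt{S_{n,k}/(2\beta)}$ for $0<|\beta-L_{n,k}|<\ep$ follows immediately from $C_\beta\geq\mu_{\cdot,\beta}(E)$ and $C_\beta\geq\Gamma_{\cdot,\beta}(E')$ for $E'$ quasi-perfect (Proposition \ref{Gammadef}), taking $\ep$ to be the minimum of the two one-sided $\ep$'s.
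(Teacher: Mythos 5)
The proposal follows the paper's intended route — apply Proposition~\ref{movesign} to each of the three matching classes from Propositions~\ref{an+1}, \ref{2221}, and \ref{gnmatch} — and the treatment of $A_{1,n+1}^{(k)}$ and $(2,2;2,1^{\times 5})$ is correct. But the $G_n$ case goes astray because of a factoring error that you then mistake for a genuine obstruction.

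You write $2n^2-n-1=(2n-1)(n+1)$; this is false, since $(2n-1)(n+1)=2n^2+n-1$. The correct factorization is $2n^2-n-1=(2n+1)(n-1)$. With that corrected, the sign of $a-\beta_0 b$ for $G_n=(2n^2-n-1,2n-1;\ldots)$ is
\[
(2n+1)(n-1)-(2n-1)\sqrt{n^2-1}=\sqrt{n-1}\left((2n+1)\sqrt{n-1}-(2n-1)\sqrt{n+1}\right),
\]
and since $(2n+1)^2(n-1)=4n^3-3n-1 < 4n^3-3n+1=(2n-1)^2(n+1)$ this is negative, not positive. Equivalently, and more directly, $(2n^2-n-1)^2-(n^2-1)(2n-1)^2=-2n+2<0$ for $n\geq 2$. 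So $a-L_{n,0}b<0$ for $G_n$, Proposition~\ref{movesign} gives a negative derivative, and $\mu_{S_{n,0},\beta}(G_n)$ exceeds the volume bound precisely for $\beta$ slightly \emph{less} than $L_{n,0}$, exactly as the statement asserts. The ``sign discrepancy'' you worry about is your own arithmetic, not the paper's; there is no need to hunt for a fourth class. Once you fix the factoring, your argument reproduces the paper's proof.
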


\begin{proof}As functions of $\beta $, $\Gamma_{S_{n,k},\beta}(A_{1,n+1}^{(k)})$ and the various $\mu_{S_{n,k},\beta }(E)$ have the form $\beta \mapsto \frac{c}{a+\beta b}$ where $a,b$ are first two entries in the expression of $A_{1,n+1}^{(k)}$ or $E$ as $(a,b;\vec{m})$.  So by Propositions \ref{an+1} and \ref{movesign} the first statement follows from the statement that $a_{1,n+1,k}>L_{n,k}b_{1,n+1,k}$, which holds by Lemma \ref{aLb} (and the fact that $L_{n+1,k}>L_{n,k}$).  Similarly the second statement follows from Propositions \ref{2221} and \ref{movesign} and the fact that $2-2L_{n,k}<0$.   Finally the third statement follows from Propositions \ref{gnmatch} and \ref{movesign} and the calculation \[ (2n^2-n-1)^2-L_{n,0}^{2}(2n-1)^2=(2n^2-n-1)^2-(n^2-1)(2n-1)^2=-2n+2<0.\]
\end{proof}

Combining Corollary \ref{moveL} with Proposition \ref{finaway} and continuity considerations, we see that if $\beta$ is sufficiently close to but not equal to $L_{n,k}$ then $C_{\beta}$ is given on a neighborhood $U_{\beta}$ of $S_{n,k}$ as the maximum of a finite collection of obstruction functions $\mu_{\cdot,\beta}(E)$.  In particular, for any given such $\beta$, only finitely many of the $A_{i,n}^{(k)}$ can influence $C_{\beta}$ in this neighborhood.  A bit more strongly, since $\frac{c_{i,n,k}}{d_{i,n,k}}\to S_{n,k}$, for all but finitely many $i$ it will hold that $\frac{c_{i,n,k}}{d_{i,n,k}}\in U_{\beta}$, and so we will have $C_{\beta}\left(\frac{c_{i,n,k}}{d_{i,n,k}}\right)>\mu_{\frac{c_{i,n,k}}{d_{i,n,k}},\beta}(A_{i,n}^{(k)})$.  But just as in the proof of Proposition \ref{Gammadef} this latter inequality implies that in fact $C_{\beta}(\alpha)>\Gamma_{\alpha,\beta}(A_{i,n}^{(k)})$ for \emph{all} $\alpha$.  Thus for a fixed $\beta$ with $0<|\beta-L_{n,k}|<\ep$, only finitely many of the $\Gamma_{\cdot,\beta}(A_{i,n}^{(k)})$ ever coincide with $C_{\beta}$.  Similar remarks apply to the classes $\hat{A}_{i,n}^{(k)}$ discussed in the next section.

\subsection{Additional obstructions} \label{ahatsect}

We will see now that the $A_{i,n}=A_{i,n}^{(0)}$ are not the only classes that contribute to the infinite staircase for $L_{n,0}=\sqrt{n^2-1}$.  For each $n\geq 2$ define a sequence of integer vectors $\vec{w}_{i,n}=(\hat{a}_{i,n},\hat{b}_{i,n},\hat{c}_{i,n},\hat{d}_{i,n})$ by: \begin{align*} \vec{w}_{-1,n}&= (n+1,-1,-1,2n+1) \\ \vec{w}_{0,n} &= (n-1,1,2n-1,1) \\ \vec{w}_{i+2,n} &= (4n^2-2)\vec{w}_{i+1,n}-\vec{w}_{i,n}-(0,4n,4n+4,4n-4).\end{align*}   
Since it is clear from these recurrences that $\hat{a}_{i,n},\hat{b}_{i,n},\hat{c}_{i,n},\hat{d}_{i,n}$ are all nonnegative for $i\geq 0$ we can then define a class \[ \hat{A}_{i,n}=\left(\hat{a}_{i,n},\hat{b}_{i,n},\mathcal{W}(\hat{c}_{i,n},\hat{d}_{i,n})\right).\]  
In terms of the $a_{i,n},b_{i,n},c_{i,n},d_{i,n}$ from (\ref{abcd}) one finds that:
\begin{align}\label{abcdhat} \hat{a}_{i,n} &= a_{2i+1,n}-b_{2i+1,n}\\ \nonumber \hat{b}_{i,n} &= b_{2i+1,n}-\frac{1}{n^2-1}(a_{2i+1,n}-n) \\ \nonumber \hat{c}_{i,n} &= b_{2i+2,n}-\frac{1}{n-1}(a_{2i+1,n}-1) \\ \nonumber \hat{d}_{i,n} &= -b_{2i,n}+\frac{1}{n+1}(a_{2i+1,n}+1). \end{align}  Using (\ref{abcd}) one then finds that \begin{align}\nonumber \frac{a_{2i+1,n}-1}{n-1}\hat{d}_{i,n}-b_{2i,n}\hat{c}_{i,n} &= \frac{a_{2i+1,n}^{2}-1}{n^2-1}-b_{2i,n}b_{2i+2,n} \\ \nonumber &= \frac{1}{4(n^2-1)}\left((\omega_{n}^{4i+2}-2+\omega_{n}^{-4i-2})-(\omega_{n}^{2i}-\omega_{n}^{-2i})(\omega_{n}^{2i+2}-\omega_{n}^{-2i-2})\right)
\\ &= \frac{(\omega_{n}-\omega_{n}^{-1})^{2}}{4(n^2-1)}=1 \label{gcdhat} \end{align} which in particular implies that $\gcd(\hat{c}_{i,n},\hat{d}_{i,n})=1$.  Also, using that (by a straightforward induction argument) \[ b_{2i+2,n}-b_{2i,n}=2nb_{2i+1,n}-2b_{2i,n}=2a_{2i+1,n},\] one sees that \begin{align*} \hat{c}_{i,n}+\hat{d}_{i,n} &= b_{2i+2,n}-b_{2i,n}+\frac{1}{n^2-1}(2n-2a_{2i+1,n})=2(\hat{a}_{i,n}+\hat{b}_{i,n}).\end{align*}  Together with the fact that $\gcd(\hat{c}_{i,n},\hat{d}_{i,n})=1$ this implies that $\hat{A}_{i,n}$ has Chern number $1$ by \cite[Lemma 1.2.6]{MS}.  Moreover a routine computation shows that $2\hat{a}_{i,n}\hat{b}_{i,n}-\hat{c}_{i,n}\hat{d}_{i,n}=-1$, \emph{i.e.} that $\hat{A}_{i,n}$ has self-intersection $-1$.

This suffices to show that $\hat{A}_{i,n}$ is quasi-perfect and hence, as seen in Proposition \ref{Gammadef}, that $C_{\beta}(\alpha)\geq \Gamma_{\alpha,\beta}(\hat{A}_{i,n})$.  We expect that the $\hat{A}_{i,n}$ are all perfect, but we will neither prove nor use this.

We record the following identities, each of which can be proven by a straightforward but (in some cases) tedious calculation based on (\ref{abcd}) and (\ref{abcdhat}):

\begin{equation} \label{cdhatcdi} \hat{c}_{i,n}d_{i,n}-\hat{d}_{i,n}c_{i,n}=2(a_{i+1,n}-b_{i+1,n}); \end{equation} 
\begin{equation} \label{cdhatcdplus} \hat{c}_{i,n}d_{i+1,n}-\hat{d}_{i,n}c_{i+1,n} = -2(a_{i,n}-b_{i,n});\end{equation}
\begin{equation} \label{AhataboveA} \hat{c}_{i,n}(a_{i,n}+\sqrt{n^2-1}b_{i,n})-c_{i,n}(\hat{a}_{i,n}+\sqrt{n^2-1}\hat{b}_{i,n})=\omega_{n}^{-i-1};\end{equation}
\begin{equation} \label{AhataboveAplus} \hat{d}_{i,n}(a_{i+1,n}+\sqrt{n^2-1}b_{i+1,n})-d_{i+1,n}(\hat{a}_{i,n}+\sqrt{n^2-1}\hat{b}_{i,n})=\omega_{n}^{-i};\end{equation}
\begin{equation} \label{lastjunction} 2\hat{c}_{i,n}d_{i+1,n}\sqrt{n^2-1}-(\hat{a}_{i,n}+\sqrt{n^2-1}\hat{b}_{i,n})(a_{i+1,n}+\sqrt{n^2-1}b_{i+1,n})=\frac{\omega_{n}^{-i-1}}{\sqrt{n^2-1}}\left(n-(1+\sqrt{n^2-1})\omega^{-2i-1}_{n}\right).\end{equation}

(In each of our applications of these identities the signs of the right hand sides, not their exact values, will be what is relevant.)
Since $a_{i,n}>b_{i,n}$ for all $i\geq 0,n\geq 2$, the identities (\ref{cdhatcdi}) and (\ref{cdhatcdplus}) show that \begin{equation}\label{ordercd} \frac{c_{i,n}}{d_{i,n}}<\frac{\hat{c}_{i,n}}{\hat{d}_{i,n}} < \frac{c_{i+1,n}}{d_{i+1,n}} .\end{equation}  Now $\Gamma_{\frac{c_{i,n}}{d_{i,n}},\sqrt{n^2-1}}(A_{i,n})=\frac{c_{i,n}}{a_{i,n}+\sqrt{n^2-1}b_{i,n}}$ provides a lower bound for the value of $C_{\sqrt{n^2-1}}(\alpha)$ at any $\alpha\geq \frac{c_{i,n}}{d_{i,n}}$, and in particular at $\alpha=\frac{\hat{c}_{i,n}}{\hat{d}_{i,n}}$.  However, (\ref{AhataboveA}) shows that this lower bound for  $C_{\sqrt{n^2-1}}(\hat{c}_{i,n}/\hat{d}_{i,n})$ coming from $A_{i,n}$ is smaller than the lower bound $\Gamma_{\frac{\hat{c}_{i,n}}{\hat{d}_{i,n}},\sqrt{n^2-1}}(\hat{A}_{i,n})$ coming from our new class $\hat{A}_{i,n}$.  

 Using (\ref{iiplus1}) and the fact that $a_{i,n}+\sqrt{n^2-1}b_{i,n}=\omega_{n}^{i}$, we have \begin{equation}\label{midstep} C_{\sqrt{n^2-1}}(\alpha)\geq \max\left\{c_{i,n}\omega_{n}^{-i},\Gamma_{\alpha,\sqrt{n^2-1}}(\hat{A}_{i,n}),d_{i+1,n}\omega_{n}^{-i-1}\alpha\right\} \end{equation} for $\alpha\in [c_{i,n}/d_{i,n},c_{i+1,n}/d_{i+1,n}]$.

\begin{prop}\label{hatbeatsvol}
Denote the right hand side of (\ref{midstep}) by $\hat{B}_{i,n}(\alpha)$.  Then for all $\alpha\in [\frac{c_{i,n}}{d_{i,n}},\frac{c_{i+1,n}}{d_{i+1,n}}]$ we have $\hat{B}_{i,n}(\alpha)>\sqrt{\frac{\alpha}{2\sqrt{n^2-1}}}$.  Thus $C_{\sqrt{n^2-1}}$ is strictly greater than the volume bound throughout $[\frac{c_{i,n}}{d_{i,n}},\frac{c_{i+1,n}}{d_{i+1,n}}]$.
\end{prop}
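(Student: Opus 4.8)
The plan is to show that the piecewise function $\hat B_{i,n}$, being the maximum of three explicit functions of $\alpha$, stays strictly above the volume obstruction $\sqrt{\alpha/(2\sqrt{n^2-1})}$ on the whole interval $I_{i,n}:=[\frac{c_{i,n}}{d_{i,n}},\frac{c_{i+1,n}}{d_{i+1,n}}]$. Since the volume function $\alpha\mapsto\sqrt{\alpha/(2\sqrt{n^2-1})}$ is concave and increasing while each of the three functions
$$f_1(\alpha)=c_{i,n}\omega_n^{-i},\qquad f_3(\alpha)=d_{i+1,n}\omega_n^{-i-1}\alpha,\qquad f_2(\alpha)=\Gamma_{\alpha,\sqrt{n^2-1}}(\hat A_{i,n})$$
is either constant, linear, or (in the case of $f_2$) piecewise linear and concave, the strategy is to reduce the claim on the interval to checking it at finitely many break points and then using convexity/concavity to interpolate. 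Concretely, the maximum $\hat B_{i,n}$ is a piecewise-linear, convex-from-below competitor against a concave function; the difference $\hat B_{i,n}(\alpha)-\sqrt{\alpha/(2\sqrt{n^2-1})}$ is, on each linear piece of $\hat B_{i,n}$, a concave-down function plus... no — it is (linear minus concave), hence \emph{convex}, so on each such subinterval it attains its minimum at an endpoint. Therefore it suffices to verify the strict inequality $\hat B_{i,n}>\sqrt{\alpha/(2\sqrt{n^2-1})}$ at the endpoints of all the linear pieces of $\hat B_{i,n}$ inside $I_{i,n}$.

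The break points to check are: the two endpoints $\alpha=\frac{c_{i,n}}{d_{i,n}}$ and $\alpha=\frac{c_{i+1,n}}{d_{i+1,n}}$ of $I_{i,n}$; the vertex $\alpha=\frac{\hat c_{i,n}}{\hat d_{i,n}}$ of $\Gamma_{\cdot,\sqrt{n^2-1}}(\hat A_{i,n})$ (which lies in $I_{i,n}$ by (\ref{ordercd})); and the (at most two) crossing points where $f_2$ meets $f_1$ or $f_3$. At $\alpha=\frac{c_{i,n}}{d_{i,n}}$ and $\alpha=\frac{c_{i+1,n}}{d_{i+1,n}}$, strictness of $\hat B_{i,n}>$ volume is exactly the content of Proposition \ref{beatvol} applied to $A_{i,n}=A_{i,n}^{(0)}$ (equivalently the inequality $(a_{i,n}-\sqrt{n^2-1}b_{i,n})^2<\sqrt{n^2-1}$, which gives $\Gamma_{c_{i,n}/d_{i,n},\sqrt{n^2-1}}(A_{i,n})>\sqrt{\tfrac{c_{i,n}/d_{i,n}}{2\sqrt{n^2-1}}}$ via the computation in the proof of Lemma \ref{aLbineq}). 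At the vertex $\alpha=\frac{\hat c_{i,n}}{\hat d_{i,n}}$ we have $f_2(\alpha)=\frac{\hat c_{i,n}}{\hat a_{i,n}+\sqrt{n^2-1}\hat b_{i,n}}$, and the strict inequality $f_2(\frac{\hat c_{i,n}}{\hat d_{i,n}})>\sqrt{\tfrac{\hat c_{i,n}/\hat d_{i,n}}{2\sqrt{n^2-1}}}$ is the $\hat A_{i,n}$-analogue of Proposition \ref{beatvol}: it is equivalent, by the same manipulation using that $\hat A_{i,n}$ has self-intersection $-1$ (so $\hat c_{i,n}\hat d_{i,n}=2\hat a_{i,n}\hat b_{i,n}+1$), to $(\hat a_{i,n}-\sqrt{n^2-1}\hat b_{i,n})^2<\sqrt{n^2-1}$, which one checks directly from the closed forms (\ref{abcdhat})/(\ref{abcd}) — indeed $\hat a_{i,n}-\sqrt{n^2-1}\hat b_{i,n}$ works out to a small positive multiple of $\omega_n^{-2i-1}$, as is visible from (\ref{AhataboveA}) and the relation between $\hat A_{i,n}$ and $A_{i,n}$. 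Finally, at the crossing points of $f_2$ with $f_1$ or $f_3$, the needed strict inequalities are precisely encoded in the sign statements (\ref{AhataboveA}), (\ref{AhataboveAplus}) and (\ref{lastjunction}): (\ref{AhataboveA}) says $f_2>f_1$ at $\alpha=\frac{\hat c_{i,n}}{\hat d_{i,n}}$ hence there is a genuine crossing of $f_1$ and $f_2$ to the left, (\ref{AhataboveAplus}) similarly controls the $f_2$–$f_3$ crossing on the right, and (\ref{lastjunction}) — whose right-hand side $\frac{\omega_n^{-i-1}}{\sqrt{n^2-1}}(n-(1+\sqrt{n^2-1})\omega_n^{-2i-1})$ is positive for all $i\ge0,n\ge2$ since $n>1+\sqrt{n^2-1}\ge (1+\sqrt{n^2-1})\omega_n^{-2i-1}$ exactly when... — is set up to give that at the $f_2$–$f_3$ crossing the common value exceeds the volume; one reads off $2\hat c_{i,n}d_{i+1,n}\sqrt{n^2-1}>(\hat a_{i,n}+\sqrt{n^2-1}\hat b_{i,n})(a_{i+1,n}+\sqrt{n^2-1}b_{i+1,n})$, and squaring the crossing value $f_2=f_3$ and comparing to $\frac{\alpha}{2\sqrt{n^2-1}}$ at that $\alpha$ reduces exactly to this inequality.

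Assembling these: on each maximal subinterval of $I_{i,n}$ where a single one of $f_1,f_2,f_3$ achieves the maximum, $\hat B_{i,n}-\text{vol}$ is convex (linear minus a strictly concave function), so its infimum is at an endpoint of that subinterval, and every such endpoint is one of the finitely many break points handled above; hence $\hat B_{i,n}-\text{vol}>0$ throughout $I_{i,n}$. The only subtlety is the combinatorics of which of $f_1,f_2,f_3$ is largest where — i.e. that the crossing points actually occur in $I_{i,n}$ and in the expected order $f_1,f_2,f_3$ from left to right — but (\ref{ordercd}), together with the monotonicity of the linear pieces and the fact (already used around (\ref{iiplus1})) that $f_1$ wins at the left endpoint and $f_3$ wins at the right endpoint, pins this down; the vertex of $f_2$ lies strictly between $\frac{c_{i,n}}{d_{i,n}}$ and $\frac{c_{i+1,n}}{d_{i+1,n}}$ by (\ref{ordercd}), and (\ref{AhataboveA})–(\ref{AhataboveAplus}) guarantee $f_2$ pokes above both its neighbors near that vertex. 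The main obstacle I anticipate is purely bookkeeping: carefully treating the degenerate possibilities (e.g. a crossing point coinciding with an interval endpoint or with the vertex of $f_2$, or $f_2$ never actually surfacing above $\max\{f_1,f_3\}$ for small $i$), and confirming the positivity of the right-hand sides of (\ref{AhataboveA}), (\ref{AhataboveAplus}), (\ref{lastjunction}) uniformly in $i\ge 0$, $n\ge 2$ — but each of these is an elementary inequality in $\omega_n$ of the type already dispatched in Propositions \ref{beatvol} and \ref{beatsup}, so no new ideas should be required. The last sentence of the proposition, that $C_{\sqrt{n^2-1}}$ itself exceeds the volume bound on $I_{i,n}$, then follows immediately since $C_{\sqrt{n^2-1}}(\alpha)\ge\hat B_{i,n}(\alpha)$ by (\ref{midstep}).
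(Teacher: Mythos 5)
Your reduction to checking the inequality at finitely many break points rests on the claim that, since $\hat B_{i,n}(\alpha)-\sqrt{\alpha/(2\sqrt{n^2-1})}$ is convex on each linear piece of $\hat B_{i,n}$, it attains its minimum at an endpoint of that piece. This is false: a convex function on a compact interval attains its \emph{maximum} at an endpoint, while its minimum may well be interior (e.g.\ $(x-1)^2-\tfrac12$ on $[0,2]$ is convex, positive at both endpoints, and negative at $x=1$). So verifying the inequality at break points does not, by the argument you give, yield the inequality on the whole of $I_{i,n}$.  What actually makes the interpolation work is special structure that you don't invoke: on subintervals where $\hat B_{i,n}$ is \emph{constant}, it suffices to compare against the volume bound at the \emph{right} endpoint (since the volume bound is increasing); on subintervals where the active piece is $m\alpha$ (linear through the origin, as both the sloped piece of $\Gamma_{\cdot,\sqrt{n^2-1}}(\hat A_{i,n})$ and $f_3$ are), the set $\{m\alpha>C\sqrt{\alpha}\}$ is a half-line, so it suffices to check at the \emph{left} endpoint. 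The paper's proof does exactly this, cutting $I_{i,n}$ at $\hat c_{i,n}/\hat d_{i,n}$ and at the point $\alpha_0$ where the constant piece of $\Gamma_{\cdot,\sqrt{n^2-1}}(\hat A_{i,n})$ meets $f_3$.

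There is a second, independent gap: even after the reduction is repaired, the inequality you would need at the $f_1$–$f_2$ crossing is not among the ones you check.  On $[\tfrac{c_{i,n}}{d_{i,n}},\tfrac{\hat c_{i,n}}{\hat d_{i,n}}]$ the active bound is the constant $f_1=c_{i,n}\omega_n^{-i}$, and what one must show is that this constant exceeds the volume bound at the \emph{right} end of that interval, i.e.\ $(c_{i,n}\omega_n^{-i})^2>\tfrac{\hat c_{i,n}}{2\sqrt{n^2-1}\,\hat d_{i,n}}$. This is the paper's Claim (i), which is genuinely new: it is not Proposition \ref{beatvol} applied to $A_{i,n}$ (that controls $f_1$ against the volume at $\tfrac{c_{i,n}}{d_{i,n}}$, not at $\tfrac{\hat c_{i,n}}{\hat d_{i,n}}$), nor is it the $\hat A_{i,n}$-analogue of Proposition \ref{beatvol} (which controls $f_2$ at the vertex, not $f_1$), and identities (\ref{AhataboveA})–(\ref{lastjunction}) only compare $f_1,f_2,f_3$ to one another, not to the volume. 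Its proof in the paper runs through the identity $c_{i,n}^2=\tfrac{a_{2i+1,n}-1}{n-1}$ and (\ref{gcdhat}), followed by the estimate $\hat c_{i,n}<2\sqrt{n^2-1}\,\omega_n^{2i}$, none of which you anticipate. (A smaller slip: $\hat a_{i,n}-\sqrt{n^2-1}\,\hat b_{i,n}$ is in fact negative for all $i\ge0$ — at $i=0$ it equals $\omega_n^{-1}-1$ — not ``a small positive multiple of $\omega_n^{-2i-1}$''; this does not affect the squared inequality you assert but does affect your appeal to Lemma \ref{aLbineq}, whose hypothesis $a\ge\beta b$ then fails for $\hat A_{i,n}$.)
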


\begin{proof} We claim that: 
\begin{itemize} \item[(i)] \[ (c_{i,n}\omega_{n}^{-i})^2>\frac{\hat{c}_{i,n}}{2\sqrt{n^2-1}\hat{d}_{i,n}},\] and 
\item[(ii)] \[ \left(\frac{\hat{c}_{i,n}}{\hat{a}_{i,n}+\sqrt{n^2-1}\hat{b}_{i,n}}\right)^2 > \frac{\hat{c}_{i,n}\omega_{n}^{i+1}}{2\sqrt{n^2-1}d_{i+1,n}(\hat{a}_{i,n}+\sqrt{n^2-1}\hat{b}_{i,n})}.\]
\end{itemize}

To prove (i), first of all note that a routine computation shows that $c_{i,n}
^{2}=\frac{a_{2i+1,n}-1}{n-1}$, and so (\ref{gcdhat}) shows that \[ c_{i,n}^{2
}\hat{d}_{i,n}=b_{2i,n}\hat{c}_{i,n}+1.\]  Thus (i) is equivalent to the 
statement that \[ 2\sqrt{n^2-1}\omega_{n}^{-2i}(b_{2i,n}\hat{c}_{i,n}+1) > 
\hat{c}_{i,n}.\]  Since $2\sqrt{n^2-1}b_{2i,n}\omega_{n}^{-2i}=1-\omega_{n}^{-
4i}$, this in turn is equivalent to the statement that $-\hat{c}_{i,n}\omega_{
n}^{-4i}+2\sqrt{n^2-1}\omega_{n}^{-2i}>0$, \emph{i.e.} that \begin{equation}
\label{chatsize} \hat{c}_{i,n}<2\sqrt{n^2-1}\omega_{n}^{2i}.\end{equation}  
We find \begin{align*}
\hat{c}_{i,n} &= \frac{1}{2\sqrt{n^2-1}}\left(\omega_{n}^{2i+2}-\omega_{n}^{-2
i-2}\right)-\frac{1}{2(n-1)}\left(\omega_{n}^{2i+1}-2+\omega_{n}^{-2i-1}\right)
\\ &= \frac{\omega_{n}^{2i}}{2\sqrt{n^2-1}}\left(\omega_{n}^{2}-\sqrt{\frac{n+
1}{n-1}}\omega_{n}\right)+\frac{1}{n-1}-\frac{\omega_{n}^{-2i}}{2\sqrt{n^2-1}}
\left(\omega_{n}^{-2}+\sqrt{\frac{n+1}{n-1}}\omega_{n}^{-1}\right) 
\\ &<  \frac{\omega_{n}^{2i}}{2\sqrt{n^2-1}}\left(\omega_{n}^{2}-\sqrt{\frac{n
+1}{n-1}}\omega_{n}+2\omega_{n}^{-2i}\sqrt{\frac{n+1}{n-1}}\right).\end{align*}

Now \begin{align*}
\omega_{n}^{2}&-\sqrt{\frac{n+1}{n-1}}\omega_{n}+2\omega_{n}^{-2i}\sqrt{\frac{n+1}{n-1}} \leq \omega_{n}^{2}-\frac{\sqrt{n+1}(\omega_{n}-2)}{\sqrt{n-1}}
\\ &= 2n^2-1+2n\sqrt{n^2-1}-\sqrt{\frac{n+1}{n-1}}(n-2+\sqrt{n^2-1}) 
\\ &= (2n^2-1)-(n+1)+2n\sqrt{n^2-1}-(n-2)\sqrt{\frac{n+1}{n-1}}
\\ &\leq (2n^2-4)+2n\sqrt{n^2-1}<4(n^2-1)
\end{align*} since $n\geq 2$. Thus \[ \hat{c}_{i,n}<\frac{\omega_{n}^{2i}}{2\sqrt{n^2-1}}\cdot 4(n^2-1)=2\sqrt{n^2-1}\omega_{n}^{2i},\] proving (\ref{chatsize}) and hence proving claim (i) at the start of the proof.

As for claim (ii), that claim is equivalent to the statement that \[ \frac{\hat{c}_{i,n}}{\hat{a}_{i,n}+\sqrt{n^2-1}\hat{b}_{i,n}}>\frac{a_{i+1,n}+\sqrt{n^2-1}b_{i+1,n}}{2\sqrt{n^2-1}d_{i+1,n}}.\] But this latter inequality follows immediately from (\ref{lastjunction}).

We now deduce the proposition from claims (i) and (ii).  Since by definition $\hat{B}_{i,n}(\alpha)\geq c_{i,n}\omega_{n}^{-i}$ for all $i$, claim (i) shows that $\hat{B}_{i,n}(\alpha)\geq \sqrt{\frac{\alpha}{2\sqrt{n^2-1}}}$ for all $\alpha\leq \frac{\hat{c}_{i,n}}{\hat{d}_{i,n}}$. Next let \[ \alpha_0=\frac{\hat{c}_{i,n}\omega_{n}^{i+1}}{d_{i+1,n}(\hat{a}_{i,n}+\sqrt{n^2-1}\hat{b}_{i,n})}.\]  Then (\ref{AhataboveAplus}) implies that $\alpha_0> \frac{\hat{c}_{i,n}}{\hat{d}_{i,n}}$, and claim (ii) shows that $\frac{\hat{c}_{i,n}}{\hat{a}_{i,n}+\sqrt{n^2-1}\hat{b}_{i,n}}>\sqrt{\frac{\alpha_0}{2\sqrt{n^2-1}}}$.  For all  $\alpha\in [\frac{\hat{c}_{i,n}}{\hat{d}_{i,n}},\alpha_0]$ we then have \[ \hat{B}_{i,n}(\alpha)\geq \frac{\hat{c}_{i,n}}{\hat{a}_{i,n}+\sqrt{n^2-1}\hat{b}_{i,n}}>\sqrt{\frac{\alpha_0}{2\sqrt{n^2-1}}}\geq \sqrt{\frac{\alpha}{2\sqrt{n^2-1}}}.\]  Finally, $\alpha_0$ was chosen to have the property that $\frac{\hat{c}_{i,n}}{\hat{a}_{i,n}+\sqrt{n^2-1}\hat{b}_{i,n}}=d_{i+1,n}\omega^{-i-1}_{n}\alpha_0$, so we have \[ d_{i+1,n}\omega_{n}^{-i-1}\alpha >\sqrt{\frac{\alpha}{2\sqrt{n^2-1}}} \mbox{ for }\alpha=\alpha_0, \mbox{and hence also for all }\alpha>\alpha_0.\]  But by definition $\hat{B}_{i,n}(\alpha)\geq   d_{i+1,n}\omega_{n}^{-i-1}\alpha $ for all $\alpha$.  So we have shown that $\hat{B}_{i,n}(\alpha)$ is strictly greater than the volume bound $\sqrt{\frac{\alpha}{2\sqrt{n^2-1}}}$ for all $\alpha$ in each of the three intervals $[\frac{c_{i,n}}{d_{i,n}},\frac{\hat{c}_{i,n}}{\hat{d}_{i,n}}],[\frac{\hat{c}_{i,n}}{\hat{d}_{i,n}},\alpha_0],$ and $[\alpha_0,\frac{c_{i+1,n}}{d_{i+1,n}}],$ completing the proof.
\end{proof}

Applying Brahmagupta moves, one obtains quasi-perfect classes $\hat{A}_{i,n}^{(k)}$ for all $i,k\geq 0$ and $n\geq 2$.
Proposition \ref{hatbeatsvol} shows that, for $k=0$, $\sup_i\max\{\Gamma_{\alpha,L_{n,k}}(A_{i,n}^{(k)}),\Gamma_{\alpha,L_{n,k}}(\hat{A}_{i,n}^{(k)})\}$ exceeds the volume bound for all $\alpha\in\cup_i\left[\frac{c_{i,n,k}}{d_{i,n,k}},\frac{c_{i+1,n,k}}{d_{i+1,n,k}}\right]$. We suspect that the same inequality holds for all $k$, and computer calculations following the same strategy as those  described at the end of 
Section \ref{undervolsect} confirm that it holds whenver $n,k\leq 100$.

\subsection{Connecting the staircases}\label{connect}

The Frenkel-M\"uller classes that were featured in Section \ref{fmsect} fit in to our collections of classes $A_{i,n}^{(k)}$ and $\hat{A}_{i,n}^{(k)}$. Specifically we have \begin{align*} A_{0,n}^{(k)} &= \left(\frac{1+1}{2},\frac{1-1}{2};\mathcal{W}(1+0,1-0)\right)^{(k)}
\\ &= \left(\frac{H_{2k}+1}{2},\frac{H_{2k}-1}{2};\mathcal{W}(H_{2k}+P_{2k},H_{2k}-P_{2k})\right)=FM_{2k-1} \end{align*} (independently of $n$), and \begin{align}\nonumber \hat{A}_{0,2}^{(k)}&=\left(\frac{2+0}{2},\frac{2-0}{2};\mathcal{W}(2+1,2-1)\right)^{(k)}
\\ &= \left(H_{2k}+P_{2k},H_{2k}+P_{2k};\mathcal{W}(3H_{2k}+4P_{2k},H_{2k})\right)=FM_{2k}. \label{evenfm}\end{align}  These are not the only ways of expressing the $FM_m$ as $A_{i,n}^{(k)}$ or $\hat{A}_{i,n}^{(k)}$; for instance because $\hat{A}_{0,3}=A_{1,2}=(2,1;\mathcal{W}(5,1))=FM_1=A_{0,n}^{(1)}$ we can write \begin{equation}\label{oddfm} FM_{2k+1}=A_{0,n}^{(k+1)}=A_{1,2}^{(k)}=\hat{A}_{0,3}^{(k)}.\end{equation}


Theorem \ref{fmsup} shows that for $\alpha\leq 3+2\sqrt{2}$ we  have $C_{\beta}(\alpha)=\sup\{\Gamma_{FM_n}(\alpha,\beta)|n\in\N\cup\{-1\}\}$ (and if $\beta>1$ then  Proposition \ref{supreduce} reduces this supremum to a maximum over a finite set).  Meanwhile for the specific values $\beta=L_{n,k}$, (\ref{stairnhd}) shows that \[ C_{L_{n,k}}(\alpha)=\Gamma_{\alpha,L_{n,k}}(A_{i,n}^{(k)}) \mbox{ for all }\alpha\mbox{ in a neighborhood of }\frac{c_{i,n,k}}{d_{i,n,k}}.\]  We have seen that, at least for $k=0$ and likely for all $k$, the equality $C_{\beta}(\alpha)=\sup_j\Gamma_{\alpha,L_{n,k}}(A_{j,n}^{(k)})$ does not persist throughout the interval $\left[\frac{c_{i,n,k}}{d_{i,n,k}},\frac{c_{i+1,n,k}}{d_{i+1,n,k}}\right]$; indeed Corollary \ref{undervolcor} and Proposition \ref{hatbeatsvol} show that (again at least for $k=0$) there is a subinterval of this interval on which $\hat{A}_{i,n}^{(k)}$ gives a stronger lower bound than do any of the $A_{j,n}^{(k)}$.  However we conjecture that this is all that needs to be taken into account to fully describe our infinite staircase:

\begin{conj}\label{fillconj}
Let $n\geq 2$ and $k\geq 0$.  Then for all $\alpha\in [\frac{c_{0,n,k}}{d_{0,n,k}},S_{n,k}]$ we have \[ C_{L_{n,k}}(\alpha)=\sup\left\{\Gamma_{\alpha,L_{n,k}}(A)\left|A\in\cup_{i=0}^{\infty}\{A_{i,n}^{(k)},\hat{A}_{i,n}^{(k)}\}\right.\right\}.\]
\end{conj}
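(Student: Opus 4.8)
\textbf{Proposal for a proof of Conjecture \ref{fillconj}.}

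The strategy I would pursue mirrors the proof of Theorem \ref{fmsup}: the lower bound is already in hand (it is precisely the lower bound (\ref{lowerbound}), which holds because each $A_{i,n}^{(k)}$ and $\hat{A}_{i,n}^{(k)}$ is quasi-perfect, see Proposition \ref{Gammadef}), so everything reduces to establishing the reverse inequality $C_{L_{n,k}}(\alpha)\le \sup\{\Gamma_{\alpha,L_{n,k}}(A)\}$ for all $\alpha\in\left[\frac{c_{0,n,k}}{d_{0,n,k}},S_{n,k}\right]$. As in Section \ref{fmsupproof}, this in turn reduces, by the monotonicity and sublinearity of $C_\beta$ established in the proof of Proposition \ref{Gammadef}, to proving it only at finitely-many-per-subinterval ``junction'' points: the points $\frac{c_{i,n,k}}{d_{i,n,k}}$ and $\frac{\hat{c}_{i,n,k}}{\hat{d}_{i,n,k}}$ where consecutive steps of the conjectured staircase meet, together with the crossover point(s) inside each $\left[\frac{c_{i,n,k}}{d_{i,n,k}},\frac{c_{i+1,n,k}}{d_{i+1,n,k}}\right]$ where one of the three functions in (\ref{midstep}) overtakes another. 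At each such point $\alpha$, write $\lambda=\sup\{\Gamma_{\alpha,L_{n,k}}(A)\}$; then by Proposition \ref{background} and (\ref{s2s2crit}) the claim $C_{L_{n,k}}(\alpha)\le\lambda$ is equivalent to the assertion that a specific class $\left(\lambda L_{n,k},\lambda;\mathcal{W}(1,\alpha)\right)$ lies in $\bar{\mathcal{C}}_K(X_N)$, which I would verify by the tiling-after-Cremona method of Section \ref{toolsect}.

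The key organizing idea is that the $A_{i,n}^{(k)}$ and $\hat{A}_{i,n}^{(k)}$ are all obtained by the $k$th-order Brahmagupta move (Definition \ref{bmovedef}) from the $k=0$ families, and that Brahmagupta moves are realized by an explicit composition of Cremona moves (Proposition \ref{pellcrem}). So I would first treat the base case $k=0$, where $L_{n,0}=\sqrt{n^2-1}$ and the relevant exceptional classes are $A_{i,n}$ and $\hat{A}_{i,n}$ with the comparatively clean closed forms (\ref{abcd}) and (\ref{abcdhat}). Here the junction-point computations should be manageable: the crossover values $\alpha$ among the three terms of (\ref{midstep}) were already computed implicitly in the proof of Proposition \ref{hatbeatsvol} (for instance $\alpha_0=\frac{\hat{c}_{i,n}\omega_n^{i+1}}{d_{i+1,n}(\hat{a}_{i,n}+\sqrt{n^2-1}\hat{b}_{i,n})}$ and the analogous point where $c_{i,n}\omega_n^{-i}$ meets $\Gamma_{\cdot}(\hat{A}_{i,n})$), and at each of these the target class $\left(\lambda\sqrt{n^2-1},\lambda;\mathcal{W}(1,\alpha)\right)$ should, after applying the composition $\Xi^{(\cdot)}$ of Cremona moves (iterated $i$ or $2i$ times to peel off the self-similar part of the weight expansion, exactly as in Proposition \ref{bigreduce}), collapse to a small class of the form $\langle r;\ldots,\mathcal{W}(p_1,q_1),\mathcal{W}(p_2,q_2),\ldots\rangle$ that one checks satisfies the tiling criterion by an explicit rectangle-packing picture. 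A parallel family of identities to (\ref{cdhatcdi})--(\ref{lastjunction}) — with the roles played by $\lambda$ and $\alpha$ rather than just by $\omega_n$-powers — will need to be recorded to force the entries of the Cremona-reduced class into the configuration required by Corollary \ref{tilecrit}; establishing that these reduced classes tile is the technical heart of the argument. Once $k=0$ is done, the general $k$ case should follow by applying the $k$th-order Brahmagupta Cremona sequence from Proposition \ref{pellcrem} to transport the $k=0$ tiling picture, since Brahmagupta moves send $A_{i,n}\mapsto A_{i,n}^{(k)}$, $\hat{A}_{i,n}\mapsto\hat{A}_{i,n}^{(k)}$, and (crucially) commute appropriately with the ellipsoid-into-polydisk reduction — much as Proposition \ref{bigreduce} reduces the $k$th problem to the $0$th.

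The main obstacle, I expect, is twofold. First, unlike in Theorem \ref{fmsup} — where the supremum reduces to a maximum over a \emph{finite} set once $\beta>1$ is fixed — here the staircase for $C_{L_{n,k}}$ is genuinely infinite, so one must handle infinitely many junction points uniformly; this forces one to find $i$-independent families of Cremona reductions and $i$-independent tiling configurations, which is exactly why the closed forms (\ref{abcd})/(\ref{abcdhat}) and the self-similarity of the weight expansions (Propositions \ref{pellweight}, \ref{genexp}) are indispensable. Second, and more seriously, the new classes $\hat{A}_{i,n}^{(k)}$ are only proven to be \emph{quasi}-perfect, not perfect; but the sharp lower-bound machinery (Lemma \ref{aLbineq}, Proposition \ref{interval}) and, more importantly, the positivity-of-intersection constraints that would let one rule out \emph{other} exceptional classes contributing to $C_{L_{n,k}}$ near these steps, rely on membership in $\mathcal{E}$. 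Circumventing this — either by proving $\hat{A}_{i,n}^{(k)}\in\mathcal{E}$ (Cremona-reducing each to $E_0$, presumably via a sequence structurally like the one in Lemma \ref{indain} combined with $\Xi$), or by a direct tiling argument that never needs to know no other class interferes — is where the real work lies. I would attempt the direct tiling route first, since it is self-contained and parallels Section \ref{fmsupproof}, and fall back on establishing perfectness of the $\hat{A}_{i,n}^{(k)}$ only if the tiling pictures prove intractable at some junction point.
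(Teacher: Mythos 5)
The statement you are addressing is explicitly labeled a conjecture in the paper, and the paper offers no proof of it; Section \ref{ahatsect} supplies only the supporting evidence that (for $k=0$) $\sup_i\max\{\Gamma_{\alpha,\beta}(A_{i,n}),\Gamma_{\alpha,\beta}(\hat A_{i,n})\}$ strictly exceeds the volume bound on the relevant range (Proposition \ref{hatbeatsvol}), together with computer checks for $n,k\le 100$. So there is nothing in the paper to compare your proposal against, and your proposal itself is honest about being a research program rather than a proof: the ``technical heart'' you defer (finding $i$-uniform Cremona reductions that land every junction class in a tiling configuration) is exactly the open content of the conjecture.

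That said, your outline is structurally sound and consistent with the tools the paper develops. Two observations worth recording. First, you are right that the tiling route sidesteps the quasi-perfect-versus-perfect issue: the upper bound $C_{\beta}(\alpha)\le\lambda$ is established by showing $(\lambda\beta,\lambda;\mathcal W(1,\alpha))\in\bar{\mathcal C}_K(X_N)$ via Corollary \ref{tilecrit}, and this never needs $\hat A_{i,n}^{(k)}\in\mathcal E$ --- the classes $A_{i,n}^{(k)},\hat A_{i,n}^{(k)}$ only enter through the target value $\lambda$. Second, the feature that genuinely distinguishes this from the proof of Theorem \ref{fmsup} is the one you flag: in Theorem \ref{fmsup} the case $\beta>1$ reduces to a \emph{finite} list of junction points because the $\gamma_{n,\beta}$ eventually decrease (Proposition \ref{supreduce}), whereas here the conjectured staircase is infinite and one needs an $i$-uniform family of reductions. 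The self-similar structure of $w(\alpha)$ near $S_{n,k}$ and the Cremona move $\Xi$ are plausibly the right tools --- indeed, Lemma \ref{indain} already provides a Cremona reduction from $A_{i,n}$ to $A_{i-2,n}$, and analogous reductions at the junction points would be the natural target. But producing those reductions and verifying the resulting tiling configurations is nontrivial work the paper did not do, and neither have you; as a check on whether this is genuinely a proof versus an outline, note that you do not write down the reduced class or the tiling picture at even a single junction point, nor do you verify the commutation claim that the Brahmagupta/Cremona sequence ``commutes appropriately with the ellipsoid-into-polydisk reduction.'' The proposal is a plausible attack, not a proof.
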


Let us compare the behavior of $C_{L_{n,k}}$ on $[\frac{c_{0,n,k}}{d_{0,n,k}},S_{n,k}]$  predicted by this conjecture to the behavior given by Theorem \ref{fmsup} on $[1,3+2\sqrt{2}]$.   First, notice that since (\ref{oddfm}) gives \[ A_{0,n}^{(k)}=FM_{2k-1}=\left(\frac{H_{2k}+1}{2},\frac{H_{2k}-1}{2};\mathcal{W}(P_{2k+1},P_{2k-1})\right),\]  the left endpoint $\frac{c_{0,n,k}}{d_{0,n,k}}$ of the interval in Conjecture \ref{fillconj} is equal to $\frac{P_{2k+1}}{P_{2k-1}}$, which is less than $3+2\sqrt{2}$ by Proposition \ref{orderratios}. If $n\geq 4$, then we can conclude that the first step in our infinite staircase for $C_{L_{n,k}}$ coincides with the final step in (what remains of) the Frenkel-M\"uller staircase, since Proposition \ref{Lsize} shows that in this case $L_{n,k}\in (b_{2k},b_{2k-1})$, which by Proposition \ref{supreduce} implies that the last step remaining in the Frenkel-M\"uller staircase is the one determined by $FM_{2k-1}$.  For the case $n=3$, since $A_{0,3}^{(k)}=FM_{2k-1}$ and $\hat{A}_{0,3}^{(k)}=FM_{2k+1}$ by (\ref{oddfm}), referring again to Propositions \ref{Lsize} and \ref{supreduce} we see that the first two steps in the staircase described by 
Conjecture \ref{fillconj} are $\Gamma_{\cdot,L_{3,k}}(FM_{2k-1})$ and $\Gamma_{\cdot,L_{3,k}}(FM_{2k+1})$, which  coincide with the last two steps of the Frenkel-M\"uller staircase.  Finally in the case $n=2$ we see that $A_{0,2}^{(k)}=FM_{2k-1}$, $\hat{A}_{0,2}^{(k)}=FM_{2k}$, and $A_{1,2}^{(k)}=FM_{2k+1}$, and so the first three steps in the staircase from Conjecture \ref{fillconj} coincide with the last three steps of the Frenkel-M\"uller staircase.  In all cases Theorem \ref{fmsup} therefore implies that the formula in Conjecture \ref{fillconj} holds for all $\alpha\in [\frac{c_{0,n,k}}{d_{0,n,k}},3+2\sqrt{2}]$.

\end{document}